\newtheorem{thm}{Theorem}[section]
\newtheorem{cor}[thm]{Corollary}
\newtheorem{prop}[thm]{Proposition}
\newtheorem{lem}[thm]{Lemma}
\theoremstyle{definition}
\newtheorem{defn}[thm]{Definition}
\newtheorem{exmp}[thm]{Example}
\newtheorem{notn}[thm]{Notation}
\newtheorem{obs}[thm]{Observation}
\theoremstyle{remark}
\newtheorem{rem}[thm]{Remark}
\def\MLine#1{\par\hspace*{-\leftmargin}\parbox{\textwidth}{\[#1\]}}
\def\Sr{\mathcal{S}_{\bullet}}
\def\Sri{\tilde{\mathcal{S}_{\bullet}}}
\def\Z{\mathbb{Z}}
\def\E{\mathbb{E}}
\def\P{\mathbb{P}}
\def\coc{c\text{-}occ}
\def\Asi{A_{\sigma,i}}
\def\leqsi{\preccurlyeq_{\sigma,i}}
\newcounter{indice}
\newcommand{\permutation}[1]{
\setcounter{indice}{0};
\foreach \i in {#1}
\addtocounter{indice}{1};

\addtocounter{indice}{1}
\draw [help lines] (1,1) grid (\theindice,\theindice);

\setcounter{indice}{1};

\foreach \i in { #1 } {
\draw (\theindice+.5,\i+.5) [fill] circle (.2);
\addtocounter{indice}{1};
}
\addtocounter{indice}{-1};

}
\title[Local convergence for permutations]{Local convergence for permutations and local limits for uniform $\rho$-avoiding permutations with $|\rho|=3$}
\author{Jacopo Borga}
\address{Institut für Mathematik, Universität Zürich, Winterthurerstrasse 190, CH-8057, Zürich}
\email{jacopo.borga@math.uzh.ch}
\keywords{Local weak limits, permutation patterns}
\subjclass[2010]{60C05,05A05}
\begin{document}

\begin{abstract}
We set up a new notion of local convergence for permutations and we prove a characterization in terms of proportions of \emph{consecutive} pattern occurrences. We also characterize random limiting objects for this new topology introducing a notion of ``shift-invariant" property (corresponding to the notion of unimodularity for random graphs). 

We then study two models in the framework of random pattern-avoiding permutations. We compute the local limits of uniform $\rho$-avoiding permutations, for $|\rho|=3,$ when the size of the permutations tends to infinity. The core part of the argument is the description of the asymptotics of the number of consecutive occurrences of any given pattern. For this result we use bijections between $\rho$-avoiding permutations and rooted ordered trees, local limit results for Galton--Watson trees, the Second moment method and singularity analysis. 
\end{abstract}

\maketitle

\tableofcontents

\section{Introduction}

The goal of this article is to introduce a local topology for permutations and to study some initial interesting applications. 

\subsection{Scaling and local limits for discrete structures}
It is well-known that, after a suitable rescaling, a simple random walk on $\mathbb{Z}$ converges to the 1-dimensional Brownian motion: probably this is the most famous scaling limit result in probability theory. In the last thirty years such scaling limits have been intensively studied for different discrete structures. We mention some mathematical areas where this notion has been investigated, without aiming at giving a complete list.

In the framework of random trees and planar maps, the systematic study of scaling limits has been initiated by Aldous with the pioneering series of articles about the well-known \emph{Continuum random tree} (\cite{aldous1991continuum_1,aldous1991continuum_2,aldous1993continuum_3}). After that, many new results have been proven, in particular about the Brownian map, which is the scaling limit of random planar maps uniformly distributed over the class of all rooted $q$-angulations with $n$ faces, for $q=3$ and $q\geq4$ even integer (see for instance \cite{le2013uniqueness,marckert2006limit,miermont2013brownian}).

In statistical mechanics, the study of scaling limits of discrete models is also a very active topic. For instance, the scaling limit of the discrete Gaussian free field on lattices, \emph{i.e.,} the continuum GFF, has been proven to be the scaling limit of many others random structures (see for instance \cite{kenyon2001dominos,sheffield2007gaussian}). 

More closely related to this work, in \cite{hoppen2013limits} a notion of scaling limits for permutations has been recently introduced, called \emph{permutons}. They are probability measures on the unit square with uniform marginals, and they represent the scaling limit of the diagram of permutations as the size grows to infinity. This new notion of convergence has been studied (sometimes phrased in other terms) in several works. We mention a few of them.
\begin{itemize}
	\item The study of the scaling limits of a uniform random permutation avoiding a pattern of length three was initiated by Miner and Pak \cite{miner2014shape} and  Madras and Pehlivan \cite{madras2016structure}. Next, with a series of two articles, Hoffman, Rizzolo and Slivken (\cite{hoffman2017pattern,hoffman2017pattern_2}) strengthened these early results and understood many interesting phenomena that had previously gone unexplained. In particular they explored the connection of these uniform pattern-avoiding permutations to Brownian excursions. All these articles do not use the ``permuton language".
	\item The first concrete and explicit example of convergence in the ``permuton language" has been studied by Kenyon, Kral,  Radin and Winkler \cite{kenyon2015permutations}. They studied scaling limits of random permutations in which a finite number of pattern densities have been fixed.
	\item Bassino, Bouvel, F\'eray, Gerin and Pierrot \cite{bassino2018brownian} showed that a sequence of uniform random separable permutations of size $n$ converges to the \emph{Brownian separable permuton}. In this work they introduced this new limiting object that has been later investigated by Maazoun \cite{maazoun2017brownian}.
	\item In a second work, Bassino, Bouvel, F\'eray, Gerin, Maazoun and Pierrot \cite{bassino2017universal} showed that the Brownian permuton has a universality property: they consider uniform random permutations in proper substitution-closed classes and study their limiting behavior in the sense of permutons, showing that the limit is an elementary one-parameter deformation of the Brownian separable permuton.
	\item With a statistical mechanical approach , Starr \cite{starr2009thermodynamic} investigates the permuton limit of Mallows permutations. This is a non-uniform model where the probability of every permutation is proportional to $q^{\text{number of inversions}}$. Treating the question as a mean-field problem, he is able to calculate the distribution of the permuton limit (after a suitable rescaling of the parameter $q$).
	\item Rahman, Virag and Vizer \cite{rahman2016geometry} generalized the study of scaling limits for permutations to permutation valued processes, \emph{i.e.,} sequences of sequences of permutations.
	\item Permutons limits have been characterized in terms of convergence of frequencies of pattern occurrences (see \cite[Theorem 2.5]{bassino2017universal}). We will come back later to this characterization.
\end{itemize}

In parallel to scaling limits, a second notion of limits of discrete structures, on which this paper focuses, has been introduced: the local limits. Informally, scaling limits look at the convergence of the objects from a global
point of view (after a rescaling of the distances between points of the objects), while local limits look at discrete objects in a neighborhood of a distinguished point (without rescaling distances). As done for scaling limits, we allude to some frameworks where this notion has been studied, again without aiming at giving a complete overview.

Local limit results around the root of random trees were first implicitly proved by Otter \cite{otter1949multiplicative} and then explicitly by Kesten \cite{kesten1986subdiffusive} and Aldous and Pitman \cite{aldous1998tree}. Janson \cite{janson2012simply} gives a unified treatment of the local limits, as the number of vertices tends to infinity, of simply generated random trees. Recently, Stufler \cite{stufler2016local} studied also the local limits for large Galton--Watson trees around a uniformly chosen vertex building on previous results of Aldous \cite{aldous1991asymptotic} and Holmgren and Janson \cite{holmgren2017fringe}. 

Although implicit in many earlier works, the notion of local convergence around a random vertex (called \emph{weak local convergence}) for random graphs has been formally introduced by Benjamini and Schramm \cite{benjamini2001recurrence} and Aldous and Steele \cite{aldous2004objective}. One would expect that a limit object for this topology should ``look the same" when regarded from any of its vertices (because of the uniform choice of the root). This property is made precise by the notion of unimodular random rooted graph and the weak local limit of any sequence of random graphs is indeed proven to be unimodular (see \cite{benjamini2001recurrence}).  

Also in the framework of random planar maps the local convergence has been studied. For example, a well-known result is that the local limit for random infinite triangulations/quadrangulations is the uniform infinite planar triangulation/quadrangulation (UIPT/UIPQ) (see for instance \cite{angel2003uniform,krikun2005local,stephenson2018local}). 

In the framework of permutations, to the best of our knowledge, a local limit approach has not been investigated so far. The goal of the current paper is to fill this gap and show several interesting aspect of the local convergence for permutations. More precisely,
\begin{itemize}
	\item we prove a characterization of the local convergence in terms of proportions of \emph{consecutive} pattern occurrences (see Theorems \ref{thm_charact} and \ref{thm7}). For the sake of comparison, recall that permutons limits have been characterized in terms of convergence of frequencies of (non-consecutive) pattern occurrences.
	\item We characterize random limiting objects for this new topology (see Theorem \ref{shiftinvthm}), introducing a notion of ``shift-invariant" property (corresponding to the notion of unimodularity for random graphs).
	\item We exhibit some concrete applications of the developed theory in the setting of pattern-avoiding permutations (see Theorems \ref{thm_1} and \ref{thm_2}). 
\end{itemize}

\begin{rem}
	We wish to mention that Hoffman, Rizzolo and Slivken \cite{hoffman2016fixed} used local limits results for random trees in the study of some specific properties of permutations, like the number of fixed points (\emph{i.e.,} indexes $i$ such that $\sigma_i=i$) of pattern-avoiding permutations. Moreover, they do not study general local limits results for permutations.
\end{rem}

\begin{rem}
	We also mention that recently Pinsky \cite{pinsky2018infinite} studied limits of random permutations avoiding patterns of size three considering a different topology. His topology captures the local limit of the permutation diagram around a {\em corner}. The two topologies (that from \cite{pinsky2018infinite} and that from the present article) are not comparable. We point out that there are two main advantages to our definition of local limits: first, convergence for our local topology is pleasantly equivalent to the convergence of consecutive pattern proportions; second, many natural models have an interesting limiting object for our topology (while converging to $+\infty$ in the topology of \cite{pinsky2018infinite}).
\end{rem}

The reader not familiar with the terminology of permutation patterns (such as pattern occurrences, pattern avoidance, \emph{etc.}) can find the necessary background in Section \ref{notation}.

\subsection{Overview of our results}
Our work can be divided into two main parts.  

\subsubsection{Local convergence for permutations}
In the first part (Section \ref{loc_conv}) we develop the theory of local convergence for permutations. We will often view a permutation $\sigma$ as a diagram, \emph{i.e.,} the set of points of the Cartesian plane at coordinates $(j,\sigma_j)$ (see Fig.~\ref{rest} for an example). In the context of local convergence we need to look at permutations with a distinguished entry, called \emph{root}. Denoting with $\mathcal{S}^n$ the set of permutations of size $n$, we say that a pair $(\sigma,i)$ is a \emph{finite rooted permutation} of size $n$ if $\sigma\in\mathcal{S}^n$ and $i\in \{1,\dots,n\}$. 

To a rooted permutation $(\sigma,i)$, we associate a total order $\preccurlyeq_{\sigma,i}$ on a finite interval of integers containing 0 denoted by $A_{\sigma,i}.$
The total order $(A_{\sigma,i},\preccurlyeq_{\sigma,i})$
is simply obtained from the diagram of $\sigma$ (as shown in the left-hand side of Fig.~\ref{rest}). We shift the indices of the $x$-axis in such a way that the column containing the root of the permutation has index zero (the new indices are reported under the columns of the diagram). Then we set $j\preccurlyeq_{\sigma,i}k$ if the point in column $j$ is lower than the point in column $k.$

\begin{figure}[htbp]
	\begin{center}
		\includegraphics[scale=.67]{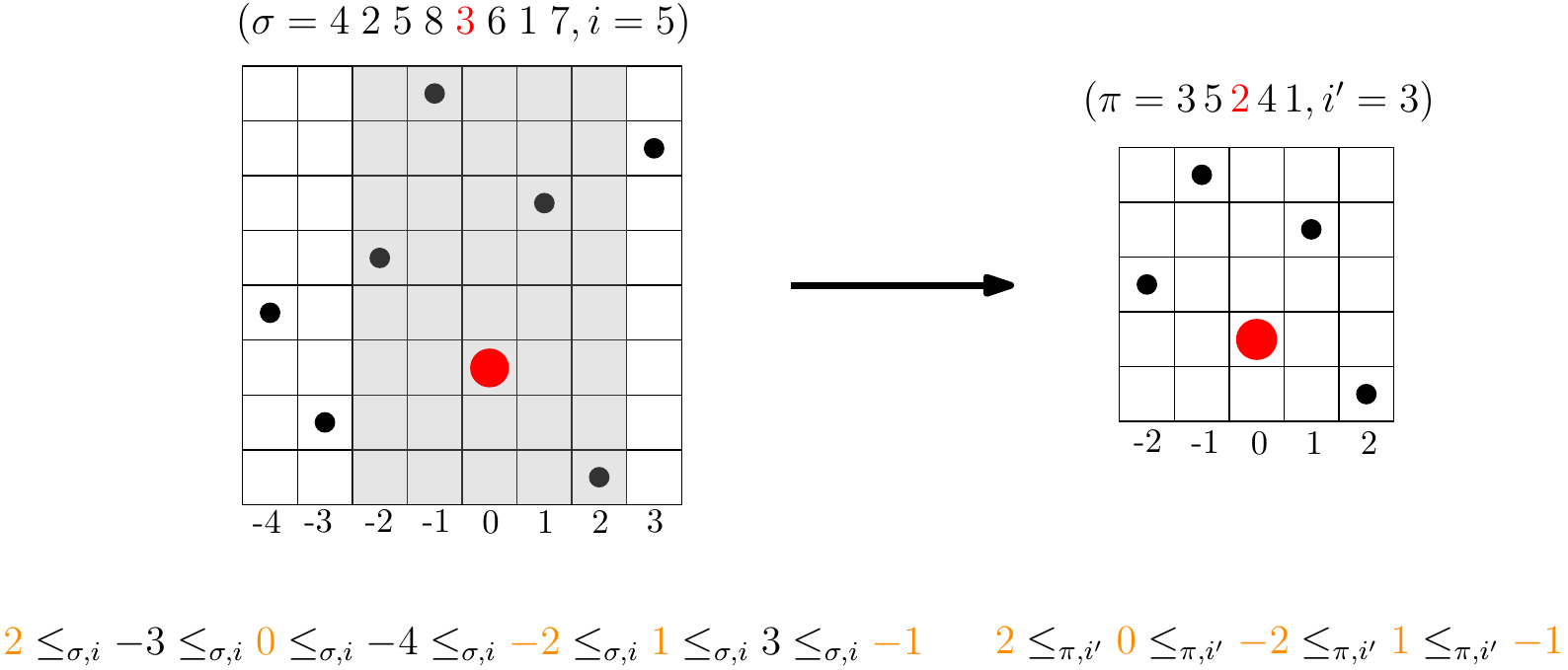}\\
		\caption{Two rooted permutations and the associated total orders. \label{rest}}
	\end{center}
\end{figure}

Since this correspondence defines a bijection, we identify every rooted permutation $(\sigma,i)$ with the total order $(A_{\sigma,i},\preccurlyeq_{\sigma,i})$.
Thanks to this identification, it is natural to call \emph{infinite rooted permutation} a pair $(A,\preccurlyeq)$ where $A$ is an infinite interval of integers containing 0 and $\preccurlyeq$ is a total order on $A$.

In order to define a notion of local convergence, we introduce a notion of $h$-restriction  around
the root. It can be thought of as the diagram of the pattern induced by a ``vertical strip" of width $2h+1$ around the root of the permutation (see Fig.~\ref{rest}) or equivalently of the restriction of the order $(A,\preccurlyeq)$ to $A\cap[-h,h].$ Note that this second equivalent characterization trivially applies also to infinite rooted permutations.

Then we say that a sequence $(A_{n},\preccurlyeq_{n})_{n\in\Z_{>0}}$ of rooted permutations is \emph{locally convergent} to a rooted permutation $(A,\preccurlyeq)$ if, for all $h\in\Z_{>0},$ the $h$-restrictions of the sequence $(A_{n},\preccurlyeq_{n})_{n\in\Z_{>0}}$ converge to the $h$-restriction of $(A,\preccurlyeq)$ (see Definition \ref{local_conv_def} for a formal statement).

We extend this notion of local convergence for rooted permutations to (unrooted) permutations, rooting them at a uniformly chosen index of $\sigma$. With this procedure, a fixed permutation $\sigma$ naturally identifies a random variable $(\sigma,\bm{i})$ that takes values in the
set of finite rooted permutations (here and throughout the paper we denote random quantities using \textbf{bold} characters).
In this way, the following notion of weak local convergence is natural: given a sequence of permutations $(\sigma^n)_{n\in\Z_{>0}}$, we say that $(\sigma^n)_{n\in\Z_{>0}}$ \emph{Benjamini--Schramm converges} to a random possibly infinite rooted permutation $(\bm{A},\bm{\preccurlyeq}),$ if the sequence $(\sigma^n,\bm{i}_n)_{n\in\Z_{>0}},$ where $\bm{i}_n$ is a uniform index in $[1,|\sigma^n|]$, converges in distribution to $(\bm{A},\bm{\preccurlyeq})$ with respect to the above defined local topology. We point out that our choice for the terminology comes from the analogous notion for graphs (see \cite{benjamini2001recurrence}).

We prove the following characterization in terms of proportions of consecutive pattern occurrences (for a formal and complete statement see Theorem \ref{bscharact}). For any pattern $\pi$ of size $k$, and any permutation $\sigma$ of size $n$, we denote by
$$\widetilde{\coc}(\pi,\sigma)=\frac{\coc(\pi,\sigma)}{n}=\frac{\text{number of consecutive occurrences of }\pi\text{ in }\sigma}{n},$$
the proportion of consecutive occurrences of $\pi$ in $\sigma.$ Moreover, we denote with $\mathcal{S}$ the set of permutations of finite size.

\begin{thm}
	\label{thm_charact}
	For any $n\in\Z_{>0}$, let $\sigma^n$ be a permutation of size $n$. Then the Benjamini--Schramm convergence
	for the sequence $(\sigma^n)_{n\in\Z_{>0}}$ is equivalent to the existence of non-negative real numbers
	$(\Delta_{\pi})_{\pi\in\mathcal{S}}$ such that, for all patterns $\pi\in\mathcal{S}$,
	$$\widetilde{\coc}(\pi,\sigma^n)\to\Delta_{\pi}.$$
\end{thm}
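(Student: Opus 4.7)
The plan is to identify the distribution of the $h$-restriction of the randomly-rooted permutation $(\sigma^n, \bm{i}_n)$ with the empirical distribution of consecutive patterns of size $2h+1$ in $\sigma^n$, up to boundary effects that vanish as $n\to\infty$. More concretely, when $\bm{i}_n \in [h+1, n-h]$, which happens with probability $1-2h/n$, the set $A_{\sigma^n, \bm{i}_n} \cap [-h, h]$ is all of $[-h, h]$, and the $h$-restriction of $(\sigma^n, \bm{i}_n)$ equals the pattern of size $2h+1$ read off from the window $[\bm{i}_n - h, \bm{i}_n + h]$ of $\sigma^n$. Since $\bm{i}_n$ is uniform and this window slides consecutively through $\sigma^n$, I obtain the key identity
\[
\P\bigl[\text{the $h$-restriction of }(\sigma^n, \bm{i}_n)\text{ equals }\pi\bigr] = \widetilde{\coc}(\pi, \sigma^n), \qquad \pi \in \mathcal{S}^{2h+1},
\]
while the remaining $O(h/n)$ mass lies on $h$-restrictions whose support is a proper sub-interval of $[-h, h]$ (coming from roots near the endpoints of $[1, n]$).

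From this identity both implications follow easily. For the forward direction, Benjamini--Schramm convergence means by definition that for every $h$ the distribution of the $h$-restriction converges, so the identity forces $\widetilde{\coc}(\pi, \sigma^n) \to \Delta_\pi$ for every pattern $\pi$ of size $2h+1$. For a pattern $\pi$ of arbitrary size $k$, I would pick any $h$ with $2h+1 \geq k$ and use the elementary counting identity
\[
(2h+2-k)\,\coc(\pi, \sigma^n) = \sum_{\pi' \in \mathcal{S}^{2h+1}} N(\pi, \pi')\,\coc(\pi', \sigma^n) + O(1),
\]
where $N(\pi, \pi')$ counts consecutive occurrences of $\pi$ inside $\pi'$; this propagates the convergence from size $2h+1$ to every size. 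Conversely, if $\widetilde{\coc}(\pi, \sigma^n) \to \Delta_\pi$ for every $\pi$, the identity above shows that the law of the $h$-restriction of $(\sigma^n, \bm{i}_n)$ converges for each $h$, which by the definition of local convergence (Definition \ref{local_conv_def}) is precisely Benjamini--Schramm convergence.

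The main obstacle I anticipate lies on the reverse side: one must verify that the limiting marginals obtained from $(\Delta_\pi)_\pi$ actually assemble into a bona fide probability distribution on a single random (possibly infinite) rooted permutation $(\bm{A}, \bm{\preccurlyeq})$. In other words, one must check that the family of limiting $h$-restriction laws is projectively consistent and lifts, by a Kolmogorov-extension-style argument on the space of total orders, to a single probability measure. I would expect this abstract step to be handled in the detailed statement of Theorem \ref{bscharact} and in the accompanying Theorem \ref{thm7}; once this machinery is in place, the elementary identities above close the proof.
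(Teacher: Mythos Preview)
Your proposal is correct and rests on the same key identity the paper uses, namely $\P\big(r_h(\sigma^n,\bm{i}_n)=(\pi,h+1)\big)=\widetilde{\coc}(\pi,\sigma^n)$ for $\pi\in\mathcal{S}^{2h+1}$ (Equation~(\ref{probinterpret})); the paper in fact deduces Theorem~\ref{thm_charact} as the deterministic case of Theorem~\ref{weakbsequivalence}, whose proof follows exactly this line.

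Two minor technical differences are worth noting. First, to pass from odd sizes $2h+1$ to arbitrary sizes, the paper uses the one-step relation $\coc(\pi,\sigma)=\sum_{m}\coc(\pi^{*m},\sigma)+O(1)$ (Equation~(\ref{coocrel})), which reduces size $2h$ directly to size $2h+1$; your more general counting identity works too but is not needed. Second, for the converse direction you anticipate a Kolmogorov-extension argument to assemble the limiting marginals; the paper instead exploits that $(\tilde{\mathcal{S}}_\bullet,d)$ is compact (Theorem~\ref{compactpolish}), applies Prokhorov's theorem to extract subsequential limits, and uses the clopen balls as a separating class (Observation~\ref{separating class}) to show all subsequential limits coincide. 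This compactness route is somewhat cleaner as it sidesteps any explicit consistency check.
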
 

\begin{rem}
	We could have introduced local convergence for permutations also from another point of view, closer to the one developed for graphons (see \cite{lovasz2012large}): we could say that a sequence of permutations is locally convergent if for every pattern $\pi\in\mathcal{S}$ the sequences $\widetilde{\coc}(\pi,\sigma^n)$ converge and then characterize the limiting objects. Theorem \ref{thm_charact} shows that the two approaches are equivalent.  
\end{rem}

What makes Theorem \ref{thm_charact} particularly interesting is the intense study of consecutive patterns. This is a very active field both from a combinatorial and a probabilistic point of view, with applications in computer science, biology, and physics. Classical statistics like the number of descents, runs and peaks contained in a permutation can be viewed as particular examples of consecutive patterns in permutations, but more general approaches have been developed, for example concerning the asymptotic behavior of the number of
permutations avoiding a consecutive pattern. For a complete overview, we refer to the wonderful survey on the topic by Elizalde \cite{elizalde2016survey}.

Once the Benjamini--Schramm convergence has been introduced for sequences of deterministic permutations, we then extend this notion  to sequences of \emph{random} permutations $(\bm{\sigma}^n)_{n\in\Z_{>0}}.$ The presence of two sources of randomness, one for the choice of the permutation and one for the (uniform) choice of the root, leads to two non-equivalent possible definitions: the \emph{annealed} and the \emph{quenched}  version of the Benjamini--Schramm convergence (see Definitions \ref{weakweakconv} and \ref{strongconv}). Intuitively, in the second definition, the random permutation is frozen, whereas  in the first one, the random permutation and the choice of the root are treated on the same level.

We obtain the following two additional characterizations (for formal and complete statements see Theorems \ref{weakbsequivalence} and \ref{strongbsconditions}).
\begin{thm}
	\label{thm7}
	For any $n\in\Z_{>0}$, let $\bm{\sigma}^n$ be a random permutation of size $n$. Then
	\begin{enumerate}[(a)]
		\item the annealed version of the Benjamini--Schramm convergence of $(\bm{\sigma}^n)_{n\in\Z_{>0}}$ is equivalent to the existence
		of non-negative real numbers $(\Delta_{\pi})_{\pi\in\mathcal{S}}$ such that 
		$$\E[\widetilde{\coc}(\pi,\bm{\sigma}^n)]\to\Delta_{\pi},\quad\text{for all patterns}\quad\pi\in\mathcal{S}.$$
		\item The quenched version of the Benjamini--Schramm convergence of $(\bm{\sigma}^n)_{n\in\Z_{>0}}$ is equivalent
		to the existence of non-negative real random variables $(\bm{\Lambda}_\pi)_{\pi\in\mathcal{S}}$ such that
		$$\big(\widetilde{\coc}(\pi,\bm{\sigma}^n)\big)_{\pi\in\mathcal{S}}\stackrel{(d)}{\to}(\bm{\Lambda}_{\pi})_{\pi\in\mathcal{S}},$$
		w.r.t. the product topology (where $\stackrel{(d)}{\to}$ indicates the convergence in distribution). 
	\end{enumerate}
	Obviously, the quenched version implies the annealed version.
\end{thm}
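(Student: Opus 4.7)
The overall plan is to reduce both parts of Theorem~\ref{thm7} to the deterministic Theorem~\ref{thm_charact}, using standard measure-theoretic lifting. The deterministic bridge I would establish first is the following identity: for a permutation $\sigma$ of size $n$, a uniform root $\bm{i}_n\in\{1,\dots,n\}$, a level $h\in\Z_{>0}$, and a finite rooted permutation $\tau$ supported in $[-h,h]\cap\Z$, one has
\[
\P\bigl((\sigma,\bm{i}_n)|_{[-h,h]}=\tau\bigr)=\widetilde{\coc}\bigl(\tilde\tau,\sigma\bigr)+O(h/n),
\]
where $\tilde\tau$ is the size-$(2h+1)$ consecutive pattern naturally induced by $\tau$ when $\tau$ is ``full'', and the boundary correction absorbs indices $\bm{i}_n$ within distance $h$ of $1$ or $n$, which produce strictly shorter restrictions. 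Unrolled level by level in $h$, this is a triangular and hence invertible linear relation between cylinder probabilities and $\widetilde{\coc}(\pi,\sigma)$ for $|\pi|\leq 2h+1$. Since the local topology on rooted permutations is Polish with a countable base of cylinder sets of the form $\{(A,\preccurlyeq):(A,\preccurlyeq)|_{[-h,h]}=\tau\}$, the joint behavior of these cylinder probabilities controls all distributional questions.

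For part (a), annealed BS-convergence $(\bm{\sigma}^n,\bm{i}_n)\stackrel{(d)}{\to}(\bm{A},\bm{\preccurlyeq})$ is, by portmanteau applied to the cylinder base, equivalent to convergence of $\P\bigl((\bm{\sigma}^n,\bm{i}_n)|_{[-h,h]}=\tau\bigr)$ for every $h$ and $\tau$. Taking expectation in the deterministic identity above rewrites these probabilities as $\E[\widetilde{\coc}(\tilde\tau,\bm{\sigma}^n)]+o(1)$, so the annealed convergence is equivalent to convergence of $\E[\widetilde{\coc}(\pi,\bm{\sigma}^n)]$ for every $\pi\in\mathcal{S}$, the reverse direction following from triangular invertibility. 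For part (b), the quenched version is convergence in distribution of the random probability measure $\bm{\mu}_n:=\mathrm{Law}\bigl((\bm{\sigma}^n,\bm{i}_n)\mid\bm{\sigma}^n\bigr)$ on the Polish space of rooted permutations. Via the countable cylinder base, this is equivalent to joint convergence in distribution, under the product topology, of the random family of cylinder masses indexed by all $(h,\tau)$; by the deterministic identity, this is in turn equivalent to joint convergence in distribution of $\bigl(\widetilde{\coc}(\pi,\bm{\sigma}^n)\bigr)_{\pi\in\mathcal{S}}$ for the product topology. The cleanest way to make the converse direction airtight is to invoke Skorokhod representation: pass to a coupling on which $(\widetilde{\coc}(\pi,\bm{\sigma}^n))_\pi\to(\bm{\Lambda}_\pi)_\pi$ almost surely, and then apply Theorem~\ref{thm_charact} realization by realization to deduce a.s.\ local convergence of $(\bm{\sigma}^n,\bm{i}_n)$. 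The concluding ``quenched $\Rightarrow$ annealed'' assertion is then immediate by taking expectations in the quenched convergence of $\bm{\mu}_n$.

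The substantive probabilistic input is Theorem~\ref{thm_charact}, which is assumed here, so what remains is largely bookkeeping. I expect the two places requiring care to be: first, the uniform-in-$n$ control of the $O(h/n)$ boundary term in the deterministic identity, especially when $\tau$ itself ranges over the strictly shorter rooted permutations that arise from roots near the boundary; and second, verifying that the local topology has a countable cylinder base generating the Borel $\sigma$-algebra, so that portmanteau, continuous-mapping and Skorokhod representation can all be applied cleanly at the level of (random) probability measures on rooted permutations.
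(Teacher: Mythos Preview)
Your treatment of part (a) is essentially the paper's: both hinge on the identity $\E[\widetilde{\coc}(\pi,\bm{\sigma}^n)]=\P\bigl(r_h(\bm{\sigma}^n,\bm{i}_n)=(\pi,h+1)\bigr)$ for $|\pi|=2h+1$, together with the recursion linking even-size to odd-size patterns. One caution: the phrase ``portmanteau applied to the cylinder base'' only covers the forward direction; the converse (convergence on cylinders $\Rightarrow$ convergence in distribution) uses compactness of $\tilde{\mathcal{S}_{\bullet}}$ and a Prokhorov/separating-class argument, which the paper carries out explicitly.

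Part (b) is where your route genuinely differs. For the hard direction the paper works on the compact space $\mathcal{P}(\tilde{\mathcal{S}_{\bullet}})$: it extracts subsequential limits by Prokhorov and shows uniqueness via Kallenberg's random-measure theorems (a random measure is determined by the joint laws of its evaluations on a separating $\pi$-system, plus a random-measure Portmanteau). Your Skorokhod-plus-pathwise strategy is a valid and arguably more elementary alternative that sidesteps Kallenberg, but two points need tightening. First, on the Skorokhod space you only get vectors $X^n\stackrel{(d)}{=}(\widetilde{\coc}(\pi,\bm{\sigma}^n))_\pi$, and you must reconstruct a permutation $\hat{\bm\sigma}^n$ from $X^n$ before invoking Theorem~\ref{thm_charact}; this is possible since $\sigma\mapsto(\widetilde{\coc}(\pi,\sigma))_\pi$ is injective on each $\mathcal{S}^n$, but it should be said. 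Second, what Theorem~\ref{thm_charact} yields pathwise is a.s.\ \emph{weak convergence of $\mu_{\hat{\bm\sigma}^n}$}, not ``a.s.\ local convergence of $(\bm\sigma^n,\bm i_n)$''; the uniform root should not enter the Skorokhod step at all. Finally, since in the paper Theorem~\ref{thm_charact} is obtained as the deterministic specialisation of part (a), the actual logical order of your plan is: prove (a) directly, specialise to get Theorem~\ref{thm_charact}, then feed that into (b); your opening claim that both parts ``reduce to Theorem~\ref{thm_charact}'' overstates its role in (a).
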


\begin{rem}
	The idea of defining and studying the quenched Benjamini--Schramm convergence is motivated by the theorem above. Indeed, one of the main goal for introducing the local topology was to obtain the characterization stated in point (b). We point out that the quenched Benjamini--Schramm convergence has been considered also for random trees (see for instance \cite{devroye2014protected}). The terminology ``annealed/quenched" is classical in statistical mechanics in the study of random systems on random environments.
\end{rem}

At this point we want to highlight a remarkable difference with the case of permuton convergence and its characterization mentioned before.
Indeed,  for the proportion of (non-consecutive) occurrences of patterns, the convergences in (a) and (b) are equivalent as shown in \cite[Theorem 2.5]{bassino2017universal}.
On the other hand, in Example \ref{notequivalent} we show that the annealed and quenched  version of the Benjamini--Schramm convergence are not equivalent.

Finally, we are also able to characterize random limiting objects for the annealed version of the Benjamini--Schramm convergence introducing a ``shift-invariant" property. For an order $(\Z,\preccurlyeq),$ its \emph{shift} $(\Z,\preccurlyeq')$ is defined by $i+1\preccurlyeq' j+1$ if and only if $i\preccurlyeq j.$ A random infinite rooted permutation, or equivalently a random total order, is said to be shift-invariant if it has the same distribution than its shift (for precise statements see Definition \ref{shiftinvariant}, Proposition \ref{easyimplication} and Theorem \ref{conversecostruction}).

\begin{thm}
	\label{shiftinvthm}
	A random infinite rooted permutation is ``shift-invariant" if and only if it is the local limit of a sequence of random permutations in the annealed Benjamini--Schramm sense. 
\end{thm}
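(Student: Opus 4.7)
\emph{Direction ``limit $\Rightarrow$ shift-invariant''.} Suppose $(\bm{\sigma}^n)$ annealed Benjamini--Schramm converges to $(\bm{A},\bm{\preccurlyeq})$, and let $\bm{i}_n$ be uniform on $[1,|\bm{\sigma}^n|]$ and independent of $\bm{\sigma}^n$. The plan is to show that rooting at $\bm{i}_n+1$ (when defined) yields the same limit: the laws of $\bm{i}_n$ and $\bm{i}_n+1$, viewed as random elements of $\Z$, differ only on the two boundary values $\{1,|\bm{\sigma}^n|+1\}$, giving total variation at most $1/|\bm{\sigma}^n|\to 0$. Hence $(\bm{\sigma}^n,\bm{i}_n+1)$ has the same annealed Benjamini--Schramm limit as $(\bm{\sigma}^n,\bm{i}_n)$. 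By construction the total order associated to the former is precisely the shift of the one associated to the latter, so passing to the limit yields shift-invariance of $(\bm{A},\bm{\preccurlyeq})$.

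\emph{Direction ``shift-invariant $\Rightarrow$ limit''.} Let $(\bm{A},\bm{\preccurlyeq})$ be shift-invariant. Since the shift is only defined for orders on $\Z$, this forces $\bm{A}=\Z$ almost surely. I would then construct $\bm{\sigma}^n$ as the permutation of size $2n+1$ whose diagram is the pattern induced by the restriction of $\bm{\preccurlyeq}$ to $\{-n,\dots,n\}$, under the natural identification $k\leftrightarrow k+n+1$. Root $\bm{\sigma}^n$ at $\bm{i}_n$ uniform on $[1,2n+1]$ and independent of $\bm{\preccurlyeq}$, and set $\bm{j}_n:=\bm{i}_n-n-1\in[-n,n]$. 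For fixed $h\in\Z_{>0}$, on the high-probability event $\{|\bm{j}_n|\leq n-h\}$ no boundary truncation occurs, and the $h$-restriction of $(\bm{\sigma}^n,\bm{i}_n)$ coincides with the $h$-restriction of the shift of $\bm{\preccurlyeq}$ by $-\bm{j}_n$. Conditioning on $\bm{j}_n$, using its independence from $\bm{\preccurlyeq}$, and invoking shift-invariance, this $h$-restriction has the same law as the $h$-restriction of $(\bm{A},\bm{\preccurlyeq})$. Since the complementary event has probability $O(h/n)\to 0$, convergence in distribution of $h$-restrictions holds for every $h$, which by construction of the local topology gives annealed Benjamini--Schramm convergence; the characterization in Theorem~\ref{thm7}(a) can also be invoked directly, since the same computation shows $\E[\widetilde{\coc}(\pi,\bm{\sigma}^n)]$ converges to a value depending only on the law of $(\bm{A},\bm{\preccurlyeq})$.

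\emph{Main obstacle.} Neither direction is technically deep, but the cleanest argument in the hard direction requires carefully organizing the shift-by-$\bm{j}_n$ computation: condition on $\bm{j}_n=j$, apply shift-invariance for the fixed shift by $-j$, then integrate over $\bm{j}_n$. The forward direction reduces to a total variation estimate together with the definition of the shift, and is essentially immediate. The one conceptual subtlety worth highlighting is that shift-invariance of a \emph{random infinite} rooted permutation in fact forces $\bm{A}=\Z$ almost surely, because the shift as defined in the paper acts only on $\Z$-indexed orders.
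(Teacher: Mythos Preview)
Your proposal is correct. The forward direction is essentially the paper's argument in different clothing: the paper (Proposition~\ref{easyimplication}) computes $\P\big((\bm{\sigma}^n,\bm{i}_n)\in O^s(\pi)\big)$ directly, obtains $\E[\widetilde{\coc}(\pi^{-1},\bm{\sigma}^n)]$ up to a boundary error that vanishes, and observes this is independent of $s$; your total-variation phrasing packages the same boundary estimate more abstractly.

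The backward direction is where your approach genuinely differs. The paper (Theorem~\ref{conversecostruction}) defines $\bm{\sigma}^n$ abstractly by the law $\P(\bm{\sigma}^n=\rho)=\P\big((\Z,\bm{\preccurlyeq})\in O(\rho^{-1})\big)$ and then proves convergence of $h$-restrictions by an explicit summation argument: it expands $\P\big(r_h(\bm{\sigma}^n,\bm{i}_n)=(\pi,h+1)\big)$ as a double sum over $\rho\in\mathcal{S}^n$ and intervals, rewrites $\P\big(r_h(\Z,\bm{\preccurlyeq})=(\pi,h+1)\big)$ using shift-invariance as an average over shifts $s=0,\dots,n-2h-1$, and matches the two expressions. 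Your construction realises $\bm{\sigma}^n$ concretely on the same probability space as $\bm{\preccurlyeq}$ (as the pattern on $[-n,n]$), and then the convergence falls out in one line by conditioning on $\bm{j}_n=j$ and applying shift-invariance for the fixed shift by $-j$. In fact your $\bm{\sigma}^n$ of size $2n+1$ has, by shift-invariance, the same law as the paper's $\bm{\sigma}^{2n+1}$, so the constructions coincide; what you gain is a shorter and more transparent proof that avoids the summation bookkeeping. What the paper's presentation buys is an explicit formula for the approximating law that does not require a joint realisation with $(\Z,\bm{\preccurlyeq})$.
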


In some sense, our ``shift-invariant" property corresponds to the notion of unimodularity for random graphs. As said before, it is well-known that the Benjamini--Schramm limit of a sequence of random graphs is unimodular. However, it is an open problem to determine if every unimodular random graph is the local limit of a sequence of finite random graphs rooted uniformly at random.

\subsubsection{Local limits for uniform $\rho$-avoiding permutations with $|\rho|=3$} In the second part of the article (Sections \ref{231} and \ref{321}), we demonstrate the relevance of this new notion of convergence. 
As first models for studying local convergence, we consider some classes of pattern avoiding permutations. This choice is motivated by the intense study of permutation classes in the last thirty/forty years. For a great survey we refer to Vatter \cite{vatter2014permutation}. In general, they have been deeply studied from a combinatorial point of view, finding the enumeration of specific classes. The most basic result in the field is that the number of $\rho$-avoiding permutations of size $n$, for $|\rho|=3,$ is given by the $n$-th Catalan number.

More recently a new probabilistic approach to the study of permutation classes has been investigated. An example, as explained in the previous section, is the study of scaling limits of uniform random permutations in a fixed pattern-avoiding class. Another example is the general problem of studying the limiting distribution of the number of occurrences (after a suitable rescaling) of a fixed pattern $\pi$ in a uniform random  permutation belonging to a fixed class when the size tends to infinity (see for instance Janson \cite{janson2018patterns,janson2017patterns,janson2017patterns321} where the author studied this problem in the model of uniform permutations avoiding a fixed family of patterns of size three).

In this work we focus on the classes of $\rho$-avoiding permutations, for $|\rho|=3,$ since several bijections (see for instance \cite{claesson2008classification}) with other combinatorial objects (such as rooted ordered trees and Dyck paths) are known and we expect some connection between the local limit convergence for trees and the local limit convergence for permutations. 

Our two main results of this second part are the two following theorems.
\begin{thm}
	\label{thm_1}
	For any $n\in\Z_{>0}$, let $\bm{\sigma}^n$ be a uniform random $231$-avoiding permutation of size $n$. Then
	we have the following convergence in probability,
	\begin{equation}
	\label{primaeq}
	\widetilde{\coc}(\pi,\bm{\sigma}^n)\stackrel{P}{\to}\frac{2^{\emph{LRMax}(\pi)+\emph{RLMax}(\pi)}}{2^{2|\pi|}},\quad\text{for all}\quad\pi\in\emph{Av}(231),
	\end{equation}
	where $\emph{LRMax}(\pi)$ (resp. $\emph{RLMax}(\pi)$) denotes the number of left-to-right maxima (resp. right-to-left maxima) in $\pi$.
\end{thm}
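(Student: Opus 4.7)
The strategy is to apply Theorem~\ref{thm7}(b) in the case of a deterministic limit: since convergence in distribution to a constant is equivalent to convergence in probability, it suffices to prove that $\E[\widetilde{\coc}(\pi,\bm{\sigma}^n)] \to c_\pi := 2^{\emph{LRMax}(\pi)+\emph{RLMax}(\pi)}/2^{2|\pi|}$ and $\mathrm{Var}(\widetilde{\coc}(\pi,\bm{\sigma}^n)) \to 0$; Chebyshev's inequality then yields the conclusion.

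For the expectation I would use the standard recursive bijection $\Phi$ between $\emph{Av}_n(231)$ and binary trees with $n$ vertices: the root carries the maximum value of $\sigma$, and the left/right subtrees recursively encode the patterns to the left/right of that maximum (231-avoidance forces all left-subtree values to be smaller than all right-subtree values). Under $\Phi$, position $i$ of $\sigma$ corresponds to the $i$-th vertex $v_i$ in the in-order traversal of $\Phi(\sigma)$, and the value $\sigma_i$ equals the maximum of the subtree rooted at $v_i$; moreover, the uniform measure on $\emph{Av}_n(231)$ pushes forward to the uniform measure on binary trees with $n$ vertices, which is a critical Galton--Watson tree with offspring law uniform on $\{\emptyset,L,R,LR\}$ conditioned to have exactly $n$ vertices. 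A key observation is that passing from $v_j$ to its in-order successor $v_{j+1}$ either goes down-right (if $v_j$ has a right child, to the leftmost descendant of that right child) or climbs ancestors up to the first left-parent step; hence the event that $\sigma_i\ldots\sigma_{i+|\pi|-1}$ realizes $\pi$ is determined by a local configuration of bounded radius around $v_i$. Applying local-limit results for conditioned Galton--Watson trees around a uniformly chosen vertex (Aldous--Pitman, Kesten, Holmgren--Janson), this configuration converges in distribution to the corresponding configuration in Kesten's infinite binary tree (a semi-infinite spine of ancestors with size-biased offspring, one of whose children is the next spine vertex and the other independently carrying an unconditioned critical GW subtree). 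A direct inspection then shows that the limiting probability factorizes across the $|\pi|$ positions, each contributing a factor $1$, $1/2$, $1/2$ or $1/4$ according as $\pi_j$ is simultaneously a LR- and RL-max, only a LR-max, only a RL-max, or neither; multiplying these factors gives exactly $c_\pi$, and averaging over $i$ produces $\E[\widetilde{\coc}(\pi,\bm{\sigma}^n)] \to c_\pi$.

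For the variance I would use the second moment method. Writing $\coc(\pi,\bm{\sigma}^n) = \sum_i X_i$ with $X_i$ the indicator of a consecutive $\pi$-occurrence at position $i$, one has $\mathrm{Var}(\coc) = \sum_{i,j}\mathrm{Cov}(X_i,X_j)$: pairs with $|i-j|\le|\pi|$ contribute only $O(n)$ in total since each covariance is bounded. For $|i-j|>|\pi|$ the two events involve disjoint sets of in-order vertices, and by singularity analysis of the bivariate generating function counting 231-avoiding permutations with two marked, non-overlapping consecutive $\pi$-occurrences at a fixed gap---whose dominant singularity at $x=1/4$ is inherited from the Catalan generating function $C(x)=(1-\sqrt{1-4x})/(2x)$---one gets $\P(X_i=1,X_j=1) = c_\pi^2 + o(1)$ uniformly in $|i-j|>|\pi|$ as $n\to\infty$. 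Summing yields $\mathrm{Var}(\coc(\pi,\bm{\sigma}^n)) = o(n^2)$, hence $\mathrm{Var}(\widetilde{\coc}(\pi,\bm{\sigma}^n)) \to 0$. The main obstacle is matching the tree-local analysis with the global combinatorial statistics $\emph{LRMax}$ and $\emph{RLMax}$ of $\pi$: these statistics involve all positions of $\pi$ at once, whereas the tree picture is naturally local around consecutive in-order neighbors, and aligning the two viewpoints uses the 231-avoidance of $\pi$ itself in an essential way to convert global max-conditions into clean binary conditions on the presence of spine/subtree children at each step.
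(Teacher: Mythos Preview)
Your overall strategy---second moment method plus Chebyshev---matches the paper exactly, and the bijection with binary trees is the same one used there. But the route you take to compute the moments is genuinely different from the paper's, and your sketch contains a real gap.

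The paper does \emph{not} use Kesten's tree or local-limit results for conditioned Galton--Watson trees in the $231$ case (it reserves that machinery for $321$). Instead it follows Janson: replace the uniform binary tree $\bm{T}_n$ by an unconditioned binary Galton--Watson tree $\bm{T}_\delta$ with left/right child probabilities $p=(1-\delta)/2$, compute $\E[\coc(\pi,\bm{T}_\delta)]$ and $\E[\coc(\pi,\bm{T}_\delta)^2]$ as explicit Laurent polynomials in $\delta$ via a recursion $\coc(\pi,T)=\coc(\pi,T_L)+\coc(\pi,T_R)+\mathds{1}_{\{\text{pat}_{\bm J}(T)=\pi\}}$, and then transfer to $\bm{T}_n$ by singularity analysis (Lemma~\ref{svantelemma}). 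The identification of the constant $c_\pi$ comes from a careful bookkeeping of factors $p$ and $1-p$ along the chain of left-to-right and right-to-left maxima of $\pi$ (Proposition~\ref{patprob231}); it is not presented as a per-position factorization, though your product interpretation is equivalent once the answer is known.

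Your local-limit approach could in principle be made to work, but the sentence ``the event\ldots\ is determined by a local configuration of bounded radius around $v_i$'' is false as written: the in-order successor of $v_i$ can be arbitrarily far in tree distance (either deep in the right subtree or high among ancestors), so the relevant neighbourhood is \emph{not} of bounded radius. One needs instead a truncation argument---fix a height $H$, show the event is determined by $f_H^\bullet$ with probability tending to $1$ as $H\to\infty$, and interchange limits---exactly as the paper does for $321$ (Section~\ref{prop1proof}). More seriously, ``a direct inspection then shows that the limiting probability factorizes'' is precisely the content of the computation; the paper spends several pages on the analogous step (Lemmas~\ref{onecross}--\ref{twoandthreecross} and Proposition~\ref{patprob231}), and in the Kesten-tree picture you would need to carry out the equivalent of the explicit construction in Section~\ref{explcon} to justify it.

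For the variance, the paper again uses the $\delta$-trick: the recursion immediately gives $\E[\coc(\pi,\bm{T}_\delta)^2]=\tfrac{1}{2}c_\pi^2\,\delta^{-3}+O(\delta^{-2})$, whence $\E[\coc(\pi,\bm{T}_n)^2]\sim c_\pi^2 n^2$. Your covariance-plus-bivariate-generating-function route is plausible but vaguer; establishing $\P(X_i=1,X_j=1)=c_\pi^2+o(1)$ uniformly in $i,j$ with $|i-j|>|\pi|$ is not obviously easier than the paper's recursion, and you have not indicated how the singularity analysis would be set up.
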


\begin{thm}
	\label{thm_2}
	For any $n\in\Z_{>0}$, let $\bm{\sigma}^n$ be a uniform random $321$-avoiding permutation of size $n$. Then
	we have the following convergence in probability for all $\pi\in\emph{Av}(321)$,
	\begin{equation}
	\label{blablahvyfdgwtrg5}
	\widetilde{\coc}(\pi,\bm{\sigma}^n)\stackrel{P}{\to}\begin{cases}
	\frac{|\pi|+1}{2^{|\pi|}} &\quad\text{if }\pi=12\dots|\pi|,\\
	\frac{1}{2^{|\pi|}} &\quad\text{if }\coc(21,\pi^{-1})=1, \\
	0 &\quad\text{otherwise.} \\ 
	\end{cases}
	\end{equation}
\end{thm}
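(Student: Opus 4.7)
My plan is to exploit the classical bijection between $\text{Av}_n(321)$ and Dyck paths of length $2n$ (equivalently, plane trees with $n$ edges), under which the uniform measure on $\text{Av}_n(321)$ pulls back to the uniform measure on Dyck paths. By standard local limit theorems for random walks conditioned to stay non-negative, a uniform Dyck path of length $2n$ rooted at a uniform step converges in the Benjamini--Schramm sense to a two-sided simple random walk. In view of Theorem~\ref{thm7}(b) and the fact that the limit in~\eqref{blablahvyfdgwtrg5} is deterministic, it suffices to prove, for every $\pi\in\text{Av}(321)$, that $\E[\widetilde{\coc}(\pi,\bm{\sigma}^n)]\to\Delta_\pi$ and $\operatorname{Var}(\widetilde{\coc}(\pi,\bm{\sigma}^n))\to 0$, and then to conclude by Chebyshev.

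For the first moment, I would assign to each position $i$ of $\sigma\in\text{Av}_n(321)$ a binary type $t_i\in\{A,B\}$ with $t_i=A$ iff $i$ is a right-to-left minimum of $\sigma$. The $321$-avoidance forces both the A-values (at A-positions) and the B-values (at B-positions) to form increasing subsequences, and the RL-min constraint at each A-position forces, in the generic case, all A-values in a fixed-size window to be less than all B-values there. Under the bijection the type sequence corresponds to a prescribed local feature of the Dyck path, so that by the random-walk local limit the restriction $(t_i,\ldots,t_{i+k-1})$ for $i$ uniform in $[n-k+1]$ converges in distribution to an iid Bernoulli$(1/2)$ sequence on $\{A,B\}^k$. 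Given a type sequence $s\in\{A,B\}^k$ with $a=\#\{j:s_j=A\}$, the generic pattern at the window is the \emph{shuffle} $\pi^{\ast}(s)$ obtained by placing $1,\ldots,a$ in increasing order at the A-positions and $a+1,\ldots,k$ in increasing order at the B-positions. A direct check shows that the $k+1$ sorted strings of the form $A^{a}B^{b}$ all map to the identity $12\cdots k$, while the remaining $2^{k}-k-1$ non-sorted strings map bijectively onto the permutations $\pi\in\text{Av}(321)$ with $\coc(21,\pi^{-1})=1$. Summing $1/2^{k}$ over the preimages of each $\pi$ recovers exactly the right-hand side of~\eqref{blablahvyfdgwtrg5}.

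For the variance I would use the second moment method. Writing
\[
\E[\widetilde{\coc}(\pi,\bm{\sigma}^n)^2]=\frac{1}{n^2}\sum_{i,j}\P[\pi\text{ occurs consecutively at both }i\text{ and }j],
\]
pairs with $|i-j|\leq k$ contribute $O(1/n)$, while far pairs require an asymptotic decorrelation of the local type sequences at $i$ and $j$; this follows from the mixing of the uniform Dyck path, made quantitative by singularity analysis of the bivariate generating functions enumerating $321$-avoiding permutations with two prescribed consecutive patterns. The outcome is $\E[\widetilde{\coc}(\pi,\bm{\sigma}^n)^2]\to\Delta_\pi^2$, hence $\operatorname{Var}(\widetilde{\coc}(\pi,\bm{\sigma}^n))\to 0$.

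The main technical obstacle lies in the first moment, in controlling the non-generic value interleavings. For a type sequence such as $s=BAA\cdots A$, the window is combinatorially compatible with several distinct patterns depending on where the unique B-value sits among the A-values; only the sub-case where the B-value exceeds all the A-values in the window coincides with the shuffle $\pi^{\ast}(s)$, and each of the other sub-cases requires the B-value to be sandwiched between consecutive A-values, a restrictive rank configuration that forces the underlying Dyck path to pass near a strict local minimum at the window. Showing that the probability of each such non-generic sub-case is $o(1)$ for an interior window, and obtaining the matching decorrelation estimate for the second-moment bound, are both achieved via singularity analysis of the appropriate generating functions and constitute the technical heart of the proof.
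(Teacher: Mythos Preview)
Your proposal shares the paper's high-level architecture---bijection to a path/tree model, local limit of that model, a binary ``type'' labeling of positions whose restriction to a window becomes i.i.d.\ Bernoulli$(1/2)$, and a separate argument that the non-generic interleavings (no separating line) are asymptotically negligible---but the technical execution is genuinely different. The paper works with the bijection to rooted ordered trees (equivalently conditioned Galton--Watson trees with $\mathrm{Geom}(1/2)$ offspring) and labels positions by $E^+=\{i:\sigma_i\ge i\}$, which corresponds exactly to leaves; it then obtains the \emph{quenched} statement directly, with no second-moment computation, by invoking Janson's quenched fringe-subtree convergence (Proposition~\ref{keycoroll}) together with a contour-function analysis showing the limiting steps are i.i.d.\ fair coins. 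The non-generic case is handled not by generating functions but probabilistically, via the Hoffman--Rizzolo--Slivken coupling with a Brownian excursion: points of $\bm\sigma^n$ are at distance of order $\sqrt n$ from the diagonal, so in a window of fixed width the $E^+$-values almost surely dominate the $E^-$-values. Your route---annealed first moment via the Dyck-path local limit, then variance $\to 0$ by singularity analysis of bivariate generating functions---is essentially what the paper does for the $231$ case, and should work here too, but it trades a clean probabilistic input for a heavier analytic computation.

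Two cautions if you pursue your plan. First, your labeling by right-to-left minima is not the paper's $E^+/E^-$; both give valid decompositions into two increasing subsequences, but you must check that RL-minima correspond to a genuinely \emph{local} feature under your chosen bijection so that the Bernoulli limit actually follows from the path local limit (the paper's choice of $E^+$ is tailored so that it reads off directly from the contour). Second, your ``non-generic sub-case is $o(1)$ because the Dyck path passes near a strict local minimum'' is the right intuition, but making it precise via generating functions requires tracking the joint law of the path height and the local step pattern at a uniform point; this is exactly what the Brownian-excursion argument in Proposition~\ref{prop2} gives for free, and you may find it simpler to import that result rather than redo it analytically.
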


\begin{obs}
	By symmetry, this covers all cases of $\rho$-avoiding permutations with $|\rho|=3.$ Indeed, every permutation of size three is in the orbit of either $231$ or $321$ by applying reverse (symmetry of the diagram w.r.t.\ the vertical axis) and complementation (symmetry of the diagram w.r.t.\ the horizontal axis). Beware that inverse (symmetry of the diagram w.r.t.\ the principal diagonal) cannot be used since it does not preserve consecutive pattern occurrences.
\end{obs}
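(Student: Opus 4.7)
The plan is to verify that the group generated by reverse $r$ and complement $c$ acts on $\mathcal{S}^3$ with exactly two orbits, represented by $231$ and $321$, and then to note that both $r$ and $c$ transfer the limits of Theorems \ref{thm_1} and \ref{thm_2} to the remaining length-three avoidance classes, whereas inverse must be excluded since it does not preserve consecutive pattern counts.

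First I would compute the orbits by direct inspection of the six elements of $\mathcal{S}^3$. Using $r(\sigma_1\sigma_2\sigma_3) = \sigma_3\sigma_2\sigma_1$ and $c(\sigma_1\sigma_2\sigma_3) = (4-\sigma_1)(4-\sigma_2)(4-\sigma_3)$, a three-line check yields $r(231) = 132$, $c(231) = 213$ and $rc(231) = 312$, so the orbit of $231$ is $\{132, 213, 231, 312\}$; similarly $r(321) = c(321) = 123$, so the orbit of $321$ is $\{123, 321\}$. The two orbits partition $\mathcal{S}^3$, so every $\rho \in \mathcal{S}^3$ admits some $\phi \in \langle r, c \rangle$ with $\phi(\rho) \in \{231, 321\}$.

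Next I would record the two compatibility properties of any $\phi \in \langle r, c \rangle$ that make the transfer work: (i) $\phi$ restricts to a bijection $\mathrm{Av}(\rho) \to \mathrm{Av}(\phi(\rho))$, so a uniform $\rho$-avoiding permutation $\bm{\sigma}^n$ maps to a uniform $\phi(\rho)$-avoiding permutation $\phi(\bm{\sigma}^n)$; and (ii) $\phi$ preserves consecutive pattern counts in the sense $\coc(\pi, \sigma) = \coc(\phi(\pi), \phi(\sigma))$. Property (ii) is clear for $c$ (positions are untouched and values are relabelled by an order-reversing bijection) and for $r$ (a block of consecutive positions remains a block after reversal, with the induced pattern reversed). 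Combining (i)--(ii) with Theorems \ref{thm_1} and \ref{thm_2} applied to $\phi(\bm{\sigma}^n)$ yields convergence in probability of $\widetilde{\coc}(\pi, \bm{\sigma}^n)$ for every $\pi \in \mathrm{Av}(\rho)$, after substituting $\phi(\pi)$ into the explicit formulas.

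Finally, for the warning, I would observe that the inverse map swaps positions and values, so adjacency of positions in $\sigma$ translates into adjacency of \emph{values} in $\sigma^{-1}$, which is generally unrelated to consecutive patterns. A single minimal witness is enough: for $\sigma = 2413$ one has $\sigma^{-1} = 3142$, with $\coc(12, \sigma) = 2$ but $\coc(12, \sigma^{-1}) = 1$. The only ``obstacle'' is bookkeeping: making sure the symmetries one actually invokes are exactly those under which consecutive-pattern densities are equivariant, which is why the statement explicitly excludes inverse.
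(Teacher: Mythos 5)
Your proposal is correct and fills in, with explicit verification, exactly the reasoning the paper leaves implicit in this observation: the two-orbit computation for $\langle r,c\rangle$ acting on $\mathcal{S}^3$, the equivariance of $\coc$ under reverse and complement (which transfers Theorems \ref{thm_1} and \ref{thm_2} to the other length-three classes), and a concrete witness ($\sigma=2413$, $\sigma^{-1}=3142$) for why inverse must be excluded. All computations check out, so there is nothing to add.
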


Since the limits of
the random sequences $\big(\widetilde{\coc}(\pi,\bm{\sigma}^n)\big)_{n\in\Z_{>0}}$ are deterministic, for all patterns $\pi\in\mathcal{S}$, the convergence in probability in  (\ref{primaeq}) and (\ref{blablahvyfdgwtrg5}) is equivalent to the convergence in distribution of the vectors $\big(\widetilde{\coc}(\pi,\bm{\sigma}^n)\big)_{\pi\in\mathcal{S}}$ for the product topology. Therefore Theorems \ref{thm_1} and \ref{thm_2} trivially imply the characterization $(b)$ in Theorem~\ref{thm7} and so prove that the sequences $(\bm{\sigma}^n)_{n\in\Z_{>0}}$ converge for the quenched version, \emph{i.e.,} the stronger version, of the Benjamini--Schramm convergence. 

Here are some other interesting aspects of Theorems \ref{thm_1} and \ref{thm_2}.
\begin{itemize}
	\item A first important fact is the concentration phenomenon, namely the fact that the limits of the random sequences $\big(\widetilde{\coc}(\pi,\bm{\sigma}^n)\big)_{n\in\Z_{>0}}$ are deterministic, for all pattern $\pi\in\mathcal{S}$. Indeed Janson \cite[Remark 1.1]{janson2018patterns} notices that, in some classes, we have concentration for the (non-consecutive) pattern occurrences around their mean, in others not. It would be interesting to understand in a more general setting when this concentration phenomenon does or does not occur (we will come back to this fact in the next section).
	\item The second important fact is the different behavior of the two models of Theorem \ref{thm_1} and Theorem \ref{thm_2}: the first limiting density
	has full support on the space of 231-avoiding permutations, whereas the second gives positive measure only to 321-avoiding permutations whose inverse have at most one descent. 
	Indeed, despite having the same enumeration sequence, $\text{Av}(231)$ and $\text{Av}(321)$ are often considered in the community as behaving really differently. Our results give new evidence of this belief.
	\item Theorem \ref{thm7} ensures just the existence of the limiting random total orders on $\Z$ for the Benjamini--Schramm convergence, without providing an explicit construction. Nevertheless, in both cases (see Sections \ref{explcon} and \ref{explcon2}), we are able to provide an explicit construction of them.
\end{itemize}

Although the two theorems have very similar statements and in both models we use bijections between $\rho$-avoiding permutations and rooted ordered trees, the two proofs involve different techniques.
\begin{itemize}
	\item For the proof of Theorem \ref{thm_1} we use the Second moment method. We study the  asymptotic behavior of the first and the second moments of $\widetilde{\coc}(\pi,\bm{\sigma}^n)$ applying a technique introduced by Janson \cite{janson2017patterns,janson2003wiener}. Instead of studying uniform trees with $n$ vertices, we focus on specific families of binary Galton--Watson trees (which have some nice independence properties). Then we recover results for the first family of trees using singularity analysis for generating functions.
	\item For the proof of Theorem \ref{thm_2} we use a probabilistic approach for the study of local limits for Galton--Watson trees pointed at a uniform vertex. The bijection between trees and 321-avoiding permutations that we used, strongly depends on the position of the leaves. We therefore study the contour functions of some specific Galton--Watson trees in order to extract information about the positions of the leaves in the neighborhood of a uniform vertex. 
\end{itemize}
 
\subsection{Future projects and open problems}
We believe that this new notion of local convergence for permutations is worth being further investigated in several ways. We list our ideas for future projects.
\begin{itemize}
	\item Motivated by the work of Janson \cite{janson2018patterns} we would like to extend our results to uniform permutations avoiding multiple patterns. In the case of patterns of size three, this generalization does not contain any difficulty (due to the high level of independence for the values of a uniform permutation in these classes) and the explicit analysis of the problem has been developed in the bachelor thesis of Petrella under the supervision of Barbato at the University of Padua.
	\item We would like to generalize the proof of Theorem \ref{thm_2} for all uniform $\rho$-avoiding permutation when $\rho$ is an increasing or decreasing permutation of size greater than three.
	\item We studied the local convergence for uniform permutations in some classes. Another interesting direction could be to study non-uniform models. Motivated by the work of Crane, DeSalvo and Elizalde \cite{crane2018probability} we believe that the local limit of random permutations with Mallows distribution could be investigated. More generally, we would like to investigate different types of biased models.
	\item In a work with Bouvel, F\'eray and Stufler \cite{borga2018localsubclose} we investigate local limits for uniform permutations in substitution-closed classes. Also in this setting, the concentration phenomenon for the proportions of consecutive pattern occurrences occurs.
	\item A challenging problem (motivated by the previous results) is to find natural (but non-trivial) models where there is no concentration for the proportion of consecutive pattern occurrences (for a trivial model see Example \ref{notequivalent}). 
	
	{\em Note added in revision:} A first family of permutations where there is no concentration phenomenon has been recently investigated in \cite{borga2019square} by the author and Slivken. 
	\item In a more theoretical direction we would like to examine in depth the relationship between local convergence and permuton convergence (and other possible intermediate notion of convergence for permutations, as done for example in \cite{frenkel2018convergence} for graphs).
	\item As said before we are able to characterize random limiting objects for the annealed version of the Benjamini--Schramm convergence. It would be interesting to find a similar characterization for the stronger quenched version, where the limiting objects are random probability measures.
\end{itemize}

\subsection{Outline of the paper}
The paper is organized as follows:
\begin{itemize}
	\item Section \ref{loc_conv} deals with the general theory of local convergence for deterministic and random permutations;
	\item Section \ref{tree_notation} summarizes notation and results on trees and random trees that we need in the two following sections;
	\item Sections \ref{231} and \ref{321} contain the proof of Theorem \ref{thm_1} and Theorem \ref{thm_2} respectively and the constructions of the corresponding limiting objects.
\end{itemize}

\section{Local convergence for permutations}
\label{loc_conv}
\subsection{Permutations and patterns}
\label{notation}
We introduce some notation to be used in the sequel. For any $n\in\Z_{>0},$ we denote the set of permutations of $[n]=\{1,2,\dots,n\}$ by $\mathcal{S}^n.$ We write permutations of $\mathcal{S}^n$ in one-line notation as $\sigma=\sigma_1\sigma_2\dots\sigma_n.$ For a permutation $\sigma\in\mathcal{S}^n$ the \emph{size} $n$ of $\sigma$ is denoted by $|\sigma|.$ We let $\mathcal{S}\coloneqq\bigcup_{n\in\Z_{>0}}\mathcal{S}^n$ be the set of finite permutations. We write sequences of permutations in $\mathcal{S}$ as $(\sigma^n)_{n\in\Z_{>0}}.$ 

If $x_1\dots x_n$ is a sequence of distinct numbers, let $\text{std}(x_1\dots x_n)$ be the unique permutation $\pi$ in $\mathcal{S}^n$ that is in the same relative order as $x_1\dots x_n,$ \emph{i.e.}, $\pi_i<\pi_j$ if and only if $x_i<x_j.$
Given a permutation $\sigma\in\mathcal{S}^n$ and a subset of indices $I\subset[n]$, let $\text{pat}_I(\sigma)$ be the permutation induced by $(\sigma_i)_{i\in I},$ namely, $\text{pat}_I(\sigma)\coloneqq\text{std}\big((\sigma_i)_{i\in I}\big).$
For example, if $\sigma=87532461$ and $I=\{2,4,7\}$ then $\text{pat}_{\{2,4,7\}}(87532461)=\text{std}(736)=312$. 

Given two permutations $\sigma\in\mathcal{S}^n$ for some $n\in\Z_{>0}$ and $\pi\in\mathcal{S}^k$ for some $k\leq n,$ we say that $\sigma$ contains $\pi$ as a \textit{pattern} if $\sigma$ has a subsequence of entries order-isomorphic to $\pi,$ that is, if there exists a \emph{subset} $I\subset[n]$ such that $\text{pat}_I(\sigma)=\pi.$ In addition, we say that $\sigma$ contains $\pi$ as a \textit{consecutive pattern} if $\sigma$ has a subsequence of adjacent entries order-isomorphic to $\pi,$ that is, if there exists an \emph{interval} $I\subset[n]$ such that $\text{pat}_I(\sigma)=\pi.$ All the intervals considered in the article are to be understood as integer intervals, \emph{i.e.}, intervals contained in $\Z,$ and will be denoted by $[a,b],$ for $a,b\in\Z.$
\begin{exmp} 
The permutation $\sigma=1532467$ contains $1423$ as a pattern but not as a consecutive pattern and $321$ as consecutive pattern. Indeed $\text{pat}_{\{1,2,3,5\}}(\sigma)=1423$ but no interval of indices of $\sigma$ induces the permutation $1423.$ Moreover, $\text{pat}_{[2,4]}(\sigma)=\text{pat}_{\{2,3,4\}}(\sigma)=321.$
\end{exmp}
We say that $\sigma$ \emph{avoids} $\pi$ if $\sigma$ does not contain $\pi$ as a pattern. We point out that the definition of $\pi$-avoiding permutations refers to patterns and not to consecutive patterns. We denote by $\text{Av}^n(\pi)$ the set of $\pi$-avoiding permutations of size $n$ and by $\text{Av}(\pi)\coloneqq\bigcup_{n\in\Z_{>0}}\text{Av}^n(\pi)$ the set of $\pi$-avoiding permutations of arbitrary size.

We denote by $\coc(\pi,\sigma)$ the number of consecutive occurrences of a pattern $\pi$ in $\sigma.$ More precisely
\begin{equation*}
\label{cocc}
\coc(\pi,\sigma)\coloneqq\text{Card}\Big\{I\subseteq[n]\Big|\text{Card}(I)=|\pi|,\;I\text{ is an interval, } \text{pat}_I(\sigma)=\pi\Big\},
\end{equation*}
where $\text{Card}(\cdot)$ denotes the cardinality of a set.
Moreover, we denote by $\widetilde{\coc}(\pi,\sigma)$ the proportion of consecutive occurrences of a pattern $\pi$ in $\sigma,$ that is,
\begin{equation*}
\label{conpatden}
\widetilde{\coc}(\pi,\sigma)\coloneqq\frac{\coc(\pi,\sigma)}{n}.
\end{equation*}
\begin{rem}
The natural choice for the denominator of the previous expression should be $n-k+1$ and not $n,$ but we make this choice for later convenience (for example, in order to give a probabilistic interpretation of the quantity $\widetilde{\coc}(\pi,\sigma)$ as done in Equation (\ref{probinterpret}, p.\pageref{probinterpret}). Moreover, for every fixed $k,$ there are no difference in the asymptotics when $n$ tends to infinity. 
\end{rem}

\subsection{The set of rooted permutations}
In the first part of this section we define the notion of finite and infinite rooted permutation. Then we introduce a local distance and at the end we show that the space of (possibly infinite) rooted permutations is the natural space to study local limits of permutations. More precisely, we show that this space is compact and it contains the space of finite rooted permutations as a dense subset.

For the reader convenience, we recall the following fundamental definition.
\begin{defn}
A \emph{finite rooted permutation} is a pair $(\sigma,i),$ where $\sigma\in\mathcal{S}^n$ and $i\in[n]$ for some $n\in\Z_{>0}.$
\end{defn}
We extend the notion of size of a permutation to rooted permutations in the natural way, \emph{i.e.,} $|(\sigma,i)|\coloneqq|\sigma|.$
We denote  with $\mathcal{S}^n_{\bullet}$ the set of rooted permutations of size $n$ and with $\mathcal{S}_{\bullet}\coloneqq\bigcup_{n\in\Z_{>0}}\mathcal{S}^n_{\bullet}$ the set of finite rooted permutations. We write sequences of finite rooted permutations in $\mathcal{S}_{\bullet}$ as $(\sigma^n,i_n)_{n\in\Z_{>0}}.$ 

To a rooted permutation $(\sigma,i),$ we associate the pair $(\Asi,\leqsi),$  where $\Asi\coloneqq[-i+1,|\sigma|-i]$ is a finite interval containing 0 and $\leqsi$ is a total order on $\Asi,$ defined for all $\ell,j\in \Asi$ by
\begin{equation*}
\ell\leqsi j\qquad\text{if and only if}\qquad \sigma_{\ell+i}\leq\sigma_{j+i}\;.
\end{equation*}  
Clearly this map is a bijection from the space of finite rooted permutations $\mathcal{S}_{\bullet}$ to the space of total orders on finite integer intervals containing zero. Consequently and throughout the paper, we identify every rooted permutation  $(\sigma,i)$ with the total order $(\Asi,\leqsi).$

We want to highlight in the following example that the total order $(\Asi,\leqsi)$ associated to a rooted permutation indicates how the elements of $\sigma$ at given positions compare to each other. 

In most examples, we draw permutations as diagrams. We recall that the diagram of a permutation $\sigma$ of size $n$ is the set of points of the Cartesian plane at coordinates $(j,\sigma_j),$ for $1\leq j\leq n$. For a rooted permutation $(\sigma,i)$ we draw the root with a bigger red dot at coordinates $(i,\sigma_i).$

\begin{exmp}
	\label{firstpartexmop}
	 The diagram of the rooted permutation $(\sigma,i)=(752934861,4)$ is provided in the left-hand side of Fig.~\ref{exampleord}. The associated total order is $(\Asi,\leqsi)=\big([-3,5],\preccurlyeq\big),$ where $$5\preccurlyeq-1\preccurlyeq1\preccurlyeq2\preccurlyeq-2\preccurlyeq4\preccurlyeq-3\preccurlyeq
	3\preccurlyeq0.$$ 
\end{exmp}

We now make an observation that will be useful for the proofs of Proposition \ref{easyimplication} and Theorem \ref{conversecostruction}.

\begin{obs}
	A total order $(\Asi,\leqsi)$ (associated to a rooted permutation $(\sigma,i)$) shifted to the interval $[1,|\sigma|]$ seen as a word coincides with the inverse permutation $\sigma^{-1}.$
	
	For instance, the total order in Example \ref{firstpartexmop} shifted on the interval $[1,9]$ gives $9\preccurlyeq3\preccurlyeq5\preccurlyeq6\preccurlyeq2\preccurlyeq8\preccurlyeq1\preccurlyeq
	7\preccurlyeq4.$ The word $935628174$ corresponding to it coincides with the inverse permutation $\sigma^{-1}.$
\end{obs}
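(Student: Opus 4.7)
The plan is to unwind the three operations in the statement (defining $\preccurlyeq_{\sigma,i}$, shifting the ground set, reading off the word) and check that what comes out is exactly the inverse permutation. Since the observation is essentially a bookkeeping check, I expect no real obstacle; the main care is simply to track indices correctly.

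First, I would fix $\sigma \in \mathcal{S}^n$, a root $i\in[n]$, and use that by definition the ground set is $A_{\sigma,i}=[-i+1,n-i]$ with the order $\ell \preccurlyeq_{\sigma,i} j \iff \sigma_{\ell+i}\le \sigma_{j+i}$. Shifting $A_{\sigma,i}$ to $[1,n]$ means applying the bijection $k\mapsto k+i$, which pushes the order to a total order $\preccurlyeq$ on $[1,n]$ defined by
\[
\ell' \preccurlyeq j' \iff \sigma_{\ell'}\le \sigma_{j'},\qquad \ell',j'\in[1,n].
\]
This is precisely the total order on positions of $\sigma$ induced by comparing the values of $\sigma$ at those positions.

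Next, I would make explicit what is meant by \emph{seen as a word}: writing the elements of $[1,n]$ in increasing order for $\preccurlyeq$ produces the word $w = w_1 w_2\cdots w_n$ characterized by $\sigma_{w_1}<\sigma_{w_2}<\cdots <\sigma_{w_n}$. Since $\sigma$ is a permutation of $[n]$, the only possibility is $\sigma_{w_k}=k$ for every $k\in[n]$, i.e.\ $w_k=\sigma^{-1}(k)$. Hence $w$ is the one-line notation of $\sigma^{-1}$, which is exactly the claim.

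To round things off I would sanity-check with the example $\sigma=752934861$, $i=4$: the order given in Example \ref{firstpartexmop} shifts to $9\preccurlyeq 3\preccurlyeq 5\preccurlyeq 6\preccurlyeq 2\preccurlyeq 8\preccurlyeq 1\preccurlyeq 7\preccurlyeq 4$, and indeed reading the positions one gets $935628174=\sigma^{-1}$, consistent with the general argument above. The proof is then complete; as anticipated, there is no hard step, only a careful translation between ``total order on $[1,n]$ given by values of $\sigma$'' and ``one-line notation of $\sigma^{-1}$''.
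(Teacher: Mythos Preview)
Your argument is correct and is exactly the natural unpacking of the definitions; the paper itself does not give a proof for this observation (it is stated as a remark and illustrated only by the example), so your write-up is the appropriate elementary verification.
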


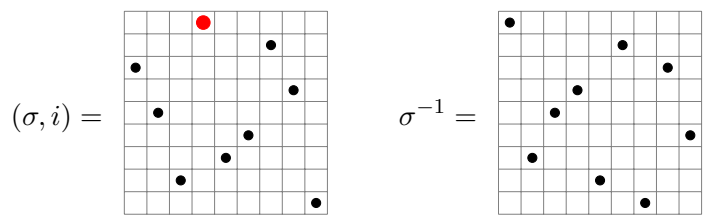
\begin{figure}[h]
	\begin{equation*}
	(\sigma,i)=
	\begin{array}{lcr}
	\begin{tikzpicture}
	\begin{scope}[scale=.3]
	\permutation{7,5,2,9,3,4,8,6,1} 
	\draw (4+.5,9+.5) [red, fill] circle (.3); 
	\end{scope}
	\end{tikzpicture}
	\end{array}
	\qquad\sigma^{-1}=
	\begin{array}{lcr}
	\begin{tikzpicture}
	\begin{scope}[scale=.3]
	\permutation{9,3,5,6,2,8,1,7,4} 
	\end{scope}
	\end{tikzpicture}
	\end{array}
	\end{equation*}
	\caption{Diagram of the rooted permutation $(\sigma,i)=(752934861,4)$ on the left. The inverse $\sigma^{-1}=935628174$ of $\sigma$ is shown on the right. \label{exampleord}}
\end{figure}

Thanks to the identification between rooted permutations and total orders, the following definition of an infinite rooted permutation is natural. 

\begin{defn}
We call \textit{infinite rooted permutation} a pair $(A,\preccurlyeq)$ where $A$ is an infinite interval of integers containing 0 and $\preccurlyeq$ is a total order on $A$. We denote the set of infinite rooted permutations by $\mathcal{S}^{\infty}_\bullet.$
\end{defn} 
We highlight that infinite rooted permutations can be thought of as rooted at 0. We set 
$$\tilde{\mathcal{S}_{\bullet}}\coloneqq\Sr\cup\mathcal{S}^\infty_{\bullet},$$
namely, the set of finite and infinite rooted permutations and we denote by $\Sr^{\leq n}\coloneqq\bigcup_{m\leq n}\Sr^m$ the set of rooted permutations with size at most $n.$ We write sequences of finite or infinite rooted permutations in $\Sri$ as $(A_n,\preccurlyeq_n)_{n\in\Z_{>0}}.$

 We now introduce the following \textit{restriction function around the root} defined, for every $h\in\Z_{>0}$, as follow
\begin{equation}
\label{rhfunct}
\begin{split}
  r_h \colon\quad &\tilde{\mathcal{S}_{\bullet}}\;\longrightarrow \qquad \;\mathcal{S}_\bullet\\
  (A,&\preccurlyeq) \mapsto \big(A\cap[-h,h],\preccurlyeq\big)\;.
\end{split}
\end{equation}
We can think of restriction functions as a notion of neighborhood around the root.
For finite rooted permutations we also have the equivalent description of the restriction functions $r_h$ in terms of consecutive patterns: if $(\sigma,i)\in\Sr$ then $r_h(\sigma,i)=(\text{pat}_{[a,b]}(\sigma),i)$ where $a=\max\{1,i-h\}$ and $b=\min\{|\sigma|,i+h\}.$

In the next example we give a graphical interpretation of the restriction function around the root.
\begin{exmp} We continue Example \ref{firstpartexmop} with the rooted permutation $(\sigma,i)=(752934861,4).$ When we consider the restriction $r_h(\sigma,i),$ we draw in gray a vertical strip ``around" the root of width $2h+1$ (or less if we are near the boundary of the diagram). An example is provided in Fig.~\ref{examplerootedperm} where $r_2(\sigma,i)$ is computed. In particular $r_2(\sigma,i)=\big([-2,2],\preccurlyeq\big),$ with $-1\preccurlyeq1\preccurlyeq2\preccurlyeq-2\preccurlyeq0.$

\begin{figure}[h]
\begin{equation*}
\begin{array}{lcr}
\begin{tikzpicture}
\begin{scope}[scale=.3]
\permutation{7,5,2,9,3,4,8,6,1}
\fill[gray!60!white, opacity=0.4] (2,1) rectangle (7,10); 
\draw (4+.5,9+.5) [red, fill] circle (.3); 
\end{scope}
\end{tikzpicture}
\end{array}
\stackrel{r_2}{\longrightarrow}
\begin{array}{lcr}
\begin{tikzpicture}
\begin{scope}[scale=.3]
\permutation{4,1,5,2,3}
\draw (3+.5,5+.5) [red, fill] circle (.3); 
\end{scope}
\end{tikzpicture}
\end{array}
\end{equation*}
\caption{Diagram of the permutation $\sigma=752934861$ rooted at $i=4$ with the corresponding restriction $r_2(\sigma,i).$}
\label{examplerootedperm}
\end{figure}
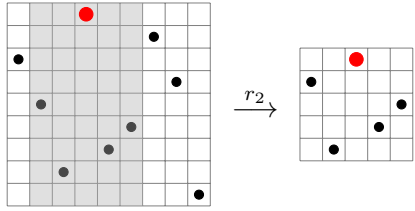

\end{exmp}

We now introduce a last definition.
\begin{defn}
	We say that a family $(A_h,\preccurlyeq_h)_{h\in\Z_{>0}}$ of elements in $\Sr$ is \emph{consistent} if
	\begin{equation*}
	\label{consprop}
	r_{h}(A_{h+1},\preccurlyeq_{h+1})=(A_h,\preccurlyeq_h),\quad \text{ for all }\quad h\in\Z_{>0}.
	\end{equation*}
\end{defn}

We have the following.

\begin{obs}
	\label{consistentobs}
	For every infinite rooted permutation $(A,\preccurlyeq),$ the corresponding family of restrictions $\big(r_h(A,\preccurlyeq)\big)_{h\in\Z_{>0}}$ is consistent.
\end{obs}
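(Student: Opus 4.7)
The plan is to unfold the definition of the restriction function $r_h$ and verify the consistency condition directly, as the statement is essentially a set-theoretic triviality about nested intervals.

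First I would fix an infinite rooted permutation $(A, \preccurlyeq)$ and an arbitrary $h \in \mathbb{Z}_{>0}$, and set $(A_h, \preccurlyeq_h) := r_h(A, \preccurlyeq)$ and $(A_{h+1}, \preccurlyeq_{h+1}) := r_{h+1}(A, \preccurlyeq)$. By the definition of $r_h$ in \eqref{rhfunct} we have $A_h = A \cap [-h, h]$, $A_{h+1} = A \cap [-(h+1), h+1]$, and in both cases the order is the restriction of $\preccurlyeq$.

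Next I would apply $r_h$ to $(A_{h+1}, \preccurlyeq_{h+1})$, obtaining
\[
r_h(A_{h+1}, \preccurlyeq_{h+1}) = \bigl( A \cap [-(h+1), h+1] \cap [-h, h],\, \preccurlyeq \bigr) = \bigl( A \cap [-h, h],\, \preccurlyeq \bigr) = (A_h, \preccurlyeq_h),
\]
where the middle equality uses the elementary fact $[-(h+1), h+1] \cap [-h, h] = [-h, h]$, and the restricted order agrees on both sides because restricting $\preccurlyeq$ twice to nested subsets yields the same order as restricting once to the smaller one. Since $h$ was arbitrary, the family $\bigl(r_h(A, \preccurlyeq)\bigr)_{h \in \mathbb{Z}_{>0}}$ satisfies the consistency condition, which concludes the argument. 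There is no real obstacle here; the observation is recorded precisely because it will be invoked in the converse direction (building an infinite rooted permutation from a consistent family), which is where the substantive content lies.
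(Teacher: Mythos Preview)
Your proof is correct. The paper states this as an observation without proof, treating it as self-evident; your direct verification by unfolding the definition of $r_h$ and using $[-(h+1),h+1]\cap[-h,h]=[-h,h]$ is exactly the intended reasoning.
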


In the next example, we exhibit a concrete infinite rooted permutation, with some of its restrictions and the associated rooted permutations. 
\begin{exmp}
	We consider the following total order on $\mathbb{Z}:$
	\begin{equation*}
	\label{extotord}
	-2\preccurlyeq-4\preccurlyeq-6\preccurlyeq\dots\preccurlyeq-1\preccurlyeq-3\preccurlyeq-5
	\preccurlyeq\dots\preccurlyeq0\preccurlyeq1\preccurlyeq2\preccurlyeq3\preccurlyeq\dots,
	\end{equation*}
	that is, the standard order on the integers except that the order on negative numbers is reversed and negative even numbers are smaller than negative odd numbers. 
	Using the definition given in Equation $(\ref{rhfunct}),$ we have for $h=4,$
	\begin{equation*}
	r_4(\mathbb{Z},\preccurlyeq)=\big([-4,4],\preccurlyeq\big),\quad\text{with}\quad -2\preccurlyeq-4\preccurlyeq-1\preccurlyeq-3\preccurlyeq0\preccurlyeq1\preccurlyeq2\preccurlyeq3\preccurlyeq4,
	\end{equation*}
	which represents the rooted permutation $(\pi^4,5),$ for $\pi^4=241356789,$ whose diagram is represented in the left-hand side of Fig.~\ref{rest_exemp}.
	In particular, $r_4(\mathbb{Z},\preccurlyeq)=(A_{\pi^4,5},\preccurlyeq_{\pi^4,5}).$ 
	
	Moreover, for $h=3,$
	\begin{equation*}
	r_3(\mathbb{Z},\preccurlyeq)=\big([-3,3],\preccurlyeq\big),\quad\text{with}\quad-2\preccurlyeq-1\preccurlyeq-3\preccurlyeq0\preccurlyeq1\preccurlyeq2\preccurlyeq3,
	\end{equation*} 
	which represents the rooted permutation $(\pi^3,4),$ for $\pi^3=3124567,$ whose diagram is represented in the right-hand side of Fig.~\ref{rest_exemp}.
	In particular, $r_3(\mathbb{Z},\preccurlyeq)=(A_{\pi^3,4},\preccurlyeq_{\pi^3,4}).$ We also note that $r_3(\pi^4,5)=(\pi^3,4)$ as shown in Fig.~\ref{rest_exemp}.
\begin{figure}[htbp]
	\begin{center}	
	\begin{equation*}
	(\pi^4,5)=
	\begin{array}{lcr}
	\begin{tikzpicture}
	\begin{scope}[scale=.3]
	\permutation{2,4,1,3,5,6,7,8,9}
	\fill[gray!60!white, opacity=0.4] (2,1) rectangle (9,10); 
	\draw (5+.5,5+.5) [red, fill] circle (.3); 
	\end{scope}
	\end{tikzpicture}
	\end{array}
	\stackrel{r_3}{\longrightarrow}
	\begin{array}{lcr}
	\begin{tikzpicture}
	\begin{scope}[scale=.3]
	\permutation{3,1,2,4,5,6,7}
	\draw (4+.5,4+.5) [red, fill] circle (.3); 
	\end{scope}
	\end{tikzpicture}
	\end{array}=(\pi^3,4).
	\end{equation*}	
	\end{center}
\caption{The $r_3$-restriction of the rooted permutation $(\pi^4,5).$ \label{rest_exemp}}
\end{figure}
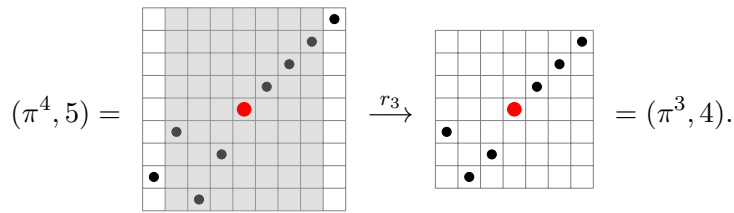
\end{exmp}

\subsection{Local convergence for rooted permutations}
We are now ready to define a notion of \emph{local distance} on the set of (possibly infinite) rooted permutations $\tilde{\mathcal{S}_{\bullet}}$. Given two rooted permutations $(A_1,\preccurlyeq_1),(A_2,\preccurlyeq_2)\in\tilde{\mathcal{S}_{\bullet}},$ we define
\begin{equation}
\label{distance}
d\big((A_1,\preccurlyeq_1),(A_2,\preccurlyeq_2)\big)\coloneqq 2^{-\sup\big\{h\in\Z_{>0}\;:\;r_h(A_1,\preccurlyeq_1)=r_h(A_2,\preccurlyeq_2)\big\}},
\end{equation}
with the classical conventions that $\sup\emptyset=0,$ $\sup\Z_{>0}=+\infty$ and $2^{-\infty}=0.$ It is a basic exercise to check that $d$ is a distance. Actually, $d$ is ultra metric, and so all the open balls of radius $2^{-h},$ $h\in\Z_{>0},$ centered in $(A,\preccurlyeq)\in\Sri$ -- which we denote by $B\big((A,\preccurlyeq),2^{-h}\big)$ -- are closed and intersecting balls are contained in each other. We will say that the balls are \emph{clopen}, \emph{i.e.,} closed and open.

Once we have a distance, the following notion of convergence is very natural. 
\begin{defn}
	\label{local_conv_def}
We say that a sequence $(A_n,\preccurlyeq_n)_{n\in\Z_{>0}}$ of rooted permutations in $\Sri$ is \emph{locally convergent} to an element $(A,\preccurlyeq)\in\Sri,$ if it converges with respect to the local distance $d.$ In this case we write $(A_n,\preccurlyeq_n)\stackrel{loc}{\longrightarrow}(A,\preccurlyeq).$
\end{defn}


We are now going to show that the space $\tilde{\mathcal{S}_{\bullet}}$ is the right space to consider in order to study the limits of finite rooted permutations with respect to the local distance $d$ defined in Equation (\ref{distance}). More precisely, as explained at the beginning, we show that $\tilde{\mathcal{S}_{\bullet}}$ is compact and that $\Sr$ is dense in $\tilde{\mathcal{S}_{\bullet}}.$ 

The latter assertion is trivial since 
\begin{equation}
d\big((A,\preccurlyeq),r_h(A,\preccurlyeq)\big)\leq 2^{-h},\quad\text{for all}\quad (A,\preccurlyeq)\in \tilde{\mathcal{S}_{\bullet}},\text{ all } h\in\Z_{>0},
\label{keydistprop}
\end{equation}
and obviously $r_h(A,\preccurlyeq)\in\Sr.$

Before proving compactness we explore some basic but important properties of our local distance $d$. 
The next proposition gives a converse to Observation \ref{consistentobs}.

\begin{prop}
	\label{consistprop}
	Given a consistent family $(A_h,\preccurlyeq_h)_{h\in\Z_{>0}}$ of elements in $\Sr$ there exists a unique (possibly infinite) rooted permutation $(A,\preccurlyeq)\in\Sri$ such that
	\begin{equation*}
	r_{h}(A,\preccurlyeq)=(A_h,\preccurlyeq_h),\quad \text{ for all }\quad h\in\Z_{>0}.
	\end{equation*}
	Moreover, $(A_h,\preccurlyeq_h)\stackrel{loc}{\longrightarrow}(A,\preccurlyeq).$
\end{prop}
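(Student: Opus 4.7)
My plan is a standard inverse-limit-style construction. The consistency condition $r_h(A_{h+1},\preccurlyeq_{h+1})=(A_h,\preccurlyeq_h)$ says that, as $h$ grows, the $A_h$ are nested finite intervals around $0$ and the total orders $\preccurlyeq_h$ agree on common domains. So I would define
\[
A \coloneqq \bigcup_{h\in\Z_{>0}} A_h,
\]
which is automatically an interval of $\Z$ containing $0$, and define a binary relation $\preccurlyeq$ on $A$ by declaring $i\preccurlyeq j$ iff $i\preccurlyeq_h j$ for some (equivalently, every sufficiently large) $h$ with $i,j\in A_h$. Well-definedness of $\preccurlyeq$ is where consistency is used: for $h'\geq h$ we have $A_{h'}\cap[-h,h]=A_h$ and the restriction of $\preccurlyeq_{h'}$ to $A_h$ is $\preccurlyeq_h$.

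Next I would verify that $\preccurlyeq$ is a total order on $A$. Reflexivity, antisymmetry, totality, and transitivity all reduce to statements involving finitely many elements of $A$; picking $h$ large enough so that all involved elements lie in $A_h$, each axiom follows from the corresponding property of the total order $\preccurlyeq_h$. Hence $(A,\preccurlyeq)\in\Sri$. The identity $r_h(A,\preccurlyeq)=(A_h,\preccurlyeq_h)$ is then immediate from the construction: by nesting, $A\cap[-h,h]=A_h$, and by definition of $\preccurlyeq$ its restriction to $A_h$ is $\preccurlyeq_h$.

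For uniqueness, suppose $(A',\preccurlyeq')\in\Sri$ also satisfies $r_h(A',\preccurlyeq')=(A_h,\preccurlyeq_h)$ for every $h$. Then $A'\cap[-h,h]=A_h$ for every $h$, so $A'=\bigcup_h A_h = A$; and for any $i,j\in A=A'$, choosing $h$ large enough that $i,j\in[-h,h]$, the relation $i\preccurlyeq' j$ is equivalent to $i\preccurlyeq_h j$, hence to $i\preccurlyeq j$. Thus $(A',\preccurlyeq')=(A,\preccurlyeq)$.

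Finally, for the local convergence, I would observe that for every $h\in\Z_{>0}$ and every $k\leq h$, consistency gives $r_k(A_h,\preccurlyeq_h)=(A_k,\preccurlyeq_k)=r_k(A,\preccurlyeq)$. By the definition \eqref{distance} of the local distance this yields $d\bigl((A_h,\preccurlyeq_h),(A,\preccurlyeq)\bigr)\leq 2^{-h}$, so $(A_h,\preccurlyeq_h)\stackrel{loc}{\longrightarrow}(A,\preccurlyeq)$. There is no real obstacle here: the whole statement is essentially an unraveling of the definitions, and the only point that requires any care is checking that the pointwise-defined order $\preccurlyeq$ is well-defined and totally ordered, both of which follow from consistency plus finiteness.
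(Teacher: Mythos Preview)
Your proof is correct and follows essentially the same approach as the paper: define $A=\bigcup_h A_h$, use consistency to define $\preccurlyeq$ on $A$ and verify it is a total order, then read off $r_h(A,\preccurlyeq)=(A_h,\preccurlyeq_h)$ and the local convergence. The only cosmetic difference is that the paper handles uniqueness by invoking uniqueness of limits in a metric space, whereas you give a direct elementwise argument; both are fine.
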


\begin{proof} 
	We explicitly construct the rooted permutation $(A,\preccurlyeq).$ We set $A\coloneqq\bigcup_{h\in\Z_{>0}}A_{h}.$ From the consistency property of the family $(A_h,\preccurlyeq_h)_{h\in\Z_{>0}}$ we immediately deduce that $A_h\subseteq A_{h+1},$ for all $h\in\Z_{>0}$ and so the set $A$ is an interval of $\mathbb{Z}.$ Moreover, the set $A$ trivially contains 0. In order to construct a (possibly infinite) permutation we have to endow the interval $A$ with a total order $\preccurlyeq.$ For all $i,j\in A,$ we set $\tilde{h}=\tilde{h}(i,j)\coloneqq\min\{h\in\Z_{>0}:i\in A_h,j\in A_h\},$ and we make the following choice: 
	\begin{equation}
	\label{oreder}
	i\preccurlyeq j\quad\text{ if and only if } \quad i\preccurlyeq_{\tilde{h}} j.
	\end{equation}
	Note that the transitivity of $\preccurlyeq$ easily follows from the consistency property.
	
	By construction, $r_h\big(A,\preccurlyeq\big)=(A_h,\preccurlyeq_h),$ for all $h\in\Z_{>0}$ and so we can conclude that the sequence $(A_h,\preccurlyeq_h)_h$ locally converges to $\big(A,\preccurlyeq\big).$
	Uniqueness follows from the uniqueness of the limit in a metric space. 
\end{proof}

Proposition \ref{consistprop} allows us to see the space $(\tilde{\mathcal{S}_{\bullet}},d)$ as a subset of a product space of finite sets as explained in the following observation.
\begin{obs}
	\label{inclusion}
	 We consider the product space $\prod_{h\in\Z_{>0}}\Sr^{\leq2h+1}$ endowed with the product topology, that is metrizable by the distance
	 \begin{equation*}
	 \tilde{d}\big((x_i)_{i\in\Z_{>0}},(y_i)_{i\in\Z_{>0}}\big)=2^{-\sup\big\{H\in\Z_{>0}\;:\;r_h(x_h)=r_h(y_h),\forall h\leq H\big\}}.
	 \end{equation*}
	 
	 We have the following isometric embedding
	\begin{equation}
	\label{inclmap}
	\begin{split}
	\mathcal{I} \colon\quad &\tilde{\mathcal{S}_{\bullet}} \;\longrightarrow \prod_{h\in\Z_{>0}}\Sr^{\leq2h+1}\\
	(A&,\preccurlyeq) \mapsto \big(r_h(A,\preccurlyeq)\big)_{h\in\Z_{>0}},
	\end{split}
	\end{equation}
	with inverse
	\begin{equation}
	\label{invinclmap}
	\begin{split}
	\mathcal{I}^{-1} \colon\quad \mathcal{I}&(\tilde{\mathcal{S}_{\bullet}}) \quad\;\;\longrightarrow\quad \tilde{\mathcal{S}_{\bullet}}\\
	(A_h&,\preccurlyeq_h)_{h\in\Z_{>0}} \mapsto \lim_{h\to\infty}{(A_h,\preccurlyeq_h)}.
	\end{split}
	\end{equation}
	We note that $\mathcal{I}^{-1}$ is well-defined and coincides with the inverse of $\mathcal{I}$ thanks to Proposition \ref{consistprop}. Moreover, the embedding is isometric since obviously the two distances coincide on $\tilde{\mathcal{S}_{\bullet}}$ and $\mathcal{I}(\tilde{\mathcal{S}_{\bullet}}).$
	
	Therefore, $\tilde{\mathcal{S}_{\bullet}}$ is homeomorphic to $\mathcal{I}(\tilde{\mathcal{S}_{\bullet}}).$ 
\end{obs}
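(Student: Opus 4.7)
The plan is to verify, in turn, that (i) $\mathcal{I}$ is well-defined and injective, (ii) the formula for $\mathcal{I}^{-1}$ makes sense on the image and inverts $\mathcal{I}$, and (iii) $\mathcal{I}$ is distance-preserving; the homeomorphism assertion then comes for free. First I would note that for every $(A,\preccurlyeq)\in\tilde{\mathcal{S}_{\bullet}}$ and every $h\in\Z_{>0}$, the restriction $r_h(A,\preccurlyeq)$ lies in $\Sr^{\leq 2h+1}$ by its very definition, so $\mathcal{I}$ lands in the stated codomain. Injectivity is immediate from the definition of $d$: if $\mathcal{I}(A_1,\preccurlyeq_1)=\mathcal{I}(A_2,\preccurlyeq_2)$, then $r_h(A_1,\preccurlyeq_1)=r_h(A_2,\preccurlyeq_2)$ for all $h$, whence $d\big((A_1,\preccurlyeq_1),(A_2,\preccurlyeq_2)\big)=0$ and so the two elements coincide.

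Next I would use Observation \ref{consistentobs} to note that the sequence $\mathcal{I}(A,\preccurlyeq)=\big(r_h(A,\preccurlyeq)\big)_{h\in\Z_{>0}}$ is a consistent family in $\Sr$. Proposition \ref{consistprop} then provides a unique rooted permutation $(A',\preccurlyeq')\in\tilde{\mathcal{S}_{\bullet}}$ whose restrictions recover this family, and it states that this $(A',\preccurlyeq')$ is the local limit of the family. By uniqueness, $(A',\preccurlyeq')=(A,\preccurlyeq)$, which shows that $\mathcal{I}^{-1}\circ\mathcal{I}=\mathrm{id}$ and that the formula defining $\mathcal{I}^{-1}$ on $\mathcal{I}(\tilde{\mathcal{S}_{\bullet}})$ is a genuine inverse.

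The main step is the comparison of the two distances, which requires a small bit of care around the meaning of $r_h(x_h)$ inside the formula for $\tilde{d}$. The key algebraic identity is $r_h\circ r_h=r_h$, since $\big(A\cap[-h,h]\big)\cap[-h,h]=A\cap[-h,h]$, so that when one plugs $x_h=r_h(A_1,\preccurlyeq_1)$ and $y_h=r_h(A_2,\preccurlyeq_2)$ into $\tilde{d}$, the outer $r_h$ is idempotent. Combined with the consistency identity $r_{h'}=r_{h'}\circ r_h$ for $h'\leq h$, the universally quantified condition ``$r_h(x_h)=r_h(y_h)$ for every $h\leq H$'' collapses to the single condition ``$r_H(A_1,\preccurlyeq_1)=r_H(A_2,\preccurlyeq_2)$''. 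Consequently the sup appearing in $\tilde{d}\big(\mathcal{I}(A_1,\preccurlyeq_1),\mathcal{I}(A_2,\preccurlyeq_2)\big)$ equals the sup appearing in $d\big((A_1,\preccurlyeq_1),(A_2,\preccurlyeq_2)\big)$, so the two distances agree and $\mathcal{I}$ is an isometry onto its image; the homeomorphism $\tilde{\mathcal{S}_{\bullet}}\cong\mathcal{I}(\tilde{\mathcal{S}_{\bullet}})$ follows.

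The only place where genuine thought is needed is in handling the double restriction $r_h\circ r_h$ and then justifying that the quantifier $\forall h\leq H$ in the definition of $\tilde{d}$ collapses to $h=H$ after the isometry is applied; both are easy once spelled out. All other steps are bookkeeping built on top of Proposition \ref{consistprop}, so I do not expect any real obstacle.
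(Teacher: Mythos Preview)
Your proposal is correct and follows essentially the same approach as the paper. The paper treats this as an observation rather than a theorem with a separate proof: the justification is embedded in the statement itself, saying only that $\mathcal{I}^{-1}$ is well-defined and inverts $\mathcal{I}$ ``thanks to Proposition \ref{consistprop}'' and that the isometry holds ``since obviously the two distances coincide.'' Your write-up simply unpacks this ``obviously'' by spelling out the idempotency $r_h\circ r_h=r_h$ and the collapse of the quantifier $\forall h\leq H$ via consistency, which is exactly the content the paper leaves implicit.
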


Since $\tilde{\mathcal{S}_{\bullet}}$ and $\mathcal{I}(\tilde{\mathcal{S}_{\bullet}})$ are homeomorphic, then the corresponding notions of convergence are equivalent. An immediate consequence is the following.

\begin{prop}
\label{contrh}
Given a sequence $(A_n,\preccurlyeq_n)_{n\in\Z_{>0}}$ of rooted permutations in $\Sri,$ the following are equivalent:
\begin{enumerate}[(a)]
\item there exists $(A,\preccurlyeq)\in\Sri$ such that $$(A_n,\preccurlyeq_n)\stackrel{loc}{\longrightarrow}(A,\preccurlyeq);$$
\item there exists a family $(B_h,\ll_h)_{h\in\Z_{>0}}$ of finite rooted permutations such that
\begin{equation*}
r_h(A_n,\preccurlyeq_n)\stackrel{loc}{\longrightarrow}(B_h,\ll_h),\quad\text{for all}\quad h\in\Z_{>0}.
\end{equation*}  
\end{enumerate}
In particular if one of the two conditions holds (and so both), then $(B_h,\ll_h)=r_h(A,\preccurlyeq),$ for all $h\in\Z_{>0}.$
\end{prop}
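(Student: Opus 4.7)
The plan is to leverage the two facts already established in the text: that the restriction map $r_h$ is essentially locally constant (hence continuous), and that consistent families of restrictions determine a unique element of $\Sri$ (Proposition \ref{consistprop}). The key identity used repeatedly is that for $h'\ge h$ one has $r_h\circ r_{h'}=r_h$ on $\Sri$, since intersecting an integer interval with $[-h',h']$ and then with $[-h,h]$ is the same as intersecting directly with $[-h,h]$.

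For $(a)\Rightarrow(b)$, I would fix $h\in\Z_{>0}$ and set $(B_h,\ll_h):=r_h(A,\preccurlyeq)$. By the very definition of $d$ in (\ref{distance}), as soon as $d\big((A_n,\preccurlyeq_n),(A,\preccurlyeq)\big)<2^{-h}$ one has $r_h(A_n,\preccurlyeq_n)=r_h(A,\preccurlyeq)$. Hence the sequence $r_h(A_n,\preccurlyeq_n)$ is eventually equal to, and in particular locally converges to, $(B_h,\ll_h)$. This already establishes (b) together with the claimed identification $(B_h,\ll_h)=r_h(A,\preccurlyeq)$.

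For $(b)\Rightarrow(a)$, the first step is to check that the family $(B_h,\ll_h)_{h\in\Z_{>0}}$ is consistent. Using $r_h\circ r_{h+1}=r_h$, I would write $r_h(A_n,\preccurlyeq_n)=r_h\big(r_{h+1}(A_n,\preccurlyeq_n)\big)$; the left-hand side converges to $(B_h,\ll_h)$ by assumption, while the right-hand side, by the same eventual-equality argument applied to the convergent sequence $r_{h+1}(A_n,\preccurlyeq_n)\to(B_{h+1},\ll_{h+1})$, converges to $r_h(B_{h+1},\ll_{h+1})$. Uniqueness of limits in the metric space $(\Sri,d)$ forces $r_h(B_{h+1},\ll_{h+1})=(B_h,\ll_h)$, i.e., consistency. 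Proposition \ref{consistprop} then supplies a unique $(A,\preccurlyeq)\in\Sri$ with $r_h(A,\preccurlyeq)=(B_h,\ll_h)$ for every $h$, which is the candidate limit and also matches the ``in particular'' clause of the statement.

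To finish $(b)\Rightarrow(a)$, I would fix $h\in\Z_{>0}$ and observe that the finite subspace $\Sr^{\le 2h+1}$ is \emph{discrete} for $d$: since every $X\in\Sr^{\le 2h+1}$ satisfies $r_{2h}(X)=X$, any two distinct elements of $\Sr^{\le 2h+1}$ differ on their $r_{2h}$-restrictions, so they lie at distance at least $2^{-(2h-1)}$. Since $r_h(A_n,\preccurlyeq_n)$ takes values in this subspace, its convergence to $(B_h,\ll_h)$ must be an eventual equality $r_h(A_n,\preccurlyeq_n)=(B_h,\ll_h)=r_h(A,\preccurlyeq)$, whence $d\big((A_n,\preccurlyeq_n),(A,\preccurlyeq)\big)\le 2^{-h}$ for all $n$ large enough; as $h$ was arbitrary, local convergence follows. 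I do not foresee any serious obstacle beyond the bookkeeping of this ``eventual-equality'' point; the overall argument is simply the projective-system structure already encoded in Observation \ref{inclusion}.
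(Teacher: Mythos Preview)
Your proof is correct and follows essentially the same route as the paper, which simply records the proposition as an immediate consequence of the projective-system/isometric-embedding structure of Observation~\ref{inclusion}. You have merely unpacked that structure by hand (continuity of $r_h$, consistency of the limits, and eventual equality in the finite fibres), which is exactly what the paper leaves implicit.
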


\begin{obs}
	\label{continuityrh}
	Note that for all $h\in\Z_{>0},$ the restriction functions $r_h$ are continuous. This is a simple consequence of implication $(a)\Rightarrow(b)$ in the previous proposition.
\end{obs}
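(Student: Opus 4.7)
The plan is to verify continuity of each $r_h \colon \tilde{\mathcal{S}_{\bullet}} \to \mathcal{S}_{\bullet}$ via sequential characterization of continuity. Since $(\tilde{\mathcal{S}_{\bullet}},d)$ is a metric space (with the local distance defined in Equation \eqref{distance}), it suffices to show that for every convergent sequence $(A_n,\preccurlyeq_n)\stackrel{loc}{\longrightarrow}(A,\preccurlyeq)$ in $\tilde{\mathcal{S}_{\bullet}}$, one has $r_h(A_n,\preccurlyeq_n)\stackrel{loc}{\longrightarrow} r_h(A,\preccurlyeq)$ in $\mathcal{S}_{\bullet}$, for each fixed $h\in\Z_{>0}$.

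Given such a convergent sequence, condition (a) of Proposition \ref{contrh} holds. Applying the implication $(a)\Rightarrow(b)$ of that proposition then produces a family $(B_h,\ll_h)_{h\in\Z_{>0}}$ of finite rooted permutations such that $r_h(A_n,\preccurlyeq_n)\stackrel{loc}{\longrightarrow}(B_h,\ll_h)$ for every $h$, and the last sentence of the proposition identifies $(B_h,\ll_h)=r_h(A,\preccurlyeq)$. This is exactly the desired sequential convergence $r_h(A_n,\preccurlyeq_n)\stackrel{loc}{\longrightarrow} r_h(A,\preccurlyeq)$, and the continuity of $r_h$ follows.

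There is essentially no obstacle here beyond invoking the preceding proposition and the metric structure; the only thing to be careful about is that Proposition \ref{contrh} is stated for sequences in $\tilde{\mathcal{S}_{\bullet}}$ but the target of $r_h$ lies in $\mathcal{S}_{\bullet}\subseteq\tilde{\mathcal{S}_{\bullet}}$, so the convergence $r_h(A_n,\preccurlyeq_n)\stackrel{loc}{\longrightarrow} r_h(A,\preccurlyeq)$ takes place with respect to the same local distance $d$ restricted to $\mathcal{S}_{\bullet}$, which is precisely what continuity of $r_h\colon\tilde{\mathcal{S}_{\bullet}}\to\mathcal{S}_{\bullet}$ requires.
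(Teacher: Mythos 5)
Your argument is correct and is exactly the one the paper intends: sequential continuity in the metric space $(\tilde{\mathcal{S}_{\bullet}},d)$, obtained by applying the implication $(a)\Rightarrow(b)$ of Proposition \ref{contrh} together with the identification $(B_h,\ll_h)=r_h(A,\preccurlyeq)$ in its final sentence. Nothing is missing.
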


We end this section with the following.
\begin{thm}
	\label{compactpolish}
The metric space $(\tilde{\mathcal{S}_{\bullet}},d)$ is compact.
\end{thm}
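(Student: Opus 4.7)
The plan is to exploit the isometric embedding $\mathcal{I}\colon\tilde{\mathcal{S}_{\bullet}}\to\prod_{h\in\Z_{>0}}\Sr^{\leq 2h+1}$ from Observation \ref{inclusion}. Since each set $\Sr^{\leq 2h+1}$ is \emph{finite}, it is compact (as a discrete space), so by Tychonoff's theorem the product space $\prod_{h\in\Z_{>0}}\Sr^{\leq 2h+1}$ is compact and metrizable by $\tilde d$. It then suffices to check that $\mathcal{I}(\tilde{\mathcal{S}_{\bullet}})$ is a closed subset of this product, since a closed subspace of a compact space is compact, and $\tilde{\mathcal{S}_{\bullet}}$ is isometric (hence homeomorphic) to its image.

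To show the image is closed, I would characterize it: by Observation \ref{consistentobs} and Proposition \ref{consistprop}, the map $\mathcal{I}$ puts $\tilde{\mathcal{S}_{\bullet}}$ in bijection with the set of \emph{consistent} families $(A_h,\preccurlyeq_h)_{h\in\Z_{>0}}$, i.e.\ those satisfying $r_h(A_{h+1},\preccurlyeq_{h+1})=(A_h,\preccurlyeq_h)$ for every $h$. For each fixed $h$ the consistency condition at level $h$ cuts out a closed set in the product topology (it only involves the two coordinates of index $h$ and $h+1$, both living in finite, hence Hausdorff, factors, so the condition is clopen on these two coordinates). The set of consistent families is the intersection over all $h$ of these closed sets, and is therefore closed.

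Alternatively, and perhaps more transparently, I would give a direct sequential-compactness argument using a diagonal extraction. Given any sequence $(A_n,\preccurlyeq_n)_{n\in\Z_{>0}}$ in $\tilde{\mathcal{S}_{\bullet}}$, the set $\Sr^{\leq 3}$ is finite, so there is an infinite subsequence along which $r_1(A_n,\preccurlyeq_n)$ is constant equal to some $(B_1,\ll_1)$. Since $\Sr^{\leq 5}$ is finite, one can further extract so that $r_2(A_n,\preccurlyeq_n)$ is constant equal to some $(B_2,\ll_2)$, and so on. A diagonal extraction produces a subsequence $(A_{n_k},\preccurlyeq_{n_k})$ along which $r_h(A_{n_k},\preccurlyeq_{n_k})=(B_h,\ll_h)$ eventually, for every $h$. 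The continuity of $r_h$ (Observation \ref{continuityrh}) forces the family $(B_h,\ll_h)_{h\in\Z_{>0}}$ to be consistent, so Proposition \ref{consistprop} yields a (possibly infinite) rooted permutation $(A,\preccurlyeq)\in\tilde{\mathcal{S}_{\bullet}}$ with $r_h(A,\preccurlyeq)=(B_h,\ll_h)$ for all $h$. Proposition \ref{contrh} (the implication $(b)\Rightarrow(a)$) then gives $(A_{n_k},\preccurlyeq_{n_k})\stackrel{loc}{\longrightarrow}(A,\preccurlyeq)$, establishing sequential compactness, which in a metric space is equivalent to compactness.

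The only mildly delicate point is making sure that the diagonal limit actually lies in $\tilde{\mathcal{S}_{\bullet}}$ rather than in some larger product space, but this is exactly the content of Proposition \ref{consistprop}: consistency is automatic from the construction, so the existence of the limit inside $\tilde{\mathcal{S}_{\bullet}}$ is free. Everything else is bookkeeping about finiteness of the layers $\Sr^{\leq 2h+1}$ and the already-established isometric embedding.
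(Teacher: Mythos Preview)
Your first approach is essentially identical to the paper's proof: identify $\mathcal{I}(\tilde{\mathcal{S}_{\bullet}})$ with the set of consistent sequences via Proposition \ref{consistprop}, observe that this is an intersection of clopen sets (each consistency condition involving only finitely many coordinates) inside the compact product $\prod_{h}\Sr^{\leq 2h+1}$, hence closed, hence compact. Your alternative diagonal-extraction argument is also correct and entirely standard; it is just the sequential-compactness version of the same idea, and the paper does not take that route.
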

\begin{proof}
By Proposition \ref{consistprop}, $\mathcal{I}(\tilde{\mathcal{S}_{\bullet}})$ is the set of consistent sequences. Therefore, it is an intersection of clopen subsets (the sets of sequences that are consistent in the first $\ell>0$ restrictions) of the compact product space $\prod_{h\in\Z_{>0}}\Sr^{\leq2h+1}.$ Therefore $\mathcal{I}(\tilde{\mathcal{S}_{\bullet}})$ is compact and so also $\tilde{\mathcal{S}_{\bullet}}$ since they are homeomorphic.
\end{proof}

\subsection{Benjamini--Schramm convergence: the deterministic case}

In this section we want to define a notion of weak-local convergence for a deterministic sequence of finite permutations $(\sigma^n)_{n\in\Z_{>0}}.$ The case of a random sequence $(\bm{\sigma}^n)_{n\in\Z_{>0}}$ will be discussed in Section \ref{random case}. 

Some of the results contained in this section are just special cases of the those in Section \ref{wbsconv} about the annealed version of the Benjamini--Schramm convergence. However we choose to present the deterministic case separately with a double purpose: first, in this way, the definitions are clearer, secondly, we believe that the subsequent quenched version of the Benjamini--Schramm convergence (see Section \ref{sbsconv}) will be more intuitive.

A possible way to define this weak-local convergence is to use the notion of local distance defined in Equation $(\ref{distance}).$ Therefore, we need to construct a sequence of rooted permutations from the sequence $(\sigma^n)_{n\in\Z_{>0}}.$ We can see a fixed permutation $\sigma$ as an element of $\mathcal{S}_\bullet$ only after a root $i$ has been chosen. A natural way to choose a root is to make the choice at random, and uniformly among the indices of $\sigma.$
In this way, a fixed permutation $\sigma$ naturally identifies a probability measure $\mu_{\sigma}$ on the space $\mathcal{S}_{\bullet},$ that is 
\begin{equation}
\label{permprob}
\text{for all Borel sets }A\subset\mathcal{S}_\bullet,\qquad\mu_{\sigma}(A)\coloneqq\mathbb{P}\big((\sigma,\bm{i})\in A\big),
\end{equation}
where $\bm{i}$ is a uniform index of $\sigma.$ Equivalently, the measure $\mu_{\sigma}$ is the law of the random variable $(\sigma,\bm{i})$ that takes values in the set $\Sr.$
\begin{notn}
In order to avoid any confusion, as done in Equation (\ref{permprob}), we write random quantities using \textbf{bold characters} to distinguish them from deterministic quantities. Moreover, given a probability measure $\mu$ we denote with $\E_{\mu}$ the expectation with respect to $\mu$ and given a random variable $\bm{X},$ we denote with $\mathcal{L}(\bm{X})$ its law. Given a sequence of random variables $(\bm{X}_n)_{n\in\Z_{>0}}$ we write $\bm{X}_n\stackrel{(d)}{\rightarrow}\bm{X}$ to denote the convergence in distribution and $\bm{X}_n\stackrel{P}{\rightarrow}\bm{X}$ to denote the convergence in probability. Finally, given an event $A$ we denote with $A^c$ the complement event of $A.$
\end{notn}

We are now ready to define our notion of Benjamini--Schramm convergence for sequences of permutations.

\begin{defn}
\label{weakconv}
Given a sequence $(\sigma^n)_{n\in\Z_{>0}}$ of elements in $\mathcal{S},$ we say that $(\sigma^n)_{n\in\Z_{>0}}$ \emph{Benjamini--Schramm converges} to a random (possibly infinite) rooted permutation $(\bm{A},\bm{\preccurlyeq})$, if the sequence $(\sigma^n,\bm{i}_n)_{n\in\Z_{>0}},$ where $\bm{i}_n$ is a uniform index of $|\sigma|,$ converges in distribution to $(\bm{A},\bm{\preccurlyeq})$ with respect to the local distance $d$ defined in Equation $(\ref{distance}).$
In this case we simply write $\sigma^n\stackrel{BS}{\longrightarrow} (\bm{A},\bm{\preccurlyeq})$ instead of $(\sigma^n,\bm{i}_n)\stackrel{(d)}{\rightarrow}(\bm{A},\bm{\preccurlyeq}).$
\end{defn}

Sometimes, in order to simplify notation, we will denote the Benjamini--Schramm limit of a sequence $(\sigma^n)_{n\in\Z_{>0}}$ simply by $\bm{\sigma}^{\infty}$ instead of $(\bm{A},\bm{\preccurlyeq}).$

Our main theorem in this section deals with the following interesting relation between the Benjamini--Schramm convergence and the convergence of consecutive pattern densities.

\begin{thm}
	\label{bscharact}
	For any $n\in\Z_{>0},$ let $\sigma^n$ be a permutation of size $n.$ Then the following are equivalent:
	\begin{enumerate}[(a)]
		\item there exists a random rooted infinite permutation $\bm{\sigma}^\infty$ such that $$\sigma^n\stackrel{BS}{\longrightarrow}\bm{\sigma}^\infty;$$ 
		\item there exist non-negative real numbers $(\Delta_{\pi})_{\pi\in\mathcal{S}}$ such that for all patterns $\pi\in\mathcal{S},$ $$\widetilde{\coc}(\pi,\sigma^n)\to\Delta_{\pi}.$$
	\end{enumerate}
	Moreover, if one of the two conditions holds (and so both) we have the following relation between the limiting objects:
	\begin{equation*}
	\mathbb{P}\big(r_h(\bm{\sigma}^\infty)=(\pi,h+1)\big)=\Delta_{\pi},\quad\text{for all}\quad h\in\Z_{>0},\text{ all}\quad\pi\in\mathcal{S}^{2h+1}.
	\end{equation*}
\end{thm}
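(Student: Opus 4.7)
The plan is to reformulate $\widetilde{\coc}(\pi,\sigma^n)$ probabilistically in terms of the law of $(\sigma^n,\bm{i}_n)$ under the restriction functions. The first step is to establish the identity
\[
\widetilde{\coc}(\pi,\sigma^n)=\mathbb{P}\bigl(r_h(\sigma^n,\bm{i}_n)=(\pi,h+1)\bigr),\qquad\pi\in\mathcal{S}^{2h+1},
\]
which is a direct bijective count: consecutive occurrences of $\pi$ on intervals $[a,a+2h]\subseteq[1,n]$ correspond to the indices $i=a+h$ with $r_h(\sigma^n,i)=(\pi,h+1)$, while for $i<h+1$ or $i>n-h$ the restriction $r_h(\sigma^n,i)$ has size strictly less than $2h+1$ and cannot match. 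More generally, for any $\pi\in\mathcal{S}^k$ one can write $\widetilde{\coc}(\pi,\sigma^n)=\mathbb{E}[f_\pi((\sigma^n,\bm{i}_n))]$, where $f_\pi\colon\Sri\to\{0,1\}$ tests whether the pattern of the $k$ consecutive positions starting at the root (and extending to the right) equals $\pi$. Since $f_\pi$ factors through $r_{k-1}$, it is continuous by Observation \ref{continuityrh}.

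For $(a)\Rightarrow(b)$ I would then apply the portmanteau theorem: continuity and boundedness of $f_\pi$ together with BS-convergence of $(\sigma^n,\bm{i}_n)$ give $\widetilde{\coc}(\pi,\sigma^n)\to\mathbb{E}[f_\pi(\bm{\sigma}^\infty)]=:\Delta_\pi\geq 0$. The \emph{moreover} formula is then obtained by specialising to $\pi\in\mathcal{S}^{2h+1}$, combining the centered identity above with continuity of $r_h$.

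For the converse $(b)\Rightarrow(a)$, I would use the compactness of $\Sri$ from Theorem \ref{compactpolish}. For each fixed $h$, I analyse the law of $r_h(\sigma^n,\bm{i}_n)$, a probability measure on the finite set $\Sr^{\leq 2h+1}$. At most $2h$ indices produce restrictions of size strictly less than $2h+1$, so these contribute probability $O(h/n)\to 0$; among the remaining indices the restriction has size exactly $2h+1$ with root at position $h+1$, and assigns mass $\widetilde{\coc}(\pi,\sigma^n)\to\Delta_\pi$ to each $(\pi,h+1)$. Since $\sum_{\pi\in\mathcal{S}^{2h+1}}\widetilde{\coc}(\pi,\sigma^n)=(n-2h)/n\to 1$, the limit $\nu^{(h)}$ is a bona fide probability measure on $\Sr^{\leq 2h+1}$, and the family $(\nu^{(h)})_h$ is automatically consistent under the relations $r_h\circ r_{h+1}=r_h$. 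By sequential compactness of $\mathcal{P}(\Sri)$, every subsequential weak limit $\mu$ of $\mu_n=\mathcal{L}((\sigma^n,\bm{i}_n))$ satisfies $(r_h)_\ast\mu=\nu^{(h)}$ for all $h$, by continuity of $r_h$.

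The principal obstacle, and the step that needs most care, is the uniqueness of such $\mu$: one must conclude that all subsequential limits coincide so that the full sequence $\mu_n$ converges. I would establish this using the isometric embedding of Observation \ref{inclusion}: the cylinders $r_h^{-1}(A)$ with $A\subseteq\Sr^{\leq 2h+1}$ and $h\geq 1$ form a countable algebra of clopen sets generating the Borel $\sigma$-algebra of $\Sri$, so any probability measure on $\Sri$ is determined by the sequence of marginals $((r_h)_\ast\mu)_{h\in\Z_{>0}}$. Uniqueness of subsequential limits then upgrades the compactness argument to genuine weak convergence $\mu_n\to\mu$, which is precisely $\sigma^n\stackrel{BS}{\longrightarrow}\mu$. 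The explicit formula for $\mu$ on cylinders, together with the centered identity from Step 1, yields the announced \emph{moreover} equality.
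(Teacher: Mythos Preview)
Your proof is correct and follows essentially the same route as the paper, which derives Theorem~\ref{bscharact} as the deterministic special case of Theorem~\ref{weakbsequivalence}: compactness of $\Sri$ plus uniqueness of subsequential limits via a separating class of clopen sets for $(b)\Rightarrow(a)$, and continuity of the restriction maps for $(a)\Rightarrow(b)$. One small but pleasant difference is your use of the right-extending indicator $f_\pi$ (testing the pattern on $[0,k-1]$) to handle all pattern sizes uniformly, whereas the paper uses the centred identity~\eqref{probinterpret} for odd $|\pi|$ and then invokes the recursion~\eqref{coocrel} to deduce the even case; both are valid, and your version avoids the parity split.
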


We skip for the moment the proof of this theorem since it will be a particular case of the more general Theorem \ref{weakbsequivalence}.

\subsection{Benjamini--Schramm convergence: the random case}
\label{random case}
The goal of this section is to characterize the convergence in distribution of a sequence of \emph{random} permutations $(\bm{\sigma}^n)_{n\in\Z_{>0}}$ with respect to the local distance defined in Equation (\ref{distance}). We have two different natural choices for this definition, one stronger than the other (and not equivalent as shown in Example \ref{notequivalent}). The weaker definition is an analogue of the notion of Benjamini--Schramm convergence for random graphs developed  in \cite{benjamini2001recurrence}.

Before starting, we give some more explanations about our notation for random quantities. We will use a superscript notation on probability measure $\P$ (and on the corresponding expectation $\E$) to record the source of randomness. Specifically, given two independent random variables $\bm{X}$ and $\bm{Y}$ (with values in two spaces $E$ and $F$ respectively) and a set $A\subseteq E\times F,$ we write 
\begin{equation*}
\P^{\bm{Y}}\big((\bm{X},\bm{Y})\in A\big)\coloneqq\P\big((\bm{X},\bm{Y})\in A|\bm{X}\big),
\end{equation*}
and similarly
\begin{equation*}
\P^{\bm{X}}\big((\bm{X},\bm{Y})\in A\big)\coloneqq\P\big((\bm{X},\bm{Y})\in A|\bm{Y}\big).
\end{equation*}
Moreover, we recall the following standard relation
\begin{equation}
\label{condlaw}
\begin{split}
\P\big((\bm{X},\bm{Y})\in A\big)=\E\big[\mathds{1}_{(\bm{X},\bm{Y})\in A}\big]=\E^{\bm{X}}\Big[\E^{\bm{Y}}\big[\mathds{1}_{(\bm{X},\bm{Y})\in A}\big]\Big]&=\E^{\bm{X}}\Big[\P^{\bm{Y}}\big((\bm{X},\bm{Y})\in A\big)\Big]\\&=\E^{\bm{Y}}\Big[\P^{\bm{X}}\big((\bm{X},\bm{Y})\in A\big)\Big].
\end{split}
\end{equation}

\subsubsection{The annealed version of the Benjamini--Schramm convergence}
\label{wbsconv}
In analogy with the definition of Benjamini--Schramm convergence for deterministic sequences of permutations (and the definition of Benjamini--Schramm convergence for graphs) we can state the following.
\begin{defn}[Annealed version of the Benjamini--Schramm convergence]\label{weakweakconv}
Given a sequence $(\bm{\sigma}^n)_{n\in\Z_{>0}}$ of random permutations in $\mathcal{S},$ let $\bm{i}_n$ be a uniform index of $\bm{\sigma}^n$ conditionally on $\bm{\sigma}^n$. We say that $(\bm{\sigma}^n)_{n\in\Z_{>0}}$ \emph{converges in the annealed Benjamini--Schramm sense} to a random variable $\bm{\sigma}^{\infty}$ with values in $\Sri$ if the sequence of random variables $(\bm{\sigma}^n,\bm{i}_n)_{n\in\Z_{>0}}$ converges in distribution to $\bm{\sigma}^{\infty}$ with respect to the local distance. In this case we write $\bm{\sigma}^n\stackrel{aBS}{\longrightarrow}\bm{\sigma}^\infty$ instead of  $(\bm{\sigma}^n,\bm{i}_n)\stackrel{(d)}{\to}\bm{\sigma}^\infty.$
\end{defn}

Like in the deterministic case, the limiting object $\bm{\sigma}^{\infty}$ is a random rooted infinite permutation.

\begin{rem}
	Even though the above definition is stated for sequences of random permutations of arbitrary size, we will often consider the case when, for all $n\in\Z_{>0},$ $|\bm{\sigma}^n|=n$ almost surely and the random choice of the root $\bm{i}_n$ is uniform in $[n]$ and independent of $\bm{\sigma}^n$.
\end{rem}

Before stating the main theorem of this section we clarify two interesting properties of the statistics $\coc(\pi,\sigma)$ introduced in Section \ref{notation}. 

Given a permutation $\pi\in\mathcal{S}^k,$ we set 
$$\pi^{*m}\coloneqq\text{std}(\pi_1,\dots,\pi_{k},m-1/2)\in\mathcal{S}^{k+1},\quad \text{for all}\quad 1\leq m\leq k+1.$$ 
In the following example we give a graphical interpretation of the operation $\pi^{*m}$.
\begin{exmp}
Let $\pi=3421=
\begin{array}{lcr}
\begin{tikzpicture}
\begin{scope}[scale=.3]
\permutation{3,4,2,1}
\fill[blue!60!white, opacity=0.4] (1,1) rectangle (5,5); 
\end{scope}
\end{tikzpicture}
\end{array}$ 
then $\pi^{*3}=\text{std}(3,4,2,1,5/2)=
\begin{array}{lcr}
\begin{tikzpicture}
\begin{scope}[scale=.3]
\permutation{4,5,2,1,3}
\fill[blue!60!white, opacity=0.4] (1,1) rectangle (5,3); 
\fill[blue!60!white, opacity=0.4] (1,4) rectangle (5,6); 
\end{scope}
\end{tikzpicture}
\end{array}
=45213,$
where we highlight in light blue the entries already presented in the permutation $\pi=3421.$
\end{exmp}
Now, for all $\pi\in\mathcal{S}^{k}$ and $\sigma\in\mathcal{S}^{n}$ with $k<n,$ we just observe that
\begin{equation}
\label{coocrel}
\coc(\pi,\sigma)=\sum_{m=1}^{k+1}\coc(\pi^{*m},\sigma)+\mathds{1}_{\big\{\text{pat}_{[n-k+1,n]}(\sigma)=\pi\big\}},
\end{equation}
and so the convergence of $\widetilde{\coc}(\pi,\sigma)$ depends only on the convergence of $\widetilde{\coc}(\pi^{*m},\sigma),$ for all $1\leq m\leq k+1.$

We also give a probabilistic interpretation of $\widetilde{\coc}(\pi,\sigma)$ for $\pi\in\mathcal{S}^{2h+1}$ and $\sigma\in\mathcal{S}^n$ with $n\geq2h+1.$ Namely, let $\bm{i}_n$ be uniform in $[n],$ then
\begin{equation}
\label{probinterpret}
\widetilde{\coc}(\pi,\sigma)=\P\big(r_h(\sigma,\bm{i}_n)=(\pi,h+1)\big).
\end{equation}

We need the following observation.
\begin{obs}
	\label{separating class}
	We claim that the set of clopen balls (see the discussion after Equation (\ref{distance}))
	\begin{equation*}
	\label{convergece_determ}
	\mathcal{A}=\Big\{B\big((A,\preccurlyeq),2^{-h}\big):h\in\Z_{>0},(A,\preccurlyeq)\in\Sri\Big\}
	\end{equation*}
	is a separating class for the space $(\Sri,d)$, \emph{i.e.,} if
	two probability measures agree on $\mathcal{A}$ then they necessarily agree also on the
	whole space. This is a trivial consequence of the monotone class theorem, recalling that the intersection of two balls is either empty or one of them.
\end{obs}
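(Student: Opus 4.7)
The plan is to invoke Dynkin's $\pi$-$\lambda$ theorem (one standard form of the monotone class theorem for sets). For this, I need to check two things about $\mathcal{A}$: that it is a $\pi$-system, and that the $\sigma$-algebra it generates coincides with the Borel $\sigma$-algebra of $(\Sri,d)$.

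First I would verify the $\pi$-system property. Because $d$ is an ultrametric (as already noted in the excerpt right after Equation \eqref{distance}), given any two balls $B_1,B_2\in\mathcal{A}$, either $B_1\cap B_2=\emptyset$ or one of them is contained in the other; in the second case the intersection is the smaller ball, which is again an element of $\mathcal{A}$. Hence $\mathcal{A}\cup\{\emptyset\}$ is closed under finite intersections, which is enough for $\pi$-$\lambda$.

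Next I would check that $\sigma(\mathcal{A})$ equals the Borel $\sigma$-algebra on $(\Sri,d)$. By Theorem \ref{compactpolish} the space $(\Sri,d)$ is compact metric, hence separable, hence second countable; moreover, since $d$ only takes values in $\{0\}\cup\{2^{-h}:h\in\Z_{>0}\}$, the clopen balls $B\bigl((A,\preccurlyeq),2^{-h}\bigr)$ with $h\in\Z_{>0}$ form a basis of the topology. From second countability, every open set can be written as a countable union of such balls, so $\sigma(\mathcal{A})$ contains every open set and therefore equals the Borel $\sigma$-algebra.

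Finally, suppose $\mu_1,\mu_2$ are two Borel probability measures on $\Sri$ with $\mu_1(B)=\mu_2(B)$ for every $B\in\mathcal{A}$. Then the collection
\[
\mathcal{D}\coloneqq\{E\in\sigma(\mathcal{A})\,:\,\mu_1(E)=\mu_2(E)\}
\]
is a $\lambda$-system: it contains $\Sri$ since $\mu_1(\Sri)=\mu_2(\Sri)=1$, it is stable under proper differences by finite additivity, and it is stable under increasing countable unions by monotone convergence. Since $\mathcal{D}\supseteq\mathcal{A}$ and $\mathcal{A}$ is a $\pi$-system, the $\pi$-$\lambda$ theorem gives $\mathcal{D}=\sigma(\mathcal{A})$, which is the full Borel $\sigma$-algebra. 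Hence $\mu_1=\mu_2$. There is no genuine obstacle here; the whole argument is standard once the ultrametric structure and second countability of $(\Sri,d)$ are in place.
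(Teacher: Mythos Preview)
Your proof is correct and follows exactly the approach the paper sketches: the paper's one-line justification invokes the monotone class theorem together with the observation that intersecting balls are nested (the ultrametric property), and you have simply spelled out the $\pi$-$\lambda$ argument and the verification that $\sigma(\mathcal{A})$ is the Borel $\sigma$-algebra. There is nothing to add.
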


We give in the following theorem some characterizations of the  annealed version of the Benjamini--Schramm convergence. 
\begin{thm}
\label{weakbsequivalence}
For any $n\in\Z_{>0},$ let $\bm{\sigma}^n$ be a random permutation of size $n$ and $\bm{i}_n$ be a uniform random index in $[n]$, independent of $\bm{\sigma}^n.$ Then the following are equivalent:
\begin{enumerate}[(a)]
\item there exists a random rooted infinite permutation $\bm{\sigma}^\infty$ such that $$\bm{\sigma}^n\stackrel{aBS}{\longrightarrow}\bm{\sigma}^\infty,$$ i.e., $(\bm{\sigma}^n,\bm{i}_n)\stackrel{(d)}{\to}\bm{\sigma}^\infty$ w.r.t.\ the local distance $d$ on $\Sri;$
\item for all $h\in\Z_{>0},$ there exist non-negative real numbers $(\Gamma^h_{\pi})_{\pi\in\mathcal{S}^{2h+1}}$ such that  $$\P\big(r_h(\bm{\sigma}^n,\bm{i}_n)=(\pi,h+1)\big)\xrightarrow[n\to\infty]{}\Gamma^h_{\pi},\quad\text{for all}\quad\pi\in\mathcal{S}^{2h+1};$$
\item there exist non-negative real numbers $(\Delta_{\pi})_{\pi\in\mathcal{S}}$ such that for all patterns $\pi\in\mathcal{S},$ $$\E[\widetilde{\coc}(\pi,\bm{\sigma}^n)]\to\Delta_{\pi}.$$
\end{enumerate}
Moreover, if one of the three conditions holds (and so all of them), for every fixed $h\in\Z_{>0},$ we have the following relations between the limiting objects,
\begin{equation}
\label{limrel}
  \mathbb{P}\big(r_h(\bm{\sigma}^\infty)=(\pi,h+1)\big)=\Delta_{\pi}=\Gamma^h_{\pi},\quad\text{for all}\quad\pi\in\mathcal{S}^{2h+1}.
\end{equation}
\end{thm}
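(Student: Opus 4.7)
The plan is to prove the chain $(a) \Rightarrow (b) \Leftrightarrow (c)$ and $(b) \Rightarrow (a)$. The first three implications are essentially topological bookkeeping combined with the elementary counting identity~(\ref{coocrel}), while the hard step is the reverse direction $(b) \Rightarrow (a)$, which relies crucially on the compactness of $\Sri$ (Theorem~\ref{compactpolish}) together with the separating-class property (Observation~\ref{separating class}).

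First I would handle $(a) \Rightarrow (b)$ via the continuous mapping theorem. Since $r_h \colon \Sri \to \Sr^{\leq 2h+1}$ is continuous (Observation~\ref{continuityrh}) and its target is a countable discrete space, $(\bm{\sigma}^n, \bm{i}_n) \stackrel{(d)}{\to} \bm{\sigma}^\infty$ implies $r_h(\bm{\sigma}^n, \bm{i}_n) \stackrel{(d)}{\to} r_h(\bm{\sigma}^\infty)$ atom by atom. For $n \geq 2h+1$ a restriction of the form $(\pi, h+1)$ with $\pi \in \mathcal{S}^{2h+1}$ occurs precisely when $\bm{i}_n \in [h+1, n-h]$, so $\P(r_h(\bm{\sigma}^n, \bm{i}_n) = (\pi, h+1)) \to \Gamma^h_\pi \coloneqq \P(r_h(\bm{\sigma}^\infty) = (\pi, h+1))$, which incidentally produces the first identity in (\ref{limrel}).

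For $(b) \Leftrightarrow (c)$ I would combine the probabilistic interpretation (\ref{probinterpret}) with the recursion (\ref{coocrel}). By (\ref{probinterpret}) and the independence of $\bm{i}_n$ from $\bm{\sigma}^n$, for any $\pi \in \mathcal{S}^{2h+1}$ and $n \geq 2h+1$,
\begin{equation*}
\E\bigl[\widetilde{\coc}(\pi,\bm{\sigma}^n)\bigr] = \E^{\bm{\sigma}^n}\bigl[\P^{\bm{i}_n}(r_h(\bm{\sigma}^n,\bm{i}_n) = (\pi,h+1))\bigr] = \P(r_h(\bm{\sigma}^n,\bm{i}_n) = (\pi,h+1)),
\end{equation*}
so convergence in (c) for odd-sized patterns is literally the same statement as (b), and the identity $\Delta_\pi = \Gamma^h_\pi$ falls out for $\pi \in \mathcal{S}^{2h+1}$. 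To move between patterns of different sizes, dividing (\ref{coocrel}) by $n$ gives $\widetilde{\coc}(\pi,\sigma) = \sum_{m=1}^{|\pi|+1} \widetilde{\coc}(\pi^{*m},\sigma) + O(1/n)$, which propagates the convergence of $\E[\widetilde{\coc}(\cdot,\bm{\sigma}^n)]$ between consecutive sizes. Iterating, the convergence over all odd sizes (i.e.\ (b)) is equivalent to convergence over all sizes (i.e.\ (c)).

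The hard implication $(b) \Rightarrow (a)$ is a tightness plus uniqueness argument. By Theorem~\ref{compactpolish} the space $\Sri$ is compact, so the laws of $(\bm{\sigma}^n, \bm{i}_n)$ are automatically tight and by Prokhorov's theorem admit subsequential limits in distribution. Applying the already-proved $(a) \Rightarrow (b)$ along any such subsequence, every subsequential limit $\bm{\tau}^\infty$ must satisfy $\P(r_h(\bm{\tau}^\infty) = (\pi, h+1)) = \Gamma^h_\pi$ for all $h$ and $\pi \in \mathcal{S}^{2h+1}$. The events $\{r_h = (\pi, h+1)\}$ are exactly the clopen balls in the separating class $\mathcal{A}$ of Observation~\ref{separating class}, and any two of them are either nested or disjoint; therefore by the monotone class theorem the values $\Gamma^h_\pi$ uniquely determine a Borel probability measure on $\Sri$. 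Hence all subsequential limits coincide, the whole sequence converges in distribution to some $\bm{\sigma}^\infty$ whose law is pinned down by (\ref{limrel}), and the theorem follows. The main subtlety throughout is ensuring that the boundary contribution $\P(\bm{i}_n \notin [h+1,n-h]) = 2h/n$ and the remainder $O(1/n)$ in the recursion are genuinely negligible, which they are since $n \to \infty$ while $h$ is fixed.
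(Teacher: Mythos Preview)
Your proof is correct and follows essentially the same route as the paper: continuity of $r_h$ for $(a)\Rightarrow(b)$, the identity $\E[\widetilde{\coc}(\pi,\bm{\sigma}^n)]=\P(r_h(\bm{\sigma}^n,\bm{i}_n)=(\pi,h+1))$ together with~(\ref{coocrel}) for $(b)\Leftrightarrow(c)$, and compactness plus Prokhorov plus the separating class $\mathcal{A}$ for $(b)\Rightarrow(a)$. The only imprecision is the claim that the events $\{r_h=(\pi,h+1)\}$ are \emph{exactly} the balls in $\mathcal{A}$---there are other balls centered at $(\rho,j)$ with $|\rho|<2h+1$ or $j\neq h+1$---but your boundary-term remark $\P(\bm{i}_n\notin[h+1,n-h])=2h/n\to 0$ is precisely what forces those balls to have measure zero under any subsequential limit (this is the content of Remark~\ref{uyfvuoe2}), so the argument goes through.
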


Before proving the theorem we point out two important facts.

\begin{rem}
	Note that the theorem proves the existence of a random rooted infinite permutation $\bm{\sigma}^\infty$ but does not furnish any explicit construction of this object as a random total order on $\mathbb{Z}.$
\end{rem}

\begin{rem}
	\label{uyfvuoe2}
	For every fixed $h\in\Z_{>0},$ the condition (b) in the previous theorem considers only the probabilities $\P\big(r_h(\bm{\sigma}^n,\bm{i}_n)=(\pi,j)\big)$ for $\pi\in\mathcal{S}^{2h+1}$ and $j=h+1.$ We remark that for all the other cases, \emph{i.e.,} when $\pi\in\mathcal{S}^{2h+1}$ and $j\neq h+1,$ or $\pi\in\mathcal{S}^{\leq2h+1}$ with $|\pi|<2h+1$ and $j\in[|\pi|]$, it is easy to show that
	\begin{equation*}
	\P\big(r_h(\bm{\sigma}^n,\bm{i}_n)=(\pi,j)\big)\to 0.
	\end{equation*}
\end{rem}

\begin{proof}[Proof of Theorem \ref{weakbsequivalence}]
$(a)\Rightarrow(b).$ For all $h\in\Z_{>0},$ the convergence in distribution of the sequence $\big(r_h(\bm{\sigma}^n,\bm{i}_n)\big)_{n\in\Z_{>0}},$ follows from the continuity of the functions $r_h$ (Observation \ref{continuityrh}). Then (b) is a trivial consequence of the fact that $r_h(\bm{\sigma}^n,\bm{i}_n)$ takes its values in the finite set $\Sr^{\leq 2h+1}.$

$(b)\Rightarrow(a).$ Thanks to Theorem \ref{compactpolish}, $(\Sri,d)$  is a compact (and so Polish) space. Therefore, applying Prokhorov's Theorem, in order to show that $(\bm{\sigma}^n,\bm{i}_n)\stackrel{(d)}{\rightarrow}\bm{\sigma}^{\infty}$, for some random infinite permutation $\bm{\sigma}^{\infty},$ it is enough to show that for every pair of convergent subsequences  $(\bm{\sigma}^{n_k},\bm{i}_{n_k})_{k\in\Z_{>0}}$ and $(\bm{\sigma}^{n_\ell},\bm{i}_{n_\ell})_{\ell\in\Z_{>0}}$ with limits $\bm{\sigma}^\infty_1$ and $\bm{\sigma}^\infty_2$ respectively, then
\begin{equation}
\label{proofgoal}
\bm{\sigma}^\infty_1\stackrel{(d)}{=}\bm{\sigma}^\infty_2.
\end{equation}

Noting that the distributions of $\bm{\sigma}^\infty_1$ and $\bm{\sigma}^\infty_2$ must coincide on $\mathcal{A}$ and using Observation \ref{separating class}, we can conclude that Equation (\ref{proofgoal}) holds.

$(b)\Rightarrow(c).$
Note that if $\pi\in\mathcal{S}$ then there exist $h\in\Z_{>0}$ such that either $\pi\in\mathcal{S}^{2h+1}$ or $\pi\in\mathcal{S}^{2h}.$  
If $\pi\in\mathcal{S}^{2h+1},$ using relations (\ref{probinterpret}) and (\ref{condlaw}) with the independence between $\bm{\sigma}^n$ and $\bm{i}_n$, we have
\begin{equation}
\label{relprobexp}
\E\Big[\widetilde{\coc}(\pi,\bm{\sigma}^n)\Big]\stackrel{(\ref{probinterpret})}{=}\E^{\bm{\sigma}^n}\Big[\P^{\bm{i}_n}\big(r_h(\bm{\sigma}^n,\bm{i}_n)=(\pi,h+1)\big)\Big]\stackrel{(\ref{condlaw})}{=}\P\big(r_h(\bm{\sigma}^n,\bm{i}_n)=(\pi,h+1)\big),
\end{equation}
which converges if (b) holds.
Otherwise, if $\pi\in\mathcal{S}^{2h}$ we just use the observation done in Equation (\ref{coocrel}) and so, the convergence of $\E^{\sigma^n}\Big[\widetilde{\coc}(\pi,\bm{\sigma}^n)\Big],$ for $\pi\in\mathcal{S}^{2h},$ follows.

$(c)\Rightarrow(b).$ As before, if $\pi\in\mathcal{S}^{2h+1},$ the convergence of $\big(\P\big(r_h(\bm{\sigma}^n,\bm{i}_n)=(\pi,h+1)\big)\big)_{n\in\Z_{>0}}$ follows from Equation (\ref{relprobexp}).
\end{proof}

\subsubsection{The quenched version of the Benjamini--Schramm convergence}
\label{sbsconv}
We start by recalling that given a permutation $\sigma,$ the associated probability measure $\mu_{\sigma}$ is defined by Equation (\ref{permprob}). The quenched version of the Benjamini--Schramm convergence is inspired by the following equivalent reformulation of Definition \ref{weakconv}.

Given a sequence $(\sigma^n)_{n\in\Z_{>0}}$ of deterministic elements in $\mathcal{S},$ we can equivalently say that $(\sigma^n)_{n\in\Z_{>0}}$ Benjamini--Schramm converges to a probability measure $\mu\in\mathcal{P}(\tilde{\mathcal{S}}_\bullet)$, if the sequence $(\mu_{\sigma^n})_{n\in\Z_{>0}}$  converges to $\mu$ with respect to the weak topology induced by the local distance $d.$

In analogy, we can state the following for the random case.
\begin{defn}[Quenched version of the Benjamini--Schramm convergence]
	\label{strongconv}
Given a sequence $(\bm{\sigma}^n)_{n\in\Z_{>0}}$ of random permutation in $\mathcal{S}$ and a random measure $\bm{\mu}^\infty$ on $\Sri,$  we say that $(\bm{\sigma}^n)_{n\in\Z_{>0}}$ \emph{converges in the quenched Benjamini--Schramm sense} to $\bm{\mu}^\infty$ if the sequence of random measures $(\bm{\mu}_{\bm{\sigma}^n})_{n\in\Z_{>0}}$ converges in distribution to $\bm{\mu}^\infty$ with respect to the weak topology induced by the local distance $d.$ In this case we write $\bm{\sigma}^n\stackrel{qBS}{\longrightarrow}\bm{\mu}^\infty$ instead of $\bm{\mu}_{\bm{\sigma}^n}\stackrel{(d)}{\to}\bm{\mu}^\infty.$
\end{defn}

Unlike the annealed version of the Benjamini--Schramm convergence, the limiting object $\bm{\mu}^{\infty}$ is a random measure on $\Sri.$

\begin{rem}
	Note that, given a random permutation $\bm{\sigma},$ the corresponding random measure $\bm{\mu}_{\bm{\sigma}}$ is the conditional law of the random variable $\big((\bm{\sigma},\bm{i})\big|\bm{\sigma}\big).$
\end{rem}

We want to remark some important topological facts.
\begin{rem}
\label{remcompactness}
Given a random permutation $\bm{\sigma},$ the associated random uniformly rooted permutation $(\bm{\sigma},\bm{i})$ can be viewed as a random variable with values in the compact (and so Polish) space $(\Sri,d).$ Similarly the associated random probability measure $\bm{\mu}_{\bm{\sigma}}$ given by Equation (\ref{permprob}) can be viewed as a random variable with values in the set of probability measures $\mathcal{P}(\Sri).$ The space $\mathcal{P}(\Sri)$ endowed with the weak convergence topology is Polish.

We recall that given a metric space $(E,d)$ endowed with a $\sigma$-algebra $\mathcal{E},$ then $\mathcal{P}(E)$ is a metric space once equipped with the Prokorov metric $\nu.$ 

Moreover, if $(E,d)$ is a Polish space, the metric $\nu$ is such that the weak convergence is equivalent to $\nu$-convergence and $(\mathcal{P}(E),\nu)$ is also a Polish space (see for instance \cite[Theorem 6.8, p.73]{billingsley2013convergence}). Finally, if $(E,d)$ is compact, Prokorov's theorem ensures that $(\mathcal{P}(E),\nu)$ is also compact (for general results on convergence of measure, we refer to \cite{billingsley2013convergence}).
\end{rem}

Thanks to the previous remark we can interpret a random probability measure on $\Sri$ as a random variable with values in the compact (and so Polish) space $(\mathcal{P}(\Sri),\nu).$

We now generalize the notion of consecutive pattern density to probability measures on $\Sri.$
\begin{defn}
Given a probability measure $\mu$ on $\Sri,$ we define the \emph{consecutive pattern density} $\widetilde{\coc}(\pi,\mu)$ as
\begin{equation}
\label{gencocc}
\begin{split}
\widetilde{\coc}(\pi,\mu)\coloneqq\mu\Big(B\big((\pi,h+1),2^{-h}\big)\Big),\quad\text{for all}\quad \pi\in\mathcal{S}^{2h+1},\text{ all }h\in\Z_{>0},\\
\widetilde{\coc}(\pi,\mu)\coloneqq\mu\Big(\bigcup_{m=1}^{2h+1}B\big((\pi^{*m},h+1),2^{-h}\big)\Big),\quad\text{for all}\quad \pi\in\mathcal{S}^{2h},\text{ all }h\in\Z_{>0},
\end{split}
\end{equation}
\end{defn}
Note that if $\mu\equiv\mu_{\sigma}$ for some $\sigma\in\mathcal{S}$ then 
\begin{equation}
\label{coccrel}
\widetilde{\coc}(\pi,\mu_{\sigma})=\widetilde{\coc}(\pi,\sigma)+ O(1/|\sigma|),\quad\text{for all}\quad\pi\in\mathcal{S}.
\end{equation} 
The relation easily follows using Equations (\ref{permprob}) and (\ref{probinterpret}) (and Equation (\ref{coocrel}) when $\pi\in\mathcal{S}^{2h}$).

Before stating our main theorem of this section we state and prove a technical proposition.

\begin{prop}
\label{mapcontinuity}
For all $\pi\in\mathcal{S},$ the function
\begin{equation*}
\begin{split}
  \widetilde{\coc}(\pi,\cdot) \colon\quad \big(\mathcal{P}(\Sri&),\nu\big) \;\longrightarrow\;\; [0,1]\\
  &\mu \quad\;\;\mapsto \widetilde{\coc}(\pi,\mu)
\end{split}
\end{equation*}
is continuous.
\end{prop}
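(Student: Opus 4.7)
The plan is to reduce the continuity statement to a direct application of the portmanteau theorem, exploiting the ultrametric structure of the underlying space.

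First I would recall that the distance $d$ on $\tilde{\mathcal{S}_{\bullet}}$ is ultrametric, so every open ball $B\big((A,\preccurlyeq),2^{-h}\big)$ is clopen (as observed right after Equation (\ref{distance})). Consequently, for any $\pi\in\mathcal{S}^{2h+1}$, the set $B\big((\pi,h+1),2^{-h}\big)$ that appears in the definition of $\widetilde{\coc}(\pi,\mu)$ is clopen, hence its topological boundary is empty. For $\pi\in\mathcal{S}^{2h}$, the relevant set is the finite union $\bigcup_{m=1}^{2h+1}B\big((\pi^{*m},h+1),2^{-h}\big)$, which is again clopen as a finite union of clopen sets, and therefore also has empty boundary.

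Next I would invoke the portmanteau theorem: since $(\tilde{\mathcal{S}_{\bullet}},d)$ is Polish (in fact compact, by Theorem \ref{compactpolish}), a sequence of probability measures $\mu_n$ converges weakly to $\mu$ if and only if $\mu_n(A)\to\mu(A)$ for every Borel set $A$ with $\mu(\partial A)=0$. Applying this with $A$ being the clopen set defining $\widetilde{\coc}(\pi,\cdot)$ in each of the two cases above, we conclude that $\widetilde{\coc}(\pi,\mu_n)\to\widetilde{\coc}(\pi,\mu)$ whenever $\mu_n\to\mu$ with respect to the weak topology (equivalently, with respect to the Prokhorov metric $\nu$, see Remark \ref{remcompactness}). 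This establishes continuity of $\widetilde{\coc}(\pi,\cdot)$ for every $\pi\in\mathcal{S}$.

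There is no substantial obstacle here: the essential content is the ultrametric property, which makes balls clopen and therefore continuity sets for every measure simultaneously. The only minor care needed is to handle separately the even-size case $\pi\in\mathcal{S}^{2h}$, where $\widetilde{\coc}(\pi,\mu)$ is a sum of finitely many ball-measures (one for each $\pi^{*m}$), but a finite union of clopen sets is clopen so the same portmanteau argument applies verbatim.
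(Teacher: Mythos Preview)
Your proposal is correct and follows essentially the same approach as the paper: both rely on the fact that the relevant sets are clopen (using the ultrametric structure and that finite unions of clopen sets are clopen), apply the Portmanteau theorem to conclude $\widetilde{\coc}(\pi,\mu_n)\to\widetilde{\coc}(\pi,\mu)$ whenever $\mu_n\to\mu$ weakly, and then identify weak convergence with $\nu$-convergence.
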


\begin{proof}
	By definition, depending on the parity of $|\pi|,$ either $\widetilde{\coc}(\pi,\mu)=\mu\Big(B\big((\pi,h+1),2^{-h}\big)\Big)$ or $\widetilde{\coc}(\pi,\mu)=\mu\Big(\bigcup_{m=1}^{2h+1}B\big((\pi^{*m},h+1),2^{-h}\big)\Big).$  Since a finite union of clopen balls is clopen, using the Portmanteau theorem,  if $\mu_n\to\mu$ in the weak sense then $\widetilde{\coc}(\pi,\mu_n)\to\widetilde{\coc}(\pi,\mu).$ Since weak convergence is equivalent to $\nu$-convergence, then $\widetilde{\coc}(\pi,\cdot)$ is continuous.
\end{proof}

We are now ready to state and prove our main theorem that gives us a characterization of the quenched version of the Benjamini--Schramm convergence. 

\begin{thm}
\label{strongbsconditions}
For any $n\in\Z_{>0},$ let $\bm{\sigma}^n$ be a random permutation of size $n$ and $\bm{i}_n$ be a uniform random index in $[n]$, independent of $\bm{\sigma}^n.$ Then the following are equivalent:
\begin{enumerate}[(a)]
\item there exists a random measure $\bm{\mu}^\infty$ on $\Sri$ such that   $$\bm{\sigma}^n\stackrel{qBS}{\longrightarrow}\bm{\mu}^\infty,$$ i.e., $\bm{\mu}_{\bm{\sigma}^n}\stackrel{(d)}{\to}\bm{\mu}^\infty$ w.r.t.\ the weak topology induced by the local distance $d$ on $\Sri;$
\item there exists a family of non-negative real random variables $(\bm{\Gamma}^h_{\pi})_{h\in\Z_{>0},\pi\in\mathcal{S}^{2h+1}}$ such that  $$\Big(\P^{{\bm{i}}_n}\big(r_h(\bm{\sigma}^n,\bm{i}_n)=(\pi,h+1)\big)\Big)_{h\in\Z_{>0},\pi\in\mathcal{S}^{2h+1}}\stackrel{(d)}{\to}(\bm{\Gamma}^h_{\pi})_{h\in\Z_{>0},\pi\in\mathcal{S}^{2h+1}},$$
w.r.t. the product topology;
\item there exists an infinite vector of non-negative real random variables $(\bm{\Lambda}_{\pi})_{\pi\in\mathcal{S}}$ such that $$\big(\widetilde{\coc}(\pi,\bm{\sigma}^n)\big)_{\pi\in\mathcal{S}}\stackrel{(d)}{\to}(\bm{\Lambda}_{\pi})_{\pi\in\mathcal{S}}$$ w.r.t.\ the product topology.
\end{enumerate}
In particular, if one of the three conditions holds (and so all of them) then 
\begin{equation}
\label{limrel2}
\widetilde{\coc}(\pi,\bm{\mu}^{\infty})\stackrel{(d)}{=}\bm{\Lambda}_{\pi},\quad\text{for all}\quad\pi\in\mathcal{S},
\end{equation}
and moreover, for every fixed $h\in\Z_{>0},$
\begin{equation}
\label{limrel3}
\begin{split}
\widetilde{\coc}(\pi,\bm{\mu}^{\infty})\stackrel{(d)}{=}\bm{\Gamma}^h_{\pi},\quad\text{for all}\quad \pi\in\mathcal{S}^{2h+1},\\
\widetilde{\coc}(\pi,\bm{\mu}^{\infty})\stackrel{(d)}{=}\sum_{m=1}^{2h+1}\bm{\Gamma}^h_{\pi^{*m}},\quad\text{for all}\quad \pi\in\mathcal{S}^{2h}.
\end{split}
\end{equation}
\end{thm}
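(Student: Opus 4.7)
The plan is to prove the cycle $(a)\Rightarrow(c)\Rightarrow(b)\Rightarrow(a)$, while extracting the identifications (\ref{limrel2}) and (\ref{limrel3}) along the way. The key tools are Proposition \ref{mapcontinuity} (continuity of the densities $\widetilde{\coc}(\pi,\cdot)$), Remark \ref{remcompactness} (compactness of $\mathcal{P}(\Sri)$), the identity $\P^{\bm{i}_n}\big(r_h(\bm{\sigma}^n,\bm{i}_n)=(\pi,h+1)\big)=\widetilde{\coc}(\pi,\bm{\sigma}^n)$ for $\pi\in\mathcal{S}^{2h+1}$ provided by Equation (\ref{probinterpret}), the approximation $\widetilde{\coc}(\pi,\mu_\sigma)=\widetilde{\coc}(\pi,\sigma)+O(1/|\sigma|)$ of Equation (\ref{coccrel}), and the pattern splitting identity (\ref{coocrel}) that reduces even-sized patterns to odd-sized ones.

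For $(a)\Rightarrow(c)$, I apply the continuous mapping theorem jointly to the family $\big(\widetilde{\coc}(\pi,\cdot)\big)_{\pi\in\mathcal{S}}$, obtaining $\big(\widetilde{\coc}(\pi,\bm{\mu}_{\bm{\sigma}^n})\big)_{\pi}\stackrel{(d)}{\to}\big(\widetilde{\coc}(\pi,\bm{\mu}^\infty)\big)_{\pi}$ in the product topology; Equation (\ref{coccrel}) then absorbs the $O(1/n)$ discrepancy between $\widetilde{\coc}(\pi,\bm{\mu}_{\bm{\sigma}^n})$ and $\widetilde{\coc}(\pi,\bm{\sigma}^n)$, producing (c) with $\bm{\Lambda}_\pi$ realised as $\widetilde{\coc}(\pi,\bm{\mu}^\infty)$, which is exactly (\ref{limrel2}). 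The step $(c)\Rightarrow(b)$ is then essentially tautological: by Equation (\ref{probinterpret}), for $\pi\in\mathcal{S}^{2h+1}$ the random vector appearing in (b) is (equal, not just equidistributed, to) the restriction of the random vector in (c) to odd-sized patterns, so one may take the joint law of $(\bm{\Gamma}^h_\pi)_{h,\pi}$ to be that of $(\bm{\Lambda}_\pi)_{\pi\in\bigcup_h\mathcal{S}^{2h+1}}$, whence the first line of (\ref{limrel3}); the second line for $\pi\in\mathcal{S}^{2h}$ falls out by applying the even-size case of the definition (\ref{gencocc}), which writes the even-size ball as a disjoint union of odd-size balls.

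The genuine work lies in $(b)\Rightarrow(a)$. Since $\mathcal{P}(\Sri)$ is compact Polish, the sequence $(\bm{\mu}_{\bm{\sigma}^n})_n$ is automatically tight in $\mathcal{P}(\mathcal{P}(\Sri))$, so subsequential limits $\bm{\mu}^\infty$ exist; the continuous mapping theorem applied to the clopen-ball evaluations $\mu\mapsto\mu\big(B((\pi,h+1),2^{-h})\big)$, combined with the identity from Equation (\ref{probinterpret}), forces the joint law of $\big(\bm{\mu}^\infty(B((\pi,h+1),2^{-h}))\big)_{h,\pi}$ to coincide with that of $(\bm{\Gamma}^h_\pi)_{h,\pi}$. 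The main obstacle is to show that this information determines the entire law of $\bm{\mu}^\infty$. To resolve this, I would first argue that $\bm{\mu}^\infty$ is almost surely supported on the fully infinite rooted permutations (those with $A=\Z$): Remark \ref{uyfvuoe2}, together with the independence of $\bm{\sigma}^n$ and $\bm{i}_n$, gives $\E\big[\bm{\mu}_{\bm{\sigma}^n}(B)\big]=\P\big((\bm{\sigma}^n,\bm{i}_n)\in B\big)\to 0$ for every ``defective'' ball $B$ of radius $2^{-h}$ centered at a rooted permutation of size less than $2h+1$ or with root not at position $h+1$; bounded convergence then yields $\E[\bm{\mu}^\infty(B)]=0$, and a countable union over such defective balls shows that $\bm{\mu}^\infty$ charges only fully infinite rooted permutations almost surely. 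On this support, the balls $\{B((\pi,h+1),2^{-h})\}_{h\in\Z_{>0},\,\pi\in\mathcal{S}^{2h+1}}$ form a countable $\pi$-system (any two are either disjoint or nested, by the ultra-metric property of $d$) generating the Borel $\sigma$-algebra; standard uniqueness results for random measures on Polish spaces then imply that the joint distribution of the coordinate projections $\mu\mapsto\mu(B)$ over this $\pi$-system determines the law of $\bm{\mu}^\infty$. Consequently all subsequential limits coincide in law, the full sequence $(\bm{\mu}_{\bm{\sigma}^n})$ converges, and $(b)\Rightarrow(a)$ is established.
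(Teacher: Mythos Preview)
Your proof is correct and follows essentially the same route as the paper: the cycle $(a)\Rightarrow(c)\Rightarrow(b)\Rightarrow(a)$, using the same tools (Proposition \ref{mapcontinuity}, compactness of $\mathcal{P}(\Sri)$, Equation (\ref{probinterpret}), Equation (\ref{coccrel}), and a separating-class uniqueness argument for random measures). The only minor difference is in $(b)\Rightarrow(a)$: you first show explicitly that any subsequential limit is a.s.\ supported on $\{A=\Z\}$ and then invoke uniqueness over the restricted $\pi$-system of ``central'' balls $B\big((\pi,h+1),2^{-h}\big)$, whereas the paper works directly with the full separating class $\mathcal{A}$ of all balls, handles the defective ones inline by declaring $\bm{\Gamma}^{h}_{r_h(A,\preccurlyeq)}=0$ there (via Remark \ref{uyfvuoe}), and cites Kallenberg's Theorems 2.2 and 4.11 explicitly for the random-measure uniqueness and Portmanteau steps.
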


Before giving the proof, as in Theorem \ref{weakbsequivalence}, we highlight two important facts.

\begin{rem}
	Note that the theorem states the existence of a random measure $\bm{\mu}^\infty$ but does not furnish any explicit construction of this object.
\end{rem}

\begin{rem}
	\label{uyfvuoe}
For every fixed $h\in\Z_{>0},$ the condition (b) in the previous theorem considers only the conditional probabilities $\P^{{\bm{i}}_n}\big(r_h(\bm{\sigma}^n,\bm{i}_n)=(\pi,j)\big)$ for $\pi\in\mathcal{S}^{2h+1}$ and $j=h+1.$ We remark that all the other cases are trivial. For more details see Remark \ref{uyfvuoe2}.
\end{rem}

\begin{proof}
	$(a)\Rightarrow(c)$ Let $\pi_1,\dots,\pi_r$ be a finite sequence of patterns. By Proposition \ref{mapcontinuity}, the map $\mu\mapsto (\widetilde{\coc}(\pi_i,\mu))_{1\leq i\leq r}$ is continuous. Therefore, the convergence of $\bm{\sigma}^n\stackrel{qBS}{\longrightarrow}\bm{\mu}^\infty$ implies the convergence of $(\widetilde{\coc}(\pi_i,\bm{\mu}_{\bm{\sigma}^n}))_{1\leq i\leq r}\stackrel{(d)}{\to} (\widetilde{\coc}(\pi_i,\bm{\mu}^\infty))_{1\leq i\leq r}.$  Since $\widetilde{\coc}(\pi_i,\bm{\mu}_{\bm{\sigma}^n})\stackrel{(\ref{coccrel})}{=}\widetilde{\coc}(\pi_i,{\bm{\sigma}^n})+O(1/n)$ we have the convergence in distribution of all the finite-dimensional marginals of the infinite vector $\big(\widetilde{\coc}(\pi,\bm{\sigma}^n)\big)_{\pi\in\mathcal{S}},$ and this proves $(c)$ (see for instance \cite[ex. 2.4, p.19]{billingsley2013convergence}).

$(c)\Rightarrow(b).$ Note that for all $\pi\in\mathcal{S}^{2h+1},$ $h\in\Z_{>0}$ we have by Equation (\ref{probinterpret}),
\begin{equation}
\label{pcocrel}
\P^{\bm{i}_n}\big(r_h(\bm{\sigma}^n,\bm{i}_n)=(\pi,h+1)\big)=\widetilde{\coc}(\pi,\bm{\sigma}^n).
\end{equation}

We can conclude that if $(c)$ holds then the vector $\Big(\P^{{\bm{i}}_n}\big(r_h(\bm{\sigma}^n,\bm{i}_n)=(\pi,h+1)\big)\Big)_{h\in\Z_{>0},\pi\in\mathcal{S}^{2h+1}}$ converges in distribution.

$(b)\Rightarrow(a).$ Thanks to Remark \ref{remcompactness}, $\bm{\mu}_{\bm{\sigma}^n}$  is a random variable with values in the compact space $(\mathcal{P}(\Sri),\nu).$ Therefore, applying Prokhorov's theorem to $(\mathcal{P}(\Sri),\nu),$ in order to show that $\bm{\mu}_{\bm{\sigma}^n}\stackrel{(d)}{\to}\bm{\mu}^\infty$ for some random measure $\bm{\mu}^\infty,$ it is enough to show that for every pair of convergent subsequences $(\bm{\mu}_{\bm{\sigma}^{n_k}})_{k\in\Z_{>0}}$ and $(\bm{\mu}_{\bm{\sigma}^{n_\ell}})_{\ell\in\Z_{>0}}$ with limits $\bm{\mu}^\infty_1$ and $\bm{\mu}^\infty_2$ respectively, it holds that
\begin{equation*}
\bm{\mu}^\infty_1\stackrel{(d)}{=}\bm{\mu}^\infty_2.
\end{equation*}
Thanks to \cite[Theorem 2.2]{kallenberg2017random}, in order to prove the above equality in distribution it is enough to show that
\begin{equation*}
\big(\bm{\mu}^\infty_1(B_1),\dots,\bm{\mu}^\infty_1(B_n)\big)\stackrel{(d)}{=}\big(\bm{\mu}^\infty_2(B_1),\dots,\bm{\mu}^\infty_2(B_n)\big),\quad\text{for all}\quad n\in\Z_{>0},\;B_1,\dots,B_n\in\mathcal{A},
\end{equation*}
where $\mathcal{A}$ is the separating class defined in Observation \ref{separating class}.

Fix $n\in\Z_{>0}$ and for all $1\leq i\leq n,$ let $B_i=B\big((A_i,\preccurlyeq_i),2^{-h_i}\big)$ for some $(A_i,\preccurlyeq_i)\in\Sri,$ $h_i\in\Z_{>0}.$ Since by assumption $\bm{\mu}_{\bm{\sigma}^{n_k}}\stackrel{(d)}{\rightarrow}\bm{\mu}^\infty_1$ then applying \cite[Theorem 4.11]{kallenberg2017random} (which is a generalization of the Portmanteau theorem for random measures) we have
\begin{equation*}
\begin{split}
\big(\bm{\mu}^\infty_1(B_i)\big)_{1\leq i\leq n}&\stackrel{(d)}{=}\lim_{k\to\infty}\big(\bm{\mu}_{\bm{\sigma}^{n_k}}(B_i)\big)_{1\leq i\leq n}\\
&\stackrel{(d)}{=}\lim_{k\to\infty}\Big(\P^{\bm{i}_{n_k}}\big(r_{h_i}(\bm{\sigma}^{n_k},\bm{i}_{n_k})=r_{h_i}(A_i,\preccurlyeq_i)\big)\Big)_{1\leq i\leq n}.
\end{split}
\end{equation*}
Therefore, using condition (c) and Remark \ref{uyfvuoe}, we can conclude that 
$$\big(\bm{\mu}^\infty_1(B_i)\big)_{1\leq i\leq n}\stackrel{(d)}{=}\big(\Gamma^{h_i}_{r_{h_i}(A_i,\preccurlyeq_i)}\big)_{1\leq i\leq n},$$
where $\Gamma^{h_i}_{r_{h_i}(A_i,\preccurlyeq_i)}=\Gamma^{h_i}_{\pi},$ if $r_{h_i}(A_i,\preccurlyeq_i)=(\pi,h_i+1)$, for some $\pi\in\mathcal{S}^{2h_i+1}$ and $\Gamma^{h_i}_{r_{h_i}(A_i,\preccurlyeq_i)}=0,$ otherwise.
Similarly we have the following equality in distribution,
\begin{equation*}
\big(\bm{\mu}^\infty_2(B_i)\big)_{1\leq i\leq n}\stackrel{(d)}{=}\big(\Gamma^{h_i}_{r_{h_i}(A_i,\preccurlyeq_i)}\big)_{1\leq i\leq n}.
\end{equation*}
Therefore $\bm{\mu}^\infty_1\stackrel{(d)}{=}\bm{\mu}^\infty_2.$
\end{proof}

\subsubsection{Relation between the annealed and the quenched versions of the Benjamini--Schramm convergence}

We recall that the \emph{intensity (measure)} $\E[\bm{\mu}]$ of a random measure $\bm{\mu}$ is defined as the expectation $\E[\bm{\mu}](A)\coloneqq\E\big[\bm{\mu}(A)\big]$ for all measurable sets $A.$ We have the following expected implication.
\begin{prop}
	\label{wsrel}
 For any $n\in\Z_{>0},$ let $\bm{\sigma}^n$ be a random permutation of size $n$ and $\bm{i}_n$ be a uniform random index in $[n]$, independent of $\bm{\sigma}^n.$ If $\bm{\sigma}^n\stackrel{qBS}{\longrightarrow}\bm{\mu}^\infty,$  for some random measure $\bm{\mu}^\infty$ on $\Sri,$ then  $$\bm{\sigma}^n\stackrel{aBS}{\longrightarrow}\bm{\sigma}^\infty,$$
 where $\bm{\sigma}^\infty$ is the random rooted infinite permutation with law $\mathcal{L}(\bm{\sigma}^\infty)=\E[\bm{\mu}^\infty].$  
\end{prop}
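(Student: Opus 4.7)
The plan is to go through the pattern-density characterizations proved in Theorems \ref{weakbsequivalence} and \ref{strongbsconditions}, since both notions of BS convergence have been reduced there to the behaviour of $\P(r_h(\bm{\sigma}^n,\bm{i}_n)=(\pi,h+1))$. More precisely, the quenched assumption gives, by condition $(b)$ of Theorem~\ref{strongbsconditions}, the joint convergence in distribution
\[
\Big(\P^{\bm{i}_n}\big(r_h(\bm{\sigma}^n,\bm{i}_n)=(\pi,h+1)\big)\Big)_{h,\pi} \stackrel{(d)}{\to}(\bm{\Gamma}^h_\pi)_{h,\pi},
\]
with $\bm{\Gamma}^h_\pi\stackrel{(d)}{=}\widetilde{\coc}(\pi,\bm{\mu}^\infty)=\bm{\mu}^\infty\big(B((\pi,h+1),2^{-h})\big)$ by \eqref{limrel3}. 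I want to derive from this condition $(b)$ of Theorem~\ref{weakbsequivalence} for the (annealed) BS convergence, together with the identification $\Gamma^h_\pi=\E[\bm{\mu}^\infty]\big(B((\pi,h+1),2^{-h})\big)$.

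The bridge between the two is bounded convergence. Each random variable $\P^{\bm{i}_n}\big(r_h(\bm{\sigma}^n,\bm{i}_n)=(\pi,h+1)\big)$ takes values in $[0,1]$, so convergence in distribution implies convergence of expectations. On the left we use the tower property (cf.~\eqref{condlaw}) to obtain
\[
\E\Big[\P^{\bm{i}_n}\big(r_h(\bm{\sigma}^n,\bm{i}_n)=(\pi,h+1)\big)\Big]=\P\big(r_h(\bm{\sigma}^n,\bm{i}_n)=(\pi,h+1)\big),
\]
while on the right, again by bounded convergence applied to the definition of intensity,
\[
\E[\bm{\Gamma}^h_\pi]=\E\Big[\bm{\mu}^\infty\big(B((\pi,h+1),2^{-h})\big)\Big]=\E[\bm{\mu}^\infty]\big(B((\pi,h+1),2^{-h})\big).
\]
Setting $\Gamma^h_\pi:=\E[\bm{\mu}^\infty]\big(B((\pi,h+1),2^{-h})\big)$, the two displays combine into condition $(b)$ of Theorem~\ref{weakbsequivalence}. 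That theorem then produces a random infinite rooted permutation $\bm{\sigma}^\infty$ with $\bm{\sigma}^n\stackrel{aBS}{\to}\bm{\sigma}^\infty$ and, via \eqref{limrel}, $\P(r_h(\bm{\sigma}^\infty)=(\pi,h+1))=\Gamma^h_\pi=\E[\bm{\mu}^\infty]\big(B((\pi,h+1),2^{-h})\big)$.

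It remains to identify the law of $\bm{\sigma}^\infty$ with the intensity $\E[\bm{\mu}^\infty]$. Both are probability measures on the compact Polish space $(\Sri,d)$, and they agree on every clopen ball of the form $B((\pi,h+1),2^{-h})$ for $\pi\in\mathcal{S}^{2h+1}$. Since these balls form the separating class $\mathcal{A}$ described in Observation~\ref{separating class}, the two measures coincide, which finishes the proof. There is no real obstacle here, the only delicate point is to be careful that the convergence in distribution of $\bm{\mu}_{\bm{\sigma}^n}$ in the weak topology indeed transfers to convergence of the intensities on the cylinder sets; this is guaranteed by the boundedness of the test functions involved and the fact that the balls $B((\pi,h+1),2^{-h})$ are clopen, so that $\mu\mapsto\mu(B)$ is continuous on $\mathcal{P}(\Sri)$ (essentially the argument of Proposition~\ref{mapcontinuity}).
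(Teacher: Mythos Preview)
Your argument is correct and follows essentially the same route as the paper: both use that convergence in distribution of the bounded random variables $\P^{\bm{i}_n}(r_h(\bm{\sigma}^n,\bm{i}_n)=(\pi,h+1))=\bm{\mu}_{\bm{\sigma}^n}(B)$ implies convergence of their expectations, combine this with the tower property~\eqref{condlaw}, and conclude via the separating class. The paper passes through condition~(c) of the two characterization theorems while you pass through condition~(b), but this is a cosmetic difference.

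One small imprecision: the balls $B((\pi,h+1),2^{-h})$ with $\pi\in\mathcal{S}^{2h+1}$ do \emph{not} exhaust the separating class $\mathcal{A}$ of Observation~\ref{separating class}, which also contains balls centred at rooted permutations $(\tau,j)$ with $|\tau|<2h+1$ or $j\neq h+1$. This is harmless here, since for fixed $h$ the balls you do check are disjoint and, by \eqref{limrel}, carry total $\mathcal{L}(\bm{\sigma}^\infty)$-mass equal to $1$; as both $\mathcal{L}(\bm{\sigma}^\infty)$ and $\E[\bm{\mu}^\infty]$ are probability measures agreeing on them, they must both vanish on the remaining balls of $\mathcal{A}$ (cf.\ Remark~\ref{uyfvuoe2}). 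The paper sidesteps this by computing directly on an arbitrary $B\in\mathcal{A}$.
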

\begin{proof}
	The first part of the statement, in particular the existence of the random rooted infinite permutation $\bm{\sigma}^\infty,$ is a trivial consequence of Theorem \ref{weakbsequivalence} and Theorem \ref{strongbsconditions}. For example, condition (c) in the second theorem trivially implies the condition (c) in the first theorem. 
	
	Therefore we just have to show that $\mathcal{L}(\bm{\sigma}^\infty)=\E[\bm{\mu}^\infty].$ For every clopen ball $B$ contained in the separating class $\mathcal{A}$ (see Observation \ref{convergece_determ}) we have
	\begin{equation*}
	\begin{split}
	\E[\bm{\mu}^\infty](B)&=\E\big[\bm{\mu}^\infty(B)\big]=\lim_{n\to\infty}\E\big[\bm{\mu}_{\bm{\sigma}^n}(B)\big]=\lim_{n\to\infty}\E\big[\P^{\bm{i}_n}\big((\bm{\sigma}^n,\bm{i}_n)\in B\big)\big]\\
	&\stackrel{(\ref{condlaw})}{=}\lim_{n\to\infty}\P\big((\bm{\sigma}^n,\bm{i}_n)\in B\big)=\P(\bm{\sigma}^\infty\in B)=\mathcal{L}(\bm{\sigma}^\infty)(B),
	\end{split}
	\end{equation*}
	where in the second equality we used that $\bm{\sigma}^n\stackrel{qBS}{\longrightarrow}\bm{\mu}^\infty$ and in the fifth equality that $\bm{\sigma}^n\stackrel{aBS}{\longrightarrow}\bm{\sigma}^\infty.$ Since the two measures agree in a separating class we can conclude that they are equal.
\end{proof}

We now show in the next example that in general the two versions of Benjamini--Schramm convergence are not equivalent.

\begin{exmp}
	\label{notequivalent}
	For all $n\in\Z_{>0},$ let $\bm{\sigma}^n$ be the random permutation defined by
	$$\P(\bm{\sigma}^n=12\dots n)=\frac{1}{2}=\P(\bm{\sigma}^n=n\;n\text-1\dots 1),$$
	and $\tau_n$  be the deterministic permutation (see also Fig.~\ref{examplesequence})
	\begin{equation*}
	\begin{split}
	&\tau_n=135\dots n\;n\text-1\;n\text-3\dots 2,\quad\text{if } n\text{ is odd,}\\
	&\tau_n=135\dots n\text-1\;n\;n\text-2\dots 2,\quad\text{if } n\text{ is even.}\\
	\end{split}
	\end{equation*}
It is easy to show that the two sequences $(\bm{\sigma}^n)_{n\in\Z_{>0}}$  and $(\tau_n)_{n\in\Z_{>0}}$ have two different quenched Benjamini--Schramm limits but the same annealed Benjamini--Schramm limit. Therefore, the alternating sequence between $\bm{\sigma}^n$ and $\tau_n$ converges in the annealed Benjamini--Schramm sense but does not converge in the quenched Benjamini--Schramm sense.
	
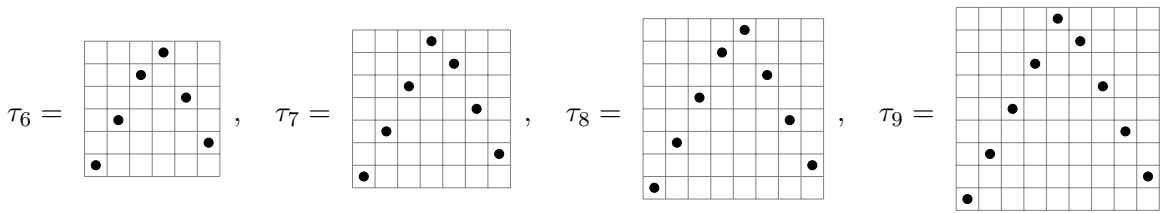
\begin{figure}[h]
	\begin{equation*}
	\tau_6=
	\begin{array}{lcr}
	\begin{tikzpicture}
	\begin{scope}[scale=.3]
	\permutation{1,3,5,6,4,2} 
	\end{scope}
	\end{tikzpicture}
	\end{array},
	\quad\tau_7=
	\begin{array}{lcr}
	\begin{tikzpicture}
	\begin{scope}[scale=.3]
	\permutation{1,3,5,7,6,4,2} 
	\end{scope}
	\end{tikzpicture}
	\end{array},
	\quad\tau_8=
	\begin{array}{lcr}
	\begin{tikzpicture}
	\begin{scope}[scale=.3]
	\permutation{1,3,5,7,8,6,4,2} 
	\end{scope}
	\end{tikzpicture}
	\end{array},
	\quad\tau_9=
	\begin{array}{lcr}
	\begin{tikzpicture}
	\begin{scope}[scale=.3]
	\permutation{1,3,5,7,9,8,6,4,2} 
	\end{scope}
	\end{tikzpicture}
	\end{array}
	\end{equation*}
	\caption{Some elements of the sequence $(\tau_n)_{n\in\Z_{>0}}.$ \label{examplesequence}}
\end{figure} 

Indeed, 
\begin{equation*}
\big(\widetilde{\coc}(\pi,\bm{\sigma}^n)\big)_{\pi\in\mathcal{S}}\stackrel{(d)}{\to}(\bm{X}_{\pi})_{\pi\in\mathcal{S}},
\end{equation*}
where, taking a Bernoulli random variable $\bm{Y}$ with parameter $1/2,$ $\bm{X}_{\pi}=\bm{Y}$ if $\pi=12\dots k$ for some $k\in\Z_{>0},$ $\bm{X}_{\pi}=1-\bm{Y}$ if $\pi=k\;k\text{-}1\dots 1$  for some $k\in\Z_{>0}$ and $\bm{X}_{\pi}=0$ otherwise.

On the contrary, 
\begin{equation*}
\big(\widetilde{\coc}(\pi,\tau_n)\big)_{\pi\in\mathcal{S}}\stackrel{(d)}{\to}(Z_{\pi})_{\pi\in\mathcal{S}},
\end{equation*}
where $Z_{\pi}=\frac{1}{2}$ if $\pi=12\dots k,$ or $\pi=k\;k\text{-}1\dots 1$ and $Z_{\pi}=0$ otherwise.

We can conclude, using condition (c) in Theorem \ref{strongbsconditions}, that the two sequences $(\bm{\sigma}^n)_{n\in\Z_{>0}}$  and $(\tau_n)_{n\in\Z_{>0}}$ have two different quenched Benjamini--Schramm limits (they are different by Equation (\ref{limrel2})). On the contrary, noting that $\lim_{n\to\infty}\E\big[\widetilde{\coc}(\pi,\bm{\sigma}^n)\big]=\lim_{n\to\infty}\E\big[\widetilde{\coc}(\pi,\tau_n)\big]$ for all $\pi\in\mathcal{S},$ then, using condition (c) in Theorem \ref{weakbsequivalence}, we conclude that the two sequences $(\bm{\sigma}^n)_{n\in\Z_{>0}}$  and $(\tau_n)_{n\in\Z_{>0}}$ have the same annealed Benjamini--Schramm limit.
\end{exmp}

We now analyze the particular case when the limiting objects $(\bm{\Lambda}_{\pi})_{\pi\in\mathcal{S}}$ (or $(\bm{\Gamma}^h_{\pi})_{h\in\Z_{>0},\pi\in\mathcal{S}^{2h+1}}$) in Theorem \ref{strongbsconditions} are deterministic, \emph{i.e.,} when there is a concentration phenomenon (this will be the case of Sections \ref{231} and \ref{321}). Before stating our result we need the following.
\begin{rem}
	When a random measure $\bm{\mu}$ on $\Sri$ is almost surely equal to a deterministic measure $\mu$ on $\Sri$, we will simply denote it with $\mu.$ In particular, if a sequence of random permutations $(\bm{\sigma}^n)_{n\in\Z_{>0}}$ converges in the quenched Benjamini--Schramm sense to a deterministic measure $\mu$ (instead of random measure) on $\Sri$, we will simply write $\bm{\sigma}^n\stackrel{qBS}{\longrightarrow}\mu.$
\end{rem}

\begin{cor}
	\label{detstrongbsconditions}
	For any $n\in\Z_{>0},$ let $\bm{\sigma}^n$ be a random permutation of size $n$ and $\bm{i}_n$ be a uniform random index in $[n]$, independent of $\bm{\sigma}^n.$ Then the following are equivalent:
	\begin{enumerate}[(a)]
		\item there exists a (deterministic) measure $\mu$ on $\Sri$ such that   $$\bm{\sigma}^n\stackrel{qBS}{\longrightarrow}\mu;$$
		\item there exists an infinite vector of non-negative real numbers $(\Lambda_{\pi})_{\pi\in\mathcal{S}}$ such that $$\big(\widetilde{\coc}(\pi,\bm{\sigma}^n)\big)_{\pi\in\mathcal{S}}\stackrel{P}{\to}(\Lambda_{\pi})_{\pi\in\mathcal{S}}$$ w.r.t.\ the product topology;
		\item there exists an infinite vector of non-negative real numbers $(\Lambda'_{\pi})_{\pi\in\mathcal{S}}$ such that for all $\pi\in\mathcal{S},$ $$\widetilde{\coc}(\pi,\bm{\sigma}^n)\stackrel{P}{\to}\Lambda'_{\pi};$$
		\item for all $h\in\Z_{>0},$ there exists a family of non-negative real numbers $(\Gamma^h_{\pi})_{\pi\in\mathcal{S}^{2h+1}}$ such that  $$\P^{{\bm{i}}_n}\big(r_h(\bm{\sigma}^n,\bm{i}_n)=(\pi,h+1)\big)\stackrel{P}{\to}\Gamma^h_{\pi},\quad\text{for all}\quad \pi\in\mathcal{S}^{2h+1}.$$
	\end{enumerate}
	In particular, if one of the four conditions holds (and so all of them) we have the same relations as in Theorem $\ref{strongbsconditions}$ with the additional relation $\Lambda_{\pi}=\Lambda'_{\pi}$ for all $\pi\in\mathcal{S}.$
\end{cor}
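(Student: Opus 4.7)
The plan is to bootstrap Theorem~\ref{strongbsconditions} using two classical facts: convergence in distribution to a deterministic limit is equivalent to convergence in probability, and since $\mathcal{S}$ is countable the product $[0,1]^{\mathcal{S}}$ is a Polish space whose product topology has convergence in probability equivalent to coordinate-wise convergence in probability. This will simultaneously yield all four equivalences and identify $\Lambda_\pi = \Lambda'_\pi = \widetilde{\coc}(\pi,\mu)$, with $\Gamma^h_\pi = \Lambda_\pi$ for $\pi \in \mathcal{S}^{2h+1}$.

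The equivalence (b) $\Leftrightarrow$ (c), together with the equality $\Lambda_\pi = \Lambda'_\pi$, will follow immediately from the second fact above. For (c) $\Leftrightarrow$ (d) I would invoke Equation~(\ref{pcocrel}), which gives
\[
\P^{\bm{i}_n}\bigl(r_h(\bm{\sigma}^n,\bm{i}_n)=(\pi,h+1)\bigr) = \widetilde{\coc}(\pi,\bm{\sigma}^n)
\]
for $\pi \in \mathcal{S}^{2h+1}$; thus condition~(d) coincides with condition~(c) restricted to odd-size patterns, and identity~(\ref{coocrel}) lets me recover convergence for even-size patterns from convergence for odd-size ones (the boundary indicator contributing $O(1/n)$).

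For (a) $\Rightarrow$ (b) the plan is short: if $\bm{\sigma}^n \stackrel{qBS}{\longrightarrow} \mu$ with $\mu$ deterministic, then Theorem~\ref{strongbsconditions} gives the convergence in distribution of $(\widetilde{\coc}(\pi,\bm{\sigma}^n))_{\pi \in \mathcal{S}}$ to a random vector whose law is prescribed by~(\ref{limrel2}); since $\mu$ is deterministic, so are the values $\widetilde{\coc}(\pi,\mu)$, the limit is constant, and convergence in distribution to a constant upgrades to convergence in probability, giving~(b) with $\Lambda_\pi = \widetilde{\coc}(\pi,\mu)$.

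The main obstacle lies in (b) $\Rightarrow$ (a). Convergence in probability implies convergence in distribution, so Theorem~\ref{strongbsconditions} produces an a priori random measure $\bm{\mu}^\infty$ with $\bm{\sigma}^n \stackrel{qBS}{\longrightarrow} \bm{\mu}^\infty$, and by~(\ref{limrel2}) the random variable $\widetilde{\coc}(\pi, \bm{\mu}^\infty)$ is almost surely equal to the deterministic constant $\Lambda_\pi$ for every $\pi \in \mathcal{S}$. The hard part will be to upgrade this coordinate-wise almost-sure determinism into almost-sure determinism of $\bm{\mu}^\infty$ itself. My approach is to observe that, by~(\ref{gencocc}), the family $(\widetilde{\coc}(\pi, \cdot))_{\pi \in \mathcal{S}}$ specifies the $\mu$-measure of every ball in the separating class $\mathcal{A}$ from Observation~\ref{separating class}; although this class is a priori indexed by $\Sri$, it is in fact countable up to equality of balls, since a ball of radius $2^{-h}$ depends only on the $r_h$-image of its center, which lies in the finite set $\Sr^{\leq 2h+1}$. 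Intersecting countably many almost-sure events, $\bm{\mu}^\infty$ almost surely agrees on $\mathcal{A}$ with a single deterministic measure $\mu$, and the separating property of $\mathcal{A}$ then forces $\bm{\mu}^\infty = \mu$ almost surely, giving~(a).
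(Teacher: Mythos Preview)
Your proposal is correct and follows the same high-level strategy as the paper: invoke Theorem~\ref{strongbsconditions}, then exploit the fact that convergence in distribution to a constant is convergence in probability. The logical skeleton $(a)\Rightarrow(b)$, $(b)\Leftrightarrow(c)\Leftrightarrow(d)$, and the hard direction back to $(a)$ is the same in both.

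The one tactical difference worth noting is in the implication towards $(a)$. The paper does not argue directly that $\bm{\mu}^\infty$ is almost surely constant on a countable separating class; instead it brings in the \emph{annealed} limit $\bm{\sigma}^\infty$ via Proposition~\ref{wsrel}, and then shows $\bm{\mu}^\infty=\mathcal{L}(\bm{\sigma}^\infty)$ almost surely by checking equality of finite-dimensional distributions on balls of $\mathcal{A}$ through Kallenberg's Theorem~2.2 and the Portmanteau-type Theorem~4.11. Your route is more self-contained (it avoids Proposition~\ref{wsrel} entirely) and arguably cleaner; the paper's route has the small bonus of explicitly identifying the deterministic limit as the law of the annealed limit. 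One small refinement you should make explicit: Equation~(\ref{gencocc}) only gives $\bm{\mu}^\infty$-measures of balls centered at $(\pi,h+1)$ with $|\pi|=2h+1$, not of \emph{all} balls in $\mathcal{A}$; the remaining balls (those with ``short'' or off-center $r_h$-image) have $\bm{\mu}^\infty$-measure zero almost surely, either by Remark~\ref{uyfvuoe} or because $\sum_{\pi\in\mathcal{S}^{2h+1}}\Lambda_\pi=1$. Once that is said, your countability-plus-separating-class argument goes through exactly as you describe.
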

\begin{rem}
	Thanks to Proposition \ref{wsrel} note that if (a) holds then $\bm{\sigma}^n\stackrel{aBS}{\longrightarrow}\bm{\sigma}^\infty,$
	where $\bm{\sigma}^\infty$ is the random rooted infinite permutation with law $\mathcal{L}(\bm{\sigma}^\infty)=\mu.$ 
\end{rem}
\begin{proof}
	$(a)\Rightarrow(b)\Rightarrow(c)\Leftrightarrow(d).$ The first and the last relations are simple consequences of Theorem \ref{strongbsconditions}, where the determinism is given by Equations (\ref{limrel2}) and (\ref{limrel3}). The second relation is trivial.
	
	$(c)\Rightarrow(a).$ Since when the limits are deterministic, the pointwise convergence in distribution is equivalent to the convergence in distribution for the product topology, assumption (c) implies condition (b). Therefore, using Theorem \ref{strongbsconditions} and Proposition \ref{wsrel}, we obtain that there exists a random measure $\bm{\mu}^{\infty}$ on $\Sri$ and a random rooted permutation $\bm{\sigma}^{\infty}$ (with $\mathcal{L}(\bm{\sigma}^\infty)=\E[\bm{\mu}^\infty]$) such that
	$$\bm{\sigma}^n\stackrel{qBS}{\longrightarrow}\bm{\mu}^{\infty}\quad\text{and}\quad\bm{\sigma}^n\stackrel{aBS}{\longrightarrow}\bm{\sigma}^\infty.$$
	In order to conclude, we need only to show that, under our assumption, we have $\bm{\mu}^{\infty}=\mathcal{L}(\bm{\sigma}^\infty)$  a.s.\ (this obviously implies that $\bm{\mu}^{\infty}$ is deterministic).
	Using again \cite[Theorem 2.2]{kallenberg2017random} and Observation \ref{separating class}, in order to prove the above equality it is enough to show that
	\begin{equation*}
	\big(\bm{\mu}^\infty(B_1),\dots,\bm{\mu}^\infty(B_k)\big)=\big(\mathcal{L}(\bm{\sigma}^\infty)(B_1),\dots,\mathcal{L}(\bm{\sigma}^\infty)(B_k)\big),\quad\text{for all}\quad k\in\Z_{>0},\;B_1,\dots,B_k\in\mathcal{A}.
	\end{equation*}
	Fix $k\in\Z_{>0}$ and for all $1\leq i\leq k$ let $B_i=B\big((A_i,\preccurlyeq_i),2^{-h_i}\big)$ for some $(A_i,\preccurlyeq_i)\in\Sri,$ $h_i\in\Z_{>0}.$ Since $\bm{\mu}_{\bm{\sigma}^{n}}\stackrel{(d)}{\rightarrow}\bm{\mu}^\infty$ then applying \cite[Theorem 4.11]{kallenberg2017random} we have
	\begin{equation*}
	\begin{split}
	\big(\bm{\mu}^\infty(B_i)\big)_{1\leq i\leq k}&\stackrel{(d)}{=}\lim_{n\to\infty}\big(\bm{\mu}_{\bm{\sigma}^n}(B_i)\big)_{1\leq i\leq k}\\
	&\stackrel{(d)}{=}\lim_{n\to\infty}\Big(\P^{\bm{i}_n}\big(r_{h_i}(\bm{\sigma}^{n},\bm{i}_{n})=r_{h_i}(A_i,\preccurlyeq_i)\big)\Big)_{1\leq i\leq k}\stackrel{(d)}{=}\big(\Gamma^{h_i}_{r_{h_i}(A_i,\preccurlyeq_i)}\big)_{1\leq i\leq k},
	\end{split}
	\end{equation*}
	where in the last equality we use assumption (d) that is equivalent to (c). We also use again the fact that when the limits are deterministic, the pointwise convergence in distribution is equivalent to the convergence in distribution for the product topology.
	Similarly, since $(\bm{\sigma}^{n},\bm{i}_n)\stackrel{(d)}{\rightarrow}\bm{\sigma}^\infty$, using the Portmanteau theorem, we have 
	\begin{equation*}
	\begin{split}
	\big(\mathcal{L}(\bm{\sigma}^\infty)(B_i)\big)_{1\leq i\leq k}&=\lim_{n\to\infty}\big(\P\big((\bm{\sigma}^{n},\bm{i}_{n})\in B_i\big)\big)_{1\leq i\leq k}\\
	&=\lim_{n\to\infty}\Big(\P\big(r_{h_i}(\bm{\sigma}^{n},\bm{i}_{n})=r_{h_i}(A_i,\preccurlyeq_i)\big)\Big)_{1\leq i\leq k}=\big(\Gamma^{h_i}_{r_{h_i}(A_i,\preccurlyeq_i)}\big)_{1\leq i\leq k}.
	\end{split}
	\end{equation*}
	Therefore $\bm{\mu}^{\infty}=\mathcal{L}(\bm{\sigma}^\infty).$ 
\end{proof}

We conclude this section with the following.

\begin{rem}
	We have seen that in order to prove the convergence in the quenched Benjamini--Schramm sense when the limiting object is deterministic, it is enough to show the pointwise convergence for the vector $\big(\widetilde{\coc}(\pi,\bm{\sigma}^n)\big)_{\pi\in\mathcal{S}}.$ This is not true when the limiting object is random. Indeed it is quite easy to explicitly construct a counter example.
	
	For every $n\in\Z_{>0},$ we can consider the three permutations
	$$\sigma^n=12\dots n,\quad \tau^n=n\dots21,\quad \rho^n=
	\begin{array}{lcr}
	\includegraphics[scale=0.6]{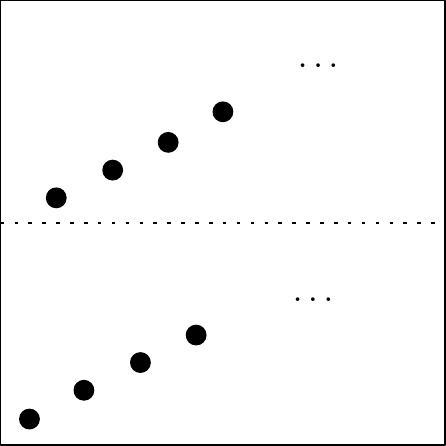}\\
	\end{array},$$
	and we can define the two following random permutations
	\begin{equation*}
	\bm{\pi}^n\coloneqq\begin{cases} 
	\sigma^{2n}\oplus\tau^n, & \text{with probability } 1/3, \\
	\sigma^{n}\oplus\rho^{2n}, & \text{with probability } 1/3, \\
	\tau^{2n}\oplus\rho^n, & \text{with probability } 1/3,
	\end{cases}\qquad
	\tilde{\bm{\pi}}^n\coloneqq\begin{cases} 
	\sigma^{n}\oplus\tau^{2n}, & \text{with probability } 1/3, \\
	\sigma^{2n}\oplus\rho^n, & \text{with probability } 1/3, \\
	\tau^{n}\oplus\rho^{2n}, & \text{with probability } 1/3,
	\end{cases}
	\end{equation*}
	where $\oplus$ denotes the direct sum of two permutations, \emph{i.e.,} for $\pi\in\mathcal{S}^k$ and $\sigma\in\mathcal{S}^n,$ $$\pi\oplus\sigma=\pi_1\dots\pi_k(\sigma_1+k)\dots(\sigma_n+k).$$
	For each pattern $\pi\in\mathcal{S},$ both $\widetilde{\coc}(\pi,\bm{\pi}^n)$ and $\widetilde{\coc}(\pi,\tilde{\bm{\pi}}^n)$ converge to the same limit. However the joint vectors $\big(\widetilde{\coc}(\pi,\bm{\pi}^n)\big)_{\pi\in\mathcal{S}}$ and $\big(\widetilde{\coc}(\pi,\tilde{\bm{\pi}}^n)\big)_{\pi\in\mathcal{S}}$ have different limits in distribution for the product topology.
\end{rem}

\subsection{Characterization of the annealed Benjamini--Schramm limits}
We now characterize the annealed Benjamini--Schramm limiting objects. More precisely we show that such a limiting object satisfies a ``shift-invariant" property (see Definition \ref{shiftinvariant} below). Conversely, we also prove in Theorem \ref{conversecostruction} that every random ``shift-invariant" infinite rooted permutation is the annealed Benjamini--Schramm limit of a sequence of random finite rooted permutations. 

We start by considering for all patterns $\pi$ of size $k$ and every shift $s\in\Z,$ the following sets
\begin{equation*}
O^s(\pi)=\big\{(A,\preccurlyeq)\in\Sri:[1+s,k+s]\subseteq A\text{ and }\pi_{1}+s\preccurlyeq\pi_{2}+s\preccurlyeq\dots\preccurlyeq\pi_{k}+s\big\}.
\end{equation*}
Moreover we set $O(\pi)\coloneqq O^0(\pi).$
\begin{defn}
\label{shiftinvariant}
	We say that a random rooted permutation $(\bm{A},\bm{\preccurlyeq})\in\Sri$ is \emph{shift-invariant} if for all patterns $\pi\in\mathcal{S},$
	\begin{equation}
	\label{fin_dim_marg}
	\P\big((\bm{A},\bm{\preccurlyeq})\in O^s(\pi)\big)=\P\big((\bm{A},\bm{\preccurlyeq})\in O(\pi)\big),\quad\text{for all}\quad s\in\Z.
	\end{equation}
\end{defn}

\begin{obs}
	Note that for a random rooted shift-invariant permutation $(\bm{A},\bm{\preccurlyeq})$ we have $\bm{A}=\mathbb{Z}$ a.s.
	Indeed,
	\begin{equation}
	\P\big(\bm{A}=\mathbb{Z}\big)=\P\Big(\bigcap_{s\in\Z_{\geq0}}\big\{\{s,-s\}\subset\bm{A}\big\}\Big)=\lim_{s\to \infty}\P\big(\{s,-s\}\subset\bm{A}\big),
	\end{equation}
	where we used that $\bm{A}$ is an interval. Rewriting the last term as $$\P\big(\{s,-s\}\subset\bm{A}\big)=\P\big((\bm{A},\bm{\preccurlyeq})\in O^{(s-1)}(1),(\bm{A},\bm{\preccurlyeq})\in O^{-(s-1)}(1)\big)$$ and noting that trivially $\P\big((\bm{A},\bm{\preccurlyeq})\in O^{(-1)}(1)\big)=1,$ then using the shift-invariant property we conclude that $\P\big(\bm{A}=\mathbb{Z}\big)=1.$
\end{obs}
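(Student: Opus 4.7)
The plan is to reduce the claim to the single statement $\P(s\in\bm{A})=1$ for every $s\in\Z$. Once that is known, since $\bm{A}$ is by definition an interval of $\Z$ containing $0$, the event $\{\bm{A}=\Z\}$ coincides with the countable intersection $\bigcap_{s\in\Z}\{s\in\bm{A}\}$, and therefore has probability $1$ as a countable intersection of almost-sure events.

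To produce each equality $\P(s\in\bm{A})=1$, I apply shift-invariance to the trivial length-one pattern $\pi=1$. Unwinding the definition of $O^r(\pi)$ in this special case, the chain condition becomes vacuous and one simply gets
\[
O^r(1)=\bigl\{(A,\preccurlyeq)\in\Sri \,:\, 1+r\in A\bigr\},
\]
so that $\{s\in\bm{A}\}=\{(\bm{A},\bm{\preccurlyeq})\in O^{s-1}(1)\}$ for every $s\in\Z$. Since every element of $\Sri$ has $0$ in its ground set by definition, the case $s=0$ is automatic: $\P((\bm{A},\bm{\preccurlyeq})\in O^{-1}(1))=1$.

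Now invoke the shift-invariance relation \eqref{fin_dim_marg} with $\pi=1$: it says exactly that $\P((\bm{A},\bm{\preccurlyeq})\in O^{r}(1))$ is the same number for every $r\in\Z$. Combined with the previous step, this common value must equal $1$, so $\P(s\in\bm{A})=1$ for every $s\in\Z$, and the conclusion $\P(\bm{A}=\Z)=1$ follows by intersecting over $s\in\Z$.

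There is no real obstacle here; the proof is short and purely formal. The only content is identifying the right pattern to feed into the shift-invariance hypothesis, namely the singleton pattern $\pi=1$, and using the built-in fact $0\in A$ for all $(A,\preccurlyeq)\in\Sri$ as the seed of full probability on which shift-invariance then spreads to every translate.
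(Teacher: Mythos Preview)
Your proof is correct and follows essentially the same approach as the paper: both identify $O^r(1)=\{1+r\in A\}$, use the automatic fact $\P(O^{-1}(1))=1$ as the seed, and propagate via shift-invariance to get $\P(O^r(1))=1$ for all $r$. Your version is in fact slightly cleaner, working directly with the countable intersection $\bigcap_{s\in\Z}\{s\in\bm{A}\}$ rather than packaging the events as $\{\{s,-s\}\subset\bm{A}\}$ first.
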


\begin{obs}
	We recall that in the introduction (see the discussion before Theorem \ref{shiftinvthm}) we said that a random infinite rooted permutation, or equivalently a random total order, is shift-invariant if it has the same distribution than its shift. Note that the probabilities in Equation (\ref{fin_dim_marg}), when $s=1,$ are the finite-dimensional distributions of the random infinite rooted permutation $(\bm{A},\bm{\preccurlyeq})$ and its shift. Therefore, the two definitions coincide since a random variable on a product space is completely determine by its finite-dimensional distributions (see for instance \cite[ex. 1.2, pp. 9-11]{billingsley2013convergence}).
\end{obs}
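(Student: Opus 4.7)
The plan is to identify a random infinite rooted permutation $(\Z,\bm{\preccurlyeq})$ with a random element of the countable product space $\{0,1\}^{\Z\times\Z}$ via the pairwise indicators $\mathds{1}_{i\bm{\preccurlyeq}j}$, and then to invoke the classical fact that a probability law on such a product space is completely determined by its finite-dimensional marginals (as cited from \cite{billingsley2013convergence}). Thanks to the preceding observation, $\bm{A}=\Z$ almost surely in either version of shift-invariance, so this identification is licit.

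First I would check that the events $\{O^s(\pi):s\in\Z,\pi\in\mathcal{S}\}$ encode exactly the finite-dimensional marginals of $(\Z,\bm{\preccurlyeq})$. Indeed, for every $\pi\in\mathcal{S}^{k}$ and every $s\in\Z$, the event $\{(\bm{A},\bm{\preccurlyeq})\in O^s(\pi)\}$ is precisely the cylinder event that the restriction of $\bm{\preccurlyeq}$ to the finite interval $[1+s,k+s]$ forms the pattern $\pi$; as $\pi$ ranges over $\mathcal{S}^k$, these events partition the full marginal on $[1+s,k+s]$. Since any finite subset of $\Z$ is contained in some integer interval, the probabilities $\bigl\{\P((\bm{A},\bm{\preccurlyeq})\in O^s(\pi)):s\in\Z,\pi\in\mathcal{S}\bigr\}$ determine every finite-dimensional marginal of $\bm{\preccurlyeq}$, hence determine the law of $(\Z,\bm{\preccurlyeq})$.

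Second, unfolding the definition of the shift $\bm{\preccurlyeq}'$ yields $a\bm{\preccurlyeq}'b\Leftrightarrow(a-1)\bm{\preccurlyeq}(b-1)$, so for every $\pi\in\mathcal{S}^k$ and every $s\in\Z$,
\[
\P\bigl((\Z,\bm{\preccurlyeq}')\in O^s(\pi)\bigr)=\P\bigl((\Z,\bm{\preccurlyeq})\in O^{s-1}(\pi)\bigr).
\]
Combining this with the first step, the intro-level definition $(\Z,\bm{\preccurlyeq})\stackrel{(d)}{=}(\Z,\bm{\preccurlyeq}')$ is equivalent to
\[
\P\bigl((\bm{A},\bm{\preccurlyeq})\in O^{s}(\pi)\bigr)=\P\bigl((\bm{A},\bm{\preccurlyeq})\in O^{s-1}(\pi)\bigr),\qquad\forall s\in\Z,\ \forall\pi\in\mathcal{S},
\]
which by an immediate induction on $|s|$ is equivalent to the full condition (\ref{fin_dim_marg}) defining Definition \ref{shiftinvariant}; the single-step identity at $s=\pm 1$ already propagates to all integer shifts. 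The only subtlety is a bookkeeping of the indices and of the sign convention on the shift, and the substantive input is the Kolmogorov-type characterization of product-space laws by finite-dimensional marginals; no genuine obstacle arises.
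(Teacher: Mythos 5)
Your argument is correct and follows essentially the same route as the paper's one-line justification: encode the law of the random total order by its finite-dimensional marginals (the probabilities of the cylinder events $O^s(\pi)$), observe that the shift turns $O^s$ into $O^{s-1}$, and invoke the fact that a law on a product space is determined by its finite-dimensional distributions. The paper states this tersely; your version only makes the bookkeeping explicit (all shifts $s$ rather than just $s=1$, the telescoping to reduce to $O^0$, and the a.s.\ equality $\bm{A}=\Z$ that legitimizes the product-space identification).
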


\begin{prop}
	\label{easyimplication}
	Let $\bm{\sigma}^{\infty}$ be the annealed Benjamini--Schramm limit of a sequence $(\bm{\sigma}^n)_{n\in\Z_{>0}}$ of random elements in $\mathcal{S}$ with $|\bm{\sigma}^n|=n$ a.s.\ (and $\bm{i}_n$ a uniform random index in $[n]$, independent of $\bm{\sigma}^n$)  then $\bm{\sigma}^{\infty}$ is shift-invariant.
\end{prop}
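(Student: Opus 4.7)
The plan is to prove shift-invariance by testing it against the cylinder events $O^s(\pi)$ one at a time, and showing that each such probability is asymptotically independent of the shift $s$ because the root $\bm{i}_n$ is uniform.

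First, I would observe that for any pattern $\pi \in \mathcal{S}^k$ and any $s \in \Z$, the set $O^s(\pi) \subseteq \Sri$ is clopen in the local topology. Indeed, whether a rooted permutation $(A,\preccurlyeq)$ belongs to $O^s(\pi)$ is determined purely by its restriction $r_{h}(A,\preccurlyeq)$ for any $h \geq \max(|s|+k, |s|)$: it is a finite union of cylinders of the form $\{r_h(\cdot) = (\tau,j)\}$, and each such cylinder is a clopen ball. Therefore, by the Portmanteau theorem applied to the convergence in distribution $(\bm{\sigma}^n,\bm{i}_n) \xrightarrow{(d)} \bm{\sigma}^\infty$, I have
\[
\P\bigl((\bm{A},\bm{\preccurlyeq}) \in O^s(\pi)\bigr) \;=\; \lim_{n\to\infty}\P\bigl((\bm{\sigma}^n,\bm{i}_n) \in O^s(\pi)\bigr).
\]

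Second, I would compute the prelimit probability explicitly. Conditionally on $\bm{\sigma}^n$, the event $\{(\bm{\sigma}^n,\bm{i}_n) \in O^s(\pi)\}$ occurs iff $\bm{i}_n + s + 1, \ldots, \bm{i}_n + s + k$ all lie in $[1,n]$ and the pattern of $\bm{\sigma}^n$ on this interval equals $\pi$. Using that $\bm{i}_n$ is uniform on $[n]$ and independent of $\bm{\sigma}^n$, this gives
\[
\P\bigl((\bm{\sigma}^n,\bm{i}_n) \in O^s(\pi)\bigr) \;=\; \frac{1}{n}\,\E\bigl[\coc(\pi,\bm{\sigma}^n)\bigr] \;+\; O\!\left(\tfrac{|s|+k}{n}\right),
\]
where the error term absorbs the $O(|s|+k)$ values of $\bm{i}_n$ near the boundary for which the window of positions $\bm{i}_n+s+1, \ldots, \bm{i}_n+s+k$ is not entirely contained in $[1,n]$. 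Crucially, the leading term does not depend on $s$, and the error vanishes as $n\to\infty$.

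Third, setting $s=0$ in the same computation yields the identical leading term $\E[\widetilde{\coc}(\pi,\bm{\sigma}^n)]$ up to an $O(k/n)$ boundary error. Subtracting, I conclude
\[
\P\bigl((\bm{\sigma}^n,\bm{i}_n) \in O^s(\pi)\bigr) - \P\bigl((\bm{\sigma}^n,\bm{i}_n) \in O(\pi)\bigr) \;=\; O\!\left(\tfrac{|s|+k}{n}\right) \;\xrightarrow[n\to\infty]{}\; 0,
\]
and passing to the limit using the Portmanteau equality above gives $\P(\bm{\sigma}^\infty \in O^s(\pi)) = \P(\bm{\sigma}^\infty \in O(\pi))$, which is exactly the shift-invariance property in Definition \ref{shiftinvariant}. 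The only mildly delicate step is verifying that $O^s(\pi)$ is clopen (so that Portmanteau gives actual equality of limits, not just an inequality), but this is immediate from the fact that membership in $O^s(\pi)$ depends on finitely many positions around the root.
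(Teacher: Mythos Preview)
Your proposal is correct and follows essentially the same approach as the paper: use that $O^s(\pi)$ is clopen, apply Portmanteau, and then observe that up to boundary effects of order $O((|s|+k)/n)$ the prelimit probability equals $\E[\widetilde{\coc}(\cdot,\bm{\sigma}^n)]$, which is independent of $s$. One small slip: the pattern induced on the interval $[\bm{i}_n+s+1,\bm{i}_n+s+k]$ is $\pi^{-1}$, not $\pi$ (the condition $\pi_1+s\preccurlyeq\cdots\preccurlyeq\pi_k+s$ says that position $\pi_j$ carries the $j$-th smallest value, which characterises $\pi^{-1}$). This is harmless here since the argument only needs the leading term to be independent of $s$, but it is worth writing correctly.
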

\begin{proof}
	Fix a pattern $\pi\in\mathcal{S}^k,$ a shift $s\in\Z$ and suppose that $\bm{\sigma}^n\stackrel{aBS}{\longrightarrow} \bm{\sigma}^{\infty}.$ Since $O^s(\pi)$ is clopen, by the Portmanteau theorem,  we have,
	\begin{equation*}
	\P\big(\bm{\sigma}^{\infty}\in O^s(\pi)\big)=\lim_{n\to\infty}\P\big((\bm{\sigma}^n,\bm{i}_n)\in O^s(\pi)\big)\stackrel{(\ref{condlaw})}{=}\lim_{n\to\infty}\E^{\bm{\sigma}^n}\big[\P^{\bm{i}_n}\big((\bm{\sigma}^n,\bm{i}_n)\in O^s(\pi)\big)\big].
	\end{equation*}
	We note that $(\bm{\sigma}^n,\bm{i}_n)\in O^s(\pi)$ if and only if
	\begin{equation*}
	[1+s,k+s]\subseteq A_{\bm{\sigma}^n,\bm{i}_n}\quad\text{and}\quad\bm{\sigma}^n_{{\pi_1}+s+\bm{i}_n}\leq\bm{\sigma}^n_{{\pi_2}+s+\bm{i}_n}\leq\dots\leq\bm{\sigma}^n_{{\pi_k}+s+\bm{i}_n}.
	\end{equation*} 
	The first event has probability that tends to 1 since $|\bm{\sigma}^n|\to\infty.$ The second event (when well-defined, \emph{i.e.,} conditionally on the first) is equivalent to $\text{pat}_{[\bm{i}_n+s+1,\bm{i}_n+s+k]}(\bm{\sigma}^n)=\pi^{-1}.$
	Since $\bm{i}_n$ is uniform using relation (\ref{probinterpret}), we obtain that
	\begin{equation*}
	\begin{split}
	\P\big(\bm{\sigma}^{\infty}\in O^s(\pi)\big)=&\lim_{n\to\infty}\E^{\bm{\sigma}^n}\big[\P^{\bm{i}_n}\big((\bm{\sigma}^n,\bm{i}_n)\in O^s(\pi)\big)\big]\\
	=&\lim_{n\to\infty}\E^{\bm{\sigma}^n}\Big[\P^{\bm{i}_n}\big(\text{pat}_{[\bm{i}_n+s+1,\bm{i}_n+s+k]}(\bm{\sigma}^n)=\pi^{-1},[1+s,k+s]\subseteq A_{\bm{\sigma}^n,\bm{i}_n}\big)\Big]\\
	=&\lim_{n\to\infty}\E^{\bm{\sigma}^n}\big[\widetilde{\coc}(\pi^{-1},\bm{\sigma}^n)\big]
	\end{split}
	\end{equation*}
	 and the last term is independent of $s.$ This concludes the proof.
\end{proof}

We now prove the inverse statement of the previous proposition.

\begin{thm}
	\label{conversecostruction}
	Let $(\mathbb{Z},\bm{\preccurlyeq})$ be a random shift-invariant rooted permutation. Then the sequence of random permutations $(\bm{\sigma}^n)_{n\in\Z_{>0}}$ defined, for all $n\in\Z_{>0},$ by
	\begin{equation}
	\label{pattdensitydef}
	\P(\bm{\sigma}^n=\rho)=\P\big((\mathbb{Z},\bm{\preccurlyeq})\in O(\rho^{-1})\big),\quad\text{for all}\quad\rho\in\mathcal{S}^n,
	\end{equation}
	(rooted at a uniform random index $\bm{i}_n$ in $[n]$, independent of $\bm{\sigma}^n$)
	converges in the annealed Benjamini--Schramm sense to $(\mathbb{Z},\bm{\preccurlyeq}).$
\end{thm}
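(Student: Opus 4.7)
My plan is to apply Theorem \ref{weakbsequivalence}, specifically implication $(c) \Rightarrow (a)$, by computing the asymptotics of $\E[\widetilde{\coc}(\pi,\bm{\sigma}^n)]$ and then identifying the resulting limit $\bm{\sigma}^\infty$ with $(\mathbb{Z},\bm{\preccurlyeq})$ via the separating class from Observation \ref{separating class}.

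First I would verify that $\bm{\sigma}^n$ is well-defined: since $\bm{A}=\mathbb{Z}$ almost surely (by the observation following Definition \ref{shiftinvariant}), the sets $\{(\mathbb{Z},\bm{\preccurlyeq})\in O(\rho^{-1})\}$ for $\rho\in\mathcal{S}^n$ partition the probability space up to a null set. Concretely, $\bm{\sigma}^n$ can be realized as the standardization of $\bm{\preccurlyeq}|_{[1,n]}$, i.e.\ $\bm{\sigma}^n_i$ is the rank of $i$ under $\bm{\preccurlyeq}$ among positions $\{1,\dots,n\}$.

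Next, for $\pi\in\mathcal{S}^k$ with $k\le n$, I would identify the event $\{\text{pat}_{[j,j+k-1]}(\bm{\sigma}^n)=\pi\}$ with a cylinder event on $\bm{\preccurlyeq}$. Since taking the pattern erases the global ranks and only keeps relative ones, this event is equivalent to
\[
\pi^{-1}_1+(j-1)\;\bm{\preccurlyeq}\;\pi^{-1}_2+(j-1)\;\bm{\preccurlyeq}\;\dots\;\bm{\preccurlyeq}\;\pi^{-1}_k+(j-1),
\]
i.e.\ $(\mathbb{Z},\bm{\preccurlyeq})\in O^{j-1}(\pi^{-1})$. The shift-invariance hypothesis then collapses the dependence on $j$:
\[
\P\big(\text{pat}_{[j,j+k-1]}(\bm{\sigma}^n)=\pi\big)=\P\big((\mathbb{Z},\bm{\preccurlyeq})\in O(\pi^{-1})\big).
\]
Summing over the $n-k+1$ possible starting positions $j$ and dividing by $n$ gives
\[
\E\big[\widetilde{\coc}(\pi,\bm{\sigma}^n)\big]=\frac{n-k+1}{n}\,\P\big((\mathbb{Z},\bm{\preccurlyeq})\in O(\pi^{-1})\big)\xrightarrow[n\to\infty]{}\P\big((\mathbb{Z},\bm{\preccurlyeq})\in O(\pi^{-1})\big)\eqqcolon\Delta_\pi.
\]
By Theorem \ref{weakbsequivalence}, this implies that $\bm{\sigma}^n\stackrel{aBS}{\longrightarrow}\bm{\sigma}^\infty$ for some random infinite rooted permutation $\bm{\sigma}^\infty$, and (\ref{limrel}) gives $\P\big(r_h(\bm{\sigma}^\infty)=(\pi,h+1)\big)=\Delta_\pi$ for every $\pi\in\mathcal{S}^{2h+1}$.

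Finally I would verify that this limit is exactly $(\mathbb{Z},\bm{\preccurlyeq})$. A direct unpacking of the definition of $r_h$ shows that the event $\{r_h(\mathbb{Z},\bm{\preccurlyeq})=(\pi,h+1)\}$ coincides with $\{(\mathbb{Z},\bm{\preccurlyeq})\in O^{-h-1}(\pi^{-1})\}$, and shift-invariance gives
\[
\P\big(r_h(\mathbb{Z},\bm{\preccurlyeq})=(\pi,h+1)\big)=\P\big((\mathbb{Z},\bm{\preccurlyeq})\in O(\pi^{-1})\big)=\Delta_\pi=\P\big(r_h(\bm{\sigma}^\infty)=(\pi,h+1)\big).
\]
The events $\{r_h(\cdot)=(\pi,h+1)\}$ form the clopen balls of the separating class $\mathcal{A}$ of Observation \ref{separating class}, so the two laws agree on $\mathcal{A}$ and hence on all of $\Sri$. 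Therefore $\bm{\sigma}^\infty\stackrel{(d)}{=}(\mathbb{Z},\bm{\preccurlyeq})$, which is what we wanted to prove.

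The only real subtlety is the bookkeeping with inverses and shifts in Step 2 (which direction the standardization goes, and which shift $s$ aligns $O^s(\pi^{-1})$ with a pattern occurrence or with a restriction around $0$). Everything else is a direct application of the already-established machinery, so this is really the only step where I would slow down and carefully track conventions.
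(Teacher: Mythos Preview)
Your proof is correct and reaches the same conclusion as the paper, but via a cleaner route. The key difference is your realization of $\bm{\sigma}^n$ as the standardization of $\bm{\preccurlyeq}|_{[1,n]}$ (a coupling), which lets you identify each pattern-occurrence event $\{\text{pat}_{[j,j+k-1]}(\bm{\sigma}^n)=\pi\}$ directly with the cylinder $O^{j-1}(\pi^{-1})$ and apply shift-invariance once per position. The paper instead works purely at the level of laws: it expands $\P\big(r_h(\bm{\sigma}^n,\bm{i}_n)=(\pi,h+1)\big)$ as $\sum_{\rho\in\mathcal{S}^n}\P\big((\Z,\bm{\preccurlyeq})\in O(\rho^{-1})\big)\widetilde{\coc}(\pi,\rho)$, then separately rewrites $\P\big(r_h(\Z,\bm{\preccurlyeq})=(\pi,h+1)\big)$ as an average over $n-2h$ shifts (each of which is itself expanded over $\rho\in\mathcal{S}^n$), and finally matches the two double sums. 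Your coupling bypasses this bookkeeping entirely; the paper's argument has the minor advantage of never leaving the distributional framework, but yours is shorter and more transparent. One small imprecision: the balls $\{r_h(\cdot)=(\pi,h+1)\}$ with $\pi\in\mathcal{S}^{2h+1}$ are not \emph{all} of $\mathcal{A}$, but the remaining balls have measure zero under both laws (since both $\bm{\sigma}^\infty$ and $(\Z,\bm{\preccurlyeq})$ are supported on total orders on $\Z$), so your separating-class step goes through.
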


\begin{proof}
	Thanks to Theorem \ref{weakbsequivalence}, it is enough to prove that for all $h\in\Z_{>0},$
	\begin{equation}
	\label{wwwtp}
	\P\big(r_h(\bm{\sigma}^n,\bm{i}_n)=(\pi,h+1)\big)\xrightarrow[n\to\infty]{}\P\big(r_h(\mathbb{Z},\bm{\preccurlyeq})=(\pi,h+1)\big),\quad\text{for all}\quad\pi\in\mathcal{S}^{2h+1},
	\end{equation}
	where $\bm{i_n}$ is uniform in $[n].$
	
	Fix $h\in\Z_{>0}.$ Note that if $\pi\in\mathcal{S}^{2h+1}$ then for $n\geq |\pi|,$ 
	
	\begin{equation*}
	\begin{split}
	\P\big(r_h(\bm{\sigma}^n,\bm{i}_n)=(\pi,h+1)\big)&=\sum_{\rho\in\mathcal{S}^n}\P\big(\bm{\sigma}^n=\rho\big)\P^{\bm{i}_n}\big(r_h(\rho,\bm{i }_n)=(\pi,h+1)\big)\\
	&=\sum_{\rho\in\mathcal{S}^n}
	\P\big((\Z,\bm{\preccurlyeq})\in O(\rho^{-1})\big)\widetilde{\coc}(\pi,\rho),
	\end{split}
	\end{equation*}
	where in the last equality we used Equation (\ref{pattdensitydef}).
	In particular, if the limits exist,
	\begin{equation}
	\label{jhvuwirbc}
	\lim_{n\to\infty}\P\big(r_h(\bm{\sigma}^n,\bm{i}_n)=(\pi,h+1)\big)=\lim_{n\to\infty}\sum_{\rho\in\mathcal{S}^n}
	\P\big((\Z,\bm{\preccurlyeq})\in O(\rho^{-1})\big)\widetilde{\coc}(\pi,\rho).
	\end{equation}

	In order to prove the existence of the previous limit we rewrite $\P\big(r_h(\mathbb{Z},\bm{\preccurlyeq})=(\pi,h+1)\big)$ in a more convenient way. Before proceeding with this, we note that for $n$ large enough, we have
	\begin{equation*}
	\label{fwigfrwuifb}
	\P\big((\mathbb{Z},\bm{\preccurlyeq})\in O(\pi^{-1})\big)=\sum_{\rho\in\mathcal{S}^{n}}\P\big((\mathbb{Z},\bm{\preccurlyeq})\in O(\rho)\big)\mathds{1}_{\big\{\text{pat}_{[1,2h+1]}(\rho^{-1})=\pi\big\}}
	\end{equation*}
	and more generally, for all $s\in\Z_{>0},$
	\begin{equation}
	\label{wuryviyrw cowl}
	\P\big((\mathbb{Z},\bm{\preccurlyeq})\in O^s(\pi^{-1})\big)=\sum_{\rho\in\mathcal{S}^{n}}\P\big((\mathbb{Z},\bm{\preccurlyeq})\in O(\rho)\big)\mathds{1}_{\big\{\text{pat}_{[s+1,s+2h+1]}(\rho^{-1})=\pi\big\}}.
	\end{equation}

	We start by noting that,
	\begin{equation}
	\label{jboubvowvb}
	\P\big(r_h(\mathbb{Z},\bm{\preccurlyeq})=(\pi,h+1)\big)=\P\big(([-h,h],\bm{\preccurlyeq})=(\pi,h+1)\big)=\P\big((\mathbb{Z},\bm{\preccurlyeq})\in O^{-(h+1)}(\pi^{-1})\big),
	\end{equation}
	where we used in the last equality that $([-h,h],\bm{\preccurlyeq})=(\pi,h+1)$ if and only if $$\pi^{-1}_{1}-(h+1)\bm{\preccurlyeq}\pi^{-1}_{2}-(h+1)\bm{\preccurlyeq} ... \bm{\preccurlyeq}\pi^{-1}_{2h+1}-(h+1).$$
	
	Using the shift-invariant property we can rewrite the last term of Equation (\ref{jboubvowvb}) as follows,
	\begin{equation}
	\label{bwuivrouvb}
	\begin{split}
	\P\big((\mathbb{Z},\bm{\preccurlyeq})\in O^{-(h+1)}(\pi^{-1})\big)&=\sum_{s=0}^{n-2h-1}\frac{\P\big((\mathbb{Z},\bm{\preccurlyeq})\in O^s(\pi^{-1})\big)}{n-2h}\\
	&=\sum_{\rho\in\mathcal{S}^{n}}\P\big((\mathbb{Z},\bm{\preccurlyeq})\in O(\rho)\big)\frac{\sum_{s=0}^{n-2h-1}\mathds{1}_{\{\text{pat}_{[s+1,s+2h+1]}(\rho^{-1})=\pi\}}}{n-2h}.
	\end{split}
	\end{equation}
	where in the last equality we applied Equation (\ref{wuryviyrw cowl}).
	
	Moreover, noting that $\sum_{s=0}^{n-2h-1}\mathds{1}_{\{\text{pat}_{[s+1,s+2h+1]}(\rho^{-1})=\pi\}}=\coc(\pi,\rho^{-1})$ and combining Equations (\ref{jboubvowvb}) and (\ref{bwuivrouvb}), we obtain, for all $n$ large enough,
	\begin{equation*}
    \P\big(r_h(\mathbb{Z},\bm{\preccurlyeq})=(\pi,h+1)\big)=\sum_{\rho\in\mathcal{S}^{n}}\P\big((\mathbb{Z},\bm{\preccurlyeq})\in O(\rho)\big)\frac{\coc(\pi,\rho^{-1})}{n-2h}.
	\end{equation*}
	The right-hand side coincides in the limit for $n$ that goes to infinity with the limit in Equation (\ref{jhvuwirbc}) and so , since the left-hand side is independent of $n,$ we proved the existence of the limit. In particular, this proves Equation (\ref{wwwtp}) and concludes the proof.
\end{proof}

\section{Trees and random trees, a toolbox}
\label{tree_notation}
 In this section we summarize notation and results on trees and random trees that we need in the two following sections. The reader comfortable with this topic may skip this section.
\subsection{Rooted ordered trees and Galton--Watson trees}
We briefly recall in this section the definition of Galton--Watson trees (for a complete introduction, see \emph{e.g.,} \cite{abraham2015introduction}). We recall Neveu's formalism (see \cite{neveu1986arbres}) for rooted ordered trees. We set 
$$\mathcal{U}\coloneqq\bigcup_{n\geq0}\big(\Z_{>0}\big)^n,$$
the set of finite sequences of positive integers with the convention $(\Z_{>0})^0=\{\varnothing\}.$ For $n\geq 1$ and $u=u_1\dots u_n\in\mathcal{U},$ we set $|u|=n$ the length of $u$ with the convention $|\varnothing|=0.$ If $u$ and $v$ are two sequences of $\mathcal{U},$ we denote by $uv$ the concatenation of the two sequences. Let $u,v\in\mathcal{U}.$ We say that $v$ is an ancestor of $u$ and write $v\prec u,$ if there exists $w\in\mathcal{U},$ $w\neq\varnothing$ such that $u=vw.$ The set of ancestors of $u$ is denoted by $A_u.$ 
\begin{defn}
	\label{orderedtree}
	A \emph{rooted ordered tree} $T$ is a subset of $\mathcal{U}$ that satisfies:
	\begin{itemize}
		\item $\varnothing\in T$ (this vertex is called root);
		\item if $u\in T,$ then $A_u\subseteq T;$
		\item for every $u\in T,$ there exists $d^+_u(T)\in\Z_{>0}\cup\{+\infty\}$ such that, for every $i\in\Z_{>0},$ $ui\in T$ if and only if $1\leq i\leq d^+_u(T).$ The number $d^+_u(T)$ represents the number of children of the vertex $u\in T.$ 
	\end{itemize}	
\end{defn}
 
We always think of (and draw) a rooted ordered tree $T$ as a graph (embedded in the plane) where the vertices are the elements of $T$ and the edges connect vertices $u$ and $ui,$ for all $i\in\Z_{>0},$ $u,ui\in T.$ We embed rooted ordered trees in the plane with the root at the top of the trees in such a way that the trees grow downward and the labels of the $n$-th generation (\emph{i.e.,} of all the vertices with length $n$) increase from left to right. An example is given in Fig.~\ref{rotordtree}.

\begin{figure}[htbp]
	\begin{center}
		\includegraphics[scale=1]{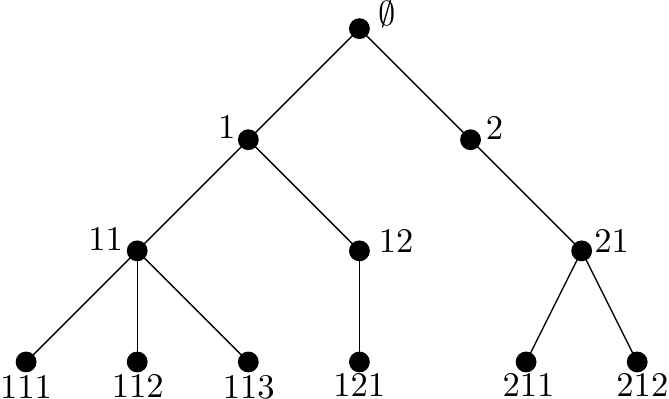}\\
		\caption{A rooted ordered tree embedded in the plane.}\label{rotordtree}
	\end{center}
\end{figure}

We denote with $\mathbb{T}$ the set of rooted ordered trees with no vertex having infinitely many children, 
$$\mathbb{T}\coloneqq\{T:d^+_u(T)<\infty,\; \forall u\in T\},$$
and with $\mathbb{T}^n\subset\mathbb{T}$ the set of rooted ordered trees with $n$ vertices. We often denote the root of the tree also by $r$ instead of $\varnothing.$ We recall that given $T\in\mathbb{T},$ for each vertex $v\in T,$ the height of $v$ is defined as $h(v)\coloneqq|v|.$ We denote with $d$ the classical distance on trees, \emph{i.e.,} for every $u,v\in T,$ $d(u,v)$ is equal to the number of edges in the unique path between $u$ and $v.$ Note that $h(v)=d(r,v).$ We also denote by $|T|$ the number of vertices of $T$. Moreover we set $\mathcal{L}(T)\coloneqq\big\{v\in T:d^+_v(T)=0\big\},$ namely $\mathcal{L}(T)$ is the set of leaves in $T$. We denote by $a(v)$ the parent of $v$ in $T,$ \emph{i.e.,} the largest ancestor of $v$ (for the order $\prec$) and we also write $a^i(v),$ $i\geq 0,$ for the $i$-th ancestor of $v$ in $T,$ \emph{i.e.,} $a^i(v)=\underbrace{a\circ a\circ\dots\circ a}_{i\text{-times}}(v)$ (with the convention that if $r=a^k(v)$ then $a^i(v)=r$ for all $i\geq k$). 

We denote with $c(u,v)$ the youngest common ancestor of $u$ and $v,$ namely $c(u,v)$ is the largest vertex $w$ (for the order $\prec$) such that $u=wu'$ and $v=wv'$ for some $u',v'\in\mathcal{U}.$ We also denote by $f(T,v)$ the fringe subtree of $T$ rooted at $v,$ \emph{i.e.,} the subtree of $T$ containing all the vertices $u\in\mathcal{U}$ such that $vu\in T.$

Finally, we recall the following definition of a standard model for random trees. 

\begin{defn}
	A $\mathbb{T}$-valued random variable $\bm{T}$ is said to have the \emph{branching property} if for $n\in\Z_{>0},$ conditionally on $\{d^+_{\varnothing}(\bm{T})=n\}$, the $n$ fringe subtrees, rooted at the $n$ children of the root, are independent
	and distributed as the original tree $\bm{T}$.
	A $\mathbb{T}$-valued random variable $\bm{T}^\eta$ is a \emph{Galton--Watson tree} with offspring distribution $\eta$ if it has the branching
	property and the distribution of $d^+_{\varnothing}(\bm{T})$ is $\eta.$
\end{defn}

\subsection{Depth-first traversal}
We now recall two classical ways of visiting the vertices of a rooted ordered tree that go under the name of \emph{depth-first traversal.} These traversals originally developed for algorithms searching vertices in a tree, work as follows.
Let $T$ be a rooted ordered tree with root $r$. 

\begin{defn}[\emph{Pre-order}]
	If $T$ consists only of $r$, then $r$ is the pre-order traversal of $T$. Otherwise, suppose that $T_1,T_2,\dots,T_d$ are the fringe subtrees in $T$ respectively rooted at the children of $r$ from left to right. The pre-order traversal begins by visiting $r$. It continues by traversing $T_1$ in pre-order, then $T_2$ in pre-order,	and so on, until $T_d$ is traversed in pre-order.
\end{defn}

\begin{defn}[\emph{Post-order}]
	If $T$ consists only of $r$, then $r$ is the post-order traversal of $T$. Otherwise, suppose that $T_1,T_2,\dots,T_d$ are the fringe subtrees in $T$ respectively rooted at the children of $r$ from left to right. The post-order traversal begins by traversing $T_1$ in post-order, then $T_2$ in post-order, and so on, until $T_d$ is traversed in post-order. It ends by visiting $r.$
\end{defn}

We will also say that $T$ is labeled with the pre-order (resp. post-order) labeling starting from $k$, if we associate the tag $i+k-1$ to the $i$-th visited vertex by the pre-order (resp. post-order) traversal. Moreover, if we simply say that $T$ is labeled with the pre-order (resp. post-order), we will assume that we are starting from $k=1$.

We will show an example in Fig. \ref{exemporder1} in the case of binary trees.

\subsection{Binary trees}We now introduce binary trees. We set
$$\mathcal{U}_{1,2}\coloneqq\bigcup_{n\geq0}\{1,2\}^n$$
the set of finite binary words with the convention $\{1,2\}^0=\{\varnothing\}.$ We use the same notation as in the case of ordered trees.
\begin{defn}
A \emph{binary tree} $T$ is a subset of $\mathcal{U}_{1,2}$ that satisfies:
\begin{itemize}
	\item $\varnothing\in T;$
	\item if $u\in T,$ then $A_u\subseteq T.$
\end{itemize}
\end{defn}

\begin{rem}
	Note that a binary tree is not a particular rooted ordered tree. Indeed the third condition in Definition \ref{orderedtree} is omitted in the case of binary trees. This implies that, if a vertex of a binary tree has only one child then it is either a left vertex (if it is marked with 1) or a right vertex (if it is marked with 2). See Fig.~\ref{lrd} for an example.
\end{rem}

Given a vertex $v$ in the tree, we denote by $T^v_L$ and $T^v_R$ the left and the right fringe subtrees of $T$ hanging below the vertex $v.$ We simply write $T_L$ and $T_R$ if $v=\varnothing$ namely, for the left and the right fringe subtrees below the root. Finally we let $\mathbb{T}_b$ be the set of all binary trees.

\begin{figure}[htbp]
	\begin{center}
		\includegraphics[scale=.80]{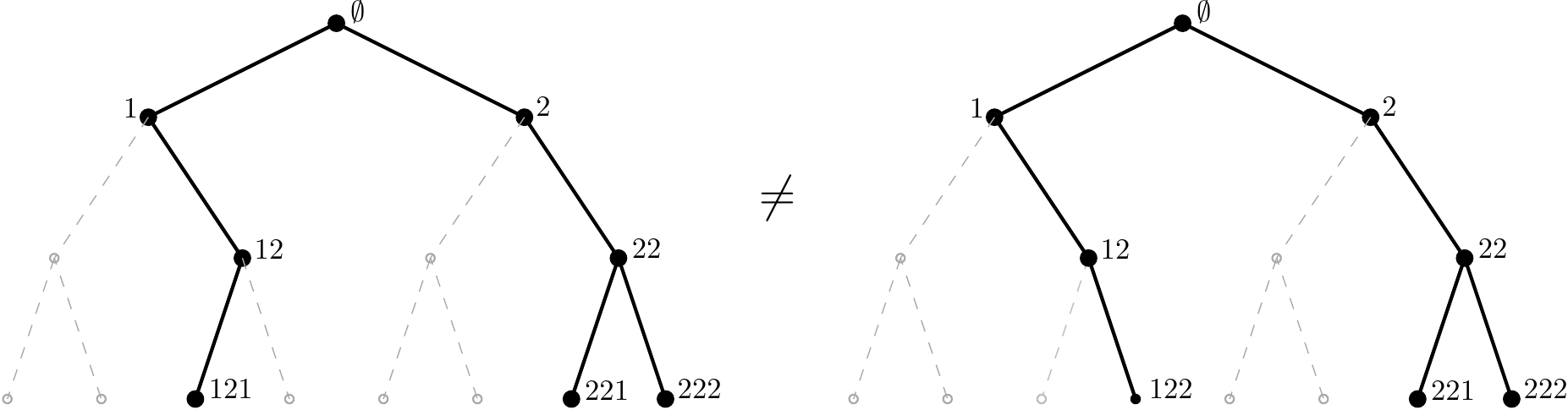}\\
		\caption{Two different binary trees.}\label{lrd}
	\end{center}
\end{figure}

The notions of pre-order and post-order traversal extend immediately to binary trees. Moreover, in the case of binary trees, the in-order traversal is also available. Let $T$ be a binary tree with root $r$. 
\begin{defn}[\emph{In-order}]
	If $T$ consists only of $r$, then $r$ is the in-order traversal of $T$. Otherwise the in-order traversal begins by traversing $T_L$ in in-order, then visits $r,$ and concludes by traversing $T_R$ in in-order.
\end{defn}
The notion of in-order labeling is defined similarly to the pre-order labeling or post-order labeling.
We exhibit an example of the three different labelings in Fig.~\ref{exemporder1}.

\begin{figure}
	\begin{center}
		\includegraphics[scale=.80]{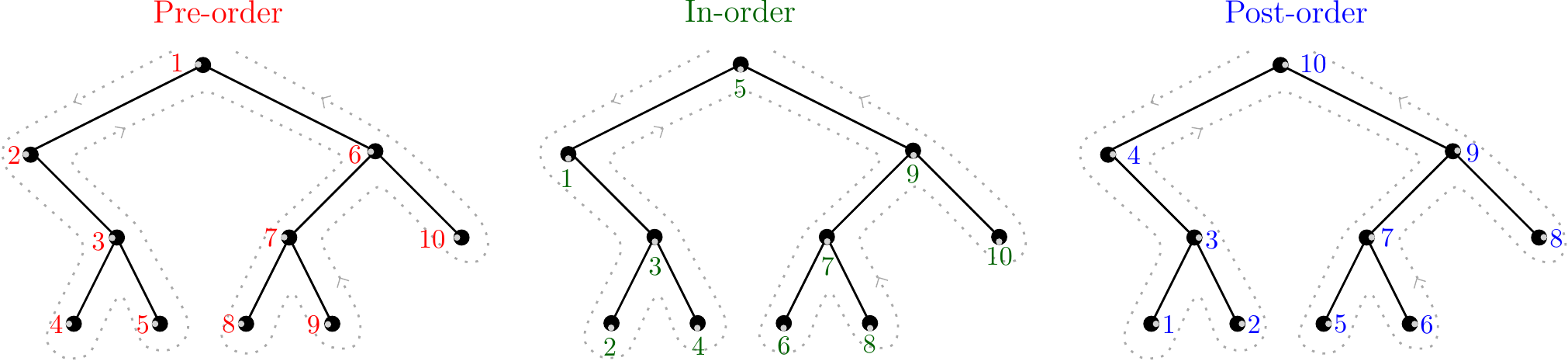}\\
		\caption{We label in red the left picture using the pre-order, in green the center picture using the in-order and in blue the right picture using the post-order. The three labelings can be obtained following the gray dotted line from left to right and putting a tag at each time we reach the small gray dots inside each vertex (that are on the left of the vertex for the pre-order, on the bottom of the vertex for the in-order and on the right of the vertex for the post-order).
		}\label{exemporder1}
	\end{center}
\end{figure}

We finally extend the notion of Galton--Watson trees to \emph{binary} Galton--Watson trees. 
\begin{defn}
	A \emph{binary Galton--Watson tree} $\bm{T}^\eta$ with offspring distribution $\eta=(\eta_0,\eta_L,\eta_R,\eta_2),$ is a $\mathbb{T}_b$-valued random variable with the branching property and such that
	\begin{equation*}
	\begin{split}
	&\P(\text{The root has 0 child})=\eta_0,\\
	&\P(\text{The root has only a left child})=\eta_L,\\
	&\P(\text{The root has only a right child})=\eta_R,\\
	&\P(\text{The root has 2 children})=\eta_2.
	\end{split}
	\end{equation*}
\end{defn}

\section{The local limit for uniform random 231-avoiding permutations}
\label{231}
The goal of this section is to prove that uniform random 231-avoiding permutations converge in the quenched Benjamini--Schramm sense. 
Before stating our precise result we fix some notation. Given a permutation $\pi\in\mathcal{S}$ we denote with LRMax($\pi$) the set of indices of the left-to-right maxima of $\pi,$ \emph{i.e.,} the values
$i\in [|\pi|]$ such that $\pi_j<\pi_i$ for every $j<i$. Similarly we denote with RLMax($\pi$) the set of indices of the right-to-left maxima of $\pi$ \emph{i.e.,} values
$i\in [|\pi|]$ such that $\pi_i>\pi_j$ for every $j>i.$ Moreover, we set $|$LRMax($\pi$)$|\coloneqq $Card$\big($LRMax($\pi$)$\big)$ and  $|$RLMax($\pi$)$|\coloneqq $Card$\big($RLMax($\pi$)$\big).$ We also fix $\text{Max}(\pi)\coloneqq\text{LRMax}(\pi)\cup\text{RLMax}(\pi),$ and similarly we set $|\text{Max}(\pi)|\coloneqq\text{Card}(\text{Max}(\pi)).$ Note that $|\text{Max}(\pi)|=|\text{LRMax}(\pi)|+|\text{RLMax}(\pi)|-1$ since $\text{LRMax}(\pi)\cap\text{RLMax}(\pi)$ contains only the index of the maximum of $\pi.$

For all $k>0,$ we define the following probability distribution on $\text{Av}^k(231),$ 
$$P_{231}(\pi)\coloneqq\frac{2^{|\text{LRMax}(\pi)|+|\text{RLMax}(\pi)|}}{2^{2|\pi|}},\quad\text{for all}\quad \pi\in\text{Av}^k(231).$$

\begin{rem}
	The fact that the above equation defines a probability distribution on $\text{Av}^k(231)$ is a consequence of Theorem \ref{231theorem} below, noting that in Equation (\ref{wfowrbfv3}), summing over all $\pi\in\text{Av}^k(231)$ the left-hand side, we trivially obtain $\frac{n-k+1}{n}$ that tends to 1. We also present in Appendix \ref{combint} a generating function proof of this fact.  
\end{rem}

\begin{exmp}
	\label{exempmax} 
	We consider the permutation  
	$$\pi=132985476=
	\begin{array}{lcr}
	\begin{tikzpicture}
	\begin{scope}[scale=.3]
	\permutation{1,3,2,9,8,5,4,7,6}
	\draw (1+.5,1+.5) [green, fill] circle (.21);  
	\draw (2+.5,3+.5) [green, fill] circle (.21); 
	\draw (4+.5,9+.5) [orange, fill] circle (.21);
	\draw (5+.5,8+.5) [cyan, fill] circle (.21);
	\draw (8+.5,7+.5) [cyan, fill] circle (.21); 
	\draw (9+.5,6+.5) [cyan, fill] circle (.21); 
	\end{scope}
	\end{tikzpicture}
	\end{array},$$ where we draw in green the left-to-right maxima, in blue the right-to-left maxima, and in orange the maximum of the permutation, which is both a left-to-right maximum and a right-to-left maximum. We have LRMax($\pi$)$=\{1,2,4\}$  and RLMax($\pi$)$=\{9,8,5,4\}.$ Thus $P_{231}(\pi)=\frac{2^{3+4}}{2^{18}}=\big(\frac{1}{2}\big)^{11}.$
\end{exmp}
Before stating our main result, we recall the following trivial fact.
\begin{obs}
	If $\pi\notin\text{Av}(231)$ and $\sigma\in\text{Av}(231)$ then obviously $\coc(\pi,\sigma)=0.$ Therefore, in what follow, we simply analyze the consecutive occurrences densities $\widetilde{\coc}(\pi,\sigma)$ for $\pi\in\text{Av}(231).$
\end{obs}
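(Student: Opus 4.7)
The plan is to observe that this statement reduces to two simple facts about the definitions recalled in Section \ref{notation}: (i) consecutive pattern containment is a special case of classical pattern containment, and (ii) classical pattern containment is transitive. I will chain these together and contradict the hypothesis $\sigma\in\text{Av}(231)$.

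First I would unfold the hypothesis that $\pi\notin\text{Av}(231)$ to mean that $\pi$ contains $231$ as a (classical) pattern, so there exists $J\subseteq[|\pi|]$ with $\text{pat}_J(\pi)=231$. Suppose for contradiction that $\coc(\pi,\sigma)\geq 1$; then by the definition of $\coc$ there is an interval $I\subseteq[|\sigma|]$ of size $|\pi|$ with $\text{pat}_I(\sigma)=\pi$. Let $\phi\colon[|\pi|]\to I$ be the unique order-preserving bijection and set $I':=\phi(J)\subseteq I\subseteq[|\sigma|]$.

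Second, I would verify directly from the definition $\text{pat}_K(\cdot)=\text{std}\bigl((\cdot)_{k\in K}\bigr)$ that standardization is transitive in the following sense: $\text{pat}_{I'}(\sigma)=\text{pat}_J\bigl(\text{pat}_I(\sigma)\bigr)=\text{pat}_J(\pi)=231$. This shows that $\sigma$ contains $231$ as a classical pattern, contradicting $\sigma\in\text{Av}(231)$. Hence no such interval $I$ exists, i.e., $\coc(\pi,\sigma)=0$.

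There is no real obstacle here; the only thing worth being careful about is the bookkeeping in the transitivity step, where one must check that relative order is preserved under the composition $J\mapsto\phi(J)$, which is immediate because $\phi$ is order-preserving. In particular note that the argument uses classical (non-consecutive) transitivity of patterns, even though the hypothesis on $\sigma$ is stated only in terms of classical avoidance; this matches the convention fixed in Section \ref{notation} that $\text{Av}(231)$ refers to classical, not consecutive, avoidance.
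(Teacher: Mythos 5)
Your argument is correct and is exactly the reasoning the paper leaves implicit when it calls the fact ``obvious'': consecutive containment implies classical containment, and classical containment is transitive, so $\sigma$ would contain $231$, contradicting $\sigma\in\text{Av}(231)$. Nothing further is needed.
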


\begin{thm}
	\label{231theorem}	
	Let $\bm{\sigma}^n$ be a uniform random permutation in $\emph{Av}^n(231)$ for all $n\in\Z_{>0}.$ For all $\pi\in\emph{Av}(231),$ we have the following convergence in probability,
	\begin{equation}
	\label{wfowrbfv3}
	\widetilde{\coc}(\pi,\bm{\sigma}^n)\stackrel{P}{\rightarrow}P_{231}(\pi).
	\end{equation}
\end{thm}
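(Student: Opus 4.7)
My strategy is the second moment method. Since the proposed limit $P_{231}(\pi)$ is deterministic, Chebyshev's inequality reduces the task to showing that $\E[\widetilde{\coc}(\pi,\bm{\sigma}^n)]\to P_{231}(\pi)$ and that $\text{Var}(\widetilde{\coc}(\pi,\bm{\sigma}^n))\to 0$; Corollary \ref{detstrongbsconditions} then upgrades the pointwise convergence in probability to the quenched Benjamini--Schramm convergence. To access both moments I plan to transport the problem to trees via the classical bijection between $\text{Av}^n(231)$ and binary trees on $n$ vertices, obtained recursively by placing the maximum of $\sigma$ at the root, the entries to its left in the left subtree, and the entries to its right in the right subtree. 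Under this bijection the entries of $\sigma$ are read off by the in-order traversal, so consecutive positions in $\sigma$ correspond to consecutive vertices in in-order, and the event ``$\pi$ occurs consecutively starting at position $i$'' translates into the local binary-tree configuration around the $i$-th in-order vertex belonging to a finite family $\mathcal{T}_\pi$ combinatorially determined by $\pi$.

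Following the technique of Janson, I intend to replace the uniform law on size-$n$ binary trees by the conditioning on total size $n$ of a critical binary Galton--Watson tree with offspring distribution $(1/4,1/4,1/4,1/4)$, for which, at every vertex, the presence of a left child and the presence of a right child are independent Bernoulli$(1/2)$ random variables. In this unconditional model the probability of observing a configuration from $\mathcal{T}_\pi$ around a fixed internal vertex factors over the ``forced'' parent--child relations into a power of $1/2$. A direct combinatorial count --- each entry of $\pi$ that is neither a left-to-right nor a right-to-left maximum forces two such relations, while each extremal entry forces only one --- yields the exponent $2|\pi|-|\text{LRMax}(\pi)|-|\text{RLMax}(\pi)|$, matching $P_{231}(\pi)$ exactly. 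To transfer these unconditional probabilities to the size-$n$ conditioned model I would apply singularity analysis to the bivariate generating function jointly tracking tree size and the number of consecutive occurrences of $\pi$, exploiting the square-root singularity of the Catalan series $C(z)=(1-\sqrt{1-4z})/2$. This should give $\E[\coc(\pi,\bm{\sigma}^n)]=n\,P_{231}(\pi)+O(1)$ and hence first-moment convergence.

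The main obstacle will be the second moment $\E[\coc(\pi,\bm{\sigma}^n)^2]$, a double sum over ordered pairs of positions $(i,j)$. I would split it into ``close'' pairs with $|i-j|\le 2|\pi|$, which contribute $O(n)$ by the trivial bound and are negligible after dividing by $n^2$, and ``far'' pairs, for which the two radius-$|\pi|$ in-order neighborhoods are disjoint in the underlying tree. For the far pairs one must prove that the two local indicators asymptotically decorrelate; in the Galton--Watson model this should follow cleanly from the branching property once one conditions on the path joining the two chosen vertices to their youngest common ancestor, and transferring the resulting identity through the size-$n$ conditioning again requires singularity analysis, now of a trivariate generating function marking two distinguished vertices. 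Combining these ingredients gives $\E[\coc(\pi,\bm{\sigma}^n)^2]=n^2 P_{231}(\pi)^2+o(n^2)$, hence $\text{Var}(\widetilde{\coc}(\pi,\bm{\sigma}^n))\to 0$ and the theorem.
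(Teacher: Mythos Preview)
Your high-level plan is the same as the paper's: second moment method, the classical bijection to binary trees, replacement of the uniform tree by a binary Galton--Watson tree, and singularity analysis to transfer asymptotics back to fixed size $n$. Where you diverge is in the mechanics of computing the two moments.

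The paper does \emph{not} work with a ``local configuration around the $i$-th in-order vertex'' and a close/far dichotomy. Instead it exploits the recursion at the maximum: writing $\sigma=\sigma_L\,\sigma_{\max}\,\sigma_R$ (equivalently $T=(T_L,\text{root},T_R)$) gives
\[
\coc(\pi,T)=\coc(\pi,T_L)+\coc(\pi,T_R)+\mathds{1}_{\{\text{pat}_{\bm J}(T)=\pi\}},
\]
where the indicator records whether the unique window straddling the root matches $\pi$. Taking expectations in the $\delta$-parametrised GW model yields $\E[\coc(\pi,\bm T_\delta)]=\delta^{-1}\P(\text{pat}_{\bm J}(\bm T_\delta)=\pi)$, and the probability on the right is computed by iterating the same left/right decomposition on $\pi$ (Lemmas~\ref{onecross} and~\ref{twoandthreecross}), giving $P_{231}(\pi)+O(\delta)$. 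For the variance the paper simply squares the same recursion: the dominant cross term is $2\,\E[\coc(\pi,\bm T_L)\coc(\pi,\bm T_R)]=2p^2\,\E[\coc(\pi,\bm T)]^2$ by independence of the two subtrees, and all other terms are lower order; one more application of Janson's transfer lemma finishes.

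Your close/far argument can be made to work, but it is genuinely harder here than for fringe-subtree statistics: a window of $|\pi|$ consecutive in-order vertices is \emph{not} a fringe subtree, so disjointness of the two in-order windows does not immediately give conditional independence in the GW tree, and one must control the geometry of the paths from both windows to their youngest common ancestor. The paper's root recursion sidesteps this entirely, because independence of $T_L$ and $T_R$ is built into the GW construction. As a minor point, your edge-counting heuristic (``non-extremal entries force two relations, extremal entries force one'') is off by one in the exponent; the paper obtains the correct power of $1/2$ through the recursive Lemmas rather than a direct count.
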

Since the limiting frequencies $\big(P_{231}(\pi)\big)_{\pi\in\text{Av}(231)}$ are deterministic, using Corollary \ref{detstrongbsconditions}, we have the following.
\begin{cor}
	\label{231corol}
	Let $\bm{\sigma}^n$ be a uniform random permutation in $\emph{Av}^n(231)$ for all $n\in\Z_{>0}.$ There exists a random infinite rooted permutation $\bm{\sigma}^\infty_{231}$ such that   $$\bm{\sigma}^n\stackrel{qBS}{\longrightarrow}\mathcal{L}(\bm{\sigma}^\infty_{231}).$$
\end{cor}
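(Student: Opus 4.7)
The plan is to deduce this corollary directly from Theorem \ref{231theorem} by feeding its conclusion into Corollary \ref{detstrongbsconditions}. The core observation is that Theorem \ref{231theorem} provides exactly the hypothesis needed to invoke the characterization of quenched Benjamini--Schramm convergence toward a deterministic limiting measure.

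First I would verify that condition (c) of Corollary \ref{detstrongbsconditions} is satisfied. For $\pi\in\text{Av}(231)$, Theorem \ref{231theorem} gives $\widetilde{\coc}(\pi,\bm{\sigma}^n)\stackrel{P}{\to}P_{231}(\pi)$. For $\pi\notin\text{Av}(231)$, the observation immediately preceding Theorem \ref{231theorem} implies $\coc(\pi,\bm{\sigma}^n)=0$ deterministically, so $\widetilde{\coc}(\pi,\bm{\sigma}^n)\stackrel{P}{\to}0$ trivially. Hence, setting
\[
\Lambda'_\pi\coloneqq\begin{cases}P_{231}(\pi),&\pi\in\text{Av}(231),\\0,&\pi\notin\text{Av}(231),\end{cases}
\]
we obtain an infinite vector of non-negative reals such that $\widetilde{\coc}(\pi,\bm{\sigma}^n)\stackrel{P}{\to}\Lambda'_\pi$ for every $\pi\in\mathcal{S}$. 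This is precisely condition (c) of Corollary \ref{detstrongbsconditions}.

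Applying the implication $(c)\Rightarrow(a)$ of Corollary \ref{detstrongbsconditions} then yields a deterministic probability measure $\mu$ on $\Sri$ with $\bm{\sigma}^n\stackrel{qBS}{\longrightarrow}\mu$. Since $(\Sri,d)$ is Polish (Theorem \ref{compactpolish}), we may take any random variable $\bm{\sigma}^\infty_{231}$ with values in $\Sri$ and law $\mathcal{L}(\bm{\sigma}^\infty_{231})=\mu$, which is what the statement requires. Because the argument is a direct application of two previously established results, there is no genuine obstacle: the only point worth flagging is the trivial extension of Theorem \ref{231theorem} to patterns outside $\text{Av}(231)$, which is needed so that the hypothesis of Corollary \ref{detstrongbsconditions} applies to \emph{all} $\pi\in\mathcal{S}$ and not just to the $231$-avoiding ones.
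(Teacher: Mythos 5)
Your proof is correct and follows exactly the route the paper takes: Theorem \ref{231theorem} (extended trivially to patterns outside $\text{Av}(231)$ via the observation that $\coc(\pi,\bm{\sigma}^n)=0$ there) verifies condition (c) of Corollary \ref{detstrongbsconditions}, whose implication $(c)\Rightarrow(a)$ yields the deterministic limiting measure. No substantive difference from the paper's argument.
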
	

\begin{rem}
	We highlight that Corollary \ref{231corol} proves the existence of a random infinite rooted permutation $\bm{\sigma}^\infty_{231}$ without explicitly constructing this limiting object. Nevertheless we are able to provide a construction of $\bm{\sigma}^\infty_{231}$ in Section \ref{explcon}. 
\end{rem}
	
Section \ref{231} is structured as follows:
\begin{itemize}
	\item in Section \ref{premres} we introduce a well-known bijection (see \emph{e.g.,} \cite{bona2012surprising}) between binary trees and 231-avoiding permutations. Moreover we present a technique due to Janson \cite{janson2017patterns}. With this tool we show that in order to study $\E\big[\widetilde{\coc}(\pi,\bm{\sigma}^n)\big],$ it is enough to study a similar expectation in terms of a specific binary Galton--Watson tree;
	\item in Section \ref{weakres} we study the behavior of $\E\big[\widetilde{\coc}(\pi,\bm{\sigma}^n)\big]$ (see in particular Proposition \ref{weakprop}) using the results from the previous section;
	\item in Section \ref{mainres} we prove Theorem \ref{231theorem}: with similar techniques we study the second moment $\E\big[\widetilde{\coc}(\pi,\bm{\sigma}^n)^2\big]$ and then we conclude the proof applying the Second moment method;
	\item in Section \ref{explcon} we give an explicit construction of the limiting object $\bm{\sigma}^\infty_{231}.$
\end{itemize}

\subsection{Notation and preliminary results}
\label{premres}
\subsubsection{Notation}
We introduce some more notation. When convenient, we extend in the trivial way the various notation introduced for permutations to arbitrary sequences of distinct numbers. For example, we extend the notion of $\coc(\pi,\sigma)$ for two arbitrary sequences of distinct numbers $x_1\dots x_n$ and $y_1\dots y_k$  as
\begin{equation*}
\coc(y_1\dots y_k,x_1\dots x_n)\coloneqq\coc\big(\text{std}(y_1\dots y_k),\text{std}(x_1\dots x_n)\big). 
\end{equation*}  

Given $\sigma\in\mathcal{S}^n,$ let indmax$(\sigma)$ be the index of the maximal value $n,$ \emph{i.e.}, $\sigma_{\text{indmax}(\sigma)}=n.$ If $\ell=\text{indmax}(\sigma),$ we set $\sigma_L\coloneqq\sigma_1\dots\sigma_{\ell-1}$ and $\sigma_R\coloneqq\sigma_{\ell+1}\dots\sigma_{n},$ respectively the (possibly empty) left and right subsequences of $\sigma,$ before and after the maximal value. In particular we have $\sigma=\sigma_L\sigma_\ell\sigma_R,$ where we point out that $\sigma_L\sigma_\ell\sigma_R$ is not the composition of permutations but just the concatenation of $\sigma_L,\sigma_\ell$ and $\sigma_R$ seen as words.

We will write $\text{pat}_{b(k)}(\sigma)$ for $\text{pat}_{[1,k]}(\sigma)$ namely, for the pattern occurring in the first $k$ positions of $\sigma.$ Similarly we will write $\text{pat}_{e(k)}(\sigma)$ for $\text{pat}_{[|\sigma|-k+1,|\sigma|]}(\sigma)$ namely, for the pattern occurring in the last $k$ positions of $\sigma.$ Note that $b(k)$ and $e(k)$ stands for \emph{beginning} and \emph{end}. Moreover, if either $k=0$ or $k>|\sigma|$ we set 
\begin{equation*}
\text{pat}_{b(k)}(\sigma)\coloneqq\emptyset\quad\text{and}\quad\text{pat}_{e(k)}(\sigma)\coloneqq\emptyset.
\end{equation*}
To simplify notation, sometimes, if it is clear from the context, we will simply write $\text{pat}_{b}(\sigma)=\pi$ or $\text{pat}_{e}(\sigma)=\pi,$ instead of $\text{pat}_{b(|\pi|)}(\sigma)=\pi$ or $\text{pat}_{e(|\pi|)}(\sigma)=\pi.$

We finally recall that a Laurent polynomial over $\mathbb{R}$ is a linear combination of positive and negative powers of the variable with coefficients in $\mathbb{R},$ \emph{i.e.}, of the form
\begin{equation*}
P(x)=a_{-m}x^{-m}+\dots+ a_{-1}x^{-1}+a_0+a_1x+\dots+a_n x^n,
\end{equation*}
where $m,n\in\Z_{\geq0}$ and $a_i\in\mathbb{R}$ for all $-m\leq i\leq n.$
We  denote by $O(x^{-\alpha}),$ $\alpha\in\Z_{>0},$ an arbitrary Laurent polynomial in $x$ of valuation at least $-\alpha,$ \emph{i.e.,} of the form $a_{-\alpha}x^{-\alpha}+\dots+a_n x^n.$ Moreover we denote by $O(x^{\alpha}),$ $\alpha\in\mathbb{Z}_{\geq0},$ a classical polynomial in $x$ of valuation at least $\alpha,$ \emph{i.e.}, of the form $a_{\alpha}x^{\alpha}+\dots+a_nx^n.$ We remark that this is \emph{not} the classical definition of $O(\cdot)$ since we are adding the additional hypothesis that the elements of $O(\cdot)$ are Laurent polynomials and not general functions.

\subsubsection{A bijection between binary trees and 231-avoiding permutations}
\label{231bij}
We present in this section a well-known bijection between 231-avoiding permutations and the set of binary trees (see \emph{e.g.,} \cite{bona2012surprising}) which sends the size of the permutation to the number of vertices of the tree. 

This bijection is based on the following result (see \emph{e.g.,} \cite{bona2010absence}). 
\begin{obs} 
	\label{permfact}
	Let $\sigma$ be a permutation and $\ell=\emph{indmax}(\sigma).$ The permutation $\sigma$ avoids $231$ if and only if $\sigma_L$ and $\sigma_R$ both avoid $231$ and furthermore $\sigma_i<\sigma_j$ whenever $i<\ell$ and $j>\ell.$
\end{obs}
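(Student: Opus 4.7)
The plan is to prove both implications by direct contradiction, exploiting the simple fact that $\sigma_{\ell}$ is the global maximum of $\sigma$ and hence is automatically available to play the role of the ``$3$'' in any $231$-occurrence that straddles position $\ell$.

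For the forward direction, assume $\sigma$ avoids $231$. Since $\sigma_L=\sigma_1\dots\sigma_{\ell-1}$ and $\sigma_R=\sigma_{\ell+1}\dots\sigma_n$ are subsequences of $\sigma$ taken at intervals of indices, any $231$-pattern in $\sigma_L$ (resp.\ $\sigma_R$) lifts to a $231$-pattern in $\sigma$; so both $\sigma_L$ and $\sigma_R$ avoid $231$. For the crossing condition, suppose for contradiction that there exist $i<\ell<j$ with $\sigma_i>\sigma_j$. Since $\sigma_\ell=\max_k \sigma_k$, at the positions $i<\ell<j$ the values $\sigma_i,\sigma_\ell,\sigma_j$ satisfy $\sigma_j<\sigma_i<\sigma_\ell$, so $\operatorname{std}(\sigma_i\sigma_\ell\sigma_j)=231$, contradicting our hypothesis.

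For the reverse direction, assume $\sigma_L,\sigma_R\in\operatorname{Av}(231)$ and that $\sigma_i<\sigma_j$ whenever $i<\ell<j$; suppose for contradiction that $\sigma$ contains $231$ at positions $i_1<i_2<i_3$, i.e.\ $\sigma_{i_3}<\sigma_{i_1}<\sigma_{i_2}$. I would then split cases according to where $\ell$ lies. If $i_3<\ell$ (all three positions on the left of $\ell$) the pattern lies in $\sigma_L$, contradicting $\sigma_L\in\operatorname{Av}(231)$; symmetrically if $\ell<i_1$. The position $\ell$ cannot equal $i_1$ (because $\sigma_{i_1}<\sigma_{i_2}$ forbids $\sigma_{i_1}$ from being the global maximum) nor $i_3$ (because $\sigma_{i_3}<\sigma_{i_1}$ does the same). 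In every remaining case, namely $\ell=i_2$, or $i_1<\ell<i_2$, or $i_2<\ell<i_3$, one has $i_1<\ell<i_3$, so the crossing hypothesis forces $\sigma_{i_1}<\sigma_{i_3}$, contradicting $\sigma_{i_3}<\sigma_{i_1}$.

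This exhausts all possibilities and proves the equivalence. There is no genuine obstacle here; the result is essentially the combinatorial backbone of the bijection between $\operatorname{Av}(231)$ and binary trees (the root encodes the maximum, the left/right subtrees encode $\sigma_L/\sigma_R$), and the only mild subtlety is to remember to treat the boundary case $\ell=i_2$, which however collapses into the crossing case since we still have $i_1<\ell<i_3$.
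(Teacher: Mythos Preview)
Your proof is correct and complete; the case analysis in the reverse direction is airtight, and you handle the boundary case $\ell=i_2$ properly. The paper itself does not prove this observation at all---it simply cites B\'ona \cite{bona2010absence} as a reference for this standard fact---so your argument is strictly more than what the paper provides, and there is nothing further to compare.
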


Given a permutation $\sigma\in\text{Av}(231)$ we build the following binary tree $T=\varphi(\sigma)$: if $\sigma$ is empty then $T$ is the empty tree. Otherwise we add the root in $T,$ which corresponds to the maximal element of $\sigma$ and we split $\sigma$ in $\sigma_L$ and $\sigma_R;$ then the left subtree of $T$ will be the tree induced by $\sigma_L,$ \emph{i.e.,} $T_L=\varphi(\sigma_L)$ and similarly, the right subtree of $T$ will be the tree induced by $\text{std}(\sigma_R),$ \emph{i.e.,} $T_R=\varphi(\text{std}(\sigma_R)).$

Conversely, given a binary tree $T$ we construct the corresponding permutation $\sigma=\psi(T)$ in $\text{Av}(231)$ as follows (\emph{cf.} Fig.~\ref{bijtreeperm} below): if $T$ is empty then $\sigma$ is the empty permutation. Otherwise  we split $T$ in $T_L$ and $T_R$ and we set $n\coloneqq|T|$, the number of vertices of $T,$ $S_L\coloneqq\psi(T_L)$ and $S_R\coloneqq\psi(T_R).$ Then we define $\sigma$ as $\sigma=\sigma_Ln\sigma_R$ where $\sigma_L$ is simply $S_L$ and $\sigma_R$ is obtained by shifting all entries of $S_R$ by $|T_L|,$ \emph{i.e.,} $(\sigma_R)_j=(S_R)_j+|T_L|,$ for all $j\leq |T_R|.$

It is clear that $\varphi$ and $\psi$ are inverse of each other, hence providing the desired bijection.

\begin{figure}[htbp]
	\begin{center}
		\includegraphics[scale=.70]{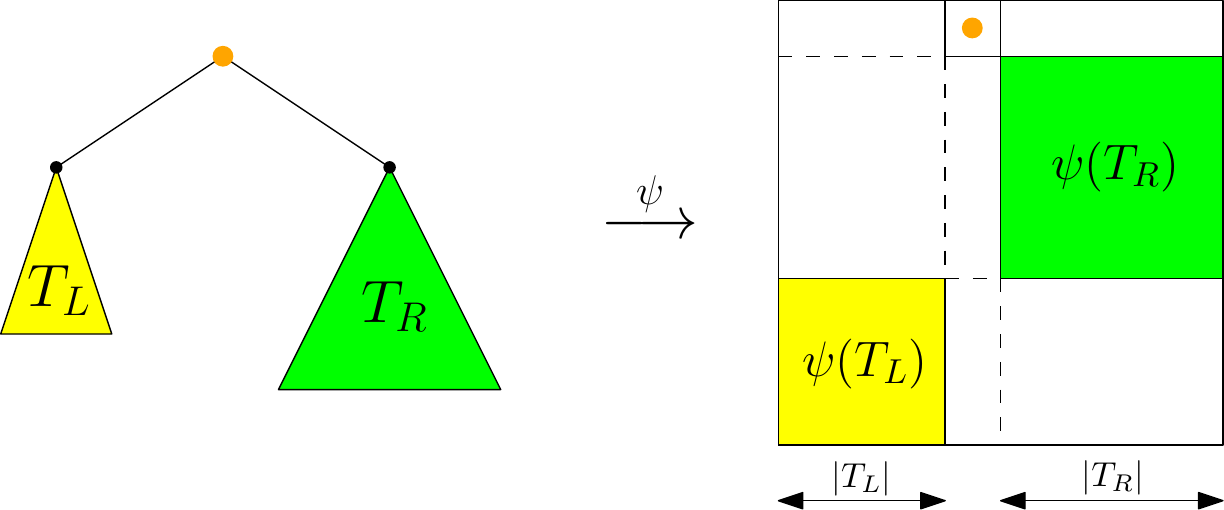}\\
		\caption{A binary tree and the corresponding 231-avoiding permutation represented as a diagram.}\label{bijtreeperm}
	\end{center}
\end{figure}

From now until the end, we denote by $T_{\sigma}$ the binary tree associated to $\sigma$ and, analogously, by $\sigma_T$ the permutation associated to the binary tree $T.$

Moreover, we define 
$$\coc(\pi,T)\coloneqq\coc(\pi,\sigma_T),\quad\text{for all}\quad\pi\in\mathcal{S},$$
and similarly
$$\text{pat}_{I}(T)\coloneqq\text{pat}_I(\sigma_T),\quad\text{ for all finite interval}\quad I\subset\Z_{>0}.$$
\begin{obs}
	\label{maxnode}
	By construction, the left-to-right maxima in a permutation $\sigma$ correspond to the vertices in the left branch of $T_{\sigma}$, \emph{i.e.,} to the vertices of the form $v=1\dots1$ in the Neveu's formalism. Similarly, the right-to-left maxima correspond to the vertices in the right branch of $T_{\sigma}$, \emph{i.e.,} to the vertices of the form $v=2\dots2.$ Obviously the maximum corresponds to the root of the tree. An example is given in Fig.~\ref{maxontree}.
\end{obs}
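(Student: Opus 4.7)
The plan is to prove this by induction on $|\sigma| = |T_\sigma|$. The base case (empty or singleton permutation) is immediate, as the unique element is trivially both a left-to-right and right-to-left maximum and sits at the root. For the inductive step, I will exploit the recursive structure of the bijection $\varphi$: if $\ell = \text{indmax}(\sigma)$, then $T_\sigma$ has root corresponding to $\sigma_\ell$, left subtree $T_{\sigma_L}$, and right subtree $T_{\text{std}(\sigma_R)}$.

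For the statement about left-to-right maxima, the key observation I would invoke is Observation \ref{permfact}: every entry of $\sigma_L$ is strictly smaller than every entry of $\sigma_R$. In particular, for $i > \ell$ the entry $\sigma_i$ is dominated by $\sigma_\ell$, so no index to the right of the maximum can be a left-to-right maximum of $\sigma$. Hence the set of left-to-right maxima of $\sigma$ is exactly $\{\ell\}$ together with the left-to-right maxima of $\sigma_L$ (note that the standardization of a prefix preserves left-to-right maxima, so I can pass freely between $\sigma_L$ and $\text{std}(\sigma_L)$). By the induction hypothesis applied to $\sigma_L$, these correspond to the vertices in $T_{\sigma_L}$ of the form $1\dots1$. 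When $T_{\sigma_L}$ is grafted to $T_\sigma$ as the left subtree, such a vertex $\underbrace{1\cdots1}_{k}$ of $T_{\sigma_L}$ becomes the vertex $1\underbrace{1\cdots1}_{k} = \underbrace{1\cdots1}_{k+1}$ of $T_\sigma$; together with the root $\varnothing$ these constitute precisely the left branch of $T_\sigma$, finishing the induction for this half.

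The right-to-left maxima are treated symmetrically: again by Observation \ref{permfact}, every entry of $\sigma_L$ is smaller than $\sigma_\ell$ (in fact smaller than any entry of $\sigma_R$), so no $i < \ell$ can contribute a right-to-left maximum. Thus the right-to-left maxima of $\sigma$ are $\ell$ together with the right-to-left maxima of $\sigma_R$, which (being invariant under standardization) are in bijection with the right-to-left maxima of $\text{std}(\sigma_R)$ and hence, by the inductive hypothesis applied to $T_{\text{std}(\sigma_R)}$, with its vertices of the form $2\dots2$. Grafting as the right subtree shifts each such vertex $\underbrace{2\cdots2}_{k}$ to $\underbrace{2\cdots2}_{k+1}$ in $T_\sigma$, and including the root yields exactly the right branch. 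The final statement about the maximum corresponding to the root is built into the construction of $\varphi$ and requires no further argument.

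There is essentially no obstacle here: this is a routine structural induction whose only subtlety is making sure that the auxiliary facts used (standardization preserves left-to-right and right-to-left maxima, and the ordering of $\sigma_L$ relative to $\sigma_R$) are cleanly invoked from Observation \ref{permfact}. The whole statement is genuinely just a dictionary translation of the recursive definition of the bijection into the language of Neveu addresses, and an example such as the one in Figure \ref{bijtreeperm}--\ref{maxontree} already makes the claim visually transparent.
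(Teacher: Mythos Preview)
Your inductive argument is correct and complete. Note, however, that the paper does not actually give a proof of this observation: it is stated ``by construction'' and illustrated with a figure, leaving the verification to the reader. Your write-up therefore supplies exactly the routine structural induction that the paper elides, and it matches the intended reasoning implicit in the phrase ``by construction.''
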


\begin{figure}[htbp]
	\begin{center}
		\includegraphics[scale=.90]{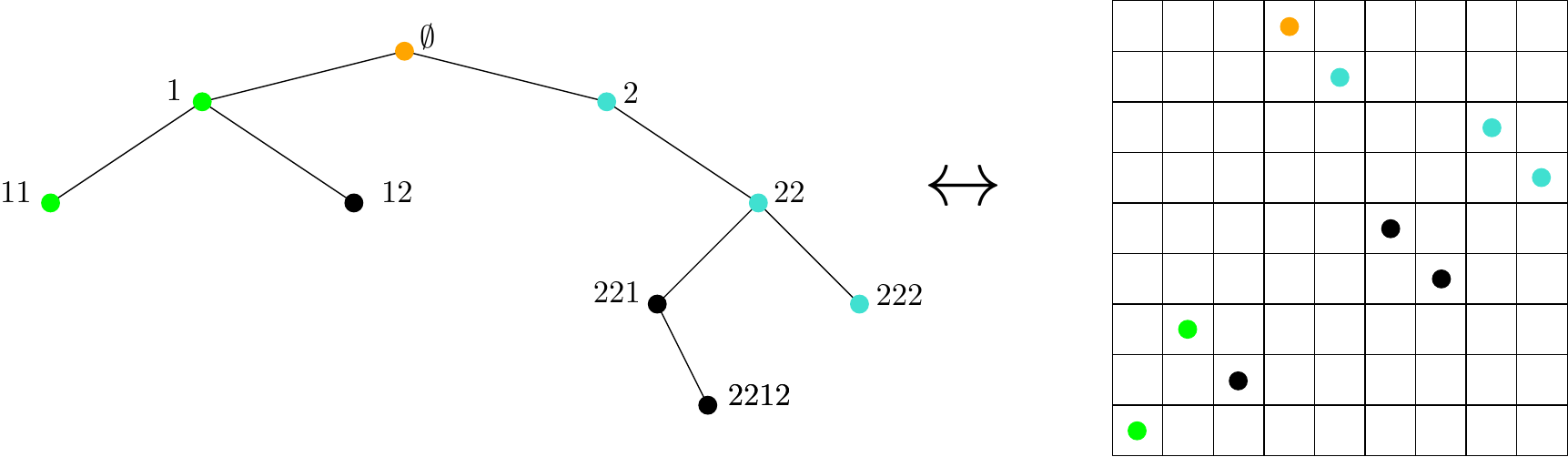}\\
		\caption{Correspondence between maxima of a permutation and vertices on the left and right branches in the associated tree. We highlight in green the left-to-right maxima, in blue the right-to-left maxima and in orange the maximum.
		}\label{maxontree}
	\end{center}
\end{figure}

We make the following final remark that is not strictly necessary in this section but useful for comparison with the next one.

\begin{rem}
	It is not difficult to prove the following equivalent characterization of the bijection between trees and permutations based on the use of the tree traversals. Given a binary tree $T$ with $n$ vertices we reconstruct the corresponding permutation in $\text{Av}(231)$ setting $\sigma_i$ to be equal to the label given by the post-order to the $i$-th vertex visited by the in-order. Namely, we are using the in-order labeling for the indices of the permutation and the post-order labeling for the values. An example is provided in Fig.~\ref{Av(231)bij}.
\end{rem}

\begin{figure}[htbp]
	\begin{center}
		\includegraphics[scale=.90]{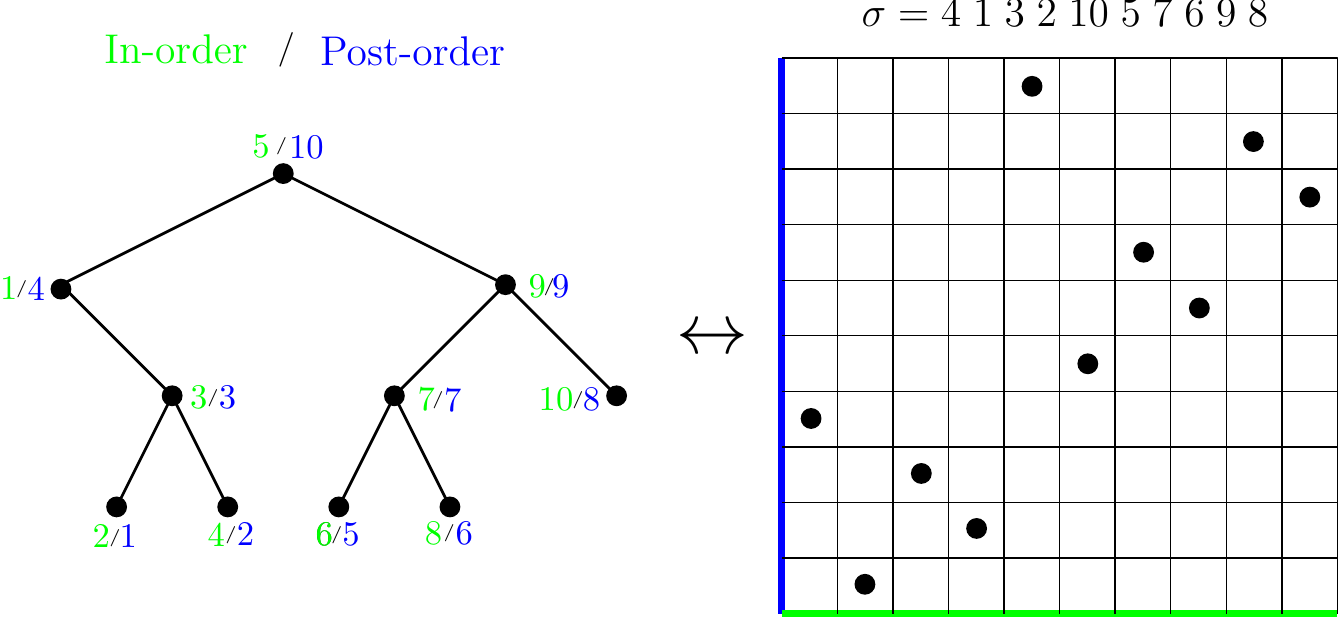}\\
		\caption{A binary tree and the corresponding 231-avoiding permutation. We are using the in-order labeling (in green) for the indices of the permutation and the post-order labeling (in blue) for the values.
		}\label{Av(231)bij}
	\end{center}
\end{figure}

\subsubsection{From uniform 231-avoiding permutations to binary Galton--Watson tree}
\label{behavtree}

Thanks to the bijection between binary trees and 231-avoiding permutations, instead of studying $\E\big[\widetilde{\coc}(\pi,\bm{\sigma}^n)\big]$ we can equivalently study the expectation $\E\big[\widetilde{\coc}(\pi,\bm{T}_n)\big]$ where $\bm{T}_n$ denotes a uniform random binary trees with $n$ vertices. A  method for studying study the behavior of the latter expectation is to analyze the expectation $\E\big[\widetilde{\coc}(\pi,\bm{T}_{\delta})\big]$ for a binary Galton--Watson tree $\bm{T}_\delta$ defined as follows.

For $\delta\in(0,1),$ we set $p=\frac{1-\delta}{2},$ and we consider a binary Galton--Watson tree $\bm{T}_\delta$ with offspring distribution $\eta$ defined by:
\begin{equation}
\label{bintree}
\begin{split}
&\eta_0=\P(\text{The root has 0 child})=(1-p)^2,\\
&\eta_L=\P(\text{The root has only one left child})=p(1-p),\\
&\eta_R=\P(\text{The root has only one right child})=p(1-p),\\
&\eta_2=\P(\text{The root has 2 children})=p^2.
\end{split}
\end{equation}
\begin{rem}
	Note that with this particular offspring distribution, having a left child is independent from having a right child.
\end{rem}
We now state two lemmas due to Janson (see \cite[Lemmas 5.2-5.4]{janson2017patterns}). The first one is a result that rewrites the expectation of a function of $\bm{T}_\delta$ in terms of uniform random binary trees with $n$ vertices $\bm{T}_n,$ using the classical fact that $\mathcal{L}(\bm{T}_n)=\mathcal{L}\big(\bm{T}_{\delta}\big||\bm{T}_{\delta}|=n\big).$ More precisely,
\begin{lem}
	Let $F$ be a functional from the set of binary trees $\mathbb{T}_b$ to $\mathbb{R}$ such that $|F(T)|\leq C\cdot|T|^m$ for some constants $C$ and $m$ (this guarantees that all expectations and sums below converge). Then
	$$\E\big[F(\bm{T}_\delta)\big]=\frac{1+\delta}{1-\delta}\sum_{n=1}^{+\infty}\E\big[F(\bm{T}_n)\big]\cdot C_n\cdot\Big(\frac{1-\delta^2}{4}\Big)^{n},$$
	where $C_n$ is the $n$-th Catalan number.
\end{lem}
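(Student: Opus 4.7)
The plan is to exploit the classical observation that a Galton--Watson tree conditioned on its total progeny becomes uniform on trees of that size, provided the offspring probabilities are such that every tree of a given size has the same probability. The hinge of the argument is therefore to show that $\P(\bm{T}_\delta = T)$ depends on the tree $T$ only through $|T|$, and then to compute this common probability explicitly.

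First, I would classify the vertices of a binary tree $T$ with $n$ vertices according to their ``type'': let $n_0$, $n_L$, $n_R$, $n_2$ be the numbers of vertices with, respectively, no child, only a left child, only a right child, and two children. By the branching property,
$$\P(\bm{T}_\delta = T) = \eta_0^{n_0}\,\eta_L^{n_L}\,\eta_R^{n_R}\,\eta_2^{n_2} = p^{\,n_L+n_R+2n_2}(1-p)^{\,2n_0+n_L+n_R},$$
with $p = (1-\delta)/2$. The two identities $n_0 + n_L + n_R + n_2 = n$ and the edge count $n_L + n_R + 2n_2 = n-1$ together imply $2n_0 + n_L + n_R = n+1$. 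Hence $\P(\bm{T}_\delta = T) = p^{n-1}(1-p)^{n+1}$, which depends only on $n$. Summing over the $C_n$ binary trees of size $n$,
$$\P(|\bm{T}_\delta| = n) = C_n \cdot p^{n-1}(1-p)^{n+1}.$$

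The next step is a direct algebraic rewriting: with $p = (1-\delta)/2$ and $1-p = (1+\delta)/2$, one factors out one copy of $1-p$ over $p$ to obtain
$$\P(|\bm{T}_\delta| = n) = \frac{1+\delta}{1-\delta}\,C_n\,\Bigl(\frac{1-\delta^2}{4}\Bigr)^n.$$
Since conditionally on $\{|\bm{T}_\delta| = n\}$ the tree $\bm{T}_\delta$ is uniform over binary trees with $n$ vertices, its conditional law coincides with that of $\bm{T}_n$. Conditioning on the size therefore yields
$$\E[F(\bm{T}_\delta)] = \sum_{n=1}^\infty \E[F(\bm{T}_n)]\cdot \P(|\bm{T}_\delta| = n),$$
which is the claimed formula.

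Two mild technical points remain, neither of which is a serious obstacle. First, one must know that $\bm{T}_\delta$ is almost surely finite so that conditioning on size produces a genuine decomposition: this holds because the mean offspring equals $\eta_L + \eta_R + 2\eta_2 = 2p = 1-\delta < 1$ for $\delta\in(0,1)$, so $\bm{T}_\delta$ is strictly subcritical. Second, the interchange of expectation and summation requires absolute convergence; this follows from the hypothesis $|F(T)|\le C|T|^m$ together with the asymptotic $C_n\sim 4^n/(n^{3/2}\sqrt{\pi})$, which makes $n^m\,\P(|\bm{T}_\delta|=n)$ decay like $n^{m-3/2}(1-\delta^2)^n$, summable for every fixed $m$.
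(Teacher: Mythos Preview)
Your proof is correct and follows exactly the approach the paper indicates: the paper does not give its own proof of this lemma but cites it from Janson, while sketching the key idea just before the statement (``using the classical fact that $\mathcal{L}(\bm{T}_n)=\mathcal{L}\big(\bm{T}_{\delta}\big||\bm{T}_{\delta}|=n\big)$'') and recording the formula $\P(\bm{T}_\delta=T)=p^{|T|-1}(1-p)^{|T|+1}$ later as Observation~\ref{obs:bintreecomp}. Your argument is precisely the standard derivation these hints point to, with the technical points (subcriticality, absolute convergence) made explicit.
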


Applying singularity analysis (see \cite[Theorem VI.3]{flajolet2009analytic}) one obtains:

\begin{lem}
	\label{svantelemma}
	If $\E\big[F(\bm{T}_\delta)\big]=a\cdot\delta^{-m}+O(\delta^{-(m-1)}),$ for $\delta\to 0,$ where $m\geq 1$ and $a\neq 0,$ then 
	$$\E\big[F(\bm{T}_n)\big]\sim\frac{\Gamma(1/2)}{\Gamma(m/2)}\cdot a\cdot  n^{\frac{m+1}{2}},\quad\text{as}\quad n\to\infty.$$
\end{lem}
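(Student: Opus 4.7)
The plan is to combine the previous lemma with a standard singularity analysis / transfer theorem. Define the generating function
\[
f(z) \coloneqq \sum_{n=1}^{\infty} \E[F(\bm{T}_n)] \cdot C_n \cdot z^n,
\]
so that the previous lemma reads $\E[F(\bm{T}_\delta)] = \frac{1+\delta}{1-\delta}\, f\!\left(\frac{1-\delta^2}{4}\right)$. Setting $z = (1-\delta^2)/4$, the map $\delta \mapsto z$ sends $\delta \to 0^+$ to $z \to 1/4^-$ and is a bianalytic change of variable near these points, with $\delta = \sqrt{1-4z}$ on a slit neighbourhood of $1/4$.

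First I will translate the hypothesis into singular behaviour of $f$ at $z=1/4$. Solving the identity above gives
\[
f(z) \;=\; \frac{1-\delta}{1+\delta}\bigl[a\, \delta^{-m} + O(\delta^{-(m-1)})\bigr] \;=\; a\, (1-4z)^{-m/2} + O\bigl((1-4z)^{-(m-1)/2}\bigr),
\]
since $(1-\delta)/(1+\delta) = 1 + O(\delta)$ and the $O(\delta)$ factor absorbs into the error.

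Next I will apply the Flajolet--Sedgewick transfer theorem (\cite[Thm.~VI.3]{flajolet2009analytic}) to read off the $n$-th coefficient of $f$. Assuming the standard delta-domain analyticity of $f$ (which one obtains from the fact that $f$ is, up to the explicit prefactor, the restriction of $\E[F(\bm{T}_\delta)]$, and one uses its analytic continuation hypothesised implicitly in the singular expansion), the transfer theorem yields
\[
[z^n] f(z) \;\sim\; \frac{a}{\Gamma(m/2)}\, n^{m/2 - 1}\, 4^n, \qquad n \to \infty,
\]
the $O$-term contributing only a lower-order asymptotic of the form $O(n^{(m-1)/2 - 1}\, 4^n)$.

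Finally I will divide by the Catalan numbers. Since $[z^n] f(z) = \E[F(\bm{T}_n)] \cdot C_n$ and $C_n \sim \frac{4^n}{\sqrt{\pi}\, n^{3/2}} = \frac{4^n}{\Gamma(1/2)\, n^{3/2}}$, we get
\[
\E[F(\bm{T}_n)] \;=\; \frac{[z^n] f(z)}{C_n} \;\sim\; \frac{a\, \Gamma(1/2)}{\Gamma(m/2)}\, n^{\frac{m+1}{2}},
\]
as required. The only genuinely delicate point is justifying that the singular expansion of $\E[F(\bm{T}_\delta)]$ along the positive real axis is valid on a delta-domain around $z=1/4$, so that the transfer theorem applies; in practice this is verified case by case in the applications, where $\E[F(\bm{T}_\delta)]$ will be given by an explicit rational expression in $\delta$, which is trivially meromorphic.
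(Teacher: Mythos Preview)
Your proof is correct and follows exactly the approach the paper intends: the paper does not give a proof but cites Janson and points to \cite[Theorem VI.3]{flajolet2009analytic}, which is precisely the transfer theorem you apply after the change of variable $\delta=\sqrt{1-4z}$. The one point you flag as delicate---$\Delta$-analyticity of $f$ near $z=1/4$---is handled by the paper's nonstandard convention that $O(\delta^{-(m-1)})$ denotes a genuine Laurent polynomial in $\delta$; under this convention $\E[F(\bm{T}_\delta)]$, and hence $f(z)$, is automatically analytic on a slit neighbourhood of $1/4$, so no case-by-case verification is needed.
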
 

\begin{rem}
	We recall that since by definition the elements of $O(\delta^{-(m+1)})$ are Laurent polynomials (and not general functions) then they are $\Delta$–analytic in $z=\frac{1-\delta^2}{4}$(for a precise definition see \cite[Definition VI.1.]{flajolet2009analytic}) and so the standard hypothesis to apply singularity analysis is fulfilled. 
\end{rem}

Lemma \ref{svantelemma} shows us why it is enough to study the behavior of $\E[\widetilde{\coc}(\pi,\bm{T}_\delta)]$ in order to derive information about the behavior of $\E[\widetilde{\coc}(\pi,\bm{T}_n)].$ This will be the goal of the next section.

\subsection{The annealed version of the Benjamini--Schramm convergence}
\label{weakres}
In this section we are going to prove the following weaker version of Theorem \ref{231theorem}.	
\begin{prop}
	\label{weakprop}
	Let $\bm{\sigma}^n$ be a uniform random permutation in $\emph{Av}^n(231)$ for all $n\in\Z_{>0}.$ It holds that
	$$\E\big[\widetilde{\coc}(\pi,\bm{\sigma}^n)\big]\to P_{231}(\pi),\quad\text{for all}\quad\pi\in\emph{Av}(231).$$
\end{prop}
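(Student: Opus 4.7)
The plan is to follow the strategy set up in Section~\ref{behavtree}. Via the bijection $\varphi$ of Section~\ref{231bij}, uniform $\bm{\sigma}^n \in \text{Av}^n(231)$ corresponds to a uniform binary tree $\bm{T}_n$ with $n$ vertices, and $\coc(\pi,\bm{\sigma}^n) = \coc(\pi,\bm{T}_n)$. Thus it suffices to prove $\E[\coc(\pi,\bm{T}_n)] \sim n \cdot P_{231}(\pi)$. By Lemma~\ref{svantelemma} applied with $m=1$, this reduces to establishing
\[
\E[\coc(\pi,\bm{T}_\delta)] = \frac{P_{231}(\pi)}{\delta} + O(1), \qquad \delta\to 0,
\]
for the binary Galton--Watson tree $\bm{T}_\delta$ of Section~\ref{behavtree}, whose crucial feature is that each vertex has, independently, a left child with probability $p = (1-\delta)/2$ and a right child with probability $p$.

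To compute the left-hand side, I would write $\coc(\pi,\bm{T}_\delta) = \sum_{v\in\bm{T}_\delta} \mathbf{1}_{E_v(\pi)}$, where $E_v(\pi)$ is the event that the $k = |\pi|$ consecutive in-order vertices of $\bm{T}_\delta$ starting at $v$ induce the pattern $\pi$. Under the bijection, the pattern induced by $k$ consecutive in-order vertices is determined by the local shape of the minimal subtree containing those vertices; consequently $E_v(\pi)$ can be rephrased as a combination of (i) certain edges of $\bm{T}_\delta$ being present, producing $T_\pi$ at position $v$, and (ii) certain edges being absent, so that the in-order window does not ``leak'' into a larger configuration. Thanks to the independence of left and right children in $\bm{T}_\delta$, $\P(E_v(\pi))$ factors as $p^{\#\text{required edges}}\,(1-p)^{\#\text{forbidden edges}}$. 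Crucially, by Observation~\ref{maxnode} the left-to-right maxima of $\pi$ correspond to vertices on the left spine of $T_\pi$ and the right-to-left maxima to vertices on its right spine, and these are exactly the vertices where the ambient tree is free to extend without breaking the pattern; this is the combinatorial mechanism that injects the factor $2^{|\text{LRMax}(\pi)|+|\text{RLMax}(\pi)|}$ into the leading singularity.

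Summing $\P(E_v(\pi))$ over $v\in\bm{T}_\delta$ then produces $\E[\coc(\pi,\bm{T}_\delta)]$ as a product of this local factor and $\E[|\bm{T}_\delta|] \sim 1/\delta$, so that taking $p\to 1/2$ at the singularity gives exactly $P_{231}(\pi)/\delta$; Lemma~\ref{svantelemma} then finishes the argument. The main obstacle is the careful bookkeeping in the step above: precisely characterizing $E_v(\pi)$ in terms of local configurations of $\bm{T}_\delta$ and identifying which vertices of $T_\pi$ force which kind of factor. The cleanest approach will likely be an induction on $|\pi|$ using Observation~\ref{permfact} (the recursive splitting of a $231$-avoiding permutation around its maximum) mirrored in a recursive decomposition of $T_\pi$ at its root, which will also clarify why the two outer spines of $T_\pi$ behave differently from its interior and produce the exponent $|\text{LRMax}(\pi)|+|\text{RLMax}(\pi)|$ rather than the naive $|\pi|$.
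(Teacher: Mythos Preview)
Your overall strategy is correct and matches the paper's: reduce to the binary Galton--Watson tree $\bm{T}_\delta$, show $\E[\coc(\pi,\bm{T}_\delta)] = P_{231}(\pi)\,\delta^{-1} + O(1)$, and invoke Lemma~\ref{svantelemma}. However, your direct summation argument has a real gap. With $E_v(\pi)$ indexed by the \emph{starting} (leftmost in-order) vertex $v$ of the window, the event $E_v(\pi)$ is \emph{not} local to the subtree below $v$: once the in-order walk exhausts that subtree it climbs to ancestors of $v$, so $E_v(\pi)$ depends on the tree above $v$, and $\P(E_v(\pi)\mid v\in\bm{T}_\delta)$ genuinely varies with $v$ (for instance $\P(E_{\text{root}}(12))=0$, since the root is always the maximum, while $\P(E_v(12)\mid v\in\bm{T}_\delta)>0$ for deeper $v$). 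Hence the factorization ``local factor times $\E[|\bm{T}_\delta|]$'' is not justified as written.

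The paper repairs this---and your closing remark already points there---by indexing each occurrence by the vertex $u$ playing the role of the \emph{maximum} of $\pi$. The recursion $\coc(\pi,T)=\coc(\pi,T_L)+\coc(\pi,T_R)+\mathds{1}_{\{\text{pat}_{\bm{J}}(T)=\pi\}}$ (Lemma~\ref{ricors}) unrolls exactly this way; since the window then lies entirely inside the fringe subtree at $u$, the branching property makes the conditional probability independent of $u$, yielding $\E[\coc(\pi,\bm{T}_\delta)]=\delta^{-1}\,\P(\text{pat}_{\bm{J}}(\bm{T}_\delta)=\pi)$ (Equation~\eqref{step1}). The remaining computation of $\P(\text{pat}_{\bm{J}}(\bm{T}_\delta)=\pi)$ does proceed, as you anticipate, by recursing on $\pi$ around its maximum (Lemmas~\ref{onecross} and~\ref{twoandthreecross}). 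One caution on your heuristic: the factor $2^{|\text{LRMax}(\pi)|+|\text{RLMax}(\pi)|}$ does not arise because the ambient tree is ``free to extend'' at each spine vertex of $T_\pi$ simultaneously. Rather, each successive LR-max (resp.\ RL-max) of $\pi$ introduces a geometric sum over its possible depth along the right (resp.\ left) spine of the ambient tree, contributing a factor $\tfrac{1}{1-p}$, while each block of $\pi$ strictly between two consecutive maxima must be reproduced \emph{exactly} and contributes $\P(\bm{T}=T_{\text{block}})=p^{|\text{block}|-1}(1-p)^{|\text{block}|+1}$; collecting these gives the exponent $2|\pi|-|\text{LRMax}(\pi)|-|\text{RLMax}(\pi)|$ (Proposition~\ref{patprob231}).
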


\subsubsection{A basic recursion}

Observation \ref{permfact} gives us a basic recursion for $\coc(\pi,\sigma)$ as we will show in the next lemmas.
\begin{lem}
	Let $\pi\in\emph{Av}^k(231)$ with $k\geq 1.$ Set $m=\emph{indmax}(\pi).$ Then, for any permutation $\sigma\in\emph{Av}^{n}(231)$ with $n\geq 1,$
	\begin{equation}
	\label{recccoc}
	\coc(\pi,\sigma)=\coc(\pi,\sigma_L)+\coc(\pi,\sigma_R)+\mathds{1}_{\big\{\emph{pat}_{[\ell-m+1,\ell+k-m]}(\sigma)=\pi\big\}},
	\end{equation} 
	where as always $\ell=\emph{indmax}(\sigma).$
\end{lem}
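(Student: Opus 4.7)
The plan is to decompose the set of intervals of length $k$ in $[1,n]$ according to their position relative to the index $\ell=\text{indmax}(\sigma)$, and then to use the fact that the maximum $\sigma_\ell=n$ forces strong constraints on any consecutive occurrence of $\pi$ straddling position $\ell$.

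More precisely, I would start by writing
\[
\coc(\pi,\sigma)=\sum_{\substack{I\subseteq[n]\\ |I|=k,\,I\text{ interval}\\ \text{pat}_I(\sigma)=\pi}}1,
\]
and split the sum into three disjoint parts according to whether the interval $I$ is contained in $[1,\ell-1]$, contained in $[\ell+1,n]$, or contains $\ell$. For the first two classes, since $\text{pat}_I(\sigma)$ depends only on the relative order of $(\sigma_i)_{i\in I}$, such intervals are in bijection with consecutive occurrences of $\pi$ in the word $\sigma_L$, respectively $\sigma_R$; by the extension of $\coc$ to words of distinct numbers introduced earlier (so that $\coc(\pi,\sigma_R)=\coc(\pi,\text{std}(\sigma_R))$), these two contributions are exactly $\coc(\pi,\sigma_L)$ and $\coc(\pi,\sigma_R)$.

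The main point is then to show that the third class contributes at most once, and precisely the claimed indicator. Suppose $I=[a,b]$ contains $\ell$ and satisfies $\text{pat}_I(\sigma)=\pi$. Since $\sigma_\ell=n$ is the global maximum of $\sigma$, it is in particular the maximum among $(\sigma_i)_{i\in I}$, so in the standardization the entry at relative position $\ell-a+1$ is the maximum of $\pi$. Hence $\ell-a+1=m$, which forces $a=\ell-m+1$ and $b=a+k-1=\ell+k-m$. This shows that the only interval containing $\ell$ that can realize $\pi$ is $[\ell-m+1,\ell+k-m]$, giving the indicator term (with the convention on $\text{pat}$ taking care of the boundary cases where this interval is not contained in $[1,n]$, since then $\text{pat}$ of it is $\emptyset\neq\pi$).

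I do not expect a real obstacle here: the lemma is essentially a direct bookkeeping consequence of Observation \ref{permfact} together with the elementary observation that the global maximum of $\sigma$ must play the role of the maximum of $\pi$ in any consecutive occurrence that uses position $\ell$. The only small care needed is to make sure the out-of-range interval case is handled by the convention $\text{pat}_{b(k)}(\sigma)=\emptyset$ introduced earlier, so that the indicator vanishes when $[\ell-m+1,\ell+k-m]\not\subseteq[1,n]$.
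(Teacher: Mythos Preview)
Your proposal is correct and follows essentially the same approach as the paper: split the intervals $I$ into those contained in $[1,\ell-1]$, those contained in $[\ell+1,n]$, and those containing $\ell$, and for the last class use that $\sigma_\ell=n$ must correspond to $\pi_m$, forcing $I=[\ell-m+1,\ell+k-m]$. Your write-up is slightly more detailed (in particular you explicitly address the boundary case via the $\text{pat}=\emptyset$ convention), but the argument is the same.
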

\begin{proof}
	Recall that consecutive occurrences of $\pi$ in $\sigma$ correspond to intervals $I\subseteq[n]$ such that $\text{pat}_I(\sigma)=\pi.$ We have three different possibilities for $I$:
	\begin{enumerate}[1.]
		\item if $I\subseteq[1,\ell-1]$ then $\pi$ is a consecutive occurrence in $\sigma_L$;
		\item if $I\subseteq[\ell+1,n]$ then $\pi$ is a consecutive occurrence in $\sigma_R$;
		\item if $\ell\in I$ then $I$ has to be the interval $[\ell-m+1,\ell+k-m].$ Indeed $\text{Card}(I)=k$ and the value induced in $\text{pat}_{I}(\pi)$ by $\sigma_{\ell}$ has to be $\pi_m.$
	\end{enumerate}
This is enough to prove Equation (\ref{recccoc}).
\end{proof}

We can translate this result in term of trees.

\begin{lem}
	\label{ricors}
	Let $\pi\in\emph{Av}^k(231)$ with $k\geq 1$ and  define $m$ as in the previous lemma. Then, for every binary tree $T,$ denoting $\ell=\emph{indmax}(\sigma_{T}),$
	\begin{equation}
	\label{recccoctrees}
	\coc(\pi,T)=\coc(\pi,T_L)+\coc(\pi,T_R)+\mathds{1}_{\big\{\emph{pat}_{[\ell-m+1,\ell+k-m]}(T)=\pi\big\}}.
	\end{equation}  
\end{lem}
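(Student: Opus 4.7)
The plan is to deduce this directly from the previous lemma via the bijection $\psi$ between binary trees and $231$-avoiding permutations described in Section \ref{231bij}, making the identifications $\coc(\pi, T) = \coc(\pi, \sigma_T)$ and $\text{pat}_I(T) = \text{pat}_I(\sigma_T)$ (which are definitions), together with a careful bookkeeping of how the decomposition $T = T_L, \text{root}, T_R$ translates to a decomposition of $\sigma_T$.

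First, I would recall from the recursive construction of $\psi$ that if $T$ has $n$ vertices then $\sigma_T = S_L \cdot n \cdot S_R'$, where $S_L = \psi(T_L) = \sigma_{T_L}$ and $S_R'$ is obtained from $\sigma_{T_R}$ by shifting every entry by $|T_L|$. In particular, $n = \text{indmax}(\sigma_T)$ sits exactly at position $\ell = |T_L| + 1$, so $(\sigma_T)_L = \sigma_{T_L}$ and $(\sigma_T)_R$ is an integer shift of $\sigma_{T_R}$. Since $\coc(\pi, \cdot)$ depends only on the standardization of a sequence, we get
\[
\coc(\pi, (\sigma_T)_L) = \coc(\pi, \sigma_{T_L}) = \coc(\pi, T_L) \quad \text{and} \quad \coc(\pi, (\sigma_T)_R) = \coc(\pi, \sigma_{T_R}) = \coc(\pi, T_R).
\]

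Second, I would apply the previous lemma (the recursion on permutations, Equation \eqref{recccoc}) to the $231$-avoiding permutation $\sigma = \sigma_T$, noting that the quantity $\ell$ used there coincides with $\text{indmax}(\sigma_T)$, which is precisely $\text{indmax}(\sigma_T) = \ell$ as defined in the current statement. Substituting the two identifications above, the recursion becomes exactly
\[
\coc(\pi, \sigma_T) = \coc(\pi, T_L) + \coc(\pi, T_R) + \mathds{1}_{\{\text{pat}_{[\ell-m+1,\ell+k-m]}(\sigma_T) = \pi\}},
\]
and using $\coc(\pi, T) = \coc(\pi, \sigma_T)$ and $\text{pat}_I(T) = \text{pat}_I(\sigma_T)$ yields the claim.

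There is essentially no obstacle: the lemma is just a dictionary translation. The only small point to be careful about is verifying that the $\psi$-recursion indeed places the maximum at position $|T_L|+1$ and that $\sigma_T$ decomposes with $(\sigma_T)_L$ and $(\sigma_T)_R$ being respectively $\sigma_{T_L}$ and a shift of $\sigma_{T_R}$, so that the $\coc$ counts on both sides of the maximum match those of the corresponding subtrees. The edge cases where $T_L$ or $T_R$ is empty are handled automatically because $\coc(\pi, \emptyset) = 0$ and the convention on $\text{pat}_{b(k)}, \text{pat}_{e(k)}$ for empty inputs is already fixed.
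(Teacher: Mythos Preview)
Your proposal is correct and matches the paper's approach: the paper states this lemma as an immediate translation of the preceding permutation recursion (Equation~\eqref{recccoc}) via the bijection, without giving any further details. Your write-up simply spells out that translation carefully, checking that $(\sigma_T)_L=\sigma_{T_L}$ and that $(\sigma_T)_R$ is a shift of $\sigma_{T_R}$, which is exactly what is needed.
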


\subsubsection{The behaviour of $\E\big[\widetilde{\coc}(\pi,\bm{\sigma}^n)\big]$.}
We now focus on the behavior of $\E\big[\widetilde{\coc}(\pi,\bm{T}_\delta)\big]$ for\break
$\pi\in\text{Av}(231),$ and then we will recover results for $\E\big[\widetilde{\coc}(\pi,\bm{\sigma}^n)\big]$ using Lemma \ref{svantelemma} and the bijection between binary trees and 231-avoiding permutation as explained in Section \ref{behavtree}. In order to simplify notation we set $\bm{T}\coloneqq\bm{T}_\delta.$ Thanks to Lemma \ref{ricors}, we know that, for all $\pi\in\text{Av}(231),$
\begin{equation}
\label{twostar}
\coc(\pi,\bm{T})=\coc(\pi,\bm{T}_L)+\coc(\pi,\bm{T}_R)+\mathds{1}_{\big\{\text{pat}_{\bm{J}}(\bm{T})=\pi\big\}},
\end{equation}
where $\bm{J}=[\bm{\ell}-m+1,\bm{\ell}+k-m]$, $\bm{\ell}=\text{indmax}(\bm{\sigma}_{\bm{T}})$ and $m=\text{indmax}(\pi).$ 
Taking the expectation in Equation (\ref{twostar}) we obtain,
$$\E\big[\coc(\pi,\bm{T})\big]=\E\big[\coc(\pi,\bm{T}_L)\big]+\E\big[\coc(\pi,\bm{T}_R)\big]+\P\big(\text{pat}_{\bm{J}}(\bm{T})=\pi\big).$$
Since $\bm{T}_L$ is an independent copy of $\bm{T}$ with probability $p$ and empty with probability $1-p,$ and the same holds for $\bm{T}_R,$ we have,
\begin{equation}
\label{step1}
\E\big[\coc(\pi,\bm{T})\big]=\frac{\P\big(\text{pat}_{\bm{J}}(\bm{T})=\pi\big)}{1-2p}=\delta^{-1}\cdot\P\big(\text{pat}_{\bm{J}}(\bm{T})=\pi\big),
\end{equation}
where in the last equality we used that $p=\frac{1-\delta}{2}.$
We now focus on the term $\P\big(\text{pat}_{\bm{J}}(\bm{T})=\pi\big).$ 

\begin{lem}
	\label{onecross}
	Let $\pi\in\emph{Av}(231).$ Using notation as above and decomposing as usual $\pi$ in\\ $\pi=\pi_L\pi_m\pi_R,$
	\begin{equation}
	\label{oneredcross}
	\P\big(\emph{pat}_{\bm{J}}(\bm{T})=\pi\big)=
	\begin{cases}
	p^2\cdot\P\big(\emph{pat}_e(\bm{T})=\pi_L\big)\cdot\P\big(\emph{pat}_b(\bm{T})=\pi_R\big), &\quad\text{if }\pi_L\neq\emptyset,\;\pi_R\neq\emptyset,\\
	p\cdot\P\big(\emph{pat}_e(\bm{T})=\pi_L\big), &\quad\text{if }\pi_L\neq\emptyset,\;\pi_R=\emptyset,\\
	p\cdot\P\big(\emph{pat}_b(\bm{T})=\pi_R\big), &\quad\text{if }\pi_L=\emptyset,\;\pi_R\neq\emptyset,\\
	1, &\quad\text{if }\pi=1.
	\end{cases}
	\end{equation}
\end{lem}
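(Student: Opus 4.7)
My plan is to translate the event $\{\text{pat}_{\bm{J}}(\bm{T}) = \pi\}$ into an event on the two subtrees $\bm{T}_L,\bm{T}_R$ and then to exploit the independence afforded by the offspring distribution~(\ref{bintree}).

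The first step is to locate $\bm{J}$ inside the tree using the in-order/post-order description of the bijection (Fig.~\ref{Av(231)bij}). Under this identification, the root of $\bm{T}$ (which carries the maximum of $\sigma_{\bm{T}}$) sits at position $\bm{\ell} = |\bm{T}_L| + 1$; the vertices of $\bm{T}_L$ occupy, in in-order, positions $[1,\bm{\ell}-1]$ of $\sigma_{\bm{T}}$, and those of $\bm{T}_R$ occupy $[\bm{\ell}+1,|\bm{T}|]$. Since $m=\text{indmax}(\pi)$, the interval $\bm{J}=[\bm{\ell}-m+1,\bm{\ell}+k-m]$ straddles the root: its first $m-1$ entries are the last $m-1$ entries of $\sigma_{\bm{T}_L}$, its middle entry is the root, and its last $k-m$ entries are the first $k-m$ entries of $\sigma_{\bm{T}_R}$.

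Next I would invoke Observation~\ref{permfact}: entries of $\sigma_L$ are below $\sigma_{\bm{\ell}}$ and below every entry of $\sigma_R$. Consequently, the standardization of the block indexed by $\bm{J}$ factorizes, giving
\begin{equation*}
\{\text{pat}_{\bm{J}}(\bm{T}) = \pi_L \pi_m \pi_R\} \;=\; \{\text{pat}_e(\sigma_{\bm{T}_L}) = \pi_L\} \cap \{\text{pat}_b(\sigma_{\bm{T}_R}) = \pi_R\}.
\end{equation*}
The conventions $\text{pat}_{e(j)}=\emptyset$ when $j=0$ or $j>|\sigma|$ (and likewise for $\text{pat}_{b(j)}$) make this identity continue to hold when $\bm{J}$ would extend outside $[1,|\bm{T}|]$: both sides fail in that case.

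Finally, the offspring distribution~(\ref{bintree}) has been tuned precisely so that $\mathds{1}_{\bm{T}_L\neq\emptyset}$ and $\mathds{1}_{\bm{T}_R\neq\emptyset}$ are independent Bernoullis of parameter $p$, and conditionally on being non-empty each of $\bm{T}_L,\bm{T}_R$ is distributed as $\bm{T}$. When $\pi_L\neq\emptyset$, the event $\{\text{pat}_e(\sigma_{\bm{T}_L})=\pi_L\}$ forces $\bm{T}_L\neq\emptyset$, so its probability equals $p\cdot\P(\text{pat}_e(\bm{T})=\pi_L)$; the analogous statement holds on the right. Independence of $\bm{T}_L$ and $\bm{T}_R$ then yields, case by case, the four formulas displayed in the statement; the degenerate case $\pi=1$ reduces to $\bm{J}=\{\bm{\ell}\}$, which trivially has probability $1$. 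The argument is essentially bookkeeping once the in-order picture is in place; the only mildly delicate point, which is where I expect to spend a few extra lines, is checking that the boundary conventions on $\text{pat}_b$, $\text{pat}_e$ line up consistently with the failure cases in which a subtree is empty or too small to host $\pi_L$ or $\pi_R$.
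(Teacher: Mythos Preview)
Your proposal is correct and follows essentially the same approach as the paper: you establish the key factorization $\{\text{pat}_{\bm{J}}(\bm{T})=\pi\}=\{\text{pat}_e(\bm{T}_L)=\pi_L\}\cap\{\text{pat}_b(\bm{T}_R)=\pi_R\}$ and then use that $\bm{T}_L,\bm{T}_R$ are independent, each being a fresh copy of $\bm{T}$ with probability $p$ and empty otherwise. The paper states the factorization as its Equation~(\ref{keyeq}) with less justification than you give, treats the first case explicitly, and dismisses the remaining three as similar.
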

\begin{proof}
	First of all note that
	\begin{equation} \label{keyeq}
	\P\big(\text{pat}_{\bm{J}}(\bm{T})=\pi\big)=\P\big(\text{pat}_e(\bm{T}_L)=\pi_L,\text{pat}_b(\bm{T}_R)=\pi_R\big).
	\end{equation}
	We recall that if $\pi_L$ (resp. $\pi_R$) is empty then $\text{pat}_e(\bm{T}_L)$ (resp. $\text{pat}_b(\bm{T}_R)$) is empty by definition.
	
	We consider the case when $\pi_L\neq\emptyset,\;\pi_R\neq\emptyset.$ We have,
		$$\P\big(\text{pat}_{\bm{J}}(\bm{T})=\pi\big)=p^2\cdot\P\big(\text{pat}_e(\bm{T})=\pi_L\big)\cdot\P\big(\text{pat}_b(\bm{T})=\pi_R\big),$$
		since in Equation (\ref{keyeq}) the random tree $\bm{T}_L$ is an independent copy of $\bm{T}$ with probability $p$ and empty with probability $1-p$ and obviously the same hold also for $\bm{T}_R.$
		The other three cases are similar.		
\end{proof}
In view of Lemma \ref{onecross}, we now focus on $\P\big(\text{pat}_e(\bm{T})=\pi)$ (the analysis for $\P\big(\text{pat}_b(\bm{T})=\pi)$ following by symmetry). We want to rewrite the event $\big\{\text{pat}_e(\bm{T})=\pi\big\}$ conditioning on the position of the maximum among the last $|\pi|$ values of $\sigma_{\bm{T}}.$ Using Observation \ref{maxnode}, we know that this maximum is always reached at an index of $\sigma_{\bm{T}}$ corresponding to a vertex of $\bm{T}$ of the form
$$v=\underbrace{2\dots2}_{n\text{-times}}\eqqcolon v_{2^n},\quad\text{for some}\quad n\in\Z_{\geq 0},$$
with the convention $v_{2^0}\coloneqq\emptyset.$ For an example, see Fig.~\ref{lastmax}.
\begin{figure}[htbp]
	\begin{center}
		\includegraphics[scale=.70]{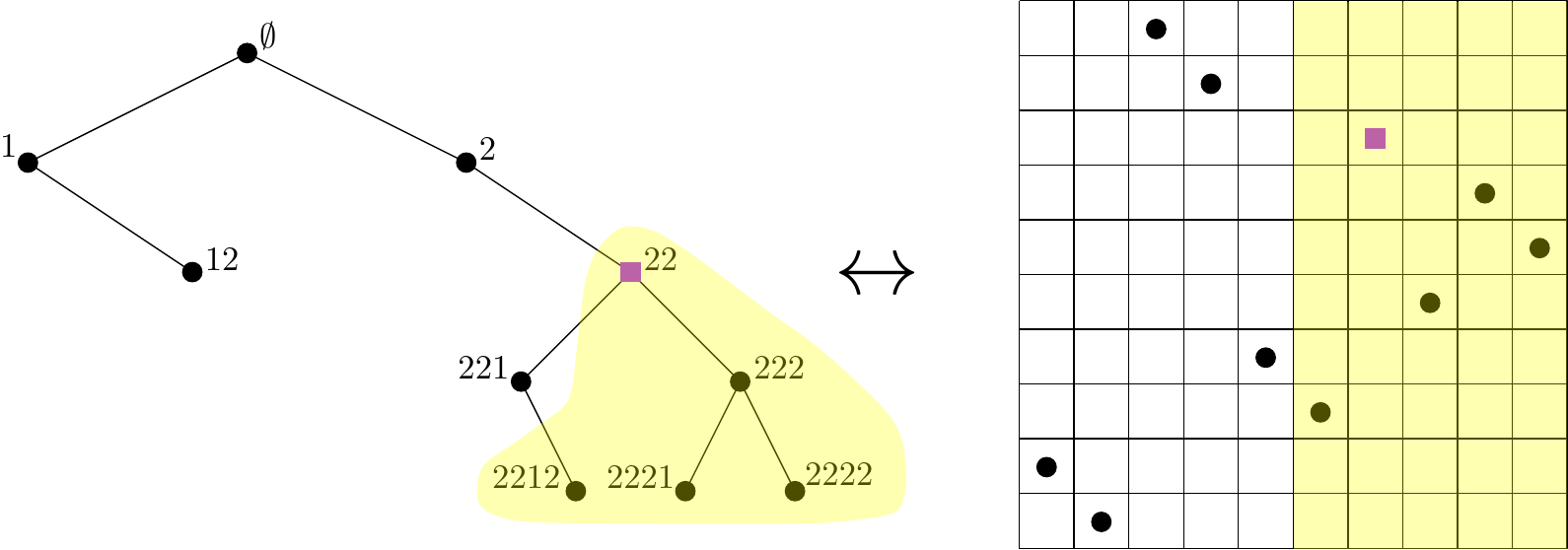}\\
		\caption{We marked with a purple square the maximum among the last 5 values (highlighted in yellow) of the permutation $\sigma=2\;1\;10\;9\;4\;3\;8\;5\;7\;6.$}\label{lastmax}
	\end{center}
\end{figure}

Therefore, defining the following events, for all $n\in\Z_{\geq0},$ all $\pi\in\text{Av}(231),$
\begin{equation*}
\begin{split}
M^n_{\pi}\coloneqq\big\{&v_{2^n}\text{ is the vertex in } \bm{T} \text{ corresponding to the}\\
&\text{maximum among the last }|\pi|\text{ values of }\sigma_{\bm{T}}\big\}
\end{split}
\end{equation*}
and using the formula of total probability, we have
\begin{equation}
\label{disintform}
\P\big(\text{pat}_e(\bm{T})=\pi)=\sum_{n=0}^\infty\P\big(\text{pat}_e(\bm{T})=\pi,M^n_{\pi}).
\end{equation}
We also introduce the events, for all $n\in\Z_{\geq0},$
$$R^n\coloneqq\big\{\text{The vertex }v_{2^n}\text{ is in }\bm{T}\big\}.$$
Note that 
\begin{equation}
\label{sumrn}
\sum_{n=0}^\infty\mathbb{P}(R^n)=\sum_{n=0}^\infty p^n=\frac{1}{1-p}.
 \end{equation}
The following lemma computes recursively $\P\big(\text{pat}_e(\bm{T})=\pi\big)$ and $\P\big(\text{pat}_b(\bm{T})=\pi\big).$
\begin{lem}
	\label{twoandthreecross}
	Let $\pi\in\emph{Av}(231).$ Using notation as above,
	\begin{equation}
	\label{twocross}
	\P\big(\emph{pat}_e(\bm{T})=\pi\big)=
	\begin{cases}
	\frac{p^2}{1-p}\cdot\P\big(\emph{pat}_e(\bm{T})=\pi_L\big)\cdot\P\big(\bm{T}=T_{\pi_R}\big), &\quad\text{if }\pi_L\neq\emptyset,\;\pi_R\neq\emptyset,\\
	p\cdot\P\big(\emph{pat}_e(\bm{T})=\pi_L\big), &\quad\text{if }\pi_L\neq\emptyset,\;\pi_R=\emptyset,\\
	\frac{p}{1-p}\cdot\P\big(\bm{T}=T_{\pi_R}\big), &\quad\text{if }\pi_L=\emptyset,\;\pi_R\neq\emptyset,\\
	1, &\quad\text{if }\pi=1,
	\end{cases}
	\end{equation}
and
	\begin{equation}
	\label{threecross}
	\P\big(\emph{pat}_b(\bm{T})=\pi\big)=
	\begin{cases}
	\frac{p^2}{1-p}\cdot\P\big(\bm{T}=T_{\pi_L}\big)\cdot\P\big(\emph{pat}_b(\bm{T})=\pi_R\big), &\quad\text{if }\pi_L\neq\emptyset,\;\pi_R\neq\emptyset,\\
	\frac{p}{1-p}\cdot\P\big(\bm{T}=T_{\pi_L}\big), &\quad\text{if }\pi_L\neq\emptyset,\;\pi_R=\emptyset,\\
	p\cdot\P\big(\emph{pat}_b(\bm{T})=\pi_R\big), &\quad\text{if }\pi_L=\emptyset,\;\pi_R\neq\emptyset,\\
	1, &\quad\text{if }\pi=1.
	\end{cases}
	\end{equation}
\end{lem}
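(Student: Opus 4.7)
The plan is to establish Equation~(\ref{twocross}) by conditioning on the events $M^n_\pi$ and summing via total probability:
\[
\P\big(\text{pat}_e(\bm{T}) = \pi\big) = \sum_{n=0}^{\infty} \P\big(\text{pat}_e(\bm{T}) = \pi,\, M^n_\pi\big).
\]
The base case $\pi = 1$ is immediate since $\text{pat}_e(\bm{T})$ of size one is always the singleton pattern. For general $\pi$, I would first translate the combinatorial event $\{\text{pat}_e(\bm{T})=\pi\}\cap M^n_\pi$ into a structural condition on $\bm{T}$ using the bijection of Section~\ref{231bij} together with Observation~\ref{maxnode}.

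The key structural observation is as follows. By Observation~\ref{maxnode}, the right-to-left maxima of $\sigma_{\bm{T}}$ correspond to the right-spine vertices $v_{2^k}$ of $\bm{T}$, and the post-order labeling (which assigns the values of $\sigma_{\bm{T}}$) makes every vertex larger than all its descendants. Hence $v_{2^n}$ is the maximum of the last $|\pi|$ entries of $\sigma_{\bm T}$ if and only if $v_{2^n}\in\bm{T}$, the subtree $\bm{T}^{v_{2^{n+1}}}$ has at most $|\pi|-1$ vertices (so that $v_{2^n}$ itself lies among the last $|\pi|$ entries), and, for $n\geq 1$, the subtree $\bm{T}^{v_{2^n}}$ has at least $|\pi|$ vertices (so that $v_{2^{n-1}}$ and all earlier right-spine ancestors lie outside). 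Since the in-order of $\bm{T}$ visits $\bm{T}^{v_{2^n}}_L$, then $v_{2^n}$, then the right subtree of $v_{2^n}$, matching $\text{pat}_e(\bm{T})=\pi$ on $M^n_\pi$ with the decomposition $\pi=\pi_L\pi_m\pi_R$ forces (i) the right subtree of $v_{2^n}$ to equal $T_{\pi_R}$ exactly (so the $|\pi_R|$ entries after $v_{2^n}$ induce $\pi_R$), and (ii) $\text{pat}_e(\bm{T}^{v_{2^n}}_L)=\pi_L$ (so the $|\pi_L|$ entries preceding $v_{2^n}$, which all lie inside $\bm{T}^{v_{2^n}}_L$ under $M^n_\pi$, induce $\pi_L$).

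I would then apply the branching property of $\bm{T}$ together with the independence encoded in the offspring distribution~(\ref{bintree}): at every vertex, the presence of a left child is independent of the presence of a right child, and each child's subtree is an independent copy of $\bm{T}$. Existence of the right spine down to $v_{2^n}$ contributes the factor $p^n$, while the left subtrees hanging off $v_{2^0},\ldots,v_{2^{n-1}}$ are irrelevant to our event. The left subtree of $v_{2^n}$ contributes the factor $p\cdot\P\big(\text{pat}_e(\bm{T})=\pi_L\big)$ when $\pi_L\neq\emptyset$, and $1$ when $\pi_L=\emptyset$ (no constraint). Symmetrically, the right subtree of $v_{2^n}$ contributes $p\cdot\P\big(\bm{T}=T_{\pi_R}\big)$ when $\pi_R\neq\emptyset$, and $1-p$ (absence of right child) when $\pi_R=\emptyset$. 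Summing the resulting geometric series $\sum_{n\geq 0}p^n = 1/(1-p)$ recovers the four cases of~(\ref{twocross}). Equation~(\ref{threecross}) for $\text{pat}_b$ follows by the entirely symmetric argument along the left spine $v_{1^n}$, using that left-to-right maxima correspond to the left spine.

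The main obstacle I expect is the $n\geq 1$ case, where one must carefully check that under $M^n_\pi$ the $|\pi_L|$ positions preceding $v_{2^n}$ actually fit inside $\bm{T}^{v_{2^n}}_L$, with no intrusion from higher right-spine ancestors or their left subtrees. This reduces to verifying $|\bm{T}^{v_{2^n}}_L|\geq|\pi_L|$ whenever the event has positive probability, and requires spelling out the precise interplay between in-order position and post-order label along the right spine in order to justify the clean probabilistic factorization.
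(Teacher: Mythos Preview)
Your approach is essentially the paper's: decompose over the events $M^n_\pi$, translate to a structural condition on the left and right subtrees of $v_{2^n}$, use the branching property to factor, and sum the geometric series $\sum_{n\ge 0}\P(R^n)=1/(1-p)$. The obstacle you anticipate dissolves once you observe (as the paper does) that the structural event $\{\text{pat}_e(\bm{T}^{v_{2^n}}_L)=\pi_L\}\cap\{\bm{T}^{v_{2^n}}_R=T_{\pi_R}\}$ is itself contained in $M^n_\pi\cap R^n$, so one may simply replace $M^n_\pi$ by $R^n$ in the sum without any further size verification.
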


\begin{proof}
	We start with the study of $\P\big(\text{pat}_e(\bm{T})=\pi\big).$ We distinguish again four different cases according to the structure of $\pi_L$ and $\pi_R$.
	\begin{itemize}
		\item \underline{$\pi_L\neq\emptyset,\;\pi_R\neq\emptyset:$} By Equation (\ref{disintform}), we know that 
		\begin{equation*}
		\begin{split}
		\P\big(\text{pat}_e(\bm{T})=\pi)&=\sum_{n\in\Z_{\geq0}}\P\big(\text{pat}_e(\bm{T})=\pi,M^n_{\pi})\\
		&=\sum_{n\in\Z_{\geq0}}\P\big(\text{pat}_e(\bm{T}^{v_{2^n}}_L)=\pi_L,\bm{T}^{v_{2^n}}_R=T_{\pi_R},M^n_{\pi}),
		\end{split}
		\end{equation*}
		where in the last equality we used that conditioning on $v_{2^n}$ being the vertex in $\bm{T}$ corresponding to the
		maximum among the last $|\pi|$ values of $\sigma_{\bm{T}},$ then $\text{pat}_{e(|\pi|)}(\bm{T})=\text{pat}_{e(|\pi|)}(\bm{T}^{v_{2^n}}).$
		Since the event $\big\{\text{pat}_e(\bm{T}^{v_{2^n}}_L)=\pi_L\big\}\cap\big\{\bm{T}^{v_{2^n}}_R=T_{\pi_R}\big\}$ is contained both in $M^n_{\pi}$ and in $R^n,$ then
		\begin{equation*}
		\P\big(\text{pat}_e(\bm{T})=\pi)=\sum_{n\in\Z_{\geq0}}\P\big(\text{pat}_e(\bm{T}^{v_{2^n}}_L)=\pi_L,\bm{T}^{v_{2^n}}_R=T_{\pi_R},R^n).
		\end{equation*}
		Using the independence between $\bm{T}^{v_{2^n}}_L$ and $\bm{T}^{v_{2^n}}_R$ conditionally on $R^n$ and continuing the sequence of equalities,
		\MLine{
		\P\big(\text{pat}_e(\bm{T})=\pi)=\sum_{n\in\Z_{\geq0}}\P\big(\text{pat}_e(\bm{T}^{v_{2^n}}_L)=\pi_L|R^n\big)\cdot\P\big(\bm{T}^{v_{2^n}}_R=T_{\pi_R}|R^n\big)\cdot\P\big(R^n).
		}
		Since, conditionally on $R^n,$ $\bm{T}^{v_{2^n}}_L$ is an independent copy of $\bm{T}$   with probability $p$ and empty with probability $1-p$ and the same obviously holds for $\bm{T}^{v_{2^n}}_R,$ we can rewrite the last term as
		\MLine{
		\begin{split}
		\P\big(\text{pat}_e(\bm{T})=\pi)&=\sum_{n\in\Z_{\geq0}}p^2\cdot\P\big(\text{pat}_e(\bm{T})=\pi_L\big)\cdot\P\big(\bm{T}=T_{\pi_R}\big)\cdot\P\big(R^n)\\
		&=p^2\cdot\P\big(\text{pat}_e(\bm{T})=\pi_L\big)\cdot\P\big(\bm{T}=T_{\pi_R}\big)\cdot\sum_{n\in\Z_{\geq0}}\P\big(R^n)\\
		&=\frac{p^2}{1-p}\cdot\P\big(\text{pat}_e(\bm{T})=\pi_L\big)\cdot\P\big(\bm{T}=T_{\pi_R}\big),
		\end{split}
		}
		where in the last equality we used Equation (\ref{sumrn}).
		\item \underline{$\pi_L\neq\emptyset,\;\pi_R=\emptyset:$} Similarly as before
		\begin{equation*}
		\begin{split}
		\P\big(\text{pat}_e(\bm{T})=\pi)&=\sum_{n\in\Z_{\geq0}}\P\big(\text{pat}_e(\bm{T}^{v_{2^n}}_L)=\pi_L,\bm{T}^{v_{2^n}}_R=\emptyset|R^n\big)\cdot\P\big(R^n).
		\end{split}
		\end{equation*}
		Noting that with similar arguments as before we have $$\P\big(\text{pat}_e(\bm{T}^{v_{2^n}}_L)=\pi_L,\bm{T}^{v_{2^n}}_R=\emptyset|R^n\big)=p\cdot\P\big(\text{pat}_e(\bm{T})=\pi_L\big)\cdot(1-p).$$ 
		and using again Equation (\ref{sumrn}), we can conclude that
		$$\P\big(\text{pat}_e(\bm{T})=\pi)=p\cdot\P\big(\text{pat}_e(\bm{T})=\pi_L\big).$$
		\item \underline{$\pi_L=\emptyset,\;\pi_R\neq\emptyset:$} Again
		\begin{equation*}
		\begin{split}
		\P\big(\text{pat}_e(\bm{T})=\pi)&=\sum_{n\in\Z_{\geq0}}\P\big(\text{pat}_e(\bm{T}^{v_{2^n}}_L)=\emptyset,\bm{T}^{v_{2^n}}_R=T_{\pi_R}|R^n\big)\cdot\P\big(R^n).
		\end{split}
		\end{equation*}
		Noting that $\text{pat}_{e(0)}(\bm{T}^{v_{2^n}}_L)$ is empty by definition and using similar arguments as before, we have $\P\big(\text{pat}_e(\bm{T}^{v_{2^n}}_L)=\emptyset,\bm{T}^{v_{2^n}}_R=T_{\pi_R}|R^n\big)=p\cdot\P\big(\bm{T}_R=T_{\pi_R}\big).$ Therefore, using again Equation (\ref{sumrn}), we can write 
		$$\P\big(\text{pat}_e(\bm{T})=\pi)=\frac{p}{1-p}\cdot\P\big(\bm{T}_R=T_{\pi_R}\big).$$
		\item \underline{$\pi_L=\emptyset,\;\pi_R=\emptyset:$} Since $\pi=1$ then
		$\P\big(\text{pat}_e(\bm{T})=\pi)=1.$	
	\end{itemize}
	Since the result for $\P\big(\text{pat}_b(\bm{T})=\pi\big)$ follows by symmetry, we can conclude the proof. 
\end{proof}

We now continue the analysis of $\P\big(\text{pat}_{\bm{J}}(\bm{T})=\pi\big).$ In order to do that, we need a formula for $\P(\bm{T}=T),$ for a given tree $T,$ because such probabilities appear in Equations (\ref{twocross}) and (\ref{threecross}).

\begin{obs}
	\label{obs:bintreecomp}
	In a binary tree with $n$ vertices, every vertex has two potential children. Out of these $2n$ potential children $n-1$ exist and $n+1$ do not exist. Hence
	\begin{equation}
	\label{bintreecomp}
	\P\big(\bm{T}=T\big)=p^{|T|-1}\cdot(1-p)^{|T|+1}.
	\end{equation}	
\end{obs}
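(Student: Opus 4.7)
The plan is to exploit the factorization built into the offspring distribution $\eta$ defined by Equation (\ref{bintree}). The key observation is that $\eta$ is the law of a pair of independent Bernoulli$(p)$ random variables: at every vertex, one coin decides the existence of a left child and an independent coin decides the existence of a right child. This is exactly why $\eta_0 = (1-p)^2$, $\eta_L = \eta_R = p(1-p)$, and $\eta_2 = p^2$.

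Given this reinterpretation, I would iterate the branching property of $\bm{T}$ to write $\P(\bm{T} = T)$ as a product, over all vertices $v \in T$, of the probability that $v$ carries exactly the configuration of children it has in $T$. The independence between the left-child and right-child Bernoulli trials at each vertex allows one to factor this product still further into a product over potential child \emph{slots}: each slot contributes $p$ if it is occupied in $T$ and $(1-p)$ if it is empty.

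It then remains to count slots, which is the content already sketched in the statement. A tree with $|T| = n$ vertices has $n-1$ edges, each corresponding to an occupied child slot; since each vertex carries exactly two potential child slots, the total is $2n$, leaving $2n - (n-1) = n+1$ empty slots. Collecting the factors yields
\[
\P(\bm{T} = T) = p^{|T|-1}(1-p)^{|T|+1},
\]
as claimed. No step here is a real obstacle; the only mild care needed is to justify that the product formula is legitimate, \emph{i.e.,} that $\bm{T}$ is almost surely finite so that the product over vertices of $T$ makes sense with probability one. This is automatic: the mean offspring number equals $2p = 1-\delta < 1$, so $\bm{T}$ is subcritical and hence finite a.s. One could equivalently carry out the argument by induction on $|T|$, using the recursive description $\bm{T} = (\bm{T}_L, \bm{T}_R)$ and the identity $|T| = |T_L| + |T_R| + 1$.
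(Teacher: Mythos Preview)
Your proposal is correct and is essentially the same argument as the paper's: the paper treats this as an observation whose justification is the slot-counting already embedded in the statement (each of the $2n$ potential child slots is an independent Bernoulli$(p)$ trial, with $n-1$ occupied and $n+1$ empty). You have simply spelled out in more detail the independence structure implicit in the factored form of $\eta$ and added the remark on subcriticality, which is fine but not needed here since the event $\{\bm T = T\}$ for a fixed finite $T$ is determined by finitely many offspring variables regardless of whether $\bm T$ is a.s.\ finite.
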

Using together Lemmas \ref{onecross} and \ref{twoandthreecross} and the above observation we have an explicit recursion to compute the probability $\P\big(\text{pat}_{\bm{J}}(\bm{T})=\pi\big).$ We show an example of the recursion obtained for an explicit pattern $\pi$.

\begin{exmp}
	\label{bigexemp}
	Let $\pi$ be the following 231-avoiding permutation,  
	$$\pi=4\;1\;3\;2\;6\;5\;7\;10\;8\;9\;11\;12\;16\;13\;15\;14=
	\begin{array}{lcr}
	\begin{tikzpicture}
	\begin{scope}[scale=.3]
	\permutation{4,1,3,2,6,5,7,10,8,9,11,12,16,13,15,14}
	\draw (1+.5,4+.5) [green, fill] circle (.21);  
	\draw (5+.5,6+.5) [green, fill] circle (.21); 
	\draw (7+.5,7+.5) [green, fill] circle (.21); 
	\draw (8+.5,10+.5) [green, fill] circle (.21); 
	\draw (11+.5,11+.5) [green, fill] circle (.21); 
	\draw (12+.5,12+.5) [green, fill] circle (.21);
	\draw (13+.5,16+.5) [orange, fill] circle (.21);
	\draw (15+.5,15+.5) [blue, fill] circle (.21); 
	\draw (16+.5,14+.5) [blue, fill] circle (.21); 
	\end{scope}
	\end{tikzpicture}
	\end{array},$$
	where, as before, we draw in green the left-to-right maxima, in blue the right-to-left maxima, and in orange the maximum.
	
	We now make the explicit computation for $\P\big(\text{pat}_{\bm{J}}(\bm{T})=\pi\big)$ using Lemmas \ref{onecross} and \ref{twoandthreecross}. First of all, we recursively split our permutation as shown in Fig.~\ref{decomp_tree}.
	
	\begin{figure}[htbp]
		\begin{center}
			\includegraphics[scale=.70]{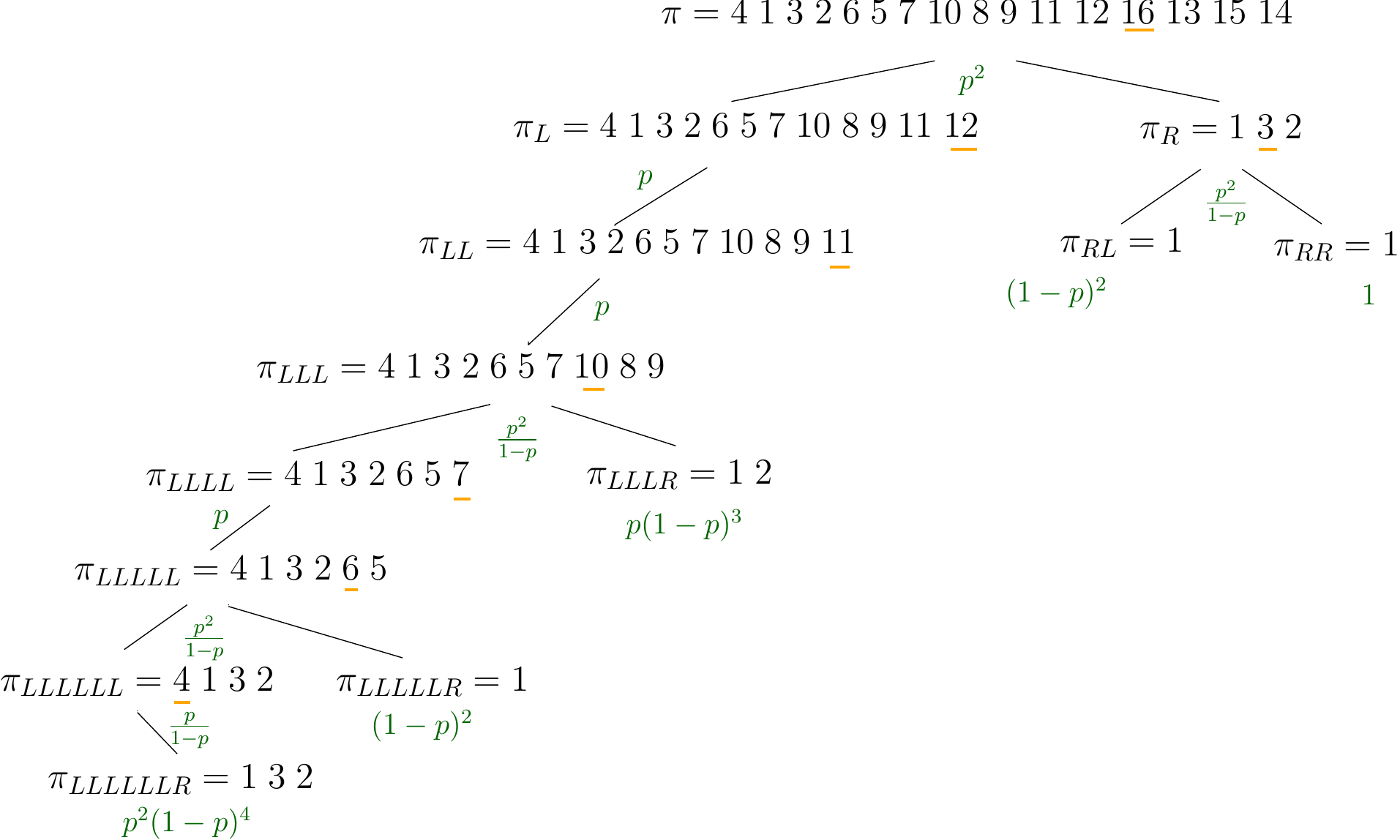}\\
			\caption{We draw the recursive decompositions in the left and right part (w.r.t.\ the position of the maximum underlined in orange) for the permutation $\pi=4\;1\;3\;2\;6\;5\;7\;10\;8\;9\;11\;12\;16\;13\;15\;14.$ Moreover we write in green the factors $p^\alpha\cdot(1-p)^{\beta}$ that we are adding at each step (coming from Lemmas \ref{onecross} and \ref{twoandthreecross}).}\label{decomp_tree}
		\end{center}
	\end{figure}
	
	Then, using Equation (\ref{oneredcross}) with the decomposition of $\pi$ in $\pi_L\pi_m\pi_R$ (shown at the root of the tree in Fig.~\ref{decomp_tree}),  we have
	\begin{equation*}
	\P\big(\text{pat}_{\bm{J}}(\bm{T})=\pi\big)= p^2\cdot\P\big(\text{pat}_e(\bm{T})=\pi_L\big)\cdot\P\big(\text{pat}_b(\bm{T})=\pi_R\big).
	\end{equation*}
	We continue decomposing $\pi_L$ and $\pi_R$ around their maxima (which correspond to the left and right children of the root in the tree in Fig.~\ref{decomp_tree}). Using Equation (\ref{twocross}) for the left part, we obtain 
	\begin{equation*}
	\P\big(\text{pat}_e(\bm{T})=\pi_L\big)=p\cdot\P\big(\text{pat}_e(\bm{T})=\pi_{LL}\big),
	\end{equation*}
	and using Equation (\ref{threecross}) for the right part, we obtain
	\begin{equation*}
	\P\big(\text{pat}_b(\bm{T})=\pi_R\big)=\frac{p^2}{1-p}\cdot\underbrace{\P\big(\bm{T}=T_{\pi_{RL}}\big)}_{\stackrel{(\ref{bintreecomp})}{=}(1-p)^2}\cdot\underbrace{\P\big(\text{pat}_b(\bm{T})=1\big)}_{1},
	\end{equation*}
	where the first probability in the right-hand side is computed using Equation (\ref{bintreecomp}).
	Therefore, summing up the last three equations and then proceeding similarly through the left subtree in Fig.~\ref{decomp_tree}, we deduce that $\P\big(\text{pat}_{\bm{J}}(\bm{T})=\pi\big)$ is the product of all the green factors in Fig.~\ref{decomp_tree}, that is
	\begin{equation*}
	\P\big(\text{pat}_{\bm{J}}(\bm{T})=\pi\big)=p^{15}(1-p)^{7}=\Big(\frac{1-\delta}{2}\Big)^{15}\cdot\Big(\frac{1+\delta}{2}\Big)^{7}=\Big(\frac{1}{2}\Big)^{22}+O(\delta),
	\end{equation*}
	and this concludes Example \ref{bigexemp}.
\end{exmp}

We now proceed with the analysis of the general case. Using the recursion obtained by combining Lemmas \ref{onecross} and \ref{twoandthreecross} and Observation \ref{obs:bintreecomp}, we immediately realize that
\begin{equation}
\label{vov0thg}
\P\big(\text{pat}_{\bm{J}}(\bm{T})=\pi\big)=p^{\alpha}\cdot(1-p)^{\beta},\quad\text{for some}\quad\alpha,\beta\in\Z_{\geq0}.
\end{equation}

Note that $\alpha\geq 0$ since in Lemmas \ref{onecross} and \ref{twoandthreecross} and Observation \ref{obs:bintreecomp} the $p^*$ factors always appear with positive exponent. Moreover $\beta\geq 0$ since in Equations (\ref{twocross}) and (\ref{threecross}), each time a $(1-p)^{-1}$ factor appears, there is also a $\P\big(\bm{T}=T\big)$ factor that contains a $(1-p)^{\gamma}$ factor with $\gamma\geq2.$

Since $p=\frac{1-\delta}{2},$ it follows that
$$\P\big(\text{pat}_{\bm{J}}(\bm{T})=\pi\big)=\Big(\frac{1}{2}-\frac{\delta}{2}\Big)^{\alpha}\cdot\Big(\frac{1}{2}+\frac{\delta}{2}\Big)^{\beta}=\Big(\frac{1}{2}\Big)^{\alpha+\beta}+O(\delta),\quad\text{for some}\quad\alpha,\beta\in\Z_{\geq0}.$$
We now compute the value $\alpha+\beta.$ In order to do that, we have to compute how many factors $p$ and $(1-p)$ appear in $\P\big(\text{pat}_{\bm{J}}(\bm{T})=\pi\big).$

Before stating our result we need to introduce a notion of distance between maxima. Given an index $j\in\text{LRMax}(\pi)$ which is not the maximum of $\pi$, we define its distance from the following left-to-right maximum as 
$$\partial_{\pi}(j)\coloneqq\min\big\{a\geq0:j+1+a\in\text{LRMax}(\pi)\big\}.$$
Analogously, given an index $j\in\text{RLMax}(\pi)$ which is not the maximum of $\pi$, we define its distance from the following right-to-left maximum as 
$$\partial_{\pi}(j)\coloneqq\min\big\{a\geq0:j-1-a\in\text{RLMax}(\pi)\big\}.$$
By convention, if $j$ is the index of the maximum of $\pi$, then $\partial_{\pi}(j)\coloneqq 0.$

\begin{exmp}
	We continue Example \ref{exempmax}, where we considered the permutation
		$$\pi=132985476=
		\begin{array}{lcr}
		\begin{tikzpicture}
		\begin{scope}[scale=.3]
		\permutation{1,3,2,9,8,5,4,7,6}
		\draw (1+.5,1+.5) [green, fill] circle (.21);  
		\draw (2+.5,3+.5) [green, fill] circle (.21); 
		\draw (4+.5,9+.5) [orange, fill] circle (.21);
		\draw (5+.5,8+.5) [blue, fill] circle (.21);
		\draw (8+.5,7+.5) [blue, fill] circle (.21); 
		\draw (9+.5,6+.5) [blue, fill] circle (.21); 
		\end{scope}
		\end{tikzpicture}
		\end{array}.$$
		For LRMax($\pi$)$=\{1,2,4\}$ we have $\partial_{\pi}(1)=0,$ $\partial_{\pi}(2)=1,$ $\partial_{\pi}(4)=0$ and for RLMax($\pi$)$=\{9,8,5,4\}$ we have $\partial_{\pi}(9)=0,$ $\partial_{\pi}(8)=2,$ $\partial_{\pi}(5)=0$  and $\partial_{\pi}(4)=0.$ 
\end{exmp}

We are now ready to state the key result of this section.

\begin{prop}
	\label{patprob231}
	Let $\pi\in\emph{Av}(231)$ and $\bm{T}=\bm{T}_{\delta}$ be a Galton--Watson tree defined as above, then
	\begin{equation}
	\label{probpat231}
	\P\big(\emph{pat}_{\bm{J}}(\bm{T})=\pi\big)=\frac{2^{|\emph{LRMax}(\pi)|+|\emph{RLMax}(\pi)|}}{2^{2|\pi|}}+O(\delta)=P_{231}(\pi)+O(\delta).
	\end{equation}
\end{prop}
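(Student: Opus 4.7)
The plan is to pin down the exponent sum $\alpha+\beta$ in the factorization $\P\big(\text{pat}_{\bm{J}}(\bm{T})=\pi\big) = p^{\alpha}(1-p)^{\beta}$ already established in Equation (\ref{vov0thg}). Since $p = \tfrac{1-\delta}{2}$ and $1-p = \tfrac{1+\delta}{2}$ are both of the form $\tfrac{1}{2}+O(\delta)$, expanding gives $p^{\alpha}(1-p)^{\beta} = 2^{-(\alpha+\beta)} + O(\delta)$. So the proposition reduces to showing that
\[
\alpha+\beta \;=\; 2|\pi| - |\text{LRMax}(\pi)| - |\text{RLMax}(\pi)|.
\]

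To that end, I would denote by $N(\pi)$ the exponent sum for $\P\big(\text{pat}_{\bm{J}}(\bm{T})=\pi\big)$, and analogously by $N_e(\pi)$ and $N_b(\pi)$ the exponent sums arising in the factorizations of $\P\big(\text{pat}_e(\bm{T})=\pi\big)$ and $\P\big(\text{pat}_b(\bm{T})=\pi\big)$ produced by Lemma \ref{twoandthreecross}. The key step is to prove by simultaneous induction on $|\pi|$ the three formulas
\[
N(\pi) = 2|\pi| - |\text{LRMax}(\pi)| - |\text{RLMax}(\pi)|, \qquad N_e(\pi) = 2|\pi| - |\text{LRMax}(\pi)| - 1, \qquad N_b(\pi) = 2|\pi| - |\text{RLMax}(\pi)| - 1.
\]
The base case $\pi=1$ is immediate: all three probabilities equal $1$, so all three exponent sums vanish, matching $2-1-1=0$ on the right-hand side of each formula.

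For the inductive step, decompose $\pi = \pi_L\pi_m\pi_R$ around its maximum and use the elementary identities $|\pi| = |\pi_L|+|\pi_R|+1$, $|\text{LRMax}(\pi)| = |\text{LRMax}(\pi_L)|+1$, and $|\text{RLMax}(\pi)| = |\text{RLMax}(\pi_R)|+1$ (with the convention that these counts vanish on the empty permutation). Observation \ref{obs:bintreecomp} contributes an exponent sum of exactly $2|T|$ for each factor $\P(\bm{T}=T)$, while in Lemma \ref{twoandthreecross} the prefactors $\tfrac{p^2}{1-p}$ or $\tfrac{p}{1-p}$ contribute $+1$ to the exponent sum. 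It is then a direct bookkeeping exercise, in each of the four cases (empty or non-empty $\pi_L$ and $\pi_R$) of Lemmas \ref{onecross} and \ref{twoandthreecross}, to check that the recursions produce exactly the right exponent sum. For example, the first case of Lemma \ref{onecross} gives $N(\pi) = 2 + N_e(\pi_L) + N_b(\pi_R)$, which by the induction hypothesis equals $2 + (2|\pi_L|-|\text{LRMax}(\pi_L)|-1) + (2|\pi_R|-|\text{RLMax}(\pi_R)|-1) = 2|\pi|-|\text{LRMax}(\pi)|-|\text{RLMax}(\pi)|$ after applying the decomposition identities.

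The argument is essentially a careful case analysis; the only subtlety is to guess the correct $-1$ offset in the formulas for $N_e$ and $N_b$, without which the cancellations in the inductive step do not close. Once this is set up correctly, every case in the two lemmas works out automatically, and the claimed identity for $\alpha+\beta$ follows by induction, completing the proof.
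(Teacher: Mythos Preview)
Your approach is correct and takes a somewhat different route than the paper's. Both arguments track the total exponent $\alpha+\beta$ in the factorization $p^\alpha(1-p)^\beta$, but the paper unfolds the recursion explicitly: it introduces a distance function $\partial_\pi(j)$ between consecutive maxima, computes the asymptotic contribution of each left-to-right and right-to-left maximum separately (case-by-case according to whether it sits at an endpoint or in the interior), and assembles these into the identity $\alpha+\beta = |\text{Max}(\pi)|-1+2\sum_{j\in\text{Max}(\pi)}\partial_\pi(j)$, which is then rewritten using $|\text{Max}(\pi)|+\sum_j\partial_\pi(j)=|\pi|$. Your simultaneous induction on the three auxiliary exponents $N$, $N_e$, $N_b$ is cleaner: it sidesteps the per-maximum bookkeeping entirely and, as a bonus, yields at once the closed forms for $\P(\text{pat}_e(\bm{T})=\pi)$ and $\P(\text{pat}_b(\bm{T})=\pi)$, which the paper records separately as Observation \ref{future_need} for later use in Section \ref{explcon}. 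One small slip in your write-up: the prefactor $\tfrac{p}{1-p}$ contributes $0$ to the exponent sum, not $+1$; the case $\pi_L=\emptyset,\,\pi_R\neq\emptyset$ of Equation (\ref{twocross}) still closes correctly once this is accounted for.
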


Note that Proposition \ref{patprob231} claims that for the specific pattern $\pi$ given in Example \ref{bigexemp} we have,
\begin{equation}
	\label{predresult}
	\P\big(\text{pat}_{\bm{J}}(\bm{T})=\pi\big)=\frac{2^{7+3}}{2^{32}}+O(\delta)=\Big(\frac{1}{2}\Big)^{22}+O(\delta),
\end{equation}
as previously computed. With this example in our hands, we can now use the same ideas in order to prove the proposition.

\begin{proof}[Proof of Proposition \ref{patprob231}]
Suppose that we are able to prove that
\begin{equation}
\label{goal}
\P\big(\text{pat}_{\bm{J}}(\bm{T})=\pi\big)=\Big(\frac{1}{2}\Big)^{|\text{Max}(\pi)|-1+2\sum_{j\in \text{Max}(\pi)}\partial_{\pi}(j)}+O(\delta).
\end{equation}
Then noting that $|\text{Max}(\pi)|+\sum_{j\in \text{Max}(\pi)}\partial_{\pi}(j)=|\pi|$ and that $$|\text{Max}(\pi)|+1=|\text{LRMax}(\pi)|+|\text{RLMax}(\pi)|,$$ we immediately derives the desired expression
\begin{equation*}
\P\big(\text{pat}_{\bm{J}}(\bm{T})=\pi\big)=\frac{2^{|\text{LRMax}(\pi)|+|\text{RLMax}(\pi)|}}{2^{2|\pi|}}+O(\delta).
\end{equation*}
We now prove Equation (\ref{goal}). As explained in the discussion below Equation (\ref{vov0thg}), we just need to count how many factors $p$ and $(1-p)$ appear in the formula for $\P\big(\text{pat}_{\bm{J}}(\bm{T})=\pi\big)$. We can suppose that $|\pi|\geq 2$ (if $|\pi|=1$ then the statement is trivial). Then by Equation (\ref{oneredcross}),
\begin{equation}
\label{onegreenstar}
\P\big(\text{pat}_{\bm{J}}(\bm{T})=\pi\big)=
\begin{cases}
p^2\cdot\P\big(\text{pat}_e(\bm{T})=\pi_L\big)\cdot\P\big(\text{pat}_b(\bm{T})=\pi_R\big), &\quad\text{if }\pi_L\neq\emptyset,\;\pi_R\neq\emptyset,\\
p\cdot\P\big(\text{pat}_e(\bm{T})=\pi_L\big), &\quad\text{if }\pi_L\neq\emptyset,\;\pi_R=\emptyset,\\
p\cdot\P\big(\text{pat}_b(\bm{T})=\pi_R\big), &\quad\text{if }\pi_L=\emptyset,\;\pi_R\neq\emptyset,
\end{cases}
\end{equation}
and so we obtain either a factor $p\sim\frac{1}{2}$ (if the maximum of $\pi$ is reached at the first or at the last index of $\pi$) or a factor $p^2\sim\Big(\frac{1}{2}\Big)^{2}$ (otherwise). We recall that the asymptotics are for $\delta\to 0.$

We now look at the factors of the form $\P\big(\text{pat}_e(\bm{T})=\pi_L\big).$ By Equation (\ref{twocross}),
\begin{equation}
\label{ergwtrgtrwgtwg}
\P\big(\text{pat}_e(\bm{T})=\pi_L\big)=
\begin{cases}
\frac{p^2}{1-p}\cdot\P\big(\bm{T}=T_{\pi_{LR}}\big)\cdot\P\big(\text{pat}_e(\bm{T})=\pi_{LL}\big), &\quad\text{if }\pi_{LL}\neq\emptyset,\;\pi_{LR}\neq\emptyset,\\
p\cdot\P\big(\text{pat}_e(\bm{T})=\pi_{LL}\big), &\quad\text{if }\pi_{LL}\neq\emptyset,\;\pi_{LR}=\emptyset,\\
\frac{p}{1-p}\cdot\P\big(\bm{T}=T_{\pi_{LR}}\big), &\quad\text{if }\pi_{LL}=\emptyset,\;\pi_{LR}\neq\emptyset,\\
1, &\quad\text{if }\pi_L=1.
\end{cases}
\end{equation}
We are going to focus on the factors that are not of the form $\P\big(\text{pat}_e(\bm{T})=\pi_{LL}\big).$ We call them \emph{prefactors}. For example, when $\pi_{LL}\neq\emptyset$ and $\pi_{LR}\neq\emptyset$ the prefactor is $\frac{p^2}{1-p}\cdot\P\big(\bm{T}=T_{\pi_{LR}}\big).$

Therefore, denoting by $j\coloneqq\text{indmax}(\pi_L)$, we obtain:
\begin{enumerate}
	\item for $|\pi_L|\geq 2:$ 
		\begin{itemize}
			\item if $j\neq1$ and $j\neq |\pi_L|,$ then we have the following prefactor,
			 $$\frac{p^2\cdot p^{\partial_{\pi}(j)-1}\cdot (1-p)^{\partial_{\pi}(j)+1}}{1-p}\sim\Big(\frac{1}{2}\Big)^{1+2\partial_{\pi}(j)};$$ 
			\item if $j= |\pi_L|$ (in this case $\partial_{\pi}(j)=0$), then we have the following prefactor, $$p\sim\frac{1}{2}=\Big(\frac{1}{2}\Big)^{1+2\partial_{\pi}(j)};$$ 
			\item if $j=1$ (in this case $\partial_{\pi}(j)=|\pi_{LR}|$), then we have the following prefactor,
			$$\frac{p\cdot p^{\partial_{\pi}(j)-1}\cdot(1-p)^{\partial_{\pi}(j)+1}}{1-p}\sim\Big(\frac{1}{2}\Big)^{2\partial_{\pi}(j)};$$ 
		\end{itemize}
	\item for $|\pi_L|=1,$ \emph{i.e.,} $\pi_L=1:$
		\begin{itemize}
			\item in this case the index of the trivial maximum is $j=1$ and obviously $\partial_{\pi}(j)=0,$ hence we have the following prefactor,
			$$1=\Big(\frac{1}{2}\Big)^{2\partial_{\pi}(j)}.$$ 
		\end{itemize}
\end{enumerate}
Summing up, and noting that the analysis of the term $\P\big(\text{pat}_b(\bm{T})=\pi_R\big)$ is symmetric, we conclude that we have the following correspondence between maxima of $\pi$ and $\Big(\frac{1}{2}\Big)^*$ factor in the asymptotic expansion of $\P\big(\text{pat}_{\bm{J}}(\bm{T})=\pi\big):$
\begin{enumerate}[(a)]
	\item if $j$ is the index of the maximum of $\pi$, by the comment below Equation (\ref{onegreenstar}), its contribution is
	\begin{equation*}
	\begin{split}
	\Big(&\frac{1}{2}\Big)^2,\quad\text{if }\quad j\neq 1 \text{ and }j\neq |\pi|,\\
	&\frac{1}{2},\qquad\,\text{otherwise;}
	\end{split}
	\end{equation*}
	\item if $j\in\text{Max}(\pi),$ $j\neq1$ and $j\neq|\pi|,$ but $j$ is not the index of the maximum, by the previous analysis, its contribution is
	\begin{equation*}
	\Big(\frac{1}{2}\Big)^{1+2\partial_{\pi}(j)};
	\end{equation*}
	\item if $j\in\text{Max}(\pi),$ $j=1$ or $j=|\pi|,$ but $j$ is not the index of the maximum, again by the previous analysis, its contribution is
	\begin{equation*}
	\Big(\frac{1}{2}\Big)^{2\partial_{\pi}(j)}.
	\end{equation*}
\end{enumerate}
Finally, we note that,
\begin{itemize}
	\item if $\text{indmax}(\pi)=1$ or $\text{indmax}(\pi)=|\pi|$ then
	$$\P\big(\text{pat}_{\bm{J}}(\bm{T})=\pi\big)=\Big(\frac{1}{2}\Big)^{1+|\text{Max}(\pi)|-2+2\sum_{j\in \text{Max}(\pi)}\partial_{\pi}(j)}+O(\delta),$$
	where the term 1 in the exponent comes from $(a)$ and the term $|\text{Max}(\pi)|-2$ comes from $(b);$
	\item if $\text{indmax}(\pi)\neq1$ and $\text{indmax}(\pi)\neq|\pi|$ then
	$$\P\big(\text{pat}_{\bm{J}}(\bm{T})=\pi\big)=\Big(\frac{1}{2}\Big)^{2+|\text{Max}(\pi)|-3+2\sum_{j\in \text{Max}(\pi)}\partial_{\pi}(j)}+O(\delta),$$
	where the term 2 in the exponent comes from $(a)$ and the term $|\text{Max}(\pi)|-3$ comes from $(b).$	 
\end{itemize}
This proves Equation (\ref{goal}) and so it concludes the proof.
\end{proof}

\begin{obs}
	\label{future_need}
	For later goals (see the proof of Proposition \ref{explicit_constuction_limit}), we note that from the analysis below Equation (\ref{ergwtrgtrwgtwg}), it also follows easily that
	\begin{equation*}
	\begin{split}
	&\P\big(\text{pat}_b(\bm{T})=\pi\big)=\frac{2^{|\text{RLMax}(\pi)|+1}}{2^{2|\pi|}},\quad\text{for all}\quad\pi\in\text{Av}(231),\\
	&\P\big(\text{pat}_e(\bm{T})=\pi\big)=\frac{2^{|\text{LRMax}(\pi)|+1}}{2^{2|\pi|}},\quad\text{for all}\quad\pi\in\text{Av}(231).
	\end{split}
	\end{equation*}
\end{obs}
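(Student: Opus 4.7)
The plan is to prove both identities simultaneously by induction on $|\pi|$, using the recursions (\ref{twocross}) and (\ref{threecross}), the exact formula $\P(\bm{T}=T)=p^{|T|-1}(1-p)^{|T|+1}$ from Observation~\ref{obs:bintreecomp}, and the behaviour of $|\text{LRMax}(\pi)|$ and $|\text{RLMax}(\pi)|$ under the canonical decomposition $\pi=\pi_L n\pi_R$ around the maximum. The two claims are symmetric under $\pi_L\leftrightarrow\pi_R$ and $\text{LRMax}\leftrightarrow\text{RLMax}$, so I will only write out the argument for $\P(\text{pat}_b(\bm{T})=\pi)$; the identity for $\P(\text{pat}_e(\bm{T})=\pi)$ is obtained by an identical induction based on (\ref{twocross}). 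The base case $\pi=1$ is immediate, since both sides equal $1$ and $|\text{RLMax}(1)|=1$ makes the target exponent $2|\pi|-|\text{RLMax}(\pi)|-1$ equal to $0$.

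For the inductive step, I extract the leading order as $\delta\to 0$ from (\ref{threecross}) by replacing every $p$ and every $(1-p)$ by $1/2$. A short calculation combining the prefactor $\tfrac{p^2}{1-p}$ (respectively $\tfrac{p}{1-p}$, $p$) with $\P(\bm{T}=T_{\pi_L})=p^{|\pi_L|-1}(1-p)^{|\pi_L|+1}$ shows that the three non-trivial branches contribute asymptotic prefactors $(1/2)^{2|\pi_L|+1}$ (when $\pi_L,\pi_R\neq\emptyset$), $(1/2)^{2|\pi_L|}$ (when $\pi_R=\emptyset$) and $(1/2)^{1}$ (when $\pi_L=\emptyset$), each multiplied by the leading order of $\P(\text{pat}_b(\bm{T})=\pi_R)$ in the cases where that factor is present. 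The key combinatorial step, which follows at once from Observation~\ref{permfact}, is the identity
\begin{equation*}
|\text{RLMax}(\pi)|=1+|\text{RLMax}(\pi_R)|\cdot\mathds{1}_{\{\pi_R\neq\emptyset\}}:
\end{equation*}
indeed, every entry of $\pi_L$ lies below every entry of $\{n\}\cup\pi_R$, the position of $n$ is always a right-to-left maximum of $\pi$, and positions inside the $\pi_R$-block are right-to-left maxima of $\pi$ precisely when they are right-to-left maxima of the standardized $\pi_R$. Plugging the inductive hypothesis $\P(\text{pat}_b(\bm{T})=\pi_R)\sim (1/2)^{2|\pi_R|-|\text{RLMax}(\pi_R)|-1}$ into each of the three cases and checking the arithmetic in the exponent yields the required leading order $(1/2)^{2|\pi|-|\text{RLMax}(\pi)|-1}$.

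The main obstacle is only the routine bookkeeping of factors of $(1-p)$: one must confirm that the $(1-p)^{-1}$ appearing in (\ref{threecross}) when $\pi_R\neq\emptyset$ is always compensated by the $(1-p)^{|\pi_L|+1}$ coming from $\P(\bm{T}=T_{\pi_L})$, so that the final expression is a Laurent polynomial in $\delta$ with non-negative exponents, and hence the leading term plus $O(\delta)$ propagates cleanly along the depth-$|\pi|$ recursion without accumulation of errors. Once this verification is carried out, the statement for $\P(\text{pat}_e(\bm{T})=\pi)$ is obtained by an entirely analogous induction using (\ref{twocross}) and the mirror identity $|\text{LRMax}(\pi)|=1+|\text{LRMax}(\pi_L)|\cdot\mathds{1}_{\{\pi_L\neq\emptyset\}}$, which is proved in the same way via Observation~\ref{permfact}.
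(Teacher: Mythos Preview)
Your induction is correct and is essentially the same argument the paper has in mind: the paper does not write a separate proof for this observation but simply points to the prefactor analysis carried out below Equation~(\ref{ergwtrgtrwgtwg}), which is nothing other than the full unfolding of the same recursion~(\ref{threecross}) combined with Observation~\ref{obs:bintreecomp}; your induction is just the clean step-by-step formalisation of that unfolding. One small clarification: the stated identities are exact at $\delta=0$ (equivalently $p=1/2$), which is how they are used in Proposition~\ref{explicit_constuction_limit}, so you may simply set $p=1-p=1/2$ throughout and obtain genuine equalities rather than carrying along the ``leading order plus $O(\delta)$'' phrasing.
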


We can finally prove Proposition \ref{weakprop}.

\begin{proof}[Proof of Proposition \ref{weakprop}]
Summing up all the results and recalling that $\bm{T}=\bm{T}_{\delta}$, we obtain,
\begin{equation}
\begin{split}
\label{coccT}
\E\big[\coc(\pi,\bm{T}_{\delta})\big]&\stackrel{(\ref{step1})}{=}\delta^{-1}\P\big(\text{pat}_{\bm{J}}(\bm{T}_{\delta})=\pi\big)\stackrel{(\ref{probpat231})}{=}\delta^{-1}\big(P_{231}(\pi)+O(\delta)\big)\\&\;=\delta^{-1}\cdot P_{231}(\pi)+O(1).
\end{split}
\end{equation}

	Applying Lemma \ref{svantelemma} and using the bijection between $231$-avoiding permutations and binary trees, we conclude that for $n\to\infty,$
	\begin{equation*}
	\E\big[\coc(\pi,\bm{\sigma}^n)\big]\sim P_{231}(\pi)\cdot n,\quad\text{for all}\quad \pi\in\text{Av}(231). 
	\end{equation*}
	Dividing by $n$ yields Proposition \ref{weakprop}.
\end{proof}

\subsection{The quenched version of the Benjamini--Schramm convergence}
\label{mainres}
In this section we finally prove Theorem \ref{231theorem}. We are going to use a standard technique in probability theory called the \emph{Second moment method}. We have to study the second moment $\E\big[\widetilde{\coc}(\pi,\bm{\sigma}^n)^2\big].$
As before we start with a result regarding trees and then we will extend it to permutations.
\begin{prop}
	\label{figrfigpqofhfpuh}
	Using notation as before and setting $\bm{T}=\bm{T}_\delta,$ we have,  
	$$\E\big[\coc(\pi,\bm{T})^2\big]=\frac{P_{231}(\pi)^2}{2}\cdot\delta^{-3}+O(\delta^{-2}),\quad\text{for all}\quad\pi\in\emph{Av}(231).$$
\end{prop}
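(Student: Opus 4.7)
The plan is to iterate the approach of Section \ref{weakres} one order further, starting from the square of the recursion in Lemma \ref{ricors}. Setting $\bm{X} := \coc(\pi, \bm{T})$, $\bm{X}_L := \coc(\pi, \bm{T}_L)$, $\bm{X}_R := \coc(\pi, \bm{T}_R)$, and $\bm{Y} := \mathds{1}_{\text{pat}_{\bm{J}}(\bm{T}) = \pi}$, one has $\bm{X} = \bm{X}_L + \bm{X}_R + \bm{Y}$. Squaring and taking expectation produces six terms. Using that $\bm{T}_L$ is empty with probability $1-p$ and an independent copy of $\bm{T}$ otherwise (and the same, independently, for $\bm{T}_R$), together with $\bm{Y}^2 = \bm{Y}$, we get $\E[\bm{X}_L^2] = \E[\bm{X}_R^2] = p\,\E[\bm{X}^2]$ and $\E[\bm{X}_L \bm{X}_R] = p^2 \E[\bm{X}]^2$. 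Rearranging and using $1 - 2p = \delta$ yields the key identity
$$\delta\,\E[\bm{X}^2] \;=\; 2 p^2 \E[\bm{X}]^2 + \E[\bm{Y}] + 2\,\E[\bm{X}_L \bm{Y}] + 2\,\E[\bm{X}_R \bm{Y}].$$

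I would then plug in the asymptotics already established in Section \ref{weakres}: Equation (\ref{coccT}) gives $\E[\bm{X}] = P_{231}(\pi)\,\delta^{-1} + O(1)$, and Proposition \ref{patprob231} gives $\E[\bm{Y}] = P_{231}(\pi) + O(\delta)$. Since $2 p^2 = \tfrac{(1-\delta)^2}{2} = \tfrac{1}{2} + O(\delta)$, the dominant term on the right-hand side is
$$2 p^2 \E[\bm{X}]^2 \;=\; \tfrac{P_{231}(\pi)^2}{2}\,\delta^{-2} + O(\delta^{-1}),$$
while the $\E[\bm{Y}] = O(1)$ contribution is negligible at this order.

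The delicate step is controlling the cross terms $\E[\bm{X}_L \bm{Y}]$ and $\E[\bm{X}_R \bm{Y}]$ to be $O(\delta^{-1})$, after which dividing by $\delta$ produces the claimed expansion. For this I would reuse the factorization from the proof of Lemma \ref{onecross}: the event $\{\text{pat}_{\bm{J}}(\bm{T}) = \pi\}$ equals $\{\text{pat}_e(\bm{T}_L) = \pi_L\} \cap \{\text{pat}_b(\bm{T}_R) = \pi_R\}$, with the standard empty-pattern convention. Since $\bm{X}_L$ and $\text{pat}_e(\bm{T}_L)$ depend only on $\bm{T}_L$, the independence of $\bm{T}_L$ and $\bm{T}_R$ gives
$$\E[\bm{X}_L \bm{Y}] \;=\; \E\bigl[\bm{X}_L\,\mathds{1}_{\text{pat}_e(\bm{T}_L) = \pi_L}\bigr]\cdot\P\bigl(\text{pat}_b(\bm{T}_R) = \pi_R\bigr) \;\leq\; \E[\bm{X}_L] \;=\; p\,\E[\bm{X}] \;=\; O(\delta^{-1}),$$
and symmetrically for $\E[\bm{X}_R \bm{Y}]$. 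The degenerate cases ($\pi_L = \emptyset$, $\pi_R = \emptyset$, or $\pi = 1$) only require minor notational adjustments: e.g., when $\pi = 1$ the indicator $\bm{Y}$ is identically $1$, so the cross term is just $\E[\bm{X}_L] = O(\delta^{-1})$. Assembling everything gives $\E[\bm{X}^2] = \tfrac{P_{231}(\pi)^2}{2}\,\delta^{-3} + O(\delta^{-2})$.
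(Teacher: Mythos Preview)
Your argument follows the paper's proof essentially step for step: square the recursion of Lemma \ref{ricors}, use $\E[\bm{X}_L^2]=\E[\bm{X}_R^2]=p\,\E[\bm{X}^2]$ to isolate $\E[\bm{X}^2]$, identify the leading contribution as $2p^2\E[\bm{X}]^2$, and control the cross terms $\E[\bm{X}_L\bm{Y}]$, $\E[\bm{X}_R\bm{Y}]$ via the factorization $\bm{Y}=\mathds{1}_{\text{pat}_e(\bm{T}_L)=\pi_L}\mathds{1}_{\text{pat}_b(\bm{T}_R)=\pi_R}$ together with the crude bound $\bm{Y}\le 1$.

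There is one genuine technical point you are missing. In this paper the symbol $O(\delta^{-2})$ is \emph{not} the usual Landau notation but denotes a Laurent polynomial in $\delta$ of valuation at least $-2$ (see the notational conventions in Section \ref{premres}); this is precisely what is needed downstream to apply Lemma \ref{svantelemma} via singularity analysis. Your inequality $\E[\bm{X}_L\bm{Y}]\le p\,\E[\bm{X}]$ shows the cross term is bounded by a Laurent polynomial that is $O(\delta^{-1})$, but it does not show that $\E[\bm{X}_L\bm{Y}]$ is itself a Laurent polynomial. The paper closes this gap by remarking that one can iterate the recursion on $\E\bigl[\coc(\pi,\bm{T})\,\mathds{1}_{\{\text{pat}_e(\bm{T})=\pi_L\}}\bigr]$ with the same techniques as in Lemma \ref{twoandthreecross}, which exhibits it as a Laurent polynomial; only then does the bound pin down the valuation and justify writing $O(\delta^{-1})$ in the paper's sense.
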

\begin{proof}
	We use again the decomposition given by Lemma \ref{ricors},
	$$\coc(\pi,\bm{T})=\coc(\pi,\bm{T}_L)+\coc(\pi,\bm{T}_R)+\mathds{1}_{\{\text{pat}_{\bm{J}}(\bm{T})=\pi\}}.$$ 
	Taking the expectation of the square of the previous equation we obtain,
	\begin{equation*}
	\begin{split}
	\E\big[\coc(\pi,\bm{T})^2\big]=&\E\big[\coc(\pi,\bm{T}_L)^2\big]+\E\big[\coc(\pi,\bm{T}_R)^2\big]+2\cdot\E\big[\coc(\pi,\bm{T}_L)\coc(\pi,\bm{T}_R)\big]\\
	&+2\cdot\E\big[\coc(\pi,\bm{T}_L)\mathds{1}_{\{\text{pat}_{\bm{J}}(\bm{T})=\pi\}}\big]+2\cdot\E\big[\coc(\pi,\bm{T}_R)\mathds{1}_{\{\text{pat}_{\bm{J}}(\bm{T})=\pi\}}\big]\\
	&+\P\big(\text{pat}_{\bm{J}}(\bm{T})=\pi\big).
	\end{split}
	\end{equation*}
	For the first two terms of the right-hand side of the above equation we use that $\bm{T}_L$ is an independent copy of $\bm{T}$ with probability $p$ and empty with probability $1-p$ (and similarly for $\bm{T}_R$). Therefore we can rewrite the above expression as
	\begin{equation}
	\label{ueiuwvfow}
	\begin{split}
	\E\big[\coc(\pi,\bm{T})^2\big]=\delta^{-1}\Big(&2\cdot\E\big[\coc(\pi,\bm{T}_L)\coc(\pi,\bm{T}_R)\big]+2\cdot\E\big[\coc(\pi,\bm{T}_L)\mathds{1}_{\{\text{pat}_{\bm{J}}(\bm{T})=\pi\}}\big]\\
	&+2\cdot\E\big[\coc(\pi,\bm{T}_R)\mathds{1}_{\{\text{pat}_{\bm{J}}(\bm{T})=\pi\}}\big]+\P\big(\text{pat}_{\bm{J}}(\bm{T})=\pi\big)\Big),
	\end{split}
	\end{equation}
	where we have also used that $1-2p=\delta.$
	We now analyze the four terms that appear on the right-hand side of the above equation. 
	
	First of all, using that $\bm{T}_L$ is independent of $\bm{T}_R,$ we obtain
	$$\E\big[\coc(\pi,\bm{T}_L)\coc(\pi,\bm{T}_R)\big]=p^2\cdot\E\big[\coc(\pi,\bm{T})\big]^2.$$ 
	Applying Equation (\ref{coccT}) and recalling that $p=\frac{1-\delta}{2},$ we conclude that
	\begin{equation}
	\label{sub1}
	\E\big[\coc(\pi,\bm{T}_L)\coc(\pi,\bm{T}_R)\big]=\frac{P_{231}(\pi)^2}{4}\cdot\delta^{-2}+O(\delta^{-1}).
	\end{equation}
	Obviously, using Proposition \ref{patprob231}, we have
	\begin{equation}
	\label{sub2}
	\P\big(\text{pat}_{\bm{J}}(\bm{T}_{\delta})=\pi\big)=P_{231}(\pi)+O(\delta).
	\end{equation}
	Therefore we have only to study the remaining two similar terms in Equation (\ref{ueiuwvfow}). Note that, if $\pi_L\neq\emptyset$ and $\pi_R\neq\emptyset,$ using the same arguments that we already used in a lot of different steps, we have
	\begin{equation*}
	\begin{split}
	\E\big[\coc(\pi,\bm{T}_L)\mathds{1}_{\{\text{pat}_{\bm{J}}(\bm{T})=\pi\}}\big]&=\E\big[\coc(\pi,\bm{T}_L)\mathds{1}_{\{\text{pat}_{e}(\bm{T}_L)=\pi_L\}}\mathds{1}_{\{\text{pat}_{b}(\bm{T}_R)=\pi_R\}}\big]\\
	&=\E\big[\coc(\pi,\bm{T}_L)\mathds{1}_{\{\text{pat}_{e}(\bm{T}_L)=\pi_L\}}\big]\E\big[\mathds{1}_{\{\text{pat}_{b}(\bm{T}_R)=\pi_R\}}\big]\\
	&=p^2\cdot\E\big[\coc(\pi,\bm{T})\mathds{1}_{\{\text{pat}_{e}(\bm{T})=\pi_L\}}\big]\cdot\P\big(\text{pat}_{b}(\bm{T})=\pi_R\big).
	\end{split}
	\end{equation*}
	Iterating the analysis of the second term (with similar techniques used in the proof of Lemma \ref{twoandthreecross}), we can conclude that  $\E\big[\coc(\pi,\bm{T}_L)\mathds{1}_{\{\text{pat}_{\bm{J}}(\bm{T})=\pi\}}\big]$ is a Laurent polynomial in $\delta$.
	Moreover, noting that $\coc(\pi,\bm{T}_L)\geq 0,$ we can obtain the following bound
	\begin{equation*}
	\begin{split}
	\E\big[\coc(\pi,\bm{T}_L)\mathds{1}_{\{\text{pat}_{\bm{J}}(\bm{T})=\pi\}}\big]\leq\E\big[\coc(\pi,\bm{T}_L)\big]=p\cdot\E\big[\coc(\pi,\bm{T})\big].
	\end{split}
	\end{equation*}
	By Equation (\ref{coccT}), $\E\big[\coc(\pi,\bm{T})\big]= O(\delta^{-1}).$ Therefore we can conclude that
	\begin{equation}
	\label{sub3}
	\E\big[\coc(\pi,\bm{T}_L)\mathds{1}_{\{\text{pat}_{\bm{J}}(\bm{T})=\pi\}}\big]=O(\delta^{-1}).
	\end{equation}
	Similarly, 
	\begin{equation}
	\label{sub4}
	\E\big[\coc(\pi,\bm{T}_R)\mathds{1}_{\{\text{pat}_{\bm{J}}(\bm{T})=\pi\}}\big]=O(\delta^{-1}).
	\end{equation}
	Substituting Equations (\ref{sub1}), (\ref{sub2}), (\ref{sub3}) and (\ref{sub4}) in Equation (\ref{ueiuwvfow}), we finally have
	\begin{equation*}
	\E\big[\coc(\pi,\bm{T})^2\big]=\frac{P_{231}(\pi)^2}{2}\cdot\delta^{-3}+O(\delta^{-2}). \qedhere
	\end{equation*}
\end{proof}

We can finally prove our main theorem.

\begin{proof}[Proof of Theorem \ref{231theorem}]
Applying again Lemma \ref{svantelemma} and using our bijection between $231$-avoiding permutations and binary trees, we conclude that for $n\to\infty,$
\begin{equation}
\label{final1}
\E\big[\coc(\pi,\bm{\sigma}^n)^2\big]\sim P_{231}(\pi)^2\cdot n^2, \quad\text{for all}\quad \pi\in\text{Av}(231).
\end{equation}
This, with Proposition \ref{weakprop}, implies that
\begin{equation}
\label{final2}
\text{Var}\big(\widetilde{\coc}(\pi,\bm{\sigma}^n)\big)\to 0, \quad\text{for all}\quad \pi\in\text{Av}(231).
\end{equation}
We can apply the Second moment method and deduce that
\begin{equation*}
\widetilde{\coc}(\pi,\bm{\sigma}^n)\stackrel{P}{\rightarrow}P_{231}(\pi),\quad\text{for all}\quad\pi\in\text{Av}(231).
\end{equation*}	
Indeed by Chebyschev's inequality, one has, for any fixed $\varepsilon>0,$
\begin{equation*}
\P\Big(\big|	\widetilde{\coc}(\pi,\bm{\sigma}^n)-\E\big[\widetilde{\coc}(\pi,\bm{\sigma}^n)\big]\big|\geq\varepsilon\Big)\leq\frac{1}{\varepsilon^2}\cdot\text{Var}\big(\widetilde{\coc}(\pi,\bm{\sigma}^n)\big),
\end{equation*}
and the right-hand side tends to zero.
\end{proof}

\subsection{The construction of the limiting object}
\label{explcon}

We now exhibit an explicit construction of the limiting object $\bm{\sigma}^{\infty}_{231}$ as a random order $\bm{\preccurlyeq}_{231}$ on $\mathbb{Z}.$ 

\begin{rem}
	The intuition behind the construction that we are going to present comes from the local limit for uniform binary trees (for more details, see \cite{stufler2016local}). When the size of a uniform binary tree tends to infinity, looking around a uniform distinguished vertex, we see an infinite upward spine. Each vertex in this spine is the left or the right child of the previous one with probability $1/2.$ Moreover, attached to this infinite spine there are some independent copies of binary Galton--Watson trees.
	Using this idea and the bijection between binary trees and $231$-avoiding permutations we are going to construct the limiting random total order.
	
	This intuition is not formally needed in the following since we present the construction of the limiting object using the permutations point of view. 
\end{rem}

We have to introduce some notation. We define two operations from $\Sr^{k}\times\mathcal{S}^{\ell}$ to $\Sr^{k+\ell+1},$ for $k>0$ and $\ell\geq0$ ($\mathcal{S}^{0}$ is the set containing the empty permutation): let $(\sigma,i)\in\Sr^{k}$ be a rooted permutation and $\pi\in\mathcal{S}^{\ell}$ be another (unrooted) permutation, we set
\begin{equation*}
\begin{split}
&(\sigma,i)*^R \pi\coloneqq\big(\sigma_1\dots\sigma_k(\ell+k+1)(\pi_1+k)\dots(\pi_{\ell}+k)\,,\,i\big),\\
&(\sigma,i)*^L \pi\coloneqq\big(\pi_1\dots\pi_\ell(\ell+k+1)(\sigma_1+\ell)\dots(\sigma_{k}+\ell)\,,\,\ell+i+1\big).
\end{split}
\end{equation*}

In words, from a graphical point of view, the diagram of $(\sigma,i)*^R \pi$ (resp. $(\sigma,i)*^L \pi$) is obtained starting from the diagram of the rooted permutation $(\sigma,i),$ adding on the top-right (resp. bottom-left) the diagram of $\pi$ and adding a new maximal element between the two diagrams. We give an example below.
\begin{exmp}
	Let $(\sigma,i)=(132,3)$ and $\pi=21$ then
	$$(\sigma,i)*^R \pi=
	\begin{array}{lcr}
	\includegraphics[scale=0.6]{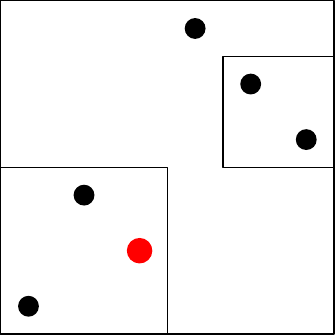}\\
	\end{array}=(132621,3)\quad\text{and}\quad(\sigma,i)*^L \pi=
	\begin{array}{lcr}
	\includegraphics[scale=0.6]{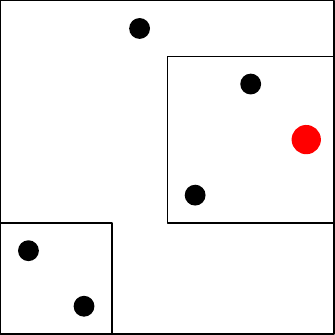}\\
	\end{array}=(216354,6).$$	
\end{exmp}

\begin{obs}
	\label{leftrightrem}
	Note that, given a rooted permutation $(\sigma,i),$ for all permutations $\pi$ (possibly empty), the rooted permutation $(\sigma,i)*^R \pi$ (resp. $(\sigma,i)*^L \pi$) contains at least one more element on the right (resp. left) of the root than $(\sigma,i)$.
\end{obs}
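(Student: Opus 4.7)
The plan is to unpack the definitions of $*^R$ and $*^L$ and simply count entries on each side of the root, before and after applying each operation. No substantive idea is required; the observation is essentially a bookkeeping remark that will be cited later (presumably when iterating $*^R$ and $*^L$ in the construction of $\bm{\sigma}^{\infty}_{231}$ to ensure that both tails grow to infinity).

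First I would fix a rooted permutation $(\sigma,i) \in \Sr^k$ and a (possibly empty) permutation $\pi \in \mathcal{S}^\ell$. In $(\sigma,i)$ there are precisely $i-1$ entries strictly left of the root and $k-i$ entries strictly right of the root. By the definition
\[
(\sigma,i)*^R\pi = \bigl(\sigma_1\dots\sigma_k(\ell+k+1)(\pi_1+k)\dots(\pi_\ell+k)\,,\,i\bigr),
\]
the resulting rooted permutation has size $k+\ell+1$ and the root still sits at position $i$. Hence the number of entries strictly right of the root equals $(k+\ell+1)-i = (k-i)+(\ell+1)$, i.e.\ exactly $\ell+1 \geq 1$ more than in $(\sigma,i)$, while the left side is unchanged.

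Next I would do the symmetric computation for $*^L$. By the definition
\[
(\sigma,i)*^L\pi = \bigl(\pi_1\dots\pi_\ell(\ell+k+1)(\sigma_1+\ell)\dots(\sigma_k+\ell)\,,\,\ell+i+1\bigr),
\]
the size is again $k+\ell+1$ but the root has been shifted to position $\ell+i+1$. Therefore there are $\ell+i$ entries strictly left of the root and $(k+\ell+1)-(\ell+i+1) = k-i$ entries strictly right of it. Comparing with the original counts $(i-1, k-i)$, the left side has grown by exactly $\ell+1 \geq 1$ while the right side is unchanged. This finishes the verification; the only potential pitfall is keeping track of the root's new index in the $*^L$ case, which is dictated unambiguously by the definition.
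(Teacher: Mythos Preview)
Your proposal is correct. The paper states this as an observation without proof, treating it as immediate from the definitions; your explicit count of entries on each side of the root is exactly the computation implicit in the paper's remark.
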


We now consider two families of random variables: let $\{\bm{S}^i\}_{i\geq 0}$ be i.i.d.\ random variables with values in $\text{Av}(231)\cup\{\emptyset\}$ and Boltzman distribution equal to 
$$\P(\bm{S}^i=\pi)=\frac{1}{2}\Big(\frac{1}{4}\Big)^{|\pi|},\quad\text{for all}\quad\pi\in\text{Av}(231)\quad\text{and}\quad\P(\bm{S}^i=\emptyset)=\frac{1}{2}.$$
This is a probability distribution since the generating function for the class $\text{Av}(231)$ is given by $A(z)=\frac{1-\sqrt{1-4z}}{2z}$ (see  \emph{e.g.,} Appendix \ref{combint}).  Moreover, let $\{\bm{X}_i\}_{i\geq 1}$ be i.i.d.\ random variables (also independent of $\{\bm{S}^i\}_{i\geq 0}$) with values in the set $\{R,L\}$ such that
$$\P(\bm{X}_i=\,R)=\frac{1}{2}=\P(\bm{X}_i=\,L).$$

\begin{obs}
	Recalling that $\bm{T}_\delta$ is the binary Galton--Watson tree with offspring distribution $\eta(\delta)$ defined by Equation (\ref{bintree}), we note that, for $\delta=0,$ $\sigma_{\bm{T}_{0}}\stackrel{(d)}{=}(\bm{S}^i|\bm{S}^i\neq\emptyset).$
\end{obs}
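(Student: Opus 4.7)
The plan is to verify the equality in distribution by computing both sides explicitly and observing they agree on all of $\text{Av}(231)$ (both measures are supported there, since $\sigma_{\bm{T}_0}$ corresponds via the bijection of Section \ref{231bij} to a non-empty binary tree, and we are conditioning on $\bm{S}^i \neq \emptyset$).

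First I would handle the left-hand side. Since the bijection $\varphi \colon \text{Av}(231) \to \mathbb{T}_b$ from Section \ref{231bij} preserves size, we have $\P(\sigma_{\bm{T}_0} = \pi) = \P(\bm{T}_0 = T_\pi)$ for every $\pi \in \text{Av}(231)$. Setting $\delta = 0$ gives $p = 1/2$, so by Observation \ref{obs:bintreecomp},
\begin{equation*}
\P(\bm{T}_0 = T) = p^{|T|-1}(1-p)^{|T|+1} = (1/2)^{2|T|} = (1/4)^{|T|},
\end{equation*}
and therefore $\P(\sigma_{\bm{T}_0} = \pi) = (1/4)^{|\pi|}$ for every $\pi \in \text{Av}(231)$.

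Next I would compute the right-hand side. The normalizing constant is $\P(\bm{S}^i \neq \emptyset) = \frac{1}{2}\sum_{k \geq 1} C_k (1/4)^k$, where $C_k = |\text{Av}^k(231)|$ is the $k$-th Catalan number. Using the generating function $A(z) = \frac{1-\sqrt{1-4z}}{2z}$ of $\text{Av}(231)$ evaluated at $z = 1/4$, we get $A(1/4) = 2$, hence $\sum_{k \geq 1} C_k (1/4)^k = 2 - 1 = 1$ and so $\P(\bm{S}^i \neq \emptyset) = 1/2$. Consequently, for every $\pi \in \text{Av}(231)$,
\begin{equation*}
\P\bigl(\bm{S}^i = \pi \bigm| \bm{S}^i \neq \emptyset\bigr) = \frac{(1/2)(1/4)^{|\pi|}}{1/2} = (1/4)^{|\pi|}.
\end{equation*}

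The two expressions coincide on $\text{Av}(231)$, which yields $\sigma_{\bm{T}_0} \stackrel{(d)}{=} (\bm{S}^i \mid \bm{S}^i \neq \emptyset)$. There is no genuine obstacle here: the statement is essentially bookkeeping, and the only point to be careful about is that the Galton--Watson tree $\bm{T}_0$ is almost surely finite. This is automatic because the offspring distribution $\eta(0)$ has mean $0 \cdot 1/4 + 1 \cdot 1/4 + 1 \cdot 1/4 + 2 \cdot 1/4 = 1$, making $\bm{T}_0$ a critical Galton--Watson tree, hence finite a.s.; consequently both sides are genuine probability measures on $\text{Av}(231)$, and the pointwise equality of masses suffices.
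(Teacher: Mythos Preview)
Your argument is correct. The paper treats this statement as a bare observation without proof, so there is no ``paper's own proof'' to compare against; your explicit computation of both distributions via Observation~\ref{obs:bintreecomp} and the Boltzmann normalization is exactly the natural verification, and your remark on a.s.\ finiteness of the critical tree $\bm{T}_0$ is a sensible point of rigor that the paper leaves implicit.
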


We are now ready to construct our random order $\bm{\preccurlyeq}_{231}$ on $\mathbb{Z}.$ Set $\tilde{\bm{S}}^0_{\bullet}\coloneqq(\tilde{\bm{S}}^0,\bm{m}),$ where $\tilde{\bm{S}}^0$ is the random variable $\bm{S}^0$ conditioned to be non empty, and $\bm{m}$ is the (random) index of the maximum of $\tilde{\bm{S}}^0.$ Then we set,
$$\bm{\sigma}^n_{\bullet}\coloneqq((\dots((\tilde{\bm{S}}^0_{\bullet}*^{\bm{X}_1}\bm{S}^1)*^{\bm{X}_2}\bm{S}^2)...\bm{S}^{n-1})*^{\bm{X}_n}\bm{S}^n),\quad\text{for all}\quad n\in\Z_{>0}.$$
Note that, for every fixed $h\in\Z_{>0},$ the sequence $\big\{r_h(\bm{\sigma}^n_{\bullet})\big\}_{n\in\Z_{>0}}$ is a.s.\ stationary (hence a.s.\ convergent) by Observation \ref{leftrightrem} since asymptotically almost surely at least $h$ random variables $\bm{X}_i$ are equal to $R$ and  at least $h$ are equal to $L.$ Let $\bm{\tau}_h$ be the limit of this sequence. The family $\big\{\bm{\tau}_h\big\}_{h\in\Z_{>0}}$ is a.s.\ consistent. Therefore, applying Proposition \ref{consistprop}, the family $\big\{\bm{\tau}_h\big\}_{h\in\Z_{>0}}$ determines a.s.\ a unique random total order $(\Z,\bm{\preccurlyeq}_{231})$ such that, for all $h\in\Z_{>0},$ $$r_h(\Z,\bm{\preccurlyeq}_{231})=\bm{\tau}_h=\lim_{n\to\infty}r_h(\bm{\sigma}^n_{\bullet}),\quad\text{a.s.}\;.$$
\begin{prop}
	\label{explicit_constuction_limit}
	Let $(\Z,\bm{\preccurlyeq}_{231})$ be the random total order defined above and $\bm{\sigma}^\infty_{231}$ be the limiting object defined in Corollary \ref{231corol}. Then
	$$(\Z,\bm{\preccurlyeq}_{231})\stackrel{(d)}{=}\bm{\sigma}^\infty_{231}.$$
\end{prop}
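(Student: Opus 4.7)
The strategy is to check equality of the finite-dimensional marginals of the two random rooted permutations and invoke Proposition~\ref{consistprop}, which states that a possibly infinite random rooted permutation is entirely determined by the consistent family of its $h$-restrictions. It is therefore enough to show
\[
\mathbb{P}\bigl(r_h(\mathbb{Z},\bm{\preccurlyeq}_{231})=(\pi,h+1)\bigr)=P_{231}(\pi)
\]
for every $h\in\mathbb{Z}_{>0}$ and every $\pi\in\text{Av}^{2h+1}(231)$ (both sides vanishing when $\pi\notin\text{Av}(231)$). That $\mathbb{P}(r_h(\bm{\sigma}^\infty_{231})=(\pi,h+1))=P_{231}(\pi)$ follows from Theorem~\ref{231theorem} combined with the identity $\widetilde{\coc}(\pi,\sigma)=\mathbb{P}(r_h(\sigma,\bm{i})=(\pi,h+1))$ for $\pi\in\mathcal{S}^{2h+1}$ and the characterizations of Theorem~\ref{strongbsconditions}.

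For the left-hand side, I would use that $r_h(\bm{\sigma}^n_\bullet)$ is a.s.\ eventually constant (by the observation in the paragraph preceding the proposition), so that $r_h(\mathbb{Z},\bm{\preccurlyeq}_{231})=\lim_n r_h(\bm{\sigma}^n_\bullet)$ a.s., and decompose according to where the local maximum of the window sits in $\bm{\sigma}^n_\bullet$. The key structural observation is that, on the right of the root, the values greater than the root are precisely the sibling permutations and the new maxes introduced by the $*^R$ operations, ordered in increasing value according to the chronology of their insertion; symmetrically on the left with the $*^L$ operations. In particular, the local max of the window equals the root if and only if $|\tilde{\bm{S}}^0_L|\geq h$ and $|\tilde{\bm{S}}^0_R|\geq h$; when this occurs, $\pi$ must satisfy $\text{indmax}(\pi)=h+1$ and decomposes as $\pi=\pi'_L\cdot(2h+1)\cdot(\pi'_R+h)$, with $\pi'_L=\text{pat}_e(\tilde{\bm{S}}^0_L)$ and $\pi'_R=\text{pat}_b(\tilde{\bm{S}}^0_R)$. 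Since $\tilde{\bm{S}}^0_L$ and $\tilde{\bm{S}}^0_R$ are i.i.d.\ copies of $\bm{S}^i$ by the branching property of $\bm{T}_0$, Observation~\ref{future_need} gives a contribution of $\frac{2^{|\text{LRMax}(\pi'_L)|}}{2^{2h}}\cdot\frac{2^{|\text{RLMax}(\pi'_R)|}}{2^{2h}}$, which simplifies to $P_{231}(\pi)$ after using the identities $|\text{LRMax}(\pi)|=|\text{LRMax}(\pi'_L)|+1$ and $|\text{RLMax}(\pi)|=|\text{RLMax}(\pi'_R)|+1$.

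The patterns with $\text{indmax}(\pi)\neq h+1$ are treated by the same mechanism: one identifies the $*^R$ or $*^L$ new max inside the window that plays the role of the local maximum (this pins down $|\tilde{\bm{S}}^0_L|$, $|\tilde{\bm{S}}^0_R|$, the side choices $\bm{X}_i$, and which sibling permutation contributes its $\text{pat}_b$ or $\text{pat}_e$), expresses the probability as a product of event-probabilities for $\tilde{\bm{S}}^0$, $(\bm{X}_i)_{i\geq 1}$ and $(\bm{S}^i)_{i\geq 1}$, and simplifies it via Observation~\ref{future_need}. The $1/2$-factors arising from $\mathbb{P}(\bm{X}_i=R)=\mathbb{P}(\bm{X}_i=L)=\tfrac{1}{2}$ and $\mathbb{P}(\bm{S}^i=\emptyset)=\tfrac{1}{2}$ combine with the ``$+1$'' jumps in $|\text{LRMax}|$ and $|\text{RLMax}|$ produced at each recursive step, yielding $P_{231}(\pi)$ in every case. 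The main obstacle, as I see it, is the bookkeeping in this case analysis, namely aligning the recursive max-decomposition of the $231$-avoiding pattern $\pi$ with the spinal structure of the construction; a less self-contained but conceptually cleaner alternative is to identify, via the bijection of Section~\ref{231bij}, $(\mathbb{Z},\bm{\preccurlyeq}_{231})$ with the image of the local limit of uniform binary trees at a uniform vertex (\cite{stufler2016local}) and then to conclude by continuity of this bijection in the local topologies on trees and permutations.
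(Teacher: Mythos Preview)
Your approach is the same as the paper's: reduce to verifying $\mathbb{P}\bigl(r_h(\mathbb{Z},\bm{\preccurlyeq}_{231})=(\pi,h+1)\bigr)=P_{231}(\pi)$ for every $h$ and $\pi\in\text{Av}^{2h+1}(231)$, then decompose the event according to the spinal structure and compute via independence and Observation~\ref{future_need}. Your treatment of the case $\text{indmax}(\pi)=h+1$ is correct and matches the paper's (the paper invokes Proposition~\ref{patprob231} instead of Observation~\ref{future_need}, which is the same computation packaged differently).

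Your sketch of the case $\text{indmax}(\pi)\neq h+1$ is however imprecise on one point that matters. Identifying \emph{only} the local maximum of the window does not ``pin down $|\tilde{\bm{S}}^0_L|$, $|\tilde{\bm{S}}^0_R|$ and the side choices $\bm{X}_i$'': the local maximum is the chronologically last spine vertex in the window, and its position encodes only a sum of sizes, not the individual pieces. What actually determines the event is the full list of left-to-right maxima of $\pi$ on the right of the root together with its mirror on the left, matched with the successive $*^R$ and $*^L$ maxima. The paper makes this precise by introducing an explicit decomposition of $(\pi,h+1)$ into pieces $\rho^0_\bullet(\pi)$, $\rho^e_L(\pi)$, $\rho^j_R(\pi)$, $\rho^b_R(\pi)$, and then identifies the event with a concrete product event on $\tilde{\bm S}^0,\bm X_1,\bm S^1,\bm S^{\bm\ell_1},\dots,\bm S^{\bm\ell_k}$. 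It also flags a subtlety you do not mention: when $\rho^0_\bullet(\pi)$ is not rooted at its own maximum (equivalently, the block of $\pi$ between the two LR-maxima nearest to $h+1$ does not have its maximum at the root), the decomposition has to be \emph{iterated} on $\rho^0(\pi)$ before the event factors. This iteration is exactly the ``bookkeeping obstacle'' you allude to, and the paper carries it out rather than bypassing it.
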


Before proving the proposition we need to introduce some more notation in order to split a permutation in several different parts. In what follows, we suggest to compare the various definitions with the example in Fig.~\ref{splitting_perm}. 

Given a pattern $\pi\in\text{Av}^{2h+1}(231)$ such that indmax$(\pi)> h+1,$ and  $$\text{LRMax}(\pi)\cap[h+2,2h+1]=\{M_1,\dots,M_k\},$$ with $M_1<\dots<M_k,$ then we can decompose it as
\begin{equation}
\label{decomp1}
(\pi,h+1)=\Bigg(\bigg(\dots\Big(\rho^0_{\bullet}(\pi)*^R\rho^1_R(\pi)\Big)*^R\dots\bigg)*^R\rho^{k-1}_R(\pi)\Bigg)*^R\rho^{b}_R(\pi),\quad \text{if}\quad h+1\in\text{LRMax}(\pi)
\end{equation}
\begin{equation}
\label{decomp2}
(\pi,h+1)=\Bigg(\bigg(\dots\Big(\big(\rho^0_{\bullet}(\pi)*^L\rho^e_L(\pi)\big)*^R\rho^1_R(\pi)\Big)*^R\dots\bigg)*^R\rho^{k-1}_R(\pi)\Bigg)*^R\rho^{b}_R(\pi),\quad \text{if}\quad h+1\notin\text{LRMax}(\pi) 
\end{equation} 
where the different terms in the decomposition are precisely described in what follows.

We set, 
\begin{equation*}
\rho^0(\pi)\coloneqq
\begin{cases}
\text{pat}_{[1,RM-1]}(\pi) &\quad\text{if }h+1\in\text{LRMax}(\pi),\\
\text{pat}_{[LM+1,RM-1]}(\pi) &\quad\text{if }h+1\notin\text{LRMax}(\pi). \\ 
\end{cases}
\end{equation*}
where $LM=\max\{0<j<h+1:j\in\text{LRMax}(\pi)\}$ and $RM=\min\{h+1<j<2h+2:j\in\text{LRMax}(\pi)\}.$ We also set
\begin{equation*}
\rho^0_{\bullet}(\pi)\coloneqq
\begin{cases}
(\rho^0(\pi),h+1) &\quad\text{if }h+1\in\text{LRMax}(\pi),\\
\big(\rho^0(\pi),h+1-LM\big) &\quad\text{if }h+1\notin\text{LRMax}(\pi), \\ 
\end{cases}
\end{equation*}
\emph{i.e.}, in both cases, we are rooting $\rho^0(\pi)$ at the index corresponding to the root of $(\pi,h+1).$ 
\begin{rem}
	Note that, in general, if $h+1\notin\text{LRMax}(\pi)$ then $\text{indmax}(\rho^0(\pi))\neq h+1-LM,$ \emph{i.e.,} the index of the maximum of $\rho^0(\pi)$ does not correspond to the root of $\rho^0_{\bullet}(\pi).$
\end{rem}

Then, analyzing the left side of the pattern $\pi,$ if $h+1\notin\text{LRMax}(\pi),$ we set $$\rho_L^e(\pi)\coloneqq\text{pat}_{[1,LM-1]}(\pi).$$

Finally, analyzing the right side of the pattern $\pi$ (regardless of $h+1$ being or not in $\text{LRMax}(\pi)$), if 
$$\text{LRMax}(\pi)\cap[h+2,2h+1]=\{M_1,\dots,M_k\},\quad\text{with}\quad M_1<\dots<M_k,$$
 we set 
 $$\rho_R^i(\pi)\coloneqq\text{pat}_{[M_i+1,M_{i+1}-1]}(\pi),\quad \text{for all}\quad 1\leq i\leq k-1,$$ 
(with the convention that, if $M_{i+1}-M_i=1$ then $\rho_R^i(\pi)=\emptyset$)  and  
$$\rho_R^b(\pi)\coloneqq\text{pat}_{[M_k+1,2h+1]}(\pi),$$
(with the convention that, if $M_{k}=2h+1$ then $\rho_R^b(\pi)=\emptyset$).

\begin{figure}[htbp]
	\begin{center}
		\includegraphics[scale=0.75]{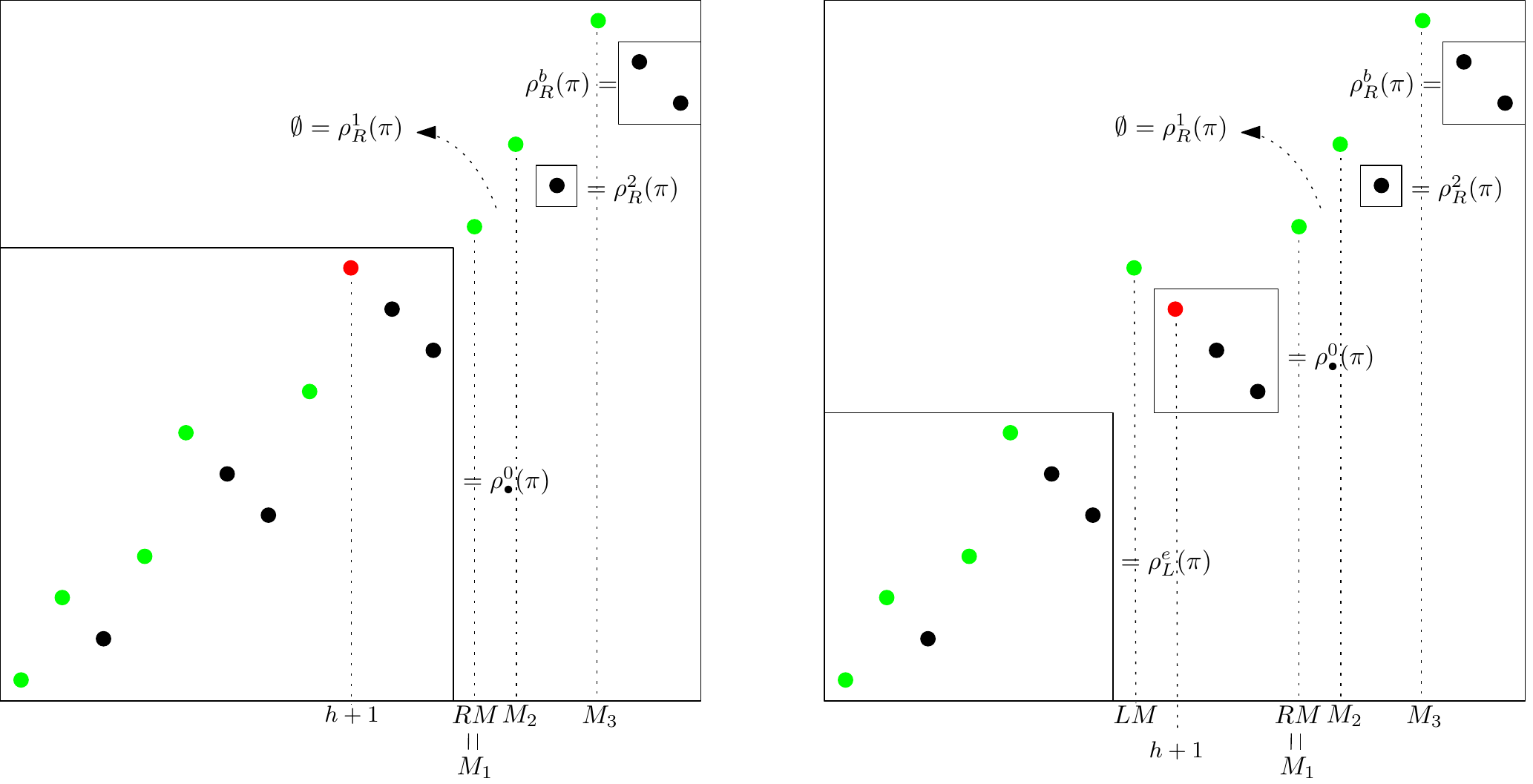}\\
		\caption{Two diagram representations of the splitting of a pattern $\pi\in\text{Av}^{2h+1}(231).$ On the left-hand side the case when $h+1\in\text{LRMax}(\pi),$ on the right-hand side the case when $h+1\notin\text{LRMax}(\pi).$ We highlight in red the value corresponding to the index $h+1.$ Moreover, we highlight in green the left-to-right maxima of the permutations $\pi.$}\label{splitting_perm}
	\end{center}
\end{figure}

\begin{proof}[Proof of Proposition \ref{explicit_constuction_limit}]
	Since by Observation \ref{separating class} the set of clopen balls 
	\begin{equation*}
	\mathcal{A}=\Big\{B\big((A,\preccurlyeq),2^{-h}\big):h\in\Z_{>0},(A,\preccurlyeq)\in\Sri\Big\}
	\end{equation*}
	is a separating class for the space $(\Sri,d),$ it is enough to prove that for all $h\in\Z_{>0},$ for all $\pi\in\text{Av}^{2h+1}(231),$
	\begin{equation}
	\label{whatwehavetoprove}
	\P\big(r_h(\Z,\bm{\preccurlyeq}_{231})=(\pi,h+1)\big)=P_{231}(\pi).
	\end{equation}
	 We fix a pattern $\pi\in\text{Av}^{2h+1}(231).$ We first suppose that:
	 \begin{itemize}
	 	\item indmax$(\pi)> h+1;$
	 	\item $h+1\notin\text{LRMax}(\pi)$ and $\text{LRMax}(\pi)\cap[h+2,2h+1]=\{M_1,\dots,M_k\},$ with $M_1<\dots<M_k;$
	 	\item $\rho^0_{\bullet}(\pi)$ is rooted at its maximum.
	 \end{itemize}    
	 Using the notation explained before, $\pi$ rewrites as 
	 \begin{equation*}
	 \label{decomp}
	 (\pi,h+1)=\Bigg(\bigg(\dots\Big(\big(\rho^0_{\bullet}(\pi)*^L\rho^e_L(\pi)\big)*^R\rho^1_R(\pi)\Big)*^R\dots\bigg)*^R\rho^{k-1}_R(\pi)\Bigg)*^R\rho^{b}_R(\pi).
	 \end{equation*}
	 Recalling that
	 $$\bm{\sigma}^n_{\bullet}\coloneqq((\dots((\tilde{\bm{S}}^0_{\bullet}*^{\bm{X}_1}\bm{S}^1)*^{\bm{X}_2}\bm{S}^2)...\bm{S}^{n-1})*^{\bm{X}_n}\bm{S}^n),\quad\text{for all}\quad n\in\Z_{>0},$$
	 and since by assumption $\rho^0_{\bullet}(\pi)$ is rooted at its maximum, by some easy combinatorial arguments, the event $\big\{r_h(\Z,\bm{\preccurlyeq}_{231})=(\pi,h+1)\big\}$ is equal to
	 \begin{equation*}
	 \big\{\tilde{\bm{S}}^0=\rho^0(\pi),\bm{X}_1=L,\text{pat}_e(\bm{S}^1)=\rho^e_L(\pi),\bm{S}^{\bm{\ell}_j}=\rho^j_R(\pi),1\leq j\leq k-1, \text{pat}_b(\bm{S}^{\bm{\ell}_k})=\rho^b_R(\pi)\big\},
	 \end{equation*}
	 where $\bm{\ell}_j$ denotes the (random) index of the $j$-th $\bm{X}_i$ equal to $R$ (note that $\bm{\ell}_j$ is almost surely well-defined).
	 Using the independence of $\{\bm{S}^i\}_{i\geq 0}$ and $\{\bm{X}_i\}_{i\geq 1}$ we have,
	 \begin{equation*}
	 \begin{split}
	 \P\big(r_h(\Z,\bm{\preccurlyeq}&_{231})=(\pi,h+1)\big)\\
	 =&\P\big(\tilde{\bm{S}}^0=\rho^0(\pi)\big)\cdot\P\big(\bm{X}_1=L\big)\cdot\P\big(\text{pat}_e(\bm{S}^1)=\rho^e_L(\pi)\big)\\
	 &\cdot\prod_{1\leq j\leq k-1}\P\big(\bm{S}^{\bm{\ell}_j}=\rho^j_R(\pi)\big)\cdot\P\big(\text{pat}_b(\bm{S}^{\bm{\ell}_k})=\rho^b_R(\pi)\big)\\
	 =&\Big(\frac{1}{4}\Big)^{|\rho^0(\pi)|}\cdot\frac{1}{2}\cdot\P\big(\text{pat}_e(\bm{S}^1)=\rho^e_L(\pi)\big)\cdot\prod_{1\leq j\leq k-1} \frac{1}{2}\Big(\frac{1}{4}\Big)^{|\rho^j_R(\pi)|}\cdot\P\big(\text{pat}_b(\bm{S}^{\bm{\ell}_k})=\rho^b_R(\pi)\big).
	 \end{split}
	 \end{equation*}
	 Using Observation \ref{future_need} and the fact that, for $\delta=0,$ $\sigma_{\bm{T}_{0}}\stackrel{(d)}{=}(\bm{S}^i|\bm{S}^i\neq\emptyset),$ we have that, 
	 \begin{equation*}
	 \begin{split}
	 &\P\big(\text{pat}_b(\bm{S}^i)=\rho\big)=\frac{2^{|\text{RLMax}(\rho)|}}{2^{2|\rho|}},\quad\text{for all}\quad\rho\in\text{Av}(231),\\
	 &\P\big(\text{pat}_e(\bm{S}^i)=\rho\big)=\frac{2^{|\text{LRMax}(\rho)|}}{2^{2|\rho|}},\quad\text{for all}\quad\rho\in\text{Av}(231).
	 \end{split}
	 \end{equation*}
	 Therefore, we can conclude that
	 \begin{equation*}
	 \begin{split}
	 \P\big(r_h(\Z,\bm{\preccurlyeq}_{231})=(\pi,h+1)\big)
	 =&\Big(\frac{1}{4}\Big)^{|\rho^0(\pi)|}\cdot\frac{1}{2}\cdot\frac{2^{|\text{LRMax}(\rho^e_L(\pi))|}}{2^{2|\rho^e_L(\pi)|}}\cdot\prod_{1\leq j\leq k-1} \frac{1}{2}\Big(\frac{1}{4}\Big)^{|\rho^j_R(\pi)|}\cdot\frac{2^{|\text{RLMax}(\rho^b_R(\pi))|}}{2^{2|\rho^b_R(\pi)|}}\\
	 =& \frac{2^{|\text{LRMax}(\rho^e_L(\pi))|+|\text{RLMax}(\rho^b_R(\pi))|}}{2^{2(|\pi|-(k+1))}2^{k}}=\frac{2^{|\text{LRMax}(\pi)|+|\text{RLMax}(\pi)|}}{2^{2|\pi|}}=P_{231}(\pi).
	 \end{split}
	 \end{equation*}
	 This concludes the proof of Equation (\ref{whatwehavetoprove}) when indmax$(\pi)> h+1,$ $h+1\notin\text{LRMax}(\pi)$ and $\rho^0_{\bullet}(\pi)$ is rooted at its maximum.
	 For the other cases we can proceed as follows:
	 \begin{itemize}
	 	\item if indmax$(\pi)> h+1,$ $h+1\notin\text{LRMax}(\pi)$ but $\rho^0_{\bullet}(\pi)$ is not rooted at its maximum, we have to repeat the decomposition explained in Equations (\ref{decomp1}) and (\ref{decomp2}) for the permutation $\rho^0(\pi)$. We iterate this procedure until $\rho^0_{\bullet}\big(\rho^0(\dots(\rho^0(\pi))\dots)\big)$ is rooted at its maximum (this procedure finishes since at each step $|\rho^0\big(\rho^0(\pi)\big)|<|\rho^0(\pi)|$). In this way we obtain a new decomposition of $(\pi,h+1).$ Finally, with similar computations as before we can verify Equation (\ref{whatwehavetoprove});
	 	
	 	\item if indmax$(\pi)> h+1,$ $h+1\in\text{LRMax}(\pi)$ then the proof of Equation (\ref{whatwehavetoprove}) is similar to the one in the first case substituting the event $\big\{\tilde{\bm{S}}^0=\rho^0(\pi),\bm{X}_1=L,\text{pat}_e(\bm{S}^1)=\rho^e_L(\pi)\big\}$ with the event $\big\{\text{pat}_e(\tilde{\bm{S}}^0_{\bullet})=\rho^0_{\bullet}(\pi)\big\}$ and then arguing exactly in the same way as before;
	 	
	 	\item if indmax$(\pi)<h+1,$ then Equation (\ref{whatwehavetoprove}) follows from the previous cases by a symmetry argument;
	 	
	 	\item finally, if $\text{indmax}(\pi)=h+1,$  $$\P\big(r_h(\Z,\bm{\preccurlyeq}_{231})=(\pi,h+1)\big)=\P\big(r_h(\tilde{\bm{S}}^0)=(\pi,h+1)\big)=P_{231}(\pi),$$
	 	where in the last equality we used Proposition \ref{patprob231} and the fact that, for $\delta=0,$ $\sigma_{\bm{T}_{0}}\stackrel{(d)}{=}\tilde{\bm{S}}^0.$
	 \end{itemize}
 The analysis of all these cases concludes the proof.	 
\end{proof}

\section{The local limit for uniform random 321-avoiding permutations}
\label{321}
The goal of this section is to prove that uniform random 321-avoiding permutations converge in the quenched Benjamini--Schramm sense. 
We start by introducing, for all $k>0,$ the following probability distribution on $\text{Av}^k(321)$,
\begin{equation}
\label{321distr}
P_{321}(\pi)\coloneqq
\begin{cases}
\frac{|\pi|+1}{2^{|\pi|}} &\quad\text{if }\pi=12\dots|\pi|,\\
\frac{1}{2^{|\pi|}} &\quad\text{if }\coc(21,\pi^{-1})=1, \\
0 &\quad\text{otherwise.} \\ 
\end{cases}
\end{equation}

\begin{rem}
	\label{rem_sep_line}
	We note that the second condition $\coc(21,\pi^{-1})=1$ (\emph{i.e.,} $\pi^{-1}$ has exactly one descent, or equivalently, $\pi$ has exactly one \emph{inverse descent}) is equivalent to saying that we can split the permutation $\pi$ in exactly two increasing subsequences such that all the values of one subsequence are smaller than the values of the other one. Formally, if $\pi\in\text{Av}^k(321)$ and $\coc(21,\pi^{-1})=1$ there exists a unique bipartition of $[m]$ as $[m]=L(\pi)\sqcup U(\pi)$ such that for all $(i,j)\in L(\pi)\times U(\pi)$ then $\pi_i<\pi_j$ and $\text{pat}_{L(\pi)}(\pi)$ and $\text{pat}_{U(\pi)}(\pi)$ are increasing.
	
	Given the diagram of a permutation in $\text{Av}^k(321)$ with exactly one inverse descent, we call \emph{separating line} the horizontal line in the diagram between these two increasing subsequences. An example is given in Fig.~\ref{321ex} in the case of the permutation $\pi=14526738,$ where we highlight in orange the separating line between the two increasing subsequences.   
	\begin{figure}[htbp]
		\begin{center}
			\includegraphics[scale=.70]{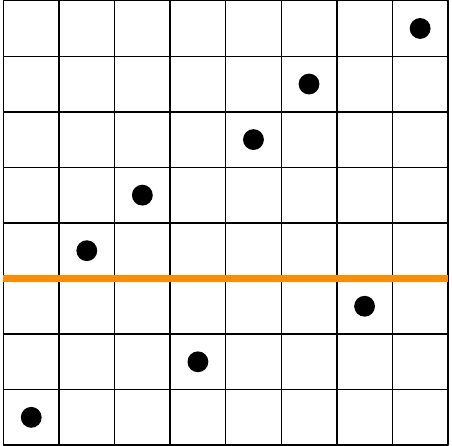}\\
			\caption{Diagram of the permutation $14526738$ with the separating line highlighted in orange.}\label{321ex}
		\end{center}
	\end{figure}
\end{rem}

\begin{rem}
	That Equation (\ref{321distr}) defines a probability distribution on $\text{Av}^k(321)$ is a consequence of Theorem \ref{321theorem} below: in Equation (\ref{shsshgea}), summing over all $\pi\in\text{Av}^k(321)$ the left-hand side, we obtain $\frac{n-k+1}{n}$ which trivially tends to 1. It is however easy to give a direct explanation. Indeed the number of permutations in $\text{Av}^k(321)$ with exactly one inverse descent and separating line at height $\ell\in\{1,\dots,m-1\}$ is $\binom{k}{\ell}-1$ (where the binomial coefficient $\binom{k}{\ell}$ counts the possible choices of points that are below the separating line and the term $-1$ stands for the identity permutation which has no inverse descent). Therefore the number of permutations in $\text{Av}^k(321)$ with exactly one inverse descent is
	$$\sum_{\ell=1}^{k-1}\Bigg(\binom{k}{\ell}-1\Bigg)=2^k-(k+1),$$ 
	which entails $\sum_{\pi\in\text{Av}^k(321)}P_{321}(\pi)=1.$
\end{rem}

We now recall, similarly to the previous section, the following trivial fact.

\begin{obs}
	If $\pi\notin\text{Av}(321)$ and $\sigma\in\text{Av}(321)$ then obviously $\coc(\pi,\sigma)=0.$ Therefore, in what follow, we simply analyze the consecutive occurrences densities $\widetilde{\coc}(\pi,\sigma)$ for $\pi\in\text{Av}(321).$
\end{obs}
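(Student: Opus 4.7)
The claim is a one-line observation and essentially a tautology, so the plan is just to unwind the definitions of pattern containment, consecutive occurrences, and pattern avoidance.

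First I would recall that, by definition, $\coc(\pi,\sigma)$ counts the intervals $I\subseteq[|\sigma|]$ with $\mathrm{Card}(I)=|\pi|$ and $\mathrm{pat}_I(\sigma)=\pi$. If any such interval $I$ existed, then the pattern $\pi$ would occur in $\sigma$ (taking $I$ itself as the witnessing subset in the definition of pattern containment), so $\sigma$ would contain $\pi$ as a pattern in the non-consecutive sense as well.

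Next I would use the elementary fact that if $\sigma$ contains $\pi$ as a pattern and $\pi$ contains $\rho$ as a pattern, then $\sigma$ contains $\rho$ as a pattern (this is immediate from the definition via $\mathrm{std}$, since relative order is preserved under restriction to subsets of indices). Applying this with $\rho=321$: if $\pi$ contains $321$, i.e.\ $\pi\notin\mathrm{Av}(321)$, and if $\coc(\pi,\sigma)\geq 1$, then $\sigma$ would contain $\pi$ which contains $321$, hence $\sigma$ would contain $321$, contradicting $\sigma\in\mathrm{Av}(321)$. Therefore $\coc(\pi,\sigma)=0$.

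There is no real obstacle; the only thing worth being explicit about is distinguishing consecutive occurrences from general occurrences, since $\text{Av}(321)$ is defined via (non-consecutive) patterns. Because every consecutive occurrence is in particular an occurrence, the implication goes through without any fuss, and the observation justifies restricting the subsequent analysis to $\pi\in\text{Av}(321)$.
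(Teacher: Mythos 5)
Your argument is correct and is precisely the reasoning the paper leaves implicit behind the word ``obviously'': a consecutive occurrence is in particular an occurrence, and pattern containment is transitive, so $\sigma\in\text{Av}(321)$ containing $\pi\notin\text{Av}(321)$ consecutively would force $\sigma$ to contain $321$. Nothing is missing.
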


\begin{thm}
	\label{321theorem}	
	Let $\bm{\sigma}^n$ be a uniform random permutation in $\emph{Av}^n(321)$ for all $n\in\Z_{>0}.$ We have the following convergence in probability,
	\begin{equation}
		\label{shsshgea}
		\widetilde{\coc}(\pi,\bm{\sigma}^n)\stackrel{P}{\rightarrow}P_{321}(\pi),\quad\text{for all}\quad\pi\in\emph{Av}(321).
	\end{equation}
\end{thm}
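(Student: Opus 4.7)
The plan is to translate Theorem~\ref{321theorem} into a statement about Galton--Watson trees via a size-preserving bijection $\Psi\colon\text{Av}^n(321)\to\mathbb{T}^{n+1}$ between $321$-avoiding permutations and rooted ordered trees with $n+1$ vertices (implemented through Dyck paths and the standard contour correspondence). The bijection is chosen so that a consecutive occurrence of a pattern $\pi$ at position $i$ in $\sigma$ is encoded by the local structure of $\Psi(\sigma)$ in a bounded neighborhood of a distinguished vertex $v_i$ corresponding to $i$, with the positions of leaves playing a crucial role. Under $\Psi$ the uniform random permutation $\bm{\sigma}^n$ corresponds to a uniform random tree $\bm{T}_n\in\mathbb{T}^{n+1}$, which is equal in distribution to a critical Galton--Watson tree with geometric$(1/2)$ offspring distribution, conditioned to have $n+1$ vertices; the uniform random index $\bm{i}_n\in[n]$ corresponds, up to boundary effects, to a uniformly chosen vertex $\bm{v}_n\in\bm{T}_n$.

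The central ingredient is the local limit theorem for critical Galton--Watson trees pointed at a uniform vertex, due to Aldous, Holmgren--Janson and Stufler: the neighborhood of $\bm{v}_n$ converges in distribution, in the local topology on rooted trees, to the pointed Kesten-type infinite tree $\bm{T}^\infty$, consisting of a semi-infinite spine of ancestors to which i.i.d.\ copies of the unconditioned Galton--Watson tree are attached. Using $\Psi$, one expresses each event $\{r_h(\bm{\sigma}^n,\bm{i}_n)=(\pi,h+1)\}$ as a local event around $\bm{v}_n$ in $\bm{T}_n$, and then evaluates the corresponding probability in $\bm{T}^\infty$. A direct combinatorial computation then recovers the values asserted in~\eqref{321distr}: patterns with two or more inverse descents correspond to configurations incompatible with the typical leaf structure of the Kesten tree at any fixed scale and hence have vanishing probability, whereas the identity pattern and patterns with a single inverse descent yield exactly $(|\pi|+1)/2^{|\pi|}$ and $1/2^{|\pi|}$ respectively. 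These values reflect the fact that, at the permutation level, the annealed local limit is describable by i.i.d.\ fair coin flips labelling each position as ``lower'' or ``upper'' and ordering all lower positions below all upper positions.

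The previous step establishes condition~(c) of Theorem~\ref{weakbsequivalence}, i.e., the annealed Benjamini--Schramm convergence of expected consecutive pattern densities. To upgrade to convergence in probability---which by Corollary~\ref{detstrongbsconditions} is equivalent to the full quenched statement~\eqref{shsshgea}---it suffices to show that $\text{Var}\big(\widetilde{\coc}(\pi,\bm{\sigma}^n)\big)\to 0$. This follows from a second moment argument in the spirit of Section~\ref{mainres}: the expectation of the square is obtained by selecting two independent uniform positions in $\bm{\sigma}^n$; since these correspond to two uniform vertices of $\bm{T}_n$ whose tree-distance tends to infinity with high probability, the associated local events become asymptotically independent in the Kesten limit, yielding the required factorization.

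The main obstacle is the translation step itself: identifying precisely how consecutive patterns in $\sigma$ are encoded by leaf positions and the contour function of $\Psi(\sigma)$, and then verifying that the corresponding local events in $\bm{T}^\infty$ have the predicted probabilities. Ruling out patterns with multiple inverse descents requires showing that the associated configurations, which would force several nested deep excursions of the contour function near $\bm{v}_n$, become rare in the local limit. A secondary technical point is the transfer from the unconditioned to the conditioned Galton--Watson model, which is standard via the cited local limit results but must be handled carefully at the ``boundary'' of the neighborhood, where the ancestors of $\bm{v}_n$ along the spine interleave with their attached subtrees.
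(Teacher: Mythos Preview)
Your overall strategy---bijection to rooted ordered trees, then local limits of conditioned Galton--Watson trees pointed at a uniform vertex---matches the paper's, and the end picture (each nearby index independently ``upper'' or ``lower'' with probability $1/2$) is correct. But there is a genuine gap in how you plan to rule out patterns with more than one inverse descent.

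The bijection you describe encodes only \emph{which} nearby indices lie in $E^+$ (they are exactly the leaves, read via post-order labels), and the tree local limit indeed gives these positions the i.i.d.\ fair-coin behaviour. What the local tree structure does \emph{not} encode is the comparison of \emph{values}: whether $\sigma(m^+)>\sigma(M^-)$ for the smallest $E^+$-index and the largest $E^-$-index in the window. Under the bijection, $\sigma(j)-j$ for $j\in E^+$ is the height of the corresponding leaf, but the values on $E^-$ are determined by a global ``fill in the remaining values increasingly'' rule, not by any bounded neighbourhood of the distinguished vertex. So the assertion that multiple inverse descents ``correspond to configurations incompatible with the typical leaf structure of the Kesten tree'' is not justified: the leaf structure fixes $E^+_{\bm{i}_n,h}(\bm{\sigma}^n)$, but says nothing about the separating line. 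The paper handles this by an entirely different, global argument (its Proposition~\ref{prop2}): using the Hoffman--Rizzolo--Slivken coupling with a Brownian excursion, one has $\sigma(j)-j=\pm\sqrt{2n}\,\bm e(j/n)+o_P(\sqrt n)$ according to whether $j\in E^\pm$, so in any window of bounded width the $E^+$-values exceed the $E^-$-values with probability tending to $1$. Without an argument of this type, your step ``a direct combinatorial computation then recovers the values'' does not go through.

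There is also a methodological difference on the concentration step. The paper does \emph{not} run a second-moment argument here. Instead it invokes a \emph{quenched} version of the tree local limit (its Proposition~\ref{keycoroll}, from Janson's fringe-subtree theorem): the conditional probabilities $\P^{\bm i_n}\big(f^\bullet_h(\bm T^\eta_n,\bm v_0)\in\mathcal T\big)$ converge \emph{in probability} to the corresponding probabilities in $\bm T^*$. This yields condition~(d) of Corollary~\ref{detstrongbsconditions} directly once Propositions~\ref{prop1} and~\ref{prop2} are in place, with no variance computation needed. Your proposed second-moment route is not wrong in spirit, but it would run into the same non-locality issue: the events whose covariance you must control involve the separating-line condition, which is not a local tree event, so ``two uniform vertices are far apart in the tree'' does not by itself give the asymptotic independence you claim.
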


Since the limiting frequencies $\big(P_{321}(\pi)\big)_{\pi\in\text{Av}(321)}$ are deterministic, using Corollary \ref{detstrongbsconditions}, we have the following.
\begin{cor}
	\label{321corol}
	Let $\bm{\sigma}^n$ be a uniform random permutation in $\emph{Av}^n(321)$ for all $n\in\Z_{>0}.$ There exists a random infinite rooted permutation $\bm{\sigma}^\infty_{321}$ such that   $$\bm{\sigma}^n\stackrel{qBS}{\longrightarrow}\mathcal{L}(\bm{\sigma}^\infty_{321}).$$
\end{cor}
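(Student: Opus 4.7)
The statement to prove, Corollary~\ref{321corol}, is an immediate consequence of Theorem~\ref{321theorem} via Corollary~\ref{detstrongbsconditions}: the convergence in probability of $\widetilde{\coc}(\pi,\bm{\sigma}^n)$ to the deterministic constants $P_{321}(\pi)$ is exactly condition (c) of Corollary~\ref{detstrongbsconditions}, which yields condition (a), namely the quenched Benjamini--Schramm convergence. So the task reduces to proving Theorem~\ref{321theorem}, for which the plan combines a bijection with ordered trees, a local-limit analysis of Galton--Watson trees around a uniform vertex, and the Second Moment Method.

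The key steps would be: \emph{(i)} use a standard bijection between $\text{Av}^n(321)$ and ordered trees with $n$ vertices (equivalently, Dyck paths of length $2n$), under which the uniform measure on $\text{Av}^n(321)$ corresponds to a critical Galton--Watson tree $\bm{T}_n$ with $\text{Geo}(1/2)$ offspring distribution, conditioned on its size; \emph{(ii)} translate the consecutive occurrences of a fixed pattern $\pi$ in $\bm{\sigma}^n$ into a tree-local statistic, encoded by specific configurations of leaves and branches in a bounded neighborhood of each vertex of $\bm{T}_n$; \emph{(iii)} apply the classical local limit for critical conditioned GW trees around a uniformly chosen vertex (Aldous, Kesten, Holmgren--Janson, Stufler), so that the neighborhood of $\bm{T}_n$ around a uniform vertex converges to Kesten's infinite tree, a bi-infinite spine on which i.i.d.\ $\text{Geo}(1/2)$ GW trees hang; \emph{(iv)} use this limit to establish $\E[\widetilde{\coc}(\pi,\bm{\sigma}^n)] \to P_{321}(\pi)$ and, by an analogous analysis for two independent uniform vertices whose tree-distance tends to infinity, also $\E[\widetilde{\coc}(\pi,\bm{\sigma}^n)^2] \to P_{321}(\pi)^2$; \emph{(v)} conclude via Chebyshev's inequality. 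The expected limiting picture on the permutation side is a ``two-color'' model: each of the $2h+1$ positions in a window around $\bm{i}_n$ independently receives an i.i.d.\ uniform color in $\{B,R\}$, and the induced pattern has all $B$-values smaller than all $R$-values with each color class increasing. The only patterns in the support of this model are the identity (achieved by any of the $k+1$ colorings with all $B$'s preceding all $R$'s, hence probability $(k+1)/2^k$) and the permutations with exactly one inverse descent (each achieved by the unique coloring corresponding to the separating line of Remark~\ref{rem_sep_line}, hence probability $1/2^k$); all other patterns have probability $0$. This reproduces exactly $P_{321}$.

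The main obstacle, I expect, lies in step \emph{(ii)}: translating local tree data into local permutation patterns, because the natural bijection $\text{Av}^n(321) \leftrightarrow \mathcal{T}_n$ encodes the value $\pi_i$ via a non-local tree statistic (roughly, the rank of a particular leaf in the contour order of the tree). One must show that, up to a vanishing error, the color in $\{B,R\}$ of each position in a window of size $2h+1$ around $\bm{i}_n$ is determined by the tree structure in a sufficiently large, but bounded, neighborhood of the pointed vertex, and that these colors are asymptotically i.i.d.\ $\text{Bernoulli}(1/2)$. Controlling the contour function of the pointed GW tree in order to pin down the positions of the nearby leaves---as anticipated in the introduction---is where the technical heart of the proof must lie; the Second Moment step will then be essentially routine, since the two-point analysis reduces to two independent copies of Kesten's tree.
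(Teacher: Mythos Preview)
Your high-level plan is close to the paper's, and your identification of the limiting two-color model is exactly right. There is, however, a genuine gap in your analysis of step (ii).

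You correctly note that the colors (membership in $E^+$ versus $E^-$) are tree-local and can be shown to be asymptotically i.i.d.\ $\text{Bernoulli}(1/2)$ via the local tree limit and a contour-function analysis; this is indeed the paper's Proposition~\ref{prop1}. But the colors alone do \emph{not} determine the induced pattern on the window: you also need that, within the window, every value coming from $E^+$ exceeds every value coming from $E^-$ (the ``separating line'' of Remark~\ref{rem_sep_line}). This second fact is \emph{not} tree-local: whether $\bm{\sigma}^n(m^+)>\bm{\sigma}^n(M^-)$ depends on how far the points $(m^+,\bm{\sigma}^n(m^+))$ and $(M^-,\bm{\sigma}^n(M^-))$ are from the diagonal, which is global information (equivalently, heights in the tree). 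The paper handles this as a separate proposition (Proposition~\ref{prop2}), and the argument does not go through the local tree limit at all: it invokes the Hoffman--Rizzolo--Slivken result that points of a uniform $321$-avoiding permutation are at distance of order $\sqrt{n}$ from the diagonal (the Brownian-excursion scaling), so that in a window of bounded width the $E^+$-values dominate the $E^-$-values with probability tending to $1$. Your proposal folds this into the contour/leaf analysis and never isolates it; without this input, the two-color description of the limit is not justified.

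A secondary difference: your step (v), via the Second Moment Method, is not how the paper proceeds in the $321$ case. Instead it uses Janson's concentration result for fringe statistics of conditioned Galton--Watson trees (see Proposition~\ref{keycoroll}), which already yields convergence \emph{in probability} of $\P^{\bm{v}_0}\big(f^\bullet_H(\bm{T}^\eta_n,\bm{v}_0)\in\cdot\big)$; combined with Propositions~\ref{prop1} and~\ref{prop2} this gives condition (d) of Corollary~\ref{detstrongbsconditions} directly, with no two-point computation. Your Second Moment route would work in principle, but the separating-line gap above would bite there too: you would need the separating line at \emph{both} uniform windows, and that again requires the Brownian-excursion input, not just two independent copies of Kesten's tree.
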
	

\begin{rem}
	We note that, as in the case of Corollary \ref{231corol}, Corollary \ref{321corol} proves the existence of a random infinite rooted permutation $\bm{\sigma}^\infty_{321}$ without constructing explicitly this limiting object. Nevertheless, also in this case, we are able to provide an explicit construction of $\bm{\sigma}^\infty_{321}$ (see Section \ref{explcon2}). 
\end{rem}

The proof of Theorem \ref{321theorem} is structured as follows:
\begin{itemize}
	\item in Section \ref{premres_321} we introduce a well-known bijection between rooted ordered trees and 321-avoiding permutations. Moreover, we explain why it is enough to study a specific family of Galton--Watson trees;
	\item in Section \ref{local_limit_for_trees} we present two results, due to Stufler \cite{stufler2016local} and Janson \cite{janson2012simply}, that characterize the local limit for Galton--Watson trees pointed at a uniform vertex;
	\item in Section \ref{cont_trees} we introduce the notion of contour of a tree and we state some technical results about the characterization of the shape of some specific contours;
	\item in Section \ref{mainres_321} we state two propositions (whose the proofs are postponed to Sections \ref{prop1proof} and \ref{prop2proof}) that are the key results for the proof of Theorem \ref{321theorem}. The proofs of the two propositions are based on the results presented in the previous Sections \ref{local_limit_for_trees} and \ref{cont_trees};
	\item finally, in Section \ref{explcon2} we give an explicit construction of the limiting object $\bm{\sigma}^\infty_{321}.$ 
\end{itemize}

\subsection{From uniform 321-avoiding permutations to Galton--Watson trees}
\label{premres_321}
It is well known that 321-avoiding permutations can be broken into two increasing subsequences, one weakly above the diagonal and one strictly below the diagonal (see \emph{e.g.,} \cite[Section 4.2]{bona2016combinatorics}). Therefore, defining for every permutation $\sigma\in\text{Av}^n(321)$ 
\begin{equation*}
\begin{split}
&E^+=E^+(\sigma)\coloneqq\big\{i\in[n]|\sigma_i\geq i\big\},\\
&E^-=E^-(\sigma)\coloneqq\big\{i\in[n]|\sigma_i< i\big\},
\end{split}
\end{equation*}
we have that $\text{pat}_{E^+}(\sigma)$ and $\text{pat}_{E^-}(\sigma)$ are the increasing patterns.
Note that by convention the indices of fixed points are elements of $E^+.$

We now describe a slightly different, but equivalent (see Remark \ref{rem_equiv}), version of a well-known bijection between 321-avoiding permutations (of size $n$) and rooted ordered trees (with $n+1$ vertices) -- see \emph{e.g.,} \cite{hoffman2016fixed}.

We recall that $\mathbb{T}^n$ denotes the set of rooted ordered trees with $n$ vertices (see Section \ref{tree_notation} for notation about trees and random trees). Suppose $T\in\mathbb{T}^{n+1}$ has $k$ leaves and let $\ell(T)=(\ell_1,\dots,\ell_k)$ be the list of leaves of $T,$ listed in order of appearance in the pre-order traversal of $T,$ \emph{i.e.,} from left to right. Moreover let $s_i$ and $q_i$ be the labels given to the leaf $\ell_i$ respectively by the pre-order labeling of $T$ starting from zero and the post-order labeling of $T$ starting from 1. We set $S(T)\coloneqq\{s_i\}_{1\leq i\leq k}$ and $Q(T)\coloneqq\{q_i\}_{1\leq i\leq k}.$  An example is given in the left-hand side of Fig.~\ref{321bij_1}.

We start by constructing the permutation $\sigma_{T}$ associated to the tree $T.$ We define $\sigma_T$ pointwise on $Q(T)=\{q_i\}_{1\leq i\leq k}$ as
$$\sigma_T(q_i)=s_i,\quad\text{for all}\quad1\leq i\leq k,$$ 
and then we extend $\sigma_T$ on $[n]\setminus Q(T)$ by assigning values of $[n]\setminus S(T)$ in increasing order. Clearly $\sigma_{T}$ is a $321$-avoiding permutation. An example is given in Fig.~\ref{321bij_1}. 

We highlight (without proof, for more details see \cite{hoffman2016fixed}) that this construction implies $E^+(\sigma_T)=Q(T)$ and $\sigma(E^+(\sigma_T))=S(T).$ It can also be proved that this construction is invertible: for a permutation $\sigma\in\text{Av}^n(321)$ there is a unique rooted ordered tree $T_{\sigma}$ with $n+1$ vertices such that $Q(T_{\sigma})=E^+(\sigma)$ and $S(T_{\sigma})=\sigma(E^+(\sigma)).$

\begin{figure}[htbp]
	\begin{center}
		\includegraphics[scale=0.80]{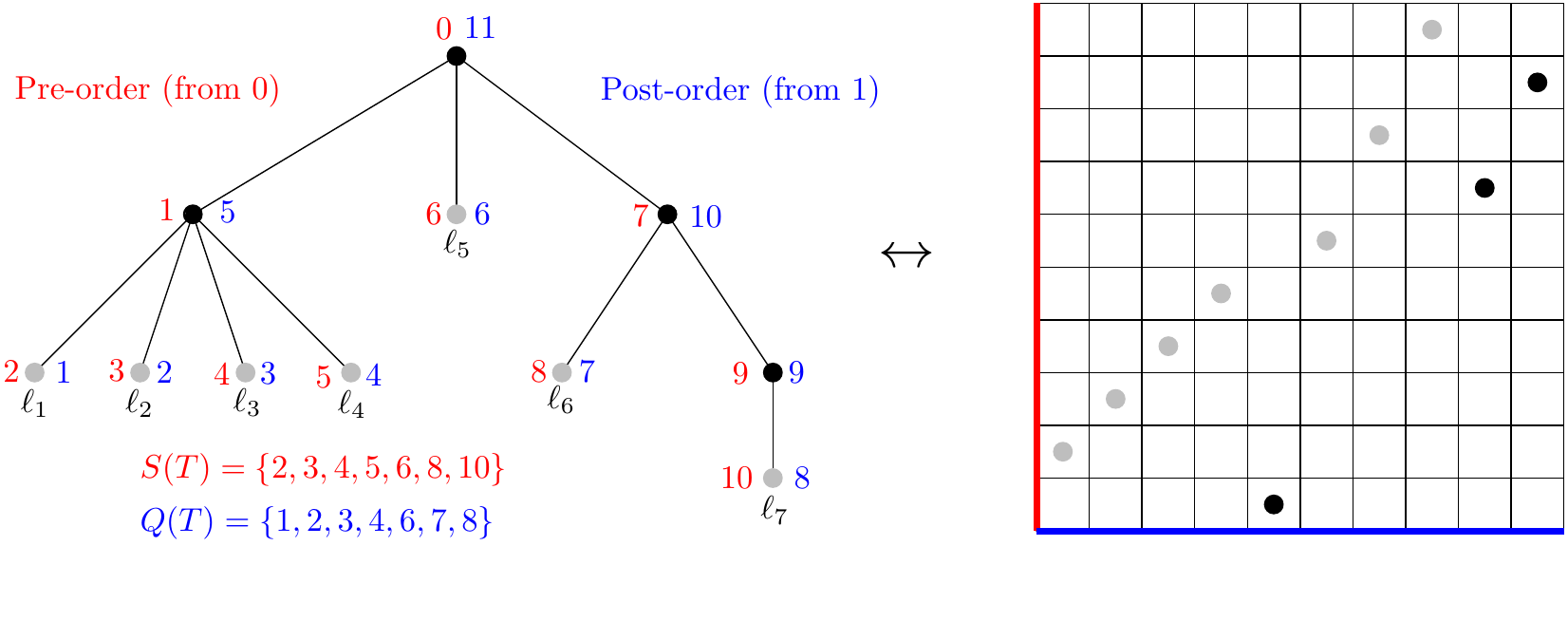}\\
		\caption{On the left of the picture we see a tree $T$ with 11 vertices and 7 leaves: we record in red on the left of each vertex the pre-order label and in blue on the right of each vertex the post-order label. On the right of the picture we see the associated 321-avoiding permutation of size 10: in the diagram, for every leaf in the tree, we draw a gray dot using the post-order label for the $x$-coordinate and the pre-order label for the $y$-coordinate. Then we complete the diagram with the unique increasing subsequence fitting in the empty rows and columns (shown with black dots on the picture).}\label{321bij_1}
	\end{center}
\end{figure}

\begin{obs}
	\label{obs_treeperm}
	Let $\bm{\eta}\sim Geom(1/2)$ \emph{i.e.,} $\eta_k\coloneqq\P(\bm{\eta}=k)=2^{-k-1},$ for all $k\geq 0.$ We recall that a uniform random rooted ordered tree with $n$ vertices $\bm{T}_n$ has the same distribution as a Galton--Watson tree $\bm{T}^{\eta}$ with offspring distribution $\bm{\eta}$ conditioned on having $n$ vertices (see \cite[Example 9.1]{janson2012simply}), namely
	\begin{equation}
	\bm{T}_n\stackrel{(d)}{=}\big(\bm{T}^\eta\;\big|\;|\bm{T}^\eta|=n\big).
	\end{equation}
	Therefore, thanks to the bijection previously introduced, in order to prove Theorem \ref{321theorem}, we can study a conditioned Galton--Watson tree $\bm{T}^\eta_n\coloneqq\big(\bm{T}^\eta\;\big|\;|\bm{T}^\eta|=n\big)$ instead of a uniform 321-avoiding permutation $\bm{\sigma}^n.$ We also point out that $E^+(\bm{\sigma}^n)=Q(\bm{T}^\eta_{n+1})$ a.s.
\end{obs}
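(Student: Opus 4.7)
The proposal is to verify the distributional identity by a direct computation of the probability mass function of $\bm{T}^\eta$ on $\mathbb{T}^n$, then to transport uniformity through the bijection recalled immediately above.

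First I would compute, for an arbitrary deterministic tree $T \in \mathbb{T}^n$, the probability
\[
\P(\bm{T}^\eta = T) = \prod_{v \in T} \eta_{d^+_v(T)} = \prod_{v \in T} 2^{-d^+_v(T)-1}.
\]
Using the elementary identity $\sum_{v \in T} d^+_v(T) = n-1$ (each non-root vertex is counted exactly once as a child), this simplifies to $2^{-(n-1)} \cdot 2^{-n} = 2^{-(2n-1)}$, which depends only on $n$ and not on the shape of $T$. Summing over $T \in \mathbb{T}^n$ then gives
\[
\P(|\bm{T}^\eta| = n) = |\mathbb{T}^n| \cdot 2^{-(2n-1)},
\]
so the conditional law $(\bm{T}^\eta \mid |\bm{T}^\eta| = n)$ is uniform on $\mathbb{T}^n$, which is exactly the law of $\bm{T}_n$. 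This establishes the main distributional identity $\bm{T}_n \stackrel{(d)}{=} \bm{T}^\eta_n$.

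For the second part of the observation, I would invoke the bijection $\sigma \mapsto T_\sigma$ between $\text{Av}^n(321)$ and $\mathbb{T}^{n+1}$ constructed just above. Since this is a bijection between finite sets, it transports the uniform measure on $\text{Av}^n(321)$ to the uniform measure on $\mathbb{T}^{n+1}$, so under $T_{\bm{\sigma}^n}$ we recover $\bm{T}_{n+1} \stackrel{(d)}{=} \bm{T}^\eta_{n+1}$. The final almost sure identity $E^+(\bm{\sigma}^n) = Q(\bm{T}^\eta_{n+1})$ is not really a probabilistic statement but an immediate deterministic consequence of the construction: the bijection was designed precisely so that $Q(T_\sigma) = E^+(\sigma)$ for every $\sigma \in \text{Av}^n(321)$, as noted in the paragraph defining the bijection.

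There is no real obstacle here; the only subtle point is correctly aligning the index shift (permutations of size $n$ correspond to trees with $n+1$ vertices), which is already emphasized in the wording of the observation. The whole argument is a short computation plus an appeal to the bijection, and the reference to \cite[Example 9.1]{janson2012simply} can be cited in place of the explicit computation of $\P(\bm{T}^\eta = T)$ if desired.
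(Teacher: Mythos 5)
Your proposal is correct: the computation $\P(\bm{T}^\eta=T)=\prod_{v\in T}2^{-d^+_v(T)-1}=2^{-(2n-1)}$ for every $T\in\mathbb{T}^n$ (using $\sum_{v\in T}d^+_v(T)=n-1$) is exactly the standard verification that the paper delegates to the citation of Janson's Example 9.1, and the identification $E^+(\bm{\sigma}^n)=Q(\bm{T}^\eta_{n+1})$ is, as you say, built into the bijection. The paper offers no separate argument for this Observation, so your write-up simply makes explicit what the reference covers, with the index shift $n\leftrightarrow n+1$ handled correctly.
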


\begin{rem}
	\label{rem_equiv}
	At first sight, it seems that the construction presented here is different from that of \cite{hoffman2016fixed}, we argue here that they are in fact the same.
	In \cite{hoffman2016fixed} the authors consider the same numbers $\{s_i\}_{1\leq i\leq k}$ but different numbers $\{q'_i\}_{1\leq i\leq k}$ (denoted by $\{q_i\}_{1\leq i\leq k}$ in \cite{hoffman2016fixed}) which are defined as follow
	\begin{equation*}
	q'_i=s_i-h(\ell_i)+1,\quad\text{for all}\quad 1\leq i\leq k.
	\end{equation*}
	Note that $q'_1=q_1=1.$ Analyzing the difference between two consecutive values in $\{q'_i\}_{1\leq i\leq k},$ we have
	\begin{equation*}
	\begin{split}
	q'_{i+1}-q'_i&=(s_{i+1}-h(\ell_{i+1})+1)-(s_i-h(\ell_i)+1)\\
	&=\big(s_{i+1}-s_i\big)-\big(h(\ell_{i+1})-h(\ell_{i})\big)\\
	&=\big(d(\ell_{i+1},c(\ell_{i+1},\ell_{i}))\big)-\big(d(\ell_{i+1},c(\ell_{i+1},\ell_{i}))-d(\ell_{i},c(\ell_{i+1},\ell_{i}))\big)\\
	&=d(\ell_{i},c(\ell_{i+1},\ell_{i}))=q_{i+1}-q_i,
	\end{split}
	\end{equation*}
	where we recall that $c(a,b)$ is the first common ancestor of $a$ and $b.$
	We conclude that $q_i=q'_i,$ for all $1\leq i\leq k,$ so our notation is consistent with that of \cite{hoffman2016fixed}.
\end{rem}

\subsection{A local limit result for Galton--Watson trees pointed at a uniform vertex}
\label{local_limit_for_trees}
In this section we present a combination of two results due to Stufler \cite{stufler2016local} and Janson \cite{janson2012simply} that characterize the local limit for Galton--Watson trees $\bm{T}^{\eta}_n$ pointed at a uniform vertex.

We begin by recalling the recent construction, due to Stufler, of the local limit tree $\bm{T}^*.$ We provide the construction in the special case where $\eta\sim Geom(1/2)$ (\emph{cf.} Fig.~\ref{Stufler_tree} below).

For each $i\geq 1,$ we let a vertex $u_i$ receive offspring according to an independent copy of $\hat{\bm{\eta}}$, that is a random variable with size-biased distribution $\hat{\eta}_k=k\cdot\eta_k.$ The vertex $u_{i-1}$ gets identified with a child of $u_i$ chosen uniformly at random. All other children of $u_{i}$ become the root of independent copies of the Galton--Watson tree $\bm{T}^{\eta}.$ Finally $u_0$ becomes the root of another independent copy of the Galton--Watson tree $\bm{T}^{\eta}.$ We always think of $\bm{T}^*$ as a tree with distinguished vertex $u_0$.

\begin{figure}[htbp]
	\begin{center}
		\includegraphics[scale=0.85]{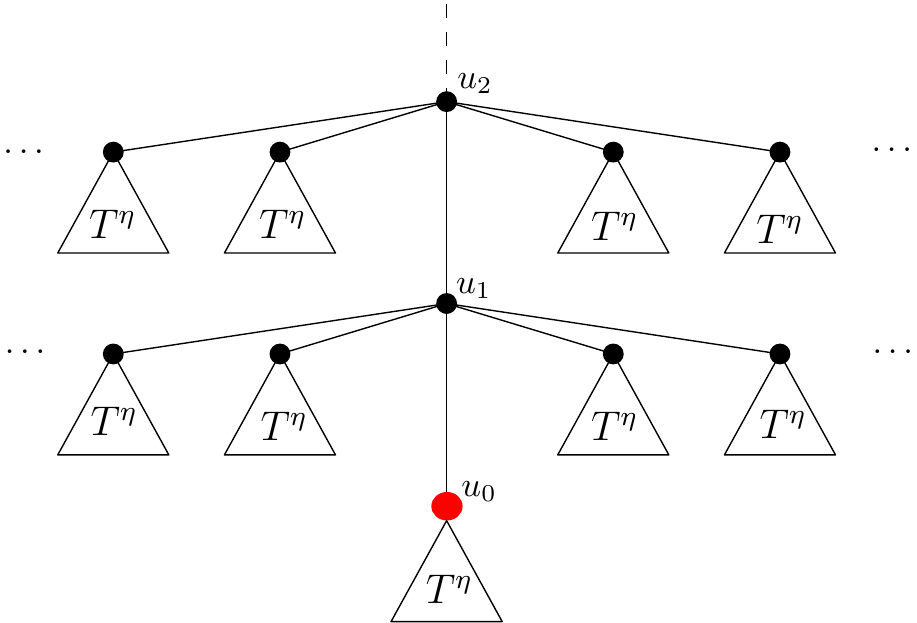}\\
		\caption{A scheme of the ``local limit tree" $\bm{T}^*.$}\label{Stufler_tree}
	\end{center}
\end{figure}

We point out that $\bm{T}^*$ is not a tree in the classical Neveu's formalism. We have to imagine that ``the root" of the tree is at the end of the infinite backward spine composed by the vertices $u_i,$ $i\geq0$. Note also that $\bm{T}^*$ has a.s.\ infinitely many vertices on the left and on the right of the infinite spine. For a formal construction of the set of these trees with infinite spine see \cite[Section 4.1, p.6]{stufler2016local}. In what follows we will refer to this type of trees as \emph{trees with an infinite spine.}

We now state the following easy reformulation of a result due to Janson \cite{janson2012simply}. First we recall that a rooted ordered tree $T$ together with a distinguished vertex $v_0$ is called a \emph{pointed rooted ordered tree} and it is denoted by $T^\bullet=(T,v_0).$ Given $T^\bullet=(T,v_0)$ a pointed rooted ordered tree, we denote with $f_\ell^{\bullet}(T^\bullet)=f_\ell^{\bullet}(T,v_0)$ the pointed fringe subtree rooted at the $\ell$-th ancestor of the distinguished vertex $v_0\in T$ (if this is not well-defined because $v_0$ has height smaller than $\ell$, we set $f_\ell^{\bullet}(T,v_0)=\diamond,$ for some fixed placeholder value $\diamond$). Similarly we denote with $f^\bullet_\ell(\bm{T}^*)$ the fringe subtree of $\bm{T}^*$ rooted at $u_\ell$ with distinguished vertex $u_0.$ Despite $\bm{T}^*$ is not a tree in Neveu's formalism, $f^\bullet_\ell(\bm{T}^*)$ is almost surely a pointed rooted ordered tree.

Finally, we denote with $\mathbb{T}^\bullet_h$ the set of pointed rooted ordered trees whose distinguished vertex has height $h\geq 0$.
\begin{prop}
	\label{keycoroll}
	Let $\bm{T}^\eta_n$ be as before and $\bm{v}_0$ be a uniform vertex of $\bm{T}^\eta_n$ chosen independently of $\bm{T}^\eta_n$. Let also $h\geq0$ and $\mathcal{T}\subseteq \mathbb{T}^\bullet_h.$ Then
	$$\P^{\bm{v}_0}\big(f^\bullet_h(\bm{T}_n^\eta,\bm{v}_0)\in\mathcal{T})\stackrel{P}{\to}\P\big(f^\bullet_h(\bm{T}^*)\in\mathcal{T}\big).$$
\end{prop}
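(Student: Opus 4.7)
The plan is to combine a classical fringe-subtree convergence result with the explicit size-biased construction of $\bm{T}^*$. The first step is to rewrite the conditional probability as an empirical average: since $\bm{v}_0$ is uniform on the vertices of $\bm{T}^\eta_n$ and independent of the tree,
\begin{equation*}
\P^{\bm{v}_0}\bigl(f^\bullet_h(\bm{T}_n^\eta,\bm{v}_0)\in\mathcal{T}\bigr) \;=\; \frac{1}{n}\sum_{v\in \bm{T}^\eta_n} \mathds{1}\bigl\{f^\bullet_h(\bm{T}^\eta_n, v)\in\mathcal{T}\bigr\}.
\end{equation*}
Second, I would observe that $f^\bullet_h(\bm{T}^\eta_n,v)$ is entirely determined by the (unpointed) fringe subtree rooted at $a^h(v)$ together with the identification of $v$ as a specific depth-$h$ descendant (and equals $\diamond$ when $h(v)<h$). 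Grouping the summation by the height-$h$ ancestor $w=a^h(v)$ converts the above quantity into
\begin{equation*}
\frac{1}{n}\sum_{w\in \bm{T}^\eta_n} F_{\mathcal{T}}\bigl(f(\bm{T}^\eta_n,w)\bigr) \;+\; O(h/n),
\end{equation*}
where $F_{\mathcal{T}}(T)\coloneqq \#\{v\in T:\ h(v)=h,\ (T,v)\in\mathcal{T}\}$ and the error term accounts for the at most $h$ vertices of $\bm{T}^\eta_n$ of height smaller than $h$.

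Third, I would invoke the fringe-subtree convergence theorem of Janson for critical conditioned Galton--Watson trees (see \cite[Theorem 7.12]{janson2012simply}): for every bounded functional $F$ on $\mathbb{T}$,
\begin{equation*}
\frac{1}{n}\sum_{w\in \bm{T}^\eta_n} F\bigl(f(\bm{T}^\eta_n,w)\bigr) \;\stackrel{P}{\to}\; \E\bigl[F(\bm{T}^\eta)\bigr].
\end{equation*}
Since $F_{\mathcal{T}}$ is dominated by the size of generation $h$ in $T$ (which has finite expectation under $\bm{T}^\eta$, because $\E[\bm{\eta}]=1$ forces every generation to have mean $1$), a standard truncation argument (apply the theorem to $F_{\mathcal{T}}\wedge K$ and send $K\to\infty$) extends the convergence to $F_{\mathcal{T}}$ itself.

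Finally, it remains to identify the limit with $\P(f^\bullet_h(\bm{T}^*)\in\mathcal{T})$. Expanding the expectation as
\begin{equation*}
\E\bigl[F_{\mathcal{T}}(\bm{T}^\eta)\bigr] \;=\; \sum_{T\in\mathbb{T}} \P(\bm{T}^\eta=T)\cdot\#\{v\in T:\ h(v)=h,\ (T,v)\in\mathcal{T}\},
\end{equation*}
the right-hand side is precisely the probability computed under the spinal (size-biased) construction of $\bm{T}^*$: picking a vertex $u_h$, letting it spawn $\hat{\bm{\eta}}$ children with a uniformly chosen one continuing the spine, and iterating, yields exactly the weighting ``one copy of $\bm{T}^\eta$ for every depth-$h$ vertex''. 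This is the classical spinal decomposition identity for critical Galton--Watson trees. The only slightly delicate point in the whole argument is controlling the unboundedness of $F_{\mathcal{T}}$ through the truncation in Janson's theorem; this is the main (modest) obstacle, and it is handled cleanly because the generation-$h$ size of $\bm{T}^\eta$ has finite mean.
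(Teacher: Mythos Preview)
Your approach is essentially the paper's, with one cosmetic difference and one small slip. The paper also rewrites the conditional probability as an empirical fringe count, uses the bijection $v\leftrightarrow a^h(v)$ to reduce to counting (unpointed) fringe subtrees, invokes \cite[Theorem 7.12]{janson2012simply}, and finally checks $\P(\bm{T}^\eta=T)=\P(f^\bullet_h(\bm{T}^*)=T^\bullet)$ via the size-biased spine computation. The difference is that the paper works one pointed tree $T^\bullet=(T,v_0)$ at a time: the key observation is that $N_{T^\bullet}(\bm{T}^\eta_n)=\big|\{u\in \bm{T}^\eta_n:\ f(\bm{T}^\eta_n,u)=T\}\big|$ exactly, so Janson's theorem applies directly to an indicator functional and no truncation is needed; the passage to an arbitrary countable $\mathcal{T}\subseteq\mathbb{T}^\bullet_h$ is then a Scheff\'e-type step using $\sum_{T^\bullet}\P(f^\bullet_h(\bm{T}^*)=T^\bullet)=1$ and $\sum_{T^\bullet} N_{T^\bullet}/n\leq 1$. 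Your route through the unbounded functional $F_{\mathcal{T}}$ and truncation is a valid alternative, but it is strictly more work than the paper's tree-by-tree argument.

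The slip: your $O(h/n)$ correction is justified by the claim that there are ``at most $h$ vertices of $\bm{T}^\eta_n$ of height smaller than $h$'', which is false for a general ordered tree (the first $h$ generations of $\bm{T}^\eta_n$ can contain order $\sqrt{n}$ vertices). Fortunately you do not need any correction at all: vertices $v$ with $h(v)<h$ have $f^\bullet_h(\bm{T}^\eta_n,v)=\diamond\notin\mathcal{T}$, so they contribute $0$ to the left-hand side, and your grouping identity $\sum_v \mathds{1}\{f^\bullet_h(\bm{T}^\eta_n,v)\in\mathcal{T}\}=\sum_w F_{\mathcal{T}}(f(\bm{T}^\eta_n,w))$ holds exactly.
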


\begin{proof}
	Note that $\P^{\bm{v}_0}\big(f^\bullet_h(\bm{T}_n^\eta,\bm{v}_0)=T^\bullet\big)$, for some $T^\bullet=(T,v_0)\in\mathbb{T}^\bullet_h,$ can be rewritten as 
	\begin{equation*}
	\P^{\bm{v}_0}\big(f^\bullet_h(\bm{T}_n^\eta,\bm{v}_0)=T^\bullet\big)=\P\big(f^\bullet_h(\bm{T}_n^\eta,\bm{v}_0)=T^\bullet\big|\bm{T}^\eta_n\big)\stackrel{(d)}{=}\frac{N_{T^\bullet}(\bm{T}^\eta_n)}{n},
	\end{equation*}
	where $N_{T^\bullet}(\bm{T}^\eta_n)\coloneqq \big|\big\{v\in \bm{T}^\eta_n:f_h^{\bullet}(\bm{T}^\eta_n,v)=T^\bullet\big\}\big|.$
	If we prove that, for all $h\geq 0,$ all $T^\bullet\in\mathbb{T}^\bullet_h,$
	\begin{equation}
	\label{goalofproof}
	\frac{N_{T^\bullet}(\bm{T}^\eta_n)}{n}\stackrel{P}{\to}\P\big(f^\bullet_h(\bm{T}^*)=T^\bullet\big),
	\end{equation}
	then the statement follows from the fact that $\mathcal{T}\subset \mathbb{T}^\bullet_h$ is countable.
	
	Note that
	$$N_{T^\bullet}(\bm{T}^\eta_n)=\big|\big\{u\in \bm{T}^\eta_n:f(\bm{T}^\eta_n,u)=T\big\}\big|.$$
	Indeed, if $f_h^{\bullet}(\bm{T}^\eta_n,v)=T^\bullet$ then $f(\bm{T}^\eta_n,a^h(v))=T.$ Conversely, given $u$ such that $f(\bm{T}^\eta_n,u)=T,$ there exists a unique choice for $v$ such that $f_h^{\bullet}(\bm{T}^\eta_n,v)=T^\bullet:$ using the Neveu's formalism, $v$ must be $v=uw,$ where $w$ is the pointed vertex of $T^\bullet.$
	Therefore by \cite[Theorem 7.12 (ii)]{janson2012simply}, we have,
	$$\frac{N_{T^\bullet}(\bm{T}^\eta_n)}{n}\stackrel{P}{\to}\P\big(\bm{T}^\eta=T\big).$$
	So, proving that $\P\big(\bm{T}^\eta=T\big)=\P\big(f^\bullet_h(\bm{T}^*)=T^\bullet\big)$ will be enough to conclude the proof of Equation \eqref{goalofproof}. This follows easily, since
	$$\P\big(\bm{T}^\eta=T\big)=\prod_{u\in T}\eta_{d^+_u(T)},$$
	and denoting with $\mathcal{P}$ the path from the root to the parent of the distinguished vertex in $T^\bullet,$
	\begin{equation*}
	\P\big(f^\bullet_h(\bm{T}^*)=T^\bullet\big)=\prod_{u\in\mathcal{P}}\Big(\hat{\eta}_{d^+_u(T^\bullet)}\cdot\frac{1}{d^+_u(T^\bullet)}\Big)\prod_{u\in T^{\bullet},u\notin\mathcal{P}}\eta_{d^+_u(T^\bullet)}=\prod_{u\in T^\bullet}\eta_{d^+_u(T^\bullet)}.
	\qedhere
	\end{equation*}	
\end{proof}

\begin{rem}
	\label{otherpossiblestrat1}
	Proposition \ref{keycoroll} implies the local convergence of the rooted tree $(\bm{T}_n^\eta,\bm{v}_0)$ in the quenched sense, \emph{i.e.,} the convergence of the conditional random variable $\big((\bm{T}_n^\eta,\bm{v}_0)|\bm{T}_n^\eta\big).$
\end{rem}

\subsection{Contour function for trees}
\label{cont_trees}
In this section we recall the notion of contour function for trees and we generalize this definition for the local limit tree $\bm{T}^*.$
We start with the following.

\begin{defn}
	A \emph{Dyck path} of length $2n$ is a lattice path from $(0,0)$ to $(2n,0)$, consisting of $n$ up-steps (along the vector $(1,1)$) and $n$ down-steps (along the vector $(1,-1)$), such that the path never goes below the $x$-axis. We encode a Dyck path by a word $w_1\cdots w_{2n}$ containing $n$ letters $U$ and $n$ letters $D$. The condition “the path never goes below the $x$-axis” is equivalent to “every prefix $w_1\cdots w_{k}$ contains at least as many $U$’s as $D$’s”. 
\end{defn}

\begin{defn}
	Given a rooted ordered tree $T$ with $n+1$ vertices, we define the \emph{contour} $C(T)$ of $T$ as the Dyck path of length $2n$ obtained by turning around $T$ from left to right (as for the depth-first traversal), starting from the root, and recording, for each traversed edge, either an up-step if we are visiting the edge on the left from top to bottom or a down-step if we are visiting the edge on the right from bottom to top. 
\end{defn}

An example of contour of a tree is given in Fig.~\ref{cont_tree}. 

\begin{figure}[htbp]
	\begin{center}
		\includegraphics[scale=1]{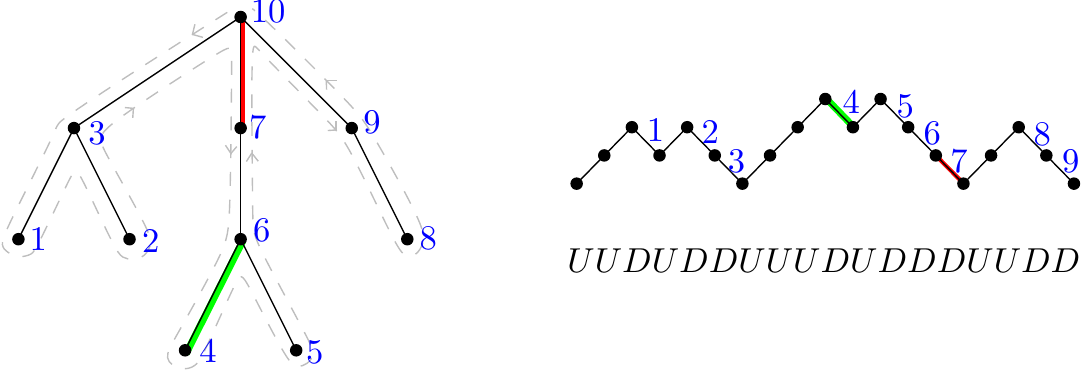}\\
		\caption{A tree with its contour. We highlighted in red and green the right side of two edges in the tree with the corresponding two down-steps in the contour. Moreover, we indicated post-order labels in the tree and enumerated down-steps in the path for future reference in the proof of Lemma \ref{lem_con_lab}.}\label{cont_tree}
	\end{center}
\end{figure}

\begin{rem}
	\label{postorderfprtree}
	From now until the end of Section \ref{321}, when we refer to labels, we always assume that we are speaking about labels given by the post-order labeling starting from 1. Moreover, given a rooted ordered tree and a label $j,$ we use the abuse of notation $(T,j)$ for denoting the pointed tree with distinguished vertex labeled by $j.$ Similarly, we will simply say ``the vertex $j"$ instead of ``the vertex labeled by $j".$
\end{rem}
We now make a connection between contours of trees and the labels of their leaves. We first introduce some notation for words over the alphabet $\{U,D\}.$ We denotes by $U^+$ (resp. $D^+$) the set of finite words of length at least one composed only by letters $U$ (resp. $D$). Given two words $w,u$, we write $wu$ for the concatenation of the two words. More generally, given a family $\{w_i\}_{1\leq i\leq n}$ of words, we write $\prod_{i=1}^n w_i$ for their consecutive concatenation.

\begin{lem}
	\label{lem_con_lab}
	Let $A=\{x_1,x_2,\dots,x_m\}\subset\mathbb{Z}_{>0}$ (with $x_1=1$) be a set of labels. Then
	$$\big\{T\in\mathbb{T}:Q(T)=A\big\}=\bigg\{T\in\mathbb{T}:C(T)\in \Big(\prod_{j=1}^{m-1}U^+D^{x_{j+1}-x_{j}}\Big)U^+D^+\bigg\}.$$
	
	Moreover, the shape of the contour of the tree between two consecutive leaves $\ell_j,\ell_{j+1}$ respectively labeled by $x_j,x_{j+1}$ is of the form $U^+D^{x_{j+1}-x_{j}}.$
\end{lem}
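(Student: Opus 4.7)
The plan is to exploit the standard bijective correspondence between the $n$ down-steps of the contour $C(T)$ (for a tree $T$ with $n+1$ vertices) and the non-root vertices of $T$: the $i$-th down-step corresponds to the vertex receiving post-order label $i$. This is because in a depth-first traversal each non-root vertex $v$ is \emph{finished} precisely when we traverse the right side of the edge joining $v$ to its parent, and post-order labels are assigned in the order vertices are finished. Granting this, a non-root vertex $v$ is a leaf iff we enter $v$ and immediately leave it, i.e.\ iff its associated down-step is preceded by an up-step; equivalently, leaves correspond exactly to the first down-steps of maximal $D$-blocks in the contour word.

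Writing the contour in its unique maximal-block decomposition,
\[
C(T) \;=\; U^{a_1} D^{b_1} U^{a_2} D^{b_2} \cdots U^{a_k} D^{b_k}, \qquad a_i,b_i \ge 1,
\]
the previous observation yields that $T$ has exactly $k$ leaves and that the post-order label of the $j$-th leaf (in left-to-right order, i.e.\ in the order induced by the pre-order traversal) equals the global index of the first $D$ of block $j$, namely $b_1 + \cdots + b_{j-1} + 1$. Imposing $Q(T)=A=\{x_1<\cdots<x_m\}$ (in particular $x_1=1$, which holds automatically) is then equivalent to $k=m$ together with $b_j = x_{j+1}-x_j$ for $1 \le j \le m-1$, while $b_m$ and all $a_i$ remain arbitrary positive integers. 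This is exactly the characterization $C(T)\in \prod_{j=1}^{m-1}U^+ D^{x_{j+1}-x_j} \cdot U^+ D^+$ asserted in the lemma. The second statement follows at once: the $j$-th block $U^{a_j}D^{b_j}$ is the portion of the contour running from the moment one starts descending toward $\ell_j$ up to the moment one starts descending toward $\ell_{j+1}$, and by the computation above it has shape $U^+ D^{x_{j+1}-x_j}$.

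The only part that requires real care is the identification of the $i$-th down-step of $C(T)$ with the vertex carrying post-order label $i$; this is a routine parallel unwinding of the two definitions, but it is the logical heart of the argument and should be stated precisely before the block-decomposition analysis is invoked.
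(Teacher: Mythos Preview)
Your proof is correct and follows essentially the same approach as the paper: both hinge on the observation that the $i$-th down-step of $C(T)$ corresponds to the vertex with post-order label $i$, and that leaves are exactly the down-steps immediately preceded by an up-step. Your version is slightly more explicit in writing out the maximal-block decomposition and computing the leaf labels as $b_1+\cdots+b_{j-1}+1$, whereas the paper states the same correspondence more tersely.
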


\begin{proof}
	Note that given a tree $T\in\mathbb{T}$ (\emph{cf.} Fig.~\ref{cont_tree}) and its contour $C(T)$ then the leaves of $T$ correspond to letters $D$ in $C(T)$ preceded by a $U$. On the other hand, internal vertices (different from the root) correspond (for the first visit) to letters $U$ preceded by a $U$ and (for the last visit) to letters $D$ preceded by a $D$. 
	Therefore, given a tree $T$ such that $Q(T)=A,$ the only down-steps in its contour preceded by an up-step are the first, $x_2\text{-th,}\dots$ and $x_m\text{-th}$ down-steps. Then it easily follows that $C(T)\in \Big(\prod_{j=1}^{m-1}U^+D^{x_{j+1}-x_{j}}\Big)U^+D^+.$ The same reasoning also proves that given a tree $T$ such that $$C(T)\in \Big(\prod_{j=1}^{m-1}U^+D^{x_{j+1}-x_{j}}\Big)U^+D^+,$$
	then $Q(T)=A,$ concluding the proof.
\end{proof}

We now extend the notion of contour to the local limit tree $\bm{T}^*.$ We introduce some more notation for words over the alphabet $\{U,D\}.$ We denote by $\{U,D\}^{\infty}_R$ the set of right-infinite words over the alphabet $\{U,D\},$ \emph{i.e.,} of infinite words of the form $w_1w_2\cdots w_n\cdots,$ and similarly we denote by $\{U,D\}^{\infty}_L$ the set of left-infinite words over the alphabet $\{U,D\},$ \emph{i.e.,} of infinite words of the form $\cdots w_{-n}\cdots w_{-2}w_{-1.}$ 

\begin{defn}
	A \emph{double-infinite pointed path} is an infinite word in $\{U,D\}^{\infty}_L\hat{w}\{U,D\}^{\infty}_R,$ where $\hat{w}\in\{\hat{U},\hat{D}\}.$ 
\end{defn}

\begin{defn}
	The \emph{contour} of a tree $T$ with an infinite spine is the double-infinite pointed path $$C(T)=\cdots w_{-n}\cdots w_{-1}\hat{w}_0w_1\cdots w_n\cdots,$$
	obtained turning around the tree $T$ (\emph{cf.} Fig.~\ref{inf_prof}) in the following way:
	\begin{itemize}
		\item we start the visit on the right of the vertex $u_0$ in the direction of the vertex $u_1$ (in Fig.~\ref{inf_prof} we start at the red square in the direction of the blue dashed line) and we set $\hat{w}_0=\hat{D}$ (\emph{i.e.,} $\hat{w}_0$ is the down-step corresponding to the right-side of the edge between $u_0$ and $u_1$);
		\item we proceed turning around $T,$ in the same direction (following the blue dashed line) setting, for all $i\geq 1,$ $w_i=U$ if the $i$-th visited edge is visited from top to bottom, otherwise setting $w_i=D;$
		\item we conclude coming back to the starting point (the red square) and turning around $T$ in the other direction (following the green dotted line), setting, for all $i\geq 1,$ $w_{-i}=U$ if the $i$-th visited edge is visited from bottom to top, otherwise setting $w_{-i}=D.$
	\end{itemize}
\end{defn}

In particular, the contour of the local limit tree $\bm{T}^*$ is the random double-infinite pointed path $$C(\bm{T}^*)=\cdots\bm{w}_{-n}\cdots\bm{w}_{-1}\hat{\bm{w}}_0\bm{w}_1\cdots\bm{w}_n\cdots,$$
obtained turning around the tree $\bm{T}^*$ as explained before.

\begin{obs}
	\label{rem_cont_tree}
	Note that the contour of the local limit tree $\bm{T}^*$ is consistent with the definition of contour on finite trees: the contour of a fringe subtree $f^\bullet_h(\bm{T}^*)$ (forgetting the distinguished vertex) is a factor $\bm{w}_{-k}\cdots\bm{w}_{-1}\bm{w}_0\bm{w}_1\cdots\bm{w}_{k'}$ of $C(\bm{T}^*)$ (where $\hat{\bm{w}}_0$ has been replaced by $\bm{w}_0=D.$). Note also that $\bm{w}_{-k}\cdots\bm{w}_{-1}$ contains at least $h$ up-steps and $\bm{w}_0\bm{w}_1\cdots\bm{w}_{k'}$ contains at least $h$ down-steps (corresponding to the edges $(u_i,u_{i+1}),$ for $0\leq i\leq h-1$).
\end{obs}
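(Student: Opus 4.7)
The plan is essentially to unwind the two contour definitions and verify by direct bookkeeping that the bi-infinite path $C(\bm{T}^*)$, restricted to a finite factor around position $0$, encodes the same Dyck word as the standard contour of the finite tree $f_h^\bullet(\bm{T}^*)$ (seen as a rooted ordered tree, forgetting the distinguished vertex). The main care is with the sign conventions: in the forward direction, going up the spine on the right side produces down-steps $D$, while in the backward direction, going up the spine on the left side produces up-steps $U$ (as prescribed in the definition of $C(\bm{T}^*)$). This asymmetry is exactly what makes the factor readable as a Dyck-path segment.

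First I would locate the positions in $C(\bm{T}^*)$ at which the vertex $u_h$ is reached from each side. Going forward (blue dashed), the traversal starts on the right of $u_0$, climbs the spine on the right side of the edges $(u_0,u_1),(u_1,u_2),\ldots,(u_{h-1},u_h)$, interspersed with complete circumnavigations of the right-sibling fringe subtrees hanging off $u_1,\ldots,u_h$. Since each of these fringe subtrees is a.s.\ finite, the traversal reaches $u_h$ from the right after finitely many steps; call this position $k'$. The step $\hat{\bm{w}}_0$ itself plays the role of the right-side down-step of the edge $(u_0,u_1)$, which is why replacing $\hat{\bm{w}}_0$ by $\bm{w}_0=D$ is the correct normalization. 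Symmetrically, going backward (green dotted), the traversal first fully visits the fringe subtree hanging below $u_0$, then climbs on the left side of $(u_0,u_1),\ldots,(u_{h-1},u_h)$ interspersed with the left-sibling fringe subtrees, and reaches $u_h$ from the left at some position $-k$.

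Next I would identify the factor $\bm{w}_{-k}\cdots\bm{w}_{-1}\bm{w}_0\bm{w}_1\cdots\bm{w}_{k'}$ with $C(f_h^\bullet(\bm{T}^*))$. Both words visit every edge of $f_h^\bullet(\bm{T}^*)$ exactly twice, once on each side, with the same up/down label assignment (once the backward convention is translated into the finite-tree convention by reversing the reading direction). The only genuine difference is the starting point: the standard contour of $f_h^\bullet(\bm{T}^*)$ starts at its root $u_h$, while the factor starts on the right of $u_0$. Any two starting points on the boundary of a finite rooted ordered tree yield cyclic shifts of the same circumnavigation, hence the factor coincides with $C(f_h^\bullet(\bm{T}^*))$ as a Dyck path encoded from the chosen starting edge, which proves the first claim.

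For the second claim, I count the spine contributions. In $\bm{w}_{-k}\cdots\bm{w}_{-1}$, the traversal climbs the left side of the $h$ spine edges $(u_i,u_{i+1})$ for $0\le i\le h-1$, and by the backward convention each such climb is an up-step $U$, producing at least $h$ up-steps. Dually, in $\bm{w}_0\bm{w}_1\cdots\bm{w}_{k'}$ the traversal climbs the right side of the same $h$ spine edges, and by the forward convention each contributes a down-step $D$ (with $\bm{w}_0=D$ accounting for the first one), producing at least $h$ down-steps. The main potential obstacle, and the point I would double-check on a small picture before finalizing, is precisely that the backward convention inverts the $U/D$ roles along the spine in the intended way; once this is sorted, both assertions follow directly.
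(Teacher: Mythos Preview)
The paper states this as an observation without proof, so there is no argument in the paper to compare against. Your overall approach is sound, and the second claim about the $h$ spine up-steps and $h$ spine down-steps is handled correctly.

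There is, however, a small confusion in your identification of the factor with $C(f_h^\bullet(\bm{T}^*))$. You write that ``the factor starts on the right of $u_0$'' and then appeal to a cyclic shift argument. This is not accurate: the factor $\bm{w}_{-k}\cdots\bm{w}_{k'}$, read left to right, begins at position $-k$, not at position $0$. By your own definitions, position $-k$ is exactly where the backward traversal arrives at $u_h$ on its left side, and position $k'$ is where the forward traversal arrives at $u_h$ on its right side. Hence, read left to right, the factor starts at the left of $u_h$ and ends at the right of $u_h$, which is precisely the start and end of the standard contour of the finite tree rooted at $u_h$. No cyclic shift is involved (and a cyclic shift of a Dyck path is typically not a Dyck path, so the argument as written would be delicate). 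Once you drop the cyclic shift remark, the identification is immediate: the factor traces the same edge-by-edge circumnavigation of $f_h^\bullet(\bm{T}^*)$ as its standard contour, in the same order and with the same $U/D$ labels, the backward convention being set up exactly so that reading the bi-infinite word from left to right agrees with the finite-tree convention.
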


\begin{figure}[htbp]
	\begin{center}
		\includegraphics[scale=1]{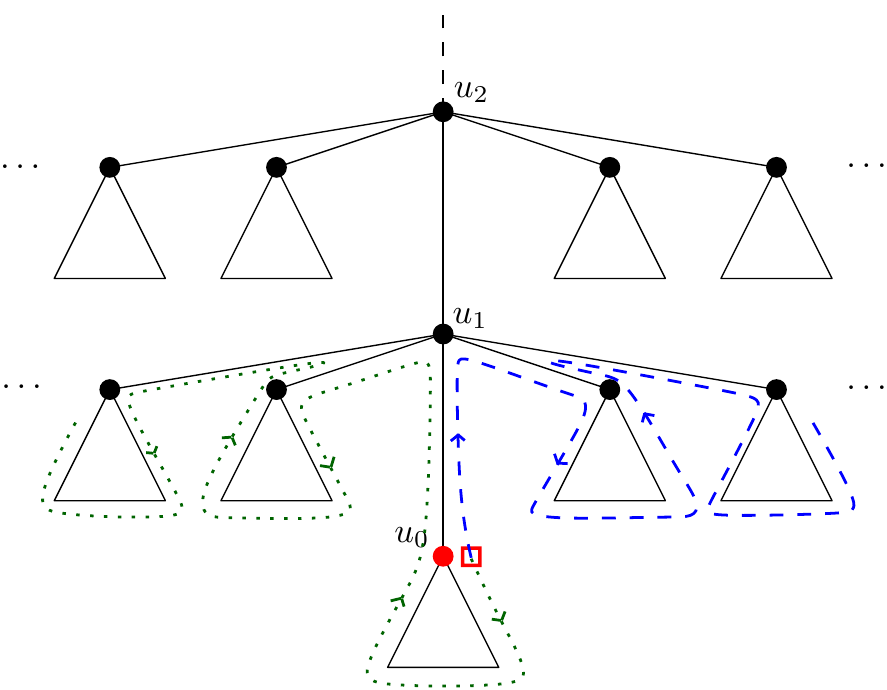}\\
		\caption{ Scheme for the construction of the contour of a tree with an infinite spine.}\label{inf_prof}
	\end{center}
\end{figure}

\begin{lem}
	\label{balanced_contour}
	Let $C(\bm{T}^*)=\cdots\bm{w}_{-n}\cdots\bm{w}_{-1}\hat{\bm{w}}_0\bm{w}_1\cdots\bm{w}_n\cdots$ be the contour of $\bm{T}^*.$ Then $\{\bm{w}_{i}\}_{i\in\mathbb{Z}\setminus\{0\}}$ is a sequence of i.i.d Bernoulli random variables on the set $\{U,D\}.$
\end{lem}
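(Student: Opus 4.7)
The strategy is to decompose $(\bm{w}_i)_{i\in\mathbb{Z}\setminus\{0\}}$ into its forward half $(\bm{w}_i)_{i\geq 1}$ and backward half $(\bm{w}_{-i})_{i\geq 1}$, show that each half is i.i.d.\ uniform on $\{U,D\}$, and finally that the two halves are independent of each other.

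The key preparatory step is to reorganise the construction of $\bm{T}^*$ so that the left and right neighbourhoods of the spine are decoupled. Using the classical identity that, for $\eta\sim\mathrm{Geom}(1/2)$, the size-biased distribution $\hat\eta_k = k\cdot 2^{-k-1}$ coincides with the law of $1+\bm{L}+\bm{R}$ with $\bm{L},\bm{R}$ i.i.d.\ $\mathrm{Geom}(1/2)$, combined with the uniform choice of $u_{i-1}$ among the children of $u_i$, one shows that an equivalent construction of $\bm{T}^*$ places, at each spine vertex $u_i$ ($i\geq 1$), exactly $\bm{L}_i$ independent $\bm{T}^\eta$-subtrees to the left of $u_{i-1}$ and $\bm{R}_i$ independent such subtrees to its right, with all the variables $(\bm{L}_i)_{i\geq 1}$ and $(\bm{R}_i)_{i\geq 1}$ i.i.d.\ $\mathrm{Geom}(1/2)$, independent of each other and of the independent $\bm{T}^\eta$-subtree $\bm{T}^{\eta,0}$ attached at $u_0$.

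Inspecting the definition of the contour immediately yields, for the forward half,
\[
(\bm{w}_i)_{i\geq 1} \;=\; \bm{B}_1 \cdot D \cdot \bm{B}_2 \cdot D \cdot \bm{B}_3 \cdots, \qquad \bm{B}_i \;=\; \prod_{j=1}^{\bm{R}_i}\bigl(U\cdot C(\bm{T}^{\eta,i,j})\cdot D\bigr),
\]
where $\bm{T}^{\eta,i,j}$ is the $j$-th right-sibling subtree of $u_{i-1}$ under $u_i$ (an independent copy of $\bm{T}^\eta$) and each trailing $D$ encodes the right-side visit of the spine edge $u_i$--$u_{i+1}$. The key step is then the classical bijection between $\mathrm{Geom}(1/2)$-Galton--Watson trees and simple random walks: for an i.i.d.\ uniform sequence $(\bm{\epsilon}_n)_{n\geq 1}$ on $\{U,D\}$ with associated walk $\bm{S}_n = \#\{j\leq n:\bm{\epsilon}_j=U\} - \#\{j\leq n:\bm{\epsilon}_j=D\}$, recurrence of the walk together with the strong Markov property show that, decomposing along the successive hitting times $0=\bm{\sigma}_0 < \bm{\sigma}_1 < \bm{\sigma}_2 < \cdots$ of the levels $0,-1,-2,\ldots$, each block $(\bm{\epsilon}_{\bm{\sigma}_{k-1}+1},\ldots,\bm{\epsilon}_{\bm{\sigma}_k})$ consists of a $\mathrm{Geom}(1/2)$-distributed number of i.i.d.\ sub-excursions each distributed as $U\cdot C(\bm{T}^\eta)\cdot D$, terminated by a single $D$-step. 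This is exactly the decomposition displayed above, so $(\bm{w}_i)_{i\geq 1}\stackrel{(d)}{=}(\bm{\epsilon}_i)_{i\geq 1}$.

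The backward half is handled symmetrically: since the $U$/$D$ label of a boundary segment depends only on whether it is the left or the right side of an edge (and not on the direction of traversal), the backward contour is the word-reversal of the corresponding clockwise boundary. An analogous factorisation holds, with blocks of the form $D\cdot \overleftarrow{C(\bm{T}^\eta)}\cdot U$ separated by single $U$-steps (encoding the left-side visits of the spine edges) and an initial block encoding the counterclockwise traversal of $\bm{T}^{\eta,0}$, whose root-degree is itself $\mathrm{Geom}(1/2)$. Applying the same walk/tree bijection with the roles of $U$ and $D$ swapped---a swap under which the uniform law on $\{U,D\}$ is invariant---gives $(\bm{w}_{-i})_{i\geq 1}\stackrel{(d)}{=}(\bm{\epsilon}_i)_{i\geq 1}$. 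Independence of the two halves is immediate from the explicit formulas, since the forward half is measurable with respect to $\bigl((\bm{R}_i)_{i\geq 1},(\bm{T}^{\eta,i,j})_{i,j\geq 1}\bigr)$ and the backward half with respect to the disjoint family $\bigl(\bm{T}^{\eta,0},(\bm{L}_i)_{i\geq 1},(\bm{T}^{\eta,i,j}_{\mathrm{left}})_{i,j\geq 1}\bigr)$. The main technical point I expect is the rigorous matching of the tree-contour factorisation with the walk decomposition by new minima; once this classical bijection is in place, the backward case and the independence conclusion are routine.
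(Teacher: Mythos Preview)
Your proposal is correct but follows a genuinely different route from the paper. The paper gives a short, purely local argument: fixing $i>0$, it cases on whether $\bm{w}_i=D$ (so $\bm{w}_{i+1}=D$ iff the current vertex is the rightmost child of its parent) or $\bm{w}_i=U$ (so $\bm{w}_{i+1}=D$ iff the current vertex is a leaf), and then invokes the memoryless property of the geometric and size-biased geometric offspring distributions to conclude that each of these events has probability $1/2$ independently of $(\bm{w}_j)_{j\leq i}$; a symmetric argument handles $i<0$. No global decomposition or walk bijection is used.

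Your approach is more structural: you first rewrite the spine via $\hat\eta\stackrel{(d)}{=}1+\bm{L}+\bm{R}$ with $\bm{L},\bm{R}$ i.i.d.\ $\mathrm{Geom}(1/2)$, then factor the forward (resp.\ backward) contour into i.i.d.\ blocks separated by the spine $D$-steps (resp.\ $U$-steps), and finally identify the resulting word with a simple random walk through the classical decomposition at strict descending ladder times. The paper's argument is shorter and self-contained, needing only the memoryless property. Your argument is heavier but has two advantages: it makes the independence of the forward and backward halves completely explicit (they are measurable with respect to disjoint families of independent inputs), and it connects the statement to the standard contour/Lukasiewicz bijection rather than rederiving it step by step. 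The one place where you should be careful is the backward half: the initial block coming from the subtree $\bm{T}^{\eta,0}$ at $u_0$ has a slightly different form than the subsequent blocks, and you need the observation that its root degree is again $\mathrm{Geom}(1/2)$ (as you note) together with the fact that the reversal-plus-$U/D$-swap of a $\mathrm{Geom}(1/2)$ Galton--Watson contour is again distributed as such a contour, so that the first block fits the same pattern as the others.
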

\begin{proof} 
	We fix $i>0.$ Note that if $\bm{w}_{i}=D$ and $\bm{w}_{i}$ is the down-step corresponding to the right side of an edge in $\bm{T}^*$ between the vertices $v$ and $u=a(v)$ then:
	\begin{itemize}
		\item $\bm{w}_{i+1}=D,$ if $v$ is the rightmost child of $u;$ 
		\item $\bm{w}_{i+1}=U,$ if $v$ is not the rightmost child of $u.$ 
	\end{itemize} 
 	We claim that, conditioning on $\bm{w}_{i}=D,$ the event ``$v$ is the rightmost child of $u$" is independent of the shape of the contour of $\bm{T}^*$ before the vertex $v,$ namely is independent of $\{\bm{w}_{j}\}_{j<i}.$ Indeed, the number of children of each vertex of the tree $\bm{T}^*$ is geometrically distributed (with parameter $1/2$) or size-biased geometrically distributed (with parameter $1/2$), and both distribution have the well-known memoryless property. This proves the claim. Moreover, thanks to this observation, 
 	\begin{equation*}
 	\begin{split}
 	&\P(v\text{ is the rightmost child of }u|\bm{w}_{i}=D\text{ and knowing } (\bm{w}_{j})_{j<i})=\frac{1}{2},\\
 	&\P(v\text{ is not the rightmost child of }u|\bm{w}_{i}=D\text{ and knowing } (\bm{w}_{j})_{j<i})=\frac{1}{2}.
 	\end{split}
 	\end{equation*}

 	On the other hand, if $\bm{w}_{i}=U$ and $\bm{w}_{i}$ is the up-step corresponding to the left side of an edge in $\bm{T}^*$ between the vertices $u=a(v)$ and $v$ then:
 	\begin{itemize}
 		\item $\bm{w}_{i+1}=D,$ if $v$ is a leaf;
 		\item $\bm{w}_{i+1}=U,$ if $v$ is not a leaf.
 	\end{itemize}
 	Note that $\bm{w}_{i}=U$ cannot correspond to the left side of an edge in the infinite spine (since all these edges are visited on the right side when $i>0$), hence $v\neq u_j,$ for all $j\geq 0.$
 	Moreover, we claim that, conditioning on $\bm{w}_{i}=U,$ the event ``$v$ is a leaf" is independent of the shape of the contour of $\bm{T}^*$ before the vertex $u,$ namely is independent of $\{\bm{w}_{j}\}_{j<i}.$ Indeed each vertex of the tree $\bm{T}^*$ that is not in the infinite spine is a leaf with probability $1/2,$ independently of all other vertices. This proves the claim.
 	
 	Summing up, for all $i>0,$ $\bm{w}_{i}$ is a Bernoulli random variable independent of $\{\bm{w}_{j}\}_{j<i}.$ Using a symmetric argument for $i<0,$ we can conclude the proof.
\end{proof}

\subsection{The proof of the main result}
\label{mainres_321}
We begin this section by stating two propositions that are the key results for the proof of Theorem \ref{321theorem}. Then we prove Theorem \ref{321theorem} and at the end, in Sections \ref{prop1proof} and \ref{prop2proof}, we give the proofs of the two propositions. 

We need the following.

\begin{defn}
	\label{wiufy0qiu}
	Given a rooted ordered tree $T$ with $n+1$ vertices, we define for $i,k\in[n+1],$
	$$Q_{i,k}(T)=\big\{x\in[-k,k]:x+i\in Q(T)\big\},$$
	\emph{i.e.,} $Q_{i,k}(T)$ is the set of values in $Q(T)\cap[i-k,i+k]$ shifted to the interval $[-k,k].$
	
	Analogously, given a permutation $\sigma\in\text{Av}^n(321),$ we define for $i,k\in[n],$
	$$E^+_{i,k}(\sigma)=\big\{x\in[-k,k]:x+i\in E^+(\sigma)\big\}.$$
\end{defn}

	Note that $E^+_{i,k}(\sigma)=Q_{i,k}(T_{\sigma}).$
	
\begin{prop}\label{prop1}
	For all $n\in\mathbb{Z}_{>0},$ let $\bm{\sigma}^n$ be a uniform random permutation in $\emph{Av}^n(321).$ Fix $k\in\mathbb{Z}_{>0}$ and let $\bm{i}_n$ be uniform in $[n]$ and independent of $\bm{\sigma}^n.$ Then, for every subset $A\subseteq[-k,k],$ as $n\to\infty,$
	$$\P^{\bm{i}_n}\big(E^+_{\bm{i}_n,k}(\bm{\sigma}^n)=A\big)\stackrel{P}{\to}\Big(\frac{1}{2}\Big)^{2k+1}.$$
\end{prop}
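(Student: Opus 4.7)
The plan is to transfer the question to trees via the bijection of Section~\ref{premres_321}, then combine the local convergence of Proposition~\ref{keycoroll} with the contour description of leaves (Lemma~\ref{lem_con_lab}) and the i.i.d.\ structure of the contour of $\bm{T}^*$ (Lemma~\ref{balanced_contour}). Observation~\ref{obs_treeperm} gives $E^+(\bm{\sigma}^n)=Q(\bm{T}^\eta_{n+1})$ almost surely, so $E^+_{\bm{i}_n,k}(\bm{\sigma}^n)=Q_{\bm{i}_n,k}(\bm{T}^\eta_{n+1})$. Under the post-order labeling the uniform $\bm{i}_n\in[n]$ identifies (up to an $O(1/n)$ correction coming from the root, which always carries post-order label $n+1$) with a uniform vertex $\bm{v}_n$ of $\bm{T}^\eta_{n+1}$ of post-order label $\bm{j}_n$, so I reduce to proving
\begin{equation*}
\P^{\bm{v}_n}\bigl(Q_{\bm{j}_n,k}(\bm{T}^\eta_{n+1})=A\bigr)\stackrel{P}{\to}\bigl(\tfrac{1}{2}\bigr)^{2k+1}.
\end{equation*}

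For a fixed integer $h\ge 1$ I will approximate this event by an event measurable with respect to the pointed fringe subtree $f^\bullet_h(\bm{T}^\eta_{n+1},\bm{v}_n)$. Since the post-order labels of the vertices of $f^\bullet_h(T,v)$ form a consecutive subinterval of the labels of $T$, and since the leaves of $f^\bullet_h(T,v)$ coincide with the leaves of $T$ sitting in that interval, the following function of $f^\bullet_h(T,v)$ alone is well-defined: set $\Psi_h(T,v)=\diamond$ when $v$ has fewer than $k$ predecessors or fewer than $k$ successors in post-order among the vertices of $f^\bullet_h(T,v)$, and otherwise
\begin{equation*}
\Psi_h(T,v)=\bigl\{x\in[-k,k]\colon\text{the vertex at post-order distance }x\text{ from }v\text{ inside }f^\bullet_h(T,v)\text{ is a leaf}\bigr\}.
\end{equation*}
Then $\Psi_h(T,v)=Q_{\bm{j}_n,k}(T)$ on $\{\Psi_h(T,v)\ne\diamond\}$, and Proposition~\ref{keycoroll} applied to $\Psi_h^{-1}(A)\cap\mathbb{T}^\bullet_h$ (and to $\Psi_h^{-1}(\diamond)\cap\mathbb{T}^\bullet_h$) yields, for every fixed $h$,
\begin{equation*}
\P^{\bm{v}_n}\bigl(\Psi_h(\bm{T}^\eta_{n+1},\bm{v}_n)=A\bigr)\stackrel{P}{\to}\P\bigl(\Psi_h(\bm{T}^*)=A\bigr),\qquad\P^{\bm{v}_n}\bigl(\Psi_h=\diamond\bigr)\stackrel{P}{\to}\P\bigl(\Psi_h(\bm{T}^*)=\diamond\bigr).
\end{equation*}
Combined with the bound $\bigl|\P^{\bm{v}_n}(Q_{\bm{j}_n,k}=A)-\P^{\bm{v}_n}(\Psi_h=A)\bigr|\le\P^{\bm{v}_n}(\Psi_h=\diamond)$, this reduces the proof to computing the two limiting probabilities for $\bm{T}^*$.

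To compute these I will read $\Psi_h(\bm{T}^*)$ off the contour of $\bm{T}^*$: by Lemma~\ref{lem_con_lab}, $\Psi_h(\bm{T}^*)$ records which among the $2k+1$ down-steps of $C(\bm{T}^*)$ closest to $\hat{\bm{w}}_0$ are preceded by an up-step. A short strong-Markov argument on the i.i.d.\ sequence $\{\bm{w}_i\}_{i\ne 0}$ of Lemma~\ref{balanced_contour} identifies these $2k+1$ ``predecessor letters'' (they are $\bm{w}_{-1}$ for $x=0$, while on the right they are $\bm{w}_1,\bm{w}_{\bm{D}_1+1},\bm{w}_{\bm{D}_2+1},\dots$, where $\bm{D}_i$ is the position of the $i$-th $D$ among $\bm{w}_1,\bm{w}_2,\dots$, and symmetrically on the left) as themselves i.i.d.\ $\mathrm{Bernoulli}(1/2)$. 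Hence on $\{\Psi_h(\bm{T}^*)\ne\diamond\}$ each of the $2^{2k+1}$ outcomes $A\subseteq[-k,k]$ is equally likely, so $\P(\Psi_h(\bm{T}^*)=A)=(1/2)^{2k+1}\cdot\P(\Psi_h(\bm{T}^*)\ne\diamond)$. The same i.i.d.\ description gives, via the law of large numbers applied to the $D$-indicators, $\P(\Psi_h(\bm{T}^*)=\diamond)\to 0$ as $h\to\infty$ (because $f^\bullet_h(\bm{T}^*)$ eventually covers any prescribed contour window around $\hat{\bm{w}}_0$). Sending first $n\to\infty$ and then $h\to\infty$ in the triangle bound completes the argument.

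The main obstacle will be the double limit in $n$ and $h$: Proposition~\ref{keycoroll} is stated at a fixed height $h$, whereas $Q_{\bm{j}_n,k}$ depends on a random contour window whose extent in the tree has no deterministic bound. Introducing the truncation $\Psi_h$ and establishing the uniform tail estimate $\P(\Psi_h(\bm{T}^*)=\diamond)\to 0$ via the i.i.d.\ contour description of Lemma~\ref{balanced_contour} is the key technical step; all other ingredients are either routine combinatorics of post-order labels or direct consequences of the already proved local convergence and the strong Markov property for the i.i.d.\ sequence $\{\bm{w}_i\}_{i\ne 0}$.
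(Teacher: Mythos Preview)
Your proof is correct and follows essentially the same approach as the paper: transfer to trees via the bijection, truncate to the pointed fringe subtree at height $h$, apply Proposition~\ref{keycoroll}, compute the limit on $\bm{T}^*$ via the contour and Lemma~\ref{balanced_contour}, and conclude by a double limit in $n$ and $h$. Your packaging via the single function $\Psi_h$ corresponds exactly to the paper's pair $(E_H,\mathcal{T}_k(A))$, and your strong-Markov identification of the $2k+1$ ``predecessor letters'' as i.i.d.\ Bernoulli$(1/2)$ is a clean conceptual alternative to the paper's direct telescoping computation of the contour-pattern probability in Lemma~\ref{lemma_C}.
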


Informally, conditionally on $\bm{\sigma}^n,$ every index of $r_k(\bm{\sigma}^n,\bm{i}_n)$ is asymptotically equiprobably in $E^+$ or $E^-.$ Moreover, these events are asymptotically independent.

Before stating our second proposition, we introduce some more notation (\emph{cf.} Fig.~\ref{min_max_perm} below). Given a permutation $\sigma\in\text{Av}(321)$ we define, for $i\in[|\sigma|],$ $k\in\mathbb{Z}_{>0},$
\begin{equation*}
\begin{split}
&m_{i,k}^+(\sigma)=\min\big\{E^+(\sigma)\cap[i-k,i+k]\big\},\\
&M_{i,k}^-(\sigma)=\max\big\{E^-(\sigma)\cap[i-k,i+k]\big\},
\end{split}
\end{equation*}
with the conventions that $\min{\emptyset}=+\infty$ and $\max{\emptyset}=-\infty.$

\begin{figure}[htbp]
	\begin{center}
		\includegraphics[scale=0.40]{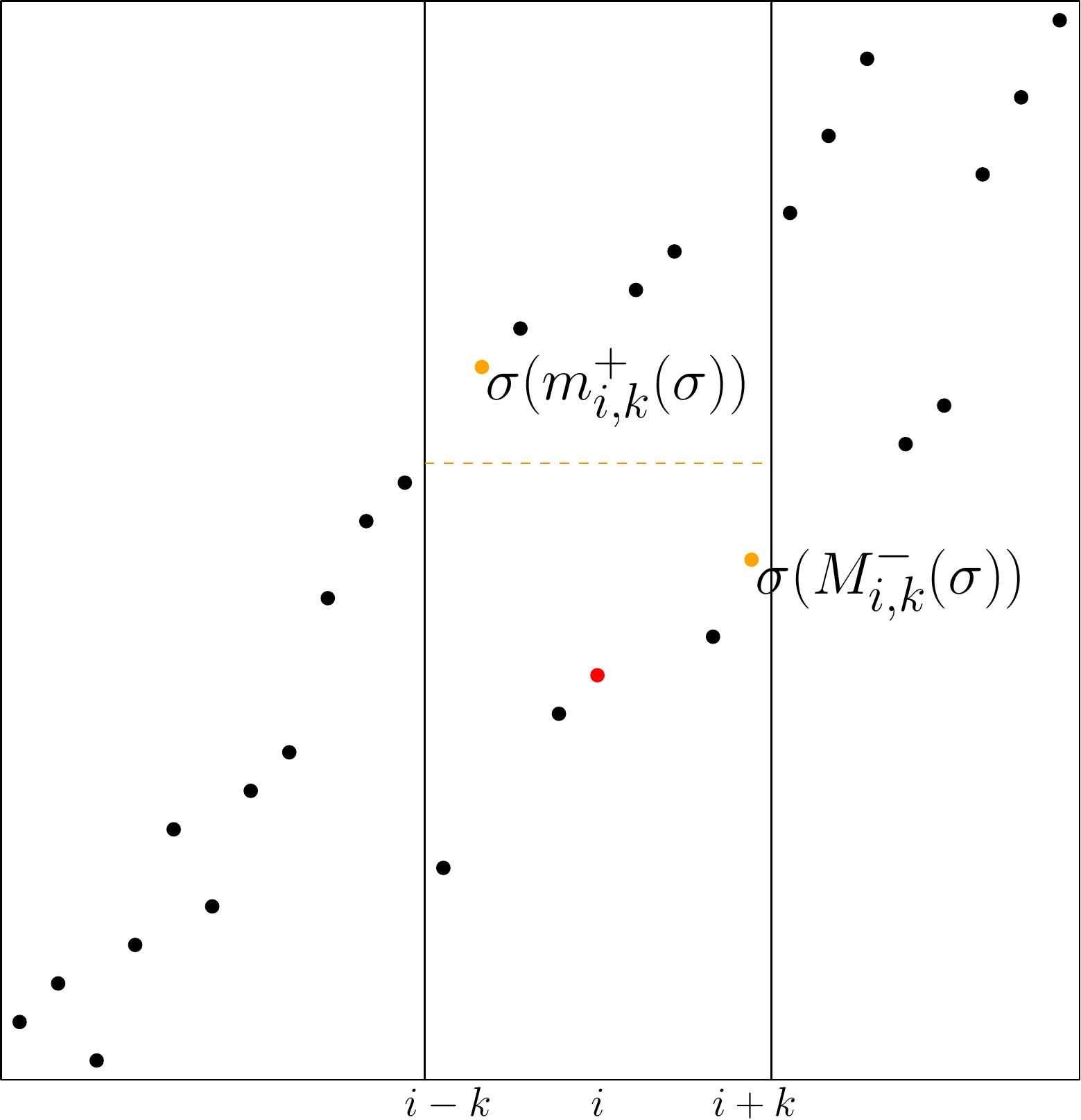}\\
		\caption{A 321-avoiding permutation. The orange dots identify the values $\sigma(m_{i,k}^+(\sigma))$ and $\sigma(M_{i,k}^-(\sigma))$ inside the vertical strip centered in $i$ of width $2k+1.$ Moreover, the dashed orange line identifies the separating line.}\label{min_max_perm}
	\end{center}
\end{figure}

We also define, for a random 321-avoiding permutation $\bm{\sigma}$ and a uniform index $\bm{i}\in[|\sigma|],$ the following events, for all $k\in\mathbb{Z}_{>0},$
\begin{equation*}
\begin{split}
S_k(\bm{\sigma},\bm{i})=&\big\{\bm{\sigma}(m_{\bm{i},k}^+(\bm{\sigma}))>\bm{\bm{\sigma}}(M_{\bm{i},k}^-(\bm{\sigma})),m_{\bm{i},k}^+(\bm{\sigma})\neq +\infty,M_{\bm{i},k}^-(\bm{\sigma})\neq -\infty\big\}\\
&\cup\big\{m_{\bm{i},k}^+(\bm{\sigma})= +\infty\big\}\cup\big\{M_{\bm{i},k}^-(\bm{\sigma})= -\infty\big\}.
\end{split}
\end{equation*}
This is the event that the random rooted permutation $r_k(\bm{\sigma},\bm{i})$ splits into two (possibly empty) increasing subsequences with separated values, namely, the minimum of the upper subsequence is greater than the maximum of the lower subsequence. We will say, if this events holds, that ``a separating line exists" (in Fig.~\ref{min_max_perm} this separating line is dashed in orange).

\begin{prop}\label{prop2}
	For all $n\in\mathbb{Z}_{>0},$ let $\bm{\sigma}^n$ be a uniform random permutation in $\emph{Av}^n(321).$ Fix $k\in\mathbb{Z}_{>0}$ and let $\bm{i}_n$ be uniform in $[n]$ and independent of $\bm{\sigma}^n.$ Then,  as $n\to\infty,$
	$$\P^{\bm{i}_n}\big(S_k(\bm{\sigma}^n,\bm{i}_n)\big)\stackrel{P}{\longrightarrow}1.$$
\end{prop}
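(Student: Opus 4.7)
The plan is to translate the separating-line event into a statement about leaf heights in the Galton--Watson tree via the bijection of Section~\ref{premres_321}, and then apply the local convergence of Proposition~\ref{keycoroll} together with the classical fact that the height of a uniform vertex in a conditioned critical Galton--Watson tree tends to infinity.

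First I would reduce to a height estimate. Via the bijection, $\bm{\sigma}^n$ has the same distribution as $\sigma_{\bm{T}^\eta_{n+1}}$, and $\bm{i}_n$ corresponds to the post-order label of a uniform non-root vertex $\bm{v}$. The key identity $s(\ell)=q(\ell)+h(\ell)-1$ from Remark~\ref{rem_equiv} gives $\sigma_{\bm T}(q(\ell))=q(\ell)+h(\ell)-1$ for every leaf $\ell$, while $\sigma_{\bm T}(q(v))<q(v)$ for every internal non-root vertex $v$. Consequently, if every leaf $\ell$ with $q(\ell)\in[\bm{i}_n-k,\bm{i}_n+k]$ satisfies $h(\ell)\geq 2k+1$, then for the leaf $\ell^*$ of smallest post-order label and the internal vertex $v^*$ of largest post-order label in the window (whenever both exist) one has
\[
\sigma_{\bm T}(q(\ell^*))\geq (\bm{i}_n-k)+(2k+1)-1=\bm{i}_n+k>\bm{i}_n+k-1\geq \sigma_{\bm T}(q(v^*)),
\]
so $S_k(\bm{\sigma}^n,\bm{i}_n)$ holds; the boundary cases where the window contains no leaf or no internal vertex make $S_k$ trivially true. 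Hence it suffices to show that $\P^{\bm{i}_n}\bigl(\exists\text{ leaf }\ell\text{ with }q(\ell)\in[\bm{i}_n-k,\bm{i}_n+k]\text{ and }h(\ell)<2k+1\bigr)\stackrel{P}{\to}0$.

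Second I would localize this event by introducing an auxiliary cut-off $K\in\mathbb{Z}_{>0}$ and the events
\[
A_K=\bigl\{\exists\text{ leaf }\ell\text{ with }q(\ell)\in[\bm{i}_n-k,\bm{i}_n+k]\text{ not a descendant of }a^K(\bm{v})\bigr\},\qquad B_K=\bigl\{h(\bm{v})<2k+1+K\bigr\}.
\]
On $A_K^c\cap B_K^c$ every leaf $\ell$ in the window is a descendant of $a^K(\bm{v})$, so $h(\ell)\geq h(a^K(\bm{v}))=h(\bm{v})-K\geq 2k+1$; thus $\P^{\bm{i}_n}(S_k^c)\leq \P^{\bm{i}_n}(A_K)+\P^{\bm{i}_n}(B_K)$. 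Crucially, $A_K$ is determined by the fringe subtree $f^\bullet_K(\bm{T}^\eta_{n+1},\bm{v})$: the subtree rooted at $a^K(\bm{v})$ contributes a contiguous range of post-order labels around $\bm{v}$, and $A_K$ is precisely the event that this range has fewer than $k$ positions on one side of $\bm{v}$, a quantity visible inside $f^\bullet_K$.

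Third I would apply Proposition~\ref{keycoroll} to $A_K$, getting $\P^{\bm{i}_n}(A_K)\stackrel{P}{\to}\P(A_K\text{ in }\bm{T}^*)$; in $\bm{T}^*$ the subtree rooted at $u_K$ contains $u_0,u_1,\dots,u_K$ together with the independent fringe copies of $\bm{T}^\eta$ attached along this spine, so its post-order extent around $u_0$ grows a.s. with $K$ and $\P(A_K\text{ in }\bm{T}^*)\to 0$ as $K\to\infty$. For $B_K$, since $\bm{i}_n$ is uniform and independent of $\bm{T}^\eta_{n+1}$,
\[
\P^{\bm{i}_n}(B_K)=\frac{|\{v\in\bm{T}^\eta_{n+1}:h(v)<2k+1+K\}|}{n},
\]
and its expectation is $\P(h(\bm{v})<2k+1+K)\to 0$ because the height of a uniform vertex in a critical Galton--Watson tree conditioned on size $n$ is of order $\sqrt{n}$; Markov's inequality then yields $\P^{\bm{i}_n}(B_K)\stackrel{P}{\to}0$ for each fixed $K$. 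Given $\varepsilon>0$, choose $K$ with $\P(A_K\text{ in }\bm{T}^*)<\varepsilon$ and let $n\to\infty$ to conclude.

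The main obstacle is the rigorous verification that $A_K$ is indeed determined by the fringe $f^\bullet_K(\bm{T}^\eta_{n+1},\bm{v})$, so that Proposition~\ref{keycoroll} applies without any global information. This hinges on the elementary fact that post-order labels within any subtree form a contiguous range, whose extent on each side of $\bm{v}$ is entirely readable from $f^\bullet_K$; the remaining ingredients---local convergence to $\bm{T}^*$ and the divergence of the height of a uniform vertex---are classical.
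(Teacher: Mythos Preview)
Your argument is correct and takes a genuinely different route from the paper. The paper invokes the Brownian-excursion approximation of Hoffman--Rizzolo--Slivken: for $j\in E^{\pm}(\bm{\sigma}^n)$ one has $\bm{\sigma}^n(j)-j=\pm\sqrt{2n}\,\bm{e}(j/n)+o_P(\sqrt{n})$, so the gap $\bm{\sigma}^n(\bm{m}^+)-\bm{\sigma}^n(\bm{M}^-)$ is of order $\sqrt{n}$ while the horizontal spread is at most $2k$; positivity of $\bm{e}(\bm{U})$ (the Rayleigh law) finishes the argument. Your proof instead stays entirely within the tree framework already used for Proposition~\ref{prop1}: the identity $\sigma_T(q(\ell))=q(\ell)+h(\ell)-1$ turns the separating-line condition into a height lower bound on the leaves in the window, and the local limit of Proposition~\ref{keycoroll} handles the rest. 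This is more self-contained---you avoid importing the Brownian coupling---and morally it says the separating line exists because leaves near $\bm{v}$ are deep in the tree, rather than because the excursion is macroscopically positive.

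Two minor points. First, $A_K$ is not literally the event that the post-order range of the subtree at $a^K(\bm{v})$ has fewer than $k$ positions on some side of $\bm{v}$: the range could be short yet no leaf lie in the overshoot. Replace $A_K$ by this range event $A'_K\supseteq A_K$, which is genuinely determined by $f^\bullet_K$ (it is essentially the complement of the event $E_K$ from Lemma~\ref{lemma_A}); since you only need the upper bound $\P^{\bm{i}_n}(S_k^c)\le\P^{\bm{i}_n}(A'_K)+\P^{\bm{i}_n}(B_K)$, the fix is harmless. Second, $B_K$ can also be handled directly via Proposition~\ref{keycoroll} with $h=2k+1+K$ and $\mathcal{T}=\mathbb{T}^\bullet_{h}$, since $f^\bullet_h(\bm{T}^*)\in\mathbb{T}^\bullet_h$ always; the $\sqrt{n}$ height scaling is therefore not needed.
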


Informally, conditionally on $\bm{\sigma}^n,$ asymptotically almost surely there exists a separating line in $r_k(\bm{\sigma}^n,\bm{i}_n)$.

With these two propositions in our hands, we can prove Theorem \ref{321theorem}.

\begin{proof}[Proof of Theorem \ref{321theorem} ]
	In order to prove Equation (\ref{shsshgea}) on page~\pageref{shsshgea}, we can check the equivalent condition (d) of Corollary \ref{detstrongbsconditions}, \emph{i.e.,} we have to prove that, for all $h\in\Z_{>0},$ for all $\pi\in\text{Av}^{2h+1}(321),$
	\begin{equation}
	\label{checkthat}
	\P^{{\bm{i}}_n}\big(r_h(\bm{\sigma}^n,\bm{i}_n)=(\pi,h+1)\big)\stackrel{P}{\to}P_{321}(\pi).
	\end{equation}
	We fix $h\in\Z_{>0}$ and we distinguish three different cases for $\pi\in\text{Av}^{2h+1}(321)$:
	
	\underline{$\coc(21,\pi^{-1})=1:$} We recall (see Remark \ref{rem_sep_line}) that, in this case, $\pi$ splits in a unique way into two increasing subsequences whose indices are denoted by $L(\pi)$ and $U(\pi).$ Note that, in general, these two sets are different from $E^-(\pi)$ and $E^+(\pi)$ (specifically when the permutation has some fixed points at the beginning, for example as in the case of Fig.~\ref{321ex} on page~\pageref{321ex}). We set $U^*(\pi)\coloneqq\{j\in[-h,h]:j+h\in U(\pi)\},$ \emph{i.e.,} $U^*(\pi)$ denotes the shifting of the set $U(\pi)$ inside the interval $[-h,h].$ 
	Conditioning on $S_h(\bm{\sigma}^n,\bm{i}_n),$ namely assuming there is a separating line, $r_h(\bm{\sigma}^n,\bm{i}_n)$ is the union of two increasing subsequences with $U^*(r_h(\bm{\sigma}^n,\bm{i}_n))=E^+_{\bm{i}_n,h}(\bm{\sigma}^n).$ In particular, using Proposition \ref{prop2}, asymptotically we have that,
	$$\P^{{\bm{i}}_n}\big(r_h(\bm{\sigma}^n,\bm{i}_n)=(\pi,h+1)\big)=\P^{{\bm{i}}_n}\big(S_h(\bm{\sigma}^n,\bm{i}_n),E^+_{\bm{i}_n,h}(\bm{\sigma}^n)=U^*(\pi)\big)+o(1).$$
	By Proposition \ref{prop1} and \ref{prop2}, we have that
	$$\P^{{\bm{i}}_n}\big(S_h(\bm{\sigma}^n,\bm{i}_n),E^+_{\bm{i}_n,h}(\bm{\sigma}^n)=U^*(\pi)\big)\stackrel{P}{\to}\frac{1}{2^{2h+1}}=P_{321}(\pi),$$
	and so Equation (\ref{checkthat}) holds.
	
	\underline{$\pi=12\dots|\pi|$:} In this case $\pi$ does not split in a unique way into two increasing subsequences. We have exacly $2h+2$ different choices for this splitting, namely setting either $U(\pi)=\emptyset$ or $U(\pi)=[k,h],$ for some $k\in[-h,h].$ Therefore, using Proposition \ref{prop2}, asymptotically we obtain that
	$$\P^{{\bm{i}}_n}\big(r_h(\bm{\sigma}^n,\bm{i}_n)=(\pi,h+1)\big)=\P^{{\bm{i}}_n}\Big(S_h(\bm{\sigma}^n,\bm{i}_n),E^+_{\bm{i}_n,h}(\bm{\sigma}^n)\in\{\emptyset\}\cup\bigcup_{k\in[-h,h]}\{[k,h]\}\Big)+o(1).$$
	Again by Proposition \ref{prop1} and \ref{prop2}, we have that
	$$\P^{{\bm{i}}_n}\Big(S_h(\bm{\sigma}^n,\bm{i}_n),E^+_{\bm{i}_n,k}(\bm{\sigma}^n)\in\{\emptyset\}\cup\bigcup_{k\in[-h,h]}\{[k,h]\}\Big)\stackrel{P}{\to}\frac{2h+2}{2^{2h+1}}=P_{321}(\pi),$$
	and so Equation (\ref{checkthat}) holds.
		
	\underline{$\coc(21,\pi^{-1})>1:$} In this case $\pi$ does not present a separating line. Since by Proposition \ref{prop2} we know that asymptotically almost surely there exists a separating line in $r_h(\bm{\sigma}^n,\bm{i}_n)$ we can immediately conclude that,
	\begin{equation*}
	\P^{{\bm{i}}_n}\big(r_h(\bm{\sigma}^n,\bm{i}_n)=(\pi,h+1)\big)\leq\P^{{\bm{i}}_n}\big(S_h(\bm{\sigma}^n,\bm{i}_n)^c\big)\stackrel{P}{\to}0=P_{321}(\pi).\qedhere
	\end{equation*}
\end{proof}

\subsubsection{The proof of Proposition \ref{prop1}}
\label{prop1proof}
We recall that $\bm{T}^{\eta}_n$ denotes a Galton--Watson tree  with offspring distribution $\eta\sim Geom(1/2)$ conditioned on having $n$ vertices and $\bm{i_n}$ is a uniform index in $[n]$ independent of the other random objects. We label the tree $\bm{T}^{\eta}_n$ with the post-order labeling starting from 1 and, as said in Remark \ref{postorderfprtree}, we denote with $(\bm{T}^{\eta}_n,\bm{i_n})$ the random pointed rooted ordered tree obtained from $\bm{T}^{\eta}_n$ by distinguishing the vertex $\bm{i_n}.$ We also recall that  $A\subseteq[-k,k]$.

\begin{obs}
	\label{elvfdowcv}
	Thanks to Observation \ref{obs_treeperm} and Definition \ref{wiufy0qiu},
	$$\P^{\bm{i}_n}\big(E^+_{\bm{i}_n,k}(\bm{\sigma}^n)=A\big)=\P^{\bm{i}_n}\big(Q_{\bm{i}_n,k}(\bm{T}^{\eta}_{n+1})=A\big)=\P^{\bm{i}_{n+1}}\big(Q_{\bm{i}_{n+1},k}(\bm{T}^{\eta}_{n+1})=A\big)+O(1/n),$$
	where in the last equality we shifted the index $\bm{i}_n$ to $\bm{i}_{n+1}$ for later convenience.
	Therefore instead of studying the event $\big\{E^+_{\bm{i}_n,k}(\bm{\sigma}^n)=A\big\}$ that appears in the statement of Proposition \ref{prop1}, we can study the event $\big\{Q_{\bm{i}_n,k}(\bm{T}^{\eta}_n)=A\big\}.$ 
\end{obs}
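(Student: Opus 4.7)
The proof has two components, corresponding to the two equalities in the statement, and both are elementary.

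For the first equality, the plan is to invoke the bijection $\sigma \leftrightarrow T_\sigma$ between $\text{Av}^n(321)$ and $\mathbb{T}^{n+1}$ constructed in Section~\ref{premres_321}. The key property recalled there is that $Q(T_\sigma) = E^+(\sigma)$, which immediately gives the pointwise equality of sets $E^+_{i,k}(\sigma) = Q_{i,k}(T_\sigma)$ for every $i \in [n]$ and $k \in \mathbb{Z}_{>0}$. Coupled with Observation~\ref{obs_treeperm}, which identifies the image of $\bm{\sigma}^n$ under this bijection with $\bm{T}^\eta_{n+1}$ (in distribution, through $\mathcal{L}(\bm{T}_{n+1}) = \mathcal{L}(\bm{T}^\eta_{n+1})$), the equality $\P^{\bm{i}_n}\bigl(E^+_{\bm{i}_n,k}(\bm{\sigma}^n) = A\bigr) = \P^{\bm{i}_n}\bigl(Q_{\bm{i}_n,k}(\bm{T}^\eta_{n+1}) = A\bigr)$ follows in distribution (as random variables on the underlying probability space, after applying the bijection measurably).

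For the second equality, the plan is a direct counting estimate. Given any realization of the tree, writing $f(j) \coloneqq \mathds{1}_{\{Q_{j,k}(\bm{T}^\eta_{n+1}) = A\}}$ for $j \in [n+1]$, we have
\begin{equation*}
\P^{\bm{i}_n}\bigl(Q_{\bm{i}_n,k}(\bm{T}^\eta_{n+1})=A\bigr) = \frac{1}{n}\sum_{j=1}^{n} f(j), \qquad \P^{\bm{i}_{n+1}}\bigl(Q_{\bm{i}_{n+1},k}(\bm{T}^\eta_{n+1})=A\bigr) = \frac{1}{n+1}\sum_{j=1}^{n+1} f(j),
\end{equation*}
so the difference rewrites as $\tfrac{1}{n(n+1)}\sum_{j=1}^n f(j) - \tfrac{1}{n+1} f(n+1)$, which is bounded in absolute value by $\tfrac{2}{n+1} = O(1/n)$ uniformly in the realization of $\bm{T}^\eta_{n+1}$. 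Taking expectations (or just noting that this pointwise bound gives an almost sure $O(1/n)$ control) yields the desired equality up to the error term.

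There is no serious obstacle here: the observation is a bookkeeping step passing from the permutation model to the tree model and harmonizing the size of the set from which the root is drawn. The only thing worth emphasizing in the write-up is that the $O(1/n)$ term is deterministic (a uniform bound over all realizations), so the convergence statements of Proposition~\ref{prop1} can indeed be deduced from the corresponding statement on trees without loss of information.
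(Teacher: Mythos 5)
Your proposal is correct and matches what the paper intends: the observation is stated without a detailed proof, and your two steps — the pointwise identity $E^+_{i,k}(\sigma)=Q_{i,k}(T_\sigma)$ combined with the distributional identification $\bm{T}_{n+1}\stackrel{(d)}{=}\bm{T}^\eta_{n+1}$, followed by the uniform $\tfrac{2}{n+1}$ bound on the difference of the two empirical averages when passing from a root uniform in $[n]$ to one uniform in $[n+1]$ — are exactly the content being invoked. Your emphasis that the $O(1/n)$ error is a deterministic bound, uniform over realizations of the tree, is the right point to make, since the conditional probabilities are random variables and the subsequent convergence in probability must survive the substitution.
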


We need to introduce some technical results in order to study the event $\big\{Q_{\bm{i}_n,k}(\bm{T}^{\eta}_n)=A\big\}$ using Proposition \ref{keycoroll}. Informally, this event is not ``local" in full generality, but it is ``local" asymptotically almost surely.
\begin{defn}
	Given a tree $T$ we say that a vertex $v$ is \emph{before} (resp. \emph{after}) a vertex $u$ if the label of $v$ is smaller (resp. greater) than the label of $u.$ 
\end{defn}

For all $H>0,$ we consider the following events,
\begin{equation*}
\begin{split}
E_H(\bm{T}^{\eta}_n,\bm{i}_n)=\big\{f^\bullet_H(\bm{T}^{\eta}_n,\bm{i}_n)&\text{ has at least }k\text{ vertices before and}\\
&\qquad k\text{ vertices after the vertex }\bm{i}_n\big\}
\end{split}
\end{equation*}
and 
\begin{equation*}
\begin{split}
E_H(\bm{T}^*)=\big\{f^\bullet_H(\bm{T}^*)&\text{ has at least }k\text{ vertices before and}\\
&\qquad k\text{ vertices after the vertex }u_0\big\}.
\end{split}
\end{equation*}

\begin{lem}\label{lemma_A}
	We have the following:
	\begin{enumerate} [(a)] 
		\item for all $H>0,$
		$\P^{\bm{i}_n}\big(E_H(\bm{T}^{\eta}_n,\bm{i}_n)\big)\stackrel{P}{\to}\P\big(E_H(\bm{T}^*)\big),$ as $n\to\infty;$
		\item $\P\big(E_H(\bm{T}^*)\big)\stackrel{H\to\infty}{\longrightarrow}1.$
	\end{enumerate}	
\end{lem}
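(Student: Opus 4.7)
The plan is to handle the two parts separately. For part (a), the key structural observation is that $E_H(\bm{T}^\eta_n,\bm{i}_n)$ depends only on the pointed fringe subtree $f^\bullet_H(\bm{T}^\eta_n,\bm{i}_n)$. Indeed, the post-order traversal visits any fringe subtree contiguously, so for a vertex $v$ lying inside a fringe subtree $T'$ of $T$, the subset of vertices of $T'$ visited before (resp.\ after) $v$ in the global post-order of $T$ coincides with the subset visited before (resp.\ after) $v$ in the local post-order of $T'$. Consequently $E_H$ rewrites as $\{f^\bullet_H(\bm{T}^\eta_n,\bm{i}_n)\in\mathcal{T}_H\}$, where $\mathcal{T}_H\subseteq\mathbb{T}^\bullet_H$ is the set of pointed rooted ordered trees in which the distinguished vertex is preceded by at least $k$ and followed by at least $k$ vertices in the post-order. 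Applying Proposition \ref{keycoroll} to this particular $\mathcal{T}_H$ yields part (a) directly.

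For part (b), I would exploit the explicit spinal structure of $\bm{T}^*$. Each of the $H$ spinal vertices $u_1,\dots,u_H$ is an ancestor of $u_0$ and is therefore visited strictly after $u_0$ in the post-order of $f^\bullet_H(\bm{T}^*)$, so there are at least $H$ vertices following $u_0$; this settles one half of the event as soon as $H\ge k$. For the other half, observe that at each level $j\in\{1,\dots,H\}$ the vertex $u_j$ has a number of children distributed as $\hat{\eta}$, with $u_{j-1}$ placed uniformly among them; hence $u_j$ has at least one child strictly to the left of $u_{j-1}$ with some fixed positive probability $p_0:=\E[(\bm{D}-1)/\bm{D}]>0$, where $\bm{D}\sim\hat{\eta}$. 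Any such leftward child roots an independent copy of $\bm{T}^\eta$ whose entire vertex set is traversed before $u_{j-1}$, hence before $u_0$, and so contributes at least one vertex to the ``before'' count. Since the choices at different levels are independent, the number of vertices of $f^\bullet_H(\bm{T}^*)$ visited before $u_0$ stochastically dominates a $\mathrm{Binomial}(H,p_0)$ random variable, which diverges in probability as $H\to\infty$. This yields $\P(E_H(\bm{T}^*))\to 1$.

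The only delicate point in this scheme is verifying, in part (a), that $E_H$ is genuinely a function of $f^\bullet_H$ alone, which is precisely what the locality of the post-order traversal provides. The remainder of the argument reduces to a direct application of Proposition \ref{keycoroll} combined with an elementary estimate on the tail of a binomial, so I do not anticipate any further substantive difficulty.
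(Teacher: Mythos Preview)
Your proposal is correct and follows essentially the same approach as the paper. For part (a), both you and the paper reduce to a direct application of Proposition~\ref{keycoroll}; you simply make explicit why the event depends only on $f^\bullet_H$, which the paper leaves implicit. For part (b), the paper invokes the previously noted fact that $\bm{T}^*$ has a.s.\ infinitely many vertices on the left and on the right of the infinite spine; your binomial-domination argument is just an explicit proof of (the relevant consequence of) that fact, so the two arguments coincide in substance.
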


\begin{proof}
	(a) This is a trivial application of Proposition  \ref{keycoroll};
	
	(b) The result it is an immediate consequence of the fact that the limiting tree $\bm{T}^*$ has a.s.\ infinitely many vertices on the left and on the right of the infinite spine.
\end{proof}

We note that, for all $H,n>0,$ conditioning on the event $E_H(\bm{T}^{\eta}_n,\bm{i}_n),$ we can rewrite the event $\big\{Q_{\bm{i}_n,k}(\bm{T}^{\eta}_n)=A\big\}$ as
$$\big\{f^\bullet_H(\bm{T}^{\eta}_n,\bm{i}_n)\in\mathcal{T}_{k}(A)\big\},$$
where $\mathcal{T}_{k}(A)$ is the set of all pointed rooted ordered trees $T^\bullet=(T,j)$ with at least $k$ vertices before and $k$ vertices after the vertex $j$ such that $Q_{j,k}(T)=A.$ 
Therefore, using Proposition \ref{keycoroll}, for all $H>0,$
\begin{equation}
\label{eq_B}
\P^{\bm{i}_n}\big(\big\{Q_{\bm{i}_n,k}(\bm{T}^{\eta}_n)=A\big\}\cap E_H(\bm{T}^{\eta}_n,\bm{i}_n)\big)\stackrel{P}{\to}\P\big(\big\{f^\bullet_H(\bm{T}^*)\in\mathcal{T}_{k}(A)\big\}\cap E_H(\bm{T}^*)\big).
\end{equation}

\begin{lem}\label{lemma_C} With the same notation as above,
	$$\P\big(\big\{f^\bullet_H(\bm{T}^*)\in\mathcal{T}_{k}(A)\big\}\cap E_H(\bm{T}^*)\big)\stackrel{H\to\infty}{\longrightarrow}\Big(\frac{1}{2}\Big)^{2k+1}.$$
\end{lem}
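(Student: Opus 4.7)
The plan is to translate the event in question into a condition on the bi-infinite contour $C(\bm{T}^*)$ and then exploit the i.i.d.\ structure provided by Lemma \ref{balanced_contour}.

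First, I would observe that $\{f^\bullet_H(\bm{T}^*)\in\mathcal{T}_k(A)\}\subseteq E_H(\bm{T}^*)$, since membership in $\mathcal{T}_k(A)$ already demands at least $k$ vertices on either side of the distinguished vertex in post-order. Hence the probability under study equals $\P\big(f^\bullet_H(\bm{T}^*)\in\mathcal{T}_k(A)\big)$. Let $F$ denote the event defined directly on $\bm{T}^*$ that, among the $2k+1$ vertices at post-order distance at most $k$ from $u_0$, the set of leaves is (after the natural shift to $[-k,k]$) exactly $A$. On $E_H$ the fringe subtree $f^\bullet_H(\bm{T}^*)$ contains all these vertices and records their leaf statuses faithfully, so $\{f^\bullet_H(\bm{T}^*)\in\mathcal{T}_k(A)\}=F\cap E_H$. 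Since $\P(E_H)\to 1$ as $H\to\infty$ by Lemma \ref{lemma_A}(b), it suffices to show $\P(F)=(1/2)^{2k+1}$.

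Second, I would encode $F$ via the contour $C(\bm{T}^*)=\cdots\bm{w}_{-1}\hat{\bm{w}}_0\bm{w}_1\cdots$ with $\hat{\bm{w}}_0=D$. Extending Lemma \ref{lem_con_lab} to a fringe subtree large enough to contain the $2k+1$ relevant vertices (using Observation \ref{rem_cont_tree}), the post-order position $+j$ (resp.\ $-j$) corresponds to the $j$-th down-step of $\bm{w}_1,\bm{w}_2,\dots$ (resp.\ of $\bm{w}_{-1},\bm{w}_{-2},\dots$), while $u_0$ corresponds to $\hat{\bm{w}}_0$ itself. A vertex sitting at a down-step $\bm{w}_p$ is a leaf iff the immediately preceding letter $\bm{w}_{p-1}$ equals $U$; in particular $u_0$ is a leaf iff $\bm{w}_{-1}=U$.

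Third, I would apply Lemma \ref{balanced_contour}, which guarantees that $(\bm{w}_i)_{i\neq 0}$ are i.i.d.\ Bernoulli$(1/2)$. Parse the forward sequence as $U^{\alpha_1}DU^{\alpha_2}D\cdots$ and the backward sequence as $U^{\beta_0}(DU^{\beta_1})(DU^{\beta_2})\cdots$; then $\alpha_1,\alpha_2,\dots,\beta_0,\beta_1,\dots$ are mutually independent with $\P(\alpha_j=\ell)=\P(\beta_j=\ell)=2^{-(\ell+1)}$ for $\ell\geq 0$. A short computation using the leaf criterion above shows that the leaf indicator at post-order position $+j$ (resp.\ $0$, resp.\ $-j$) equals $[\alpha_j\geq 1]$ (resp.\ $[\beta_0\geq 1]$, resp.\ $[\beta_j\geq 1]$). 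These $2k+1$ indicators are thus i.i.d.\ Bernoulli$(1/2)$, and since $F$ prescribes each of them, $\P(F)=(1/2)^{2k+1}$. The most delicate point is the joint handling of the leaf indicators at positions $0$ and $-1$, both of which a priori involve $\bm{w}_{-1}$; isolating the initial run of $U$'s as the standalone block $U^{\beta_0}$ in the backward decomposition is precisely what disentangles these two indicators into independent Bernoulli variables.
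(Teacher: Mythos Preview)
Your proof is correct and follows essentially the same route as the paper: translate the event into a condition on the bi-infinite contour $C(\bm{T}^*)$ around $\hat{\bm w}_0$, invoke Lemma~\ref{balanced_contour} for the i.i.d.\ structure, and use Lemma~\ref{lemma_A}(b) to discard $E_H^c$. The paper writes out the explicit $U^+/D^c$ pattern determined by $A$ and sums over the $U^+$ block lengths, whereas you package the same computation via the independent geometric run-lengths $\alpha_j,\beta_j$ and read off directly that the $2k+1$ leaf indicators are i.i.d.\ Bernoulli$(1/2)$; the two calculations are equivalent.
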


\begin{proof} 
	Suppose that $A=\big\{x_{m},\dots,x_{1},y_{1},\dots,y_{\ell}\big\},$ where $m,\ell\leq k,$ $x_{i+1}<x_{i}<0$ for all $1\leq i\leq m-1$ and $0\leq y_i < y_{i+1}$ for all $1\leq i\leq \ell-1.$ For all $H>0,$ using the same ideas as in Lemma \ref{lem_con_lab} on the right and left part of the contour of $\bm{T}^*$, the event $\big\{f^\bullet_H(\bm{T}^*)\in\mathcal{T}_{k}(A)\big\}$ rewrites as 
	$$\Big\{C(\bm{T}^*)\in\{U,D\}^{\infty}_LD^{|-k-x_{m}|+1}\prod_{p=m-1}^{1} w_pU^{+}D^{|x_{1}|-1}\hat{D}D^{y_1}U^{+}\prod_{q=1}^{\ell-1} w_qD^{k-y_{\ell}+1}\{U,D\}^{\infty}_R\Big\},$$ 
	where $w_p=U^{+}D^{|x_{p+1}-x_{p}|},$ for all $1\leq p \leq m-1$ and $w_q=D^{y_{q+1}-y_{q}}U^{+},$ for all $1\leq q \leq \ell-1$ (see Fig.~\ref{proof_cont} for a representation of this event). 
	
	\begin{figure}[htbp]
		\begin{center}
			\includegraphics[scale=0.8]{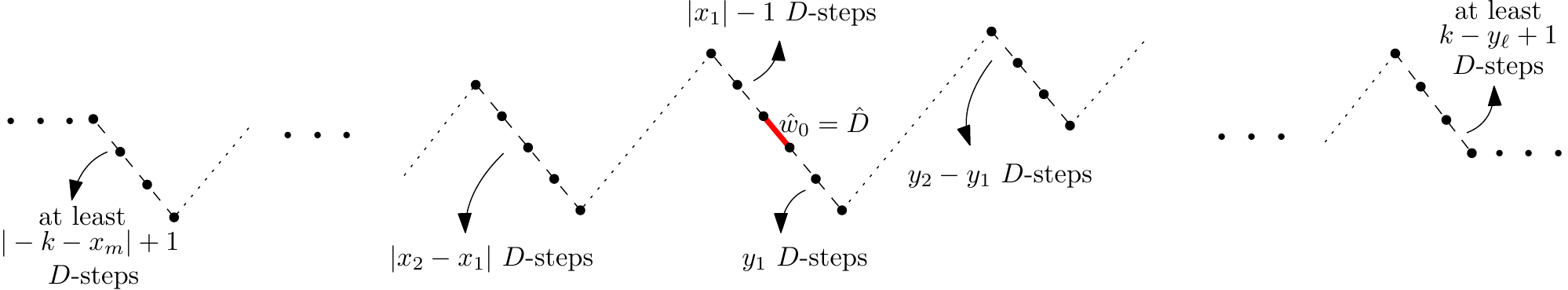}\\
			\caption{A scheme of the contour of $C(\bm{T}^*).$ We used dashed lines for sequences of down-steps with a fixed size (explicitly reported) and dotted lines for sequences of up-steps with arbitrary size at least 1.}\label{proof_cont}
		\end{center}
	\end{figure}
	
	Using Lemma \ref{balanced_contour}, and noting that for all $k>0,$ $\sum_{j\geq 1}\P(\bm{w}_{k+1}\cdots\bm{w}_{k+j}=U^{j})=\sum_{j\geq 1}(1/2)^j=1$ (and a similar results holds also for $k<0$), we have that 
	\begin{equation*}
	\begin{split}
	\P\Big(&C(\bm{T}^*)\in\{U,D\}_L^{\infty}D^{|-k-x_{m}|+1}\prod_{p=m-1}^{1} w_pU^{+}D^{|x_1|-1}\hat{D}D^{y_1}U^{+}\prod_{q=1}^{\ell-1} w_qD^{k-y_{\ell}+1}\{U,D\}^{\infty}_R\Big)\\
	&=\Big(\frac{1}{2}\Big)^{k+x_{m}+1}\prod_{p=m-1}^{1}\Big(\frac{1}{2}\Big)^{x_{p}-x_{p+1}}\Big(\frac{1}{2}\Big)^{-x_{1}-1+y_{1}}\prod_{q=1}^{\ell-1}\Big(\frac{1}{2}\Big)^{y_{q+1}-y_{q}}\Big(\frac{1}{2}\Big)^{k-y_{\ell}+1}=\Big(\frac{1}{2}\Big)^{2k+1}.
	\end{split}
	\end{equation*}
	Since by Lemma \ref{lemma_A} (b), $\P\big(E_H(\bm{T}^*)\big)\stackrel{H\to\infty}{\longrightarrow}1,$ the statement is proved.
\end{proof}

With these results in our hands we can finish the proof of Proposition \ref{prop1}.

\begin{proof}[Proof of Proposition \ref{prop1}]
	We start studying the conditional probability $\P^{\bm{i}_n}\big(Q_{\bm{i}_n,k}(\bm{T}^{\eta}_n)=A\big),$ which, we recall, is a random variable.
	For all $n,H>0,$ almost surely,
	\begin{equation*}
	\begin{split}
	\P^{\bm{i}_n}\big(Q_{\bm{i}_n,k}(\bm{T}^{\eta}_n)=A\big)= &\P^{\bm{i}_n}\big(\big\{Q_{\bm{i}_n,k}(\bm{T}^{\eta}_n)=A\big\}\cap E_H(\bm{T}^{\eta}_n,\bm{i}_n)\big)\\
	&+\P^{\bm{i}_n}\big(\big\{Q_{\bm{i}_n,k}(\bm{T}^{\eta}_n)=A\big\}\cap E_H(\bm{T}^{\eta}_n,\bm{i}_n)^c\big).
	\end{split}
	\end{equation*}
	Therefore, almost surely,
	\begin{equation}
	\label{keybound}
	\begin{split}
	&\P^{\bm{i}_n}\big(Q_{\bm{i}_n,k}(\bm{T}^{\eta}_n)=A\big)\geq\P^{\bm{i}_n}\big(\big\{Q_{\bm{i}_n,k}(\bm{T}^{\eta}_n)=A\big\}\cap E_H(\bm{T}^{\eta}_n,\bm{i}_n)\big),\\
	&\P^{\bm{i}_n}\big(Q_{\bm{i}_n,k}(\bm{T}^{\eta}_n)=A\big)\leq\P^{\bm{i}_n}\big(\big\{Q_{\bm{i}_n,k}(\bm{T}^{\eta}_n)=A\big\}\cap E_H(\bm{T}^{\eta}_n,\bm{i}_n)\big)+\P^{\bm{i}_n}\big(E_H(\bm{T}^{\eta}_n,\bm{i}_n)^c\big).
	\end{split}
	\end{equation}
	By Equation (\ref{eq_B}), for all $H>0,$ the following limit for the lower bound in Equation (\ref{keybound}) holds
	\begin{equation}
	\label{firstlim}
	\P^{\bm{i}_n}\big(\big\{Q_{\bm{i}_n,k}(\bm{T}^{\eta}_n)=A\big\}\cap E_H(\bm{T}^{\eta}_n,\bm{i}_n)\big)\stackrel{P}{\to}\P\big(\big\{f^\bullet_H(\bm{T}^*)\in\mathcal{T}_{k}(A)\big\}\cap E_H(\bm{T}^*)\big).
	\end{equation}
	Combining Equation (\ref{eq_B}) and Lemma \ref{lemma_A} (a), for all $H>0,$ the following limit for the upper bound in Equation (\ref{keybound}) holds
	\begin{equation}
	\label{secondlim}
	\begin{split}
	\P^{\bm{i}_n}\big(\big\{Q_{\bm{i}_n,k}(\bm{T}^{\eta}_n)=A\big\}&\cap E_H(\bm{T}^{\eta}_n,\bm{i}_n)\big)+\P^{\bm{i}_n}\big(E_H(\bm{T}^{\eta}_n,\bm{i}_n)^c\big)\\
	&\stackrel{P}{\to}\P\big(\big\{f^\bullet_H(\bm{T}^*)\in\mathcal{T}_{k}(A)\big\}\cap E_H(\bm{T}^*)\big)+\P\big(E_H(\bm{T}^*)^c\big).
	\end{split}
	\end{equation}
	Since the limiting objects are deterministic, the limits in Equations (\ref{firstlim}) and (\ref{secondlim}) hold jointly for all $H>0$. Moreover,
	applying Skorokhod's representation theorem in the product space over all $H>0,$ we can also assume that there exists a coupling such that these limits hold also almost surely jointly for all $H>0$.  Therefore, almost surely, for all $H>0,$
	\begin{equation}
	\label{seckeybound}
	\begin{split}
	&\liminf_{n\to\infty}\P^{\bm{i}_n}\big(Q_{\bm{i}_n,k}(\bm{T}^{\eta}_n)=A\big)\geq\P\big(\big\{f^\bullet_H(\bm{T}^*)\in\mathcal{T}_{k}(A)\big\}\cap E_H(\bm{T}^*)\big),\\
	&\limsup_{n\to\infty}\P^{\bm{i}_n}\big(Q_{\bm{i}_n,k}(\bm{T}^{\eta}_n)=A\big)
	\leq\P\big(\big\{f^\bullet_H(\bm{T}^*)\in\mathcal{T}_{k}(A)\big\}\cap E_H(\bm{T}^*)\big)+\P\big(E_H(\bm{T}^*)^c\big).
	\end{split}
	\end{equation}
	Finally, since by Lemma \ref{lemma_C},
	$$\P\big(\big\{f^\bullet_H(\bm{T}^*)\in\mathcal{T}_{k}(A)\big\}\cap E_H(\bm{T}^*)\big)\stackrel{H\to\infty}{\longrightarrow}\Big(\frac{1}{2}\Big)^{2k+1}$$ 
	and by Lemma \ref{lemma_A} (a),
	$$\P\big(E_H(\bm{T}^*)^c\big)\stackrel{H\to\infty}{\longrightarrow}0,$$
	we can conclude, from Equation (\ref{seckeybound}), that the limit in probability of $\P^{\bm{i}_n}\big(Q_{\bm{i}_n,k}(\bm{T}^{\eta}_n)=A\big)$ exists and is equal to $\big(\frac{1}{2}\big)^{2k+1}.$
	Finally, thanks to Observation \ref{elvfdowcv}, 
	\begin{equation*}
	\lim_{n\to\infty}\P^{\bm{i}_n}\big(E^+_{\bm{i}_n,k}(\bm{\sigma}^n)=A\big)=\lim_{n\to\infty}\P^{\bm{i}_n}\big(Q_{\bm{i}_n,k}(\bm{T}^{\eta}_n)=A\big)=\Big(\frac{1}{2}\Big)^{2k+1}.\qedhere
	\end{equation*}
\end{proof}

\subsubsection{The proof of Proposition \ref{prop2}}
\label{prop2proof}
It remains to prove the following:
$$\P^{\bm{i}_n}\big(S_k(\bm{\sigma}^n,\bm{i}_n)\big)\stackrel{P}{\longrightarrow}1.$$

Before entering into the details, we first give a quick idea of the proof: by a result from \cite{hoffman2017pattern}, the points of a large uniform 321-avoiding permutation of size $n$ are at distance of order $\sqrt n$ from the diagonal $x=y.$ Therefore, in a window of constant width, the points above the diagonal are all higher than the points below the diagonal. 

\begin{proof}[Proof of Proposition \ref{prop2}]
	In order to simplify notation we fix $\bm{m}^+=m_{\bm{i}_n,k}^+(\bm{\sigma}^n)$ and $\bm{M}^-=M_{\bm{i}_n,k}^-(\bm{\sigma}^n).$ We split the probability $\P\big(S_k(\bm{\sigma}^n,\bm{i}_n)\big)$ into two different terms as follow,
	\begin{equation*}
	\begin{split}
	\P\big(S_k(\bm{\sigma}^n,\bm{i}_n)\big)=&\P\big(\bm{\sigma}^n(\bm{m}^+)>\bm{\bm{\sigma}^n}(\bm{M}^-),\bm{m}^+\neq +\infty,\bm{M}^-\neq -\infty\big)\\
	&+\P\Big(\big\{\bm{m}^+\neq+\infty,\bm{M}^-\neq-\infty\big\}^c\Big).
	\end{split}
	\end{equation*}
	We analyze the first term,
	\begin{equation*}
	\begin{split}
	&\P\big(\bm{\sigma}^n(\bm{m}^+)>\bm{\bm{\sigma}^n}(\bm{M}^-),\bm{m}^+\neq +\infty,\bm{M}^-\neq -\infty\big)\\
	&=\P\big(\bm{\sigma}^n(\bm{m}^+)-\bm{m}^+-(\bm{\bm{\sigma}^n}(\bm{M}^-)-\bm{M}^-)>\bm{M}^--\bm{m}^+,\bm{m}^+\neq +\infty,\bm{M}^-\neq -\infty\big).
	\end{split}
	\end{equation*}
	Setting for all $i\in[n],$ $\Delta_i(\bm{\sigma}^n)=\bm{\sigma}^n(i)-i,$ we have
	\begin{equation}
	\label{blawfuighfpuw}
	\begin{split}
	&\P\big(\bm{\sigma}^n(\bm{m}^+)>\bm{\bm{\sigma}^n}(\bm{M}^-),\bm{m}^+\neq +\infty,\bm{M}^-\neq -\infty\big)\\
	&=\P\big(\Delta_{\bm{m}^+}(\bm{\sigma}^n)-\Delta_{\bm{M}^-}(\bm{\sigma}^n)>\bm{M}^--\bm{m}^+,\bm{m}^+\neq +\infty,\bm{M}^-\neq -\infty\big).
	\end{split}
	\end{equation}
	We now analyze the behavior of $\Delta_{\bm{m}^+}(\bm{\sigma}^n)$ and $\Delta_{\bm{M}^-}(\bm{\sigma}^n).$
	By an easy application (stated in \cite[p.10]{janson2017patterns321}) of a result due to Hoffman, Rizzolo and Slivken (see \cite{hoffman2017pattern}) we can assume that there exist a coupling for $(\{\bm{\sigma}^n\}_{n\in\Z_{>0}},\bm{e}),$ where $\bm{e}$ is a Brownian excursion, such that, almost surely,
	\begin{equation*}
	\begin{split}
	&\Delta_{j}(\bm{\sigma}^n)=\sqrt{2n}\cdot\bm{e}(j/n)+\varepsilon_n^+(j,\bm{e},\bm{\sigma}^n),\quad\text{for all}\quad j\in E^+(\bm{\sigma}^n),\\
	&\Delta_{j}(\bm{\sigma}^n)=-\sqrt{2n}\cdot\bm{e}(j/n)+\varepsilon_n^-(j,\bm{e},\bm{\sigma}^n),\quad\text{for all}\quad j\in E^-(\bm{\sigma}^n),
	\end{split}
	\end{equation*}
	where $\varepsilon_n^*(j,\bm{e},\bm{\sigma}^n)$ are error terms in probability of order $o(n^{1/2})$ uniformly in $j,$ \emph{i.e.,} $\frac{\sup_j\{\varepsilon_n^*(j,\bm{e},\bm{\sigma}^n)\}}{n^{1/2}}\stackrel{P}{\to}0$ (w.r.t.\ the probability given by the coupling of $(\{\bm{\sigma}^n\}_{n\in\Z_{>0}},\bm{e})$).
	
	Therefore, setting $j=\bm{m}^+$ or $j=\bm{M}^-,$ we obtain, almost surely,
	\begin{equation*}
	\begin{split}
	&\Delta_{\bm{m}^+}(\bm{\sigma}^n)=\sqrt{2n}\cdot\bm{e}(\bm{m}^+/n)+\varepsilon_n^+(\bm{m}^+,\bm{e},\bm{\sigma}^n),\\
	&\Delta_{\bm{M}^-}(\bm{\sigma}^n)=-\sqrt{2n}\cdot\bm{e}(\bm{M}^-/n)+\varepsilon_n^-(\bm{M}^-,\bm{e},\bm{\sigma}^n),
	\end{split}
	\end{equation*}
	where $\frac{\varepsilon_n^+(\bm{m}^+,\bm{e},\bm{\sigma}^n)}{n^{1/2}}\stackrel{P}{\to}0$ (w.r.t.\ the probability given by the coupling of $(\{\bm{\sigma}^n\}_{n\in\Z_{>0}},\bm{e},\{\bm{i}_n\}_{n\in\Z_{>0}})$) since, almost surely, $\varepsilon_n(\bm{m}^+,\bm{e},\bm{\sigma}^n)\leq\sup_j\{\varepsilon_n(j,\bm{e},\bm{\sigma}^n)\}.$ The same holds also for $\varepsilon_n^-(\bm{M}^-,\bm{e},\bm{\sigma}^n)$. In what follows all the probabilities and the error terms have to be interpreted in the same way, \emph{i.e}. w.r.t.\ the probability given by the coupling of $(\{\bm{\sigma}^n\}_{n\in\Z_{>0}},\bm{e},\{\bm{i}_n\}_{n\in\Z_{>0}}).$ Moreover, we will simply denote the errors terms of order less than $n^\alpha$ with $\bm{o}_P(n^\alpha).$
	
	We can rewrite the last term in Equation (\ref{blawfuighfpuw}) as
	\begin{equation}
	\label{piegfpogfpfh}
	\begin{split}
	&\P\big(\bm{\sigma}^n(\bm{m}^+)>\bm{\bm{\sigma}^n}(\bm{M}^-),\bm{m}^+\neq +\infty,\bm{M}^-\neq -\infty\big)\\
	&=\P\Big(\bm{e}(\bm{m}^+/n)+\bm{e}(\bm{M}^-/n)>\tfrac{\bm{M}^--\bm{m}^+}{\sqrt{2n}}+\bm{o}_p(1),\bm{m}^+\neq +\infty,\bm{M}^-\neq -\infty\Big).
	\end{split}
	\end{equation}
	
	Noting that, if $\bm{m}^+\neq +\infty$ and $\bm{M}^-\neq -\infty$, then $-k\leq\bm{m}^+-\bm{i}_n,\bm{M}^--\bm{i}_n\leq k,$ a.s., we have that
	$$\frac{1}{n}|\bm{m}^+-\bm{i}_n|=\bm{o}_p(1)\quad\text{and}\quad\frac{1}{n}|\bm{M}^--\bm{i}_n|=\bm{o}_p(1).$$
	
	Since $\bm{e}$ is a continuous process, applying Lemma \ref{tecn_lemma} in the appendix, we can rewrite Equation (\ref{piegfpogfpfh}) as
	\begin{equation}
	\begin{split}
	&\P\big(\bm{\sigma}^n(\bm{m}^+)>\bm{\bm{\sigma}^n}(\bm{M}^-),\bm{m}^+\neq +\infty,\bm{M}^-\neq -\infty\big)\\
	&=\P\Big(\bm{e}(\bm{i}_n/n)>\tfrac{\bm{M}^--\bm{m}^+}{2\sqrt{2n}}+\bm{o}_p(1),\bm{m}^+\neq +\infty,\bm{M}^-\neq -\infty\Big).
	\end{split}
	\end{equation}
	Noting that, if $\bm{m}^+\neq +\infty,\bm{M}^-\neq -\infty$, than $\bm{M}^--\bm{m}^+\leq 2k,$ a.s., we can obtain the following bound,
	\begin{equation}
	\begin{split}
	&\P\big(\bm{\sigma}^n(\bm{m}^+)>\bm{\bm{\sigma}^n}(\bm{M}^-),\bm{m}^+\neq +\infty,\bm{M}^-\neq -\infty\big)\\
	&\geq\P\Big(\bm{e}(\bm{i}_n/n)+\bm{o}_p(1)>\tfrac{k}{\sqrt{2n}},\bm{m}^+\neq +\infty,\bm{M}^-\neq -\infty\Big).
	\end{split}
	\end{equation}
	From now until the end of the proof we denote with $\{\bm{\varepsilon}_n\}_{n\in\Z_{>0}}$ the family of random error terms that appears in the previous equation. Since $\bm{i}_n/n\stackrel{(d)}{\to}\bm{U},$ where $\bm{U}$ is a uniform random variable in $(0,1),$ by Skorokhod's representation theorem, we can assume that the convergence holds almost surely. Since $\bm{e}$ is a.s.\ continuous, we can deduce that
	\begin{equation}
	\label{reqi0qugv1g1}
	\bm{e}(\bm{i}_n/n)+\bm{\varepsilon}_n\stackrel{(d)}{\to}\bm{e}(\bm{U}).
	\end{equation} 
	In particular, since $\bm{e}$ and $\bm{U}$ are independent ($\bm{i}_n$ is independent of $\bm{\sigma}^n,$ hence of $\bm{e}$), $\bm{e}(\bm{U})$ admits a density (called Rayleigh distribution, see \cite[p.5]{bertoin1994path}) and so the distribution function $F_{\bm{e}(\bm{U})}(t)$ is continuous. From Equation (\ref{reqi0qugv1g1}), we can conclude that the convergence 
	\begin{equation*}
	F_{\bm{e}(\bm{i}_n/n)+\bm{\varepsilon}_n}(t)\to F_{\bm{e}(\bm{U})}(t),
	\end{equation*}
	is uniform in $\mathbb{R}$ as stated in \cite[exercise 14.8, p.198]{billingsley2008probability}. Therefore,
	\begin{equation*}
	\P\Big(\bm{e}(\bm{i}_n/n)+\bm{\varepsilon}_n>\frac{k}{\sqrt{2n}}\Big)=1-F_{\bm{e}(\bm{i}_n/n)+\bm{\varepsilon}_n}\Big(\frac{k}{\sqrt{2n}}\Big)\to 1-F_{\bm{e}(\bm{U})}(0)=1.
	\end{equation*}
	Finally, summing up, we obtain that
	\begin{equation*}
	\lim_{n\to\infty}\P\big(S_k(\bm{\sigma}^n,\bm{i}_n)\big)\geq\lim_{n\to\infty}\P\big(\bm{m}^+\neq +\infty,\bm{M}^-\neq -\infty\big)+\P\Big(\big\{\bm{m}^+\neq+\infty,\bm{M}^-\neq-\infty\big\}^c\Big)=1.
	\end{equation*}
	Noting that $\P\big(S_k(\bm{\sigma}^n,\bm{i}_n)\big)=\E^{\bm{\sigma}^n}\Big[\P^{\bm{i}_n}\big(S_k(\bm{\sigma}^n,\bm{i}_n)\big)\Big],$ and using Markov's inequality for the probability $1-\P^{\bm{i}_n}\big(S_k(\bm{\sigma}^n,\bm{i}_n)\big)$, we can conclude that,
	\begin{equation*}
	\P^{\bm{i}_n}\big(S_k(\bm{\sigma}^n,\bm{i}_n)\big)\stackrel{P}{\longrightarrow}1.\qedhere
	\end{equation*}
\end{proof}

\subsection{The construction of the limiting object}
\label{explcon2}

We now exhibit an explicit construction of the limiting object $\bm{\sigma}_{321}^{\infty}$ as a random order $\bm{\preccurlyeq}_{321}$ on $\Z.$ 

We consider the set of integer numbers $\Z,$ and we label, uniformly and independently, each integer with a ``$+$" or a ``$-$": namely, for all $x\in\Z,$ 
$$\P(x \text{ has label }``+")=\frac{1}{2}=\P(x \text{ has label }``-").$$
We set $A^+\coloneqq\{x\in\Z:x\text{ has label }``+"\}$ and $A^-\coloneqq\{x\in\Z:x\text{ has label }``-"\}.$
Then we define a random total order $\bm{\preccurlyeq}_{321}$ on $\Z$ saying that, for all $x,y\in\Z,$ $x\bm{\preccurlyeq}_{321}y$ if either
$x<y$ and $x,y\in A^-,$ or $x<y$ and $x,y\in A^+,$ or $x\in A^-$ and $y\in A^+.$ An example is given in Fig.~\ref{constr_order321}.

\begin{figure}[htbp]
	\begin{center}
		\includegraphics[scale=0.8]{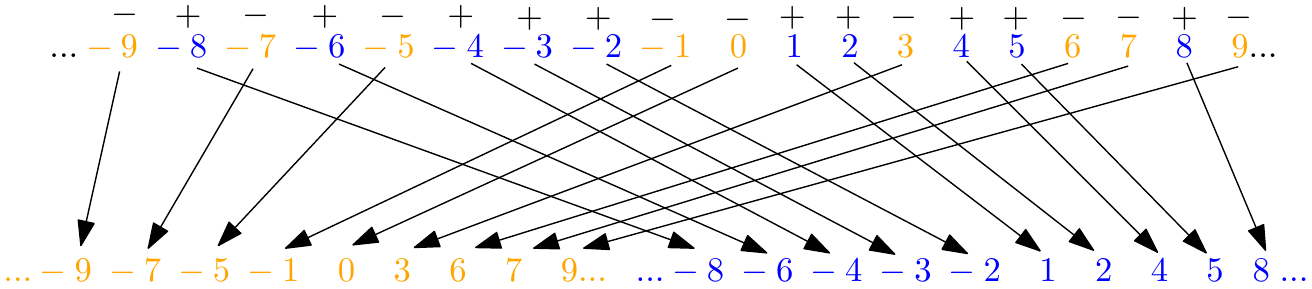}\\
		\caption{A sample of the random total order $(\Z,\bm{\preccurlyeq}_{321}).$ On the top, the standard total order on $\Z$ with the integers labeled by random ``-" and ``+" signs. We paint in orange the integers labeled by ``-" and in blue the integers labeled by ``+." Then, in the bottom part of the picture, we move the orange numbers at the beginning of the new random total order and the blue numbers at the end. Reading the bottom line from left to right gives the random total order $\bm{\preccurlyeq}_{321}$ on $\Z.$ }\label{constr_order321}
	\end{center}
\end{figure}

\begin{prop}
	Let $(\Z,\bm{\preccurlyeq}_{321})$ be the random total order defined above and $\bm{\sigma}^\infty_{321}$ be the limiting object defined in Corollary \ref{321corol}. Then
	$$(\Z,\bm{\preccurlyeq}_{321})\stackrel{(d)}{=}\bm{\sigma}^\infty_{321}.$$
\end{prop}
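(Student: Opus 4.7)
The strategy is to verify that $(\Z,\bm{\preccurlyeq}_{321})$ and $\bm{\sigma}^\infty_{321}$ have the same law by checking agreement on a separating class. By Observation~\ref{separating class}, the collection of clopen balls $B\big((A,\preccurlyeq),2^{-h}\big)$ is separating on $\Sri$, and every such ball either has empty intersection with a given $(\Z,\preccurlyeq)$ or coincides with an event of the form $\{r_h(\cdot)=(\pi,h+1)\}$ for some $h\in\Z_{>0}$ and some $\pi\in\mathcal{S}^{2h+1}$. Combining Corollary~\ref{321corol} with the relation~(\ref{limrel2})--(\ref{limrel3}) in Theorem~\ref{strongbsconditions} (applied in the deterministic regime, since the limits in Theorem~\ref{321theorem} are deterministic), the target distribution is characterized by
\[
\P\big(r_h(\bm{\sigma}^\infty_{321})=(\pi,h+1)\big)=P_{321}(\pi),\qquad \pi\in\mathcal{S}^{2h+1}.
\]
So the task reduces to computing $\P\big(r_h(\Z,\bm{\preccurlyeq}_{321})=(\pi,h+1)\big)$ explicitly and checking it equals $P_{321}(\pi)$.

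The second step is a direct combinatorial calculation based on the construction. Restricting $\bm{\preccurlyeq}_{321}$ to $\{-h,\dots,h\}$ amounts to looking at the independent labels $(\bm\ell_j)_{-h\le j\le h}$ with $\P(\bm\ell_j=\,``+")=\P(\bm\ell_j=\,``-")=\tfrac12$: the resulting order lists, in increasing natural order, first the positions with label ``$-$'' and then those with label ``$+$''. Under the bijection between rooted permutations and total orders, this yields a rooted permutation $(\pi,h+1)$ in which $\pi$ is the concatenation (at the values level) of two increasing subsequences whose index sets partition $[1,2h+1]$, with every value in the lower subsequence smaller than every value in the upper one. In particular $\pi$ is always $321$-avoiding, and its structure coincides with the one described in Remark~\ref{rem_sep_line}. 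Conversely, any such $\pi$ is obtained from exactly one labelling of $\{-h,\dots,h\}$ up to the choice of the bipartition $[1,2h+1]=L(\pi)\sqcup U(\pi)$.

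It then suffices to count, for a fixed $\pi\in\mathrm{Av}^{2h+1}(321)$, the number $N(\pi)$ of bipartitions $[1,2h+1]=L\sqcup U$ such that $\pi|_L$ and $\pi|_U$ are increasing and $\max\pi(L)<\min\pi(U)$ (each such bipartition giving rise to exactly one label assignment with probability $(1/2)^{2h+1}$). Three cases arise, matching exactly the definition of $P_{321}$: if $\coc(21,\pi^{-1})=1$ the bipartition of Remark~\ref{rem_sep_line} is unique, so $N(\pi)=1$; if $\pi=12\cdots(2h+1)$ any ``prefix'' bipartition $L=\{1,\dots,k\}$, $U=\{k+1,\dots,2h+1\}$ with $k\in\{0,\dots,2h+1\}$ works, so $N(\pi)=2h+2=|\pi|+1$; and if $\coc(21,\pi^{-1})\ge 2$ no such bipartition exists and $N(\pi)=0$ (which is also the value $0$ assigned in the last line of~(\ref{321distr})). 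Multiplying by $(1/2)^{2h+1}=1/2^{|\pi|}$ gives $\P\big(r_h(\Z,\bm{\preccurlyeq}_{321})=(\pi,h+1)\big)=P_{321}(\pi)$ in every case, completing the proof.

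There is no real obstacle here: all the analytic content is packaged in Theorem~\ref{321theorem}, which pins down $P_{321}$; what remains is a transparent ``encode the $\pm$ labels as the bipartition into lower and upper increasing run'' argument. The only care needed is to match the two encodings of $\mathrm{Av}(321)$ correctly (labels on positions vs.\ bipartitions of values) and to remember to count the extra $|\pi|+1$ bipartitions in the identity case, which is exactly the discrepancy reflected in~(\ref{321distr}).
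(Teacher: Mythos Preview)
Your proposal is correct and follows essentially the same approach as the paper: reduce to checking $\P\big(r_h(\Z,\bm{\preccurlyeq}_{321})=(\pi,h+1)\big)=P_{321}(\pi)$ via the separating class of clopen balls, then split into the same three cases (one inverse descent, identity, more than one inverse descent) and count admissible $\pm$ label configurations on $[-h,h]$. The paper phrases the count directly as $\P(A^+\cap[-h,h]=U^*(\pi))$ rather than via your ``number of bipartitions $N(\pi)$'' bookkeeping, but the arguments are the same in substance.
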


\begin{proof}
	Since by Observation \ref{separating class} the set of clopen balls 
	\begin{equation*}
	\mathcal{A}=\Big\{B\big((A,\preccurlyeq),2^{-h}\big):h\in\Z_{>0},(A,\preccurlyeq)\in\Sri\Big\}
	\end{equation*}
	is a separating class for the space $(\Sri,d),$ it is enough to prove that for all $h\in\Z_{>0},$ for all $\pi\in\text{Av}^{2h+1}(321),$
	$$\P\big(r_h(\Z,\bm{\preccurlyeq}_{321})=(\pi,h+1)\big)=P_{321}(\pi).$$

	We fix $h\in\Z_{>0}$ and, as in the proof of Theorem \ref{321theorem}, we distinguish three different cases for $\pi\in\text{Av}^{2h+1}(321)$:
	
	\underline{$\coc(21,\pi^{-1})=1:$} We recall (see Remark \ref{rem_sep_line}) that, in this case, $\pi$ splits in a unique way into two increasing subsequences whose indices are denoted by $L(\pi)$ and $U(\pi).$ Recalling that $U^*(\pi)\coloneqq\{j\in[-h,h]:j+h\in U(\pi)\},$ we have
	\begin{equation*}
	\begin{split}
	\P\big(r_h(\Z,\bm{\preccurlyeq}_{321})=(\pi,h+1)\big)=\P\big(A^+\cap[-h,h]=U^*(\pi)\big)=\frac{1}{2^{2h+1}}=P_{321}(\pi).
	\end{split}
	\end{equation*}
	
	\underline{$\pi=12\dots|\pi|$:} In this case $\pi$ does not split in a unique way into two increasing subsequences. We have exactly $2h+2$ different choices for this splitting, namely setting either $U^*(\pi)=\emptyset$ or $U^*(\pi)=[k,h],$ for some $k\in[-h,h].$ Therefore, we obtain that
	\begin{equation*}
	\begin{split}
	\P\big(r_h(\Z,\bm{\preccurlyeq}_{321})=(\pi,h+1)\big)=\P\Big(A^+\cap[-h,h]\in\{\emptyset\}\cup\bigcup_{k\in[-h,h]}\{[k,h]\}\Big)=\frac{2h+2}{2^{2h+1}}=P_{321}(\pi).
	\end{split}
	\end{equation*}
	
	\underline{$\coc(21,\pi^{-1})>1:$} Since, by construction, $r_h(\Z,\bm{\preccurlyeq}_{321})$ contains a.s.\ at most one inverse descent, then
	$$\P\big(r_h(\Z,\bm{\preccurlyeq}_{321})=(\pi,h+1)\big)=0=P_{321}(\pi).$$
	The analysis of this three cases concludes the proof.
\end{proof}

\appendix

\section{A generating function proof that $P_{231}(\pi)$ is a probability distribution} 
\label{combint}
We give here a generating function proof of the fact that  
\begin{equation}
\label{probdens}
P_{231}(\pi)\coloneqq\frac{2^{|\text{LRMax}(\pi)|+|\text{RLMax}(\pi)|}}{2^{2|\pi|}},\quad\text{for all}\quad \pi\in\text{Av}(231),
\end{equation}
defines a probability distribution on $\text{Av}^k(231),$ for all $k\geq 1.$ 

Using our bijection between 231-avoiding permutations and binary trees, we can consider the generating function 
$$C(z,x,y)=\sum_{T\in\mathcal{T}_b}z^{|T|}x^{|\text{LRMax}(\sigma_T)|}y^{|\text{RLMax}(\sigma_T)|},$$
where $\mathcal{T}_b\coloneqq\mathbb{T}_b\cup \emptyset$ is the set of (possibly empty) binary trees. 
Using the classical decomposition of the binary trees in root, left subtree and right subtree, together with Observation \ref{maxnode}, we obtain the following equation for our generating function,
\begin{equation}
\label{genfctrel}
C(z,x,y)=1+zxy\cdot C(z,x,1)\cdot C(z,1,y),
\end{equation}
where the term $1$ corresponds to the empty tree.

Now setting $x=y=1$ in Equation (\ref{genfctrel}) and $A(z)\coloneqq C(z,1,1),$ we obtain the classical relation for the generating function of Catalan numbers:
$$A(z)=1+z\cdot A(z)^2,$$
that can be algebraically solved to yield
$A(z)=\frac{1-\sqrt{1-4z}}{2z}.$

Substituting $y=1$ in Equation (\ref{genfctrel}) and setting $B(z,x)\coloneqq C(z,x,1)$, we have
$$B(z,x)= 1+ zx\cdot B(z,x)\cdot A(z)$$
and so
$$B(z,x)=\frac{1}{1-zx\cdot A(z)}=\frac{2}{x\sqrt{1-4z}-x+2}.$$
Finally, noting the symmetry $C(z,x,1)=C(z,1,x)$, we obtain
\begin{equation*}
\begin{split}
C(z,x,y)&=1+zxy\bigg(\frac{2}{x\sqrt{1-4z}-x+2}\bigg)\bigg(\frac{2}{y\sqrt{1-4z}-y+2}\bigg)\\
&=1+\frac{4zxy}{(2-x+x\sqrt{1-4z})(2-y+y\sqrt{1-4z})}.
\end{split}
\end{equation*}
Once we have the explicit expression for $C(z,x,y),$ in order to prove that Equation (\ref{probdens}) defines a probability distribution on $\text{Av}^k(231)$, it suffices to notice that
$$C(z,2,2)=\frac{1}{1-4z},$$
and so $[z^n]C(z,2,2)=2^{2n}.$

\section{A small lemma}
\label{small_lemma}
\begin{lem}
	\label{tecn_lemma}
	Let $(\bm{X}_n)_{n\in\Z_{>0}},$ $(\tilde{\bm{X}}_n)_{n\in\Z_{>0}},$ be two sequences of real random variables with values in $[0,1]$ such that, for all $n\in\Z_{>0},$
	$$\bm{X}_n-\tilde{\bm{X}}_n=\bm{o}_P(1),$$
	\emph{i.e.,} $\bm{X}_n-\tilde{\bm{X}}_n\stackrel{P}{\to}0.$ Then, for every real continuous random process $\bm{Y}=(\bm{Y}_t)_{t\in [0,1]}$ (\emph{i.e.,} a random variable with values in $\mathcal{C}([0,1],\mathbb{R})$) we have
	$$\bm{Y}_{\bm{X}_n}-\bm{Y}_{\tilde{\bm{X}}_n}=\bm{o}_P(1).$$
\end{lem}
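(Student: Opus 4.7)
The plan is a direct $\varepsilon$-$\delta$ argument, exploiting the fact that a continuous function on the compact interval $[0,1]$ is uniformly continuous, and that the modulus of continuity is small with high probability once we condition on a ``nice'' realization of the process $\bm{Y}$.

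First, I would introduce the (random) modulus of continuity of $\bm{Y}$,
$$\omega_{\bm{Y}}(\delta)\coloneqq\sup_{\substack{s,t\in[0,1]\\ |s-t|\leq\delta}}|\bm{Y}_s-\bm{Y}_t|.$$
Since $\bm{Y}$ almost surely takes values in $\mathcal{C}([0,1],\mathbb{R})$ and $[0,1]$ is compact, Heine's theorem gives that $\omega_{\bm{Y}}(\delta)\to 0$ almost surely as $\delta\to 0^+$. By dominated convergence (or monotone convergence, since $\delta\mapsto\omega_{\bm{Y}}(\delta)$ is nondecreasing), this implies $\omega_{\bm{Y}}(\delta)\stackrel{P}{\to}0$ as $\delta\to 0^+$.

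Next, fix $\varepsilon>0$ and $\eta>0$. By the previous step I can choose $\delta_0>0$ small enough so that
$$\P\big(\omega_{\bm{Y}}(\delta_0)>\varepsilon\big)<\eta/2.$$
Using the hypothesis $\bm{X}_n-\tilde{\bm{X}}_n\stackrel{P}{\to}0$, there exists $N=N(\delta_0,\eta)$ such that for all $n\geq N$,
$$\P\big(|\bm{X}_n-\tilde{\bm{X}}_n|>\delta_0\big)<\eta/2.$$
Finally, since both $\bm{X}_n$ and $\tilde{\bm{X}}_n$ take values in $[0,1]$, the inclusion
$$\big\{|\bm{Y}_{\bm{X}_n}-\bm{Y}_{\tilde{\bm{X}}_n}|>\varepsilon\big\}\subseteq\big\{\omega_{\bm{Y}}(\delta_0)>\varepsilon\big\}\cup\big\{|\bm{X}_n-\tilde{\bm{X}}_n|>\delta_0\big\}$$
holds, giving $\P(|\bm{Y}_{\bm{X}_n}-\bm{Y}_{\tilde{\bm{X}}_n}|>\varepsilon)<\eta$ for all $n\geq N$, which is exactly $\bm{Y}_{\bm{X}_n}-\bm{Y}_{\tilde{\bm{X}}_n}=\bm{o}_P(1)$.

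There is no serious obstacle here: the only mild subtlety is to be explicit that the process $\bm{Y}$ and the sequences $\bm{X}_n,\tilde{\bm{X}}_n$ need not be independent, so one cannot condition on $\bm{Y}$ in a naive product-space way; the argument above avoids this by working with the single event $\{\omega_{\bm{Y}}(\delta_0)>\varepsilon\}$ rather than conditioning.
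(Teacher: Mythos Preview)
Your proof is correct and uses the same key ingredient as the paper, namely the (random) modulus of continuity $\omega_{\bm{Y}}(\delta)$ and its almost-sure vanishing as $\delta\to 0$. The difference is only in how the conclusion is extracted from this: the paper invokes Skorokhod's representation theorem to upgrade $\bm{X}_n-\tilde{\bm{X}}_n\stackrel{P}{\to}0$ to almost-sure convergence on a coupled space and then bounds $|\bm{Y}_{\bm{X}_n}-\bm{Y}_{\tilde{\bm{X}}_n}|\leq\omega(\bm{Y},\bm{X}_n-\tilde{\bm{X}}_n)$ directly, whereas you give a self-contained $\varepsilon$--$\eta$ argument with the union bound. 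Your route is slightly more elementary (no Skorokhod) and, as you rightly note, sidesteps any worry about preserving the joint law of $(\bm{Y},\bm{X}_n,\tilde{\bm{X}}_n)$ when passing to the coupled space; the paper's Skorokhod step is a bit terse on exactly this point. Either way the argument is short and the two are essentially equivalent.
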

\begin{proof}
	 Since $\bm{Y}=(\bm{Y}_t)_{t\in [0,1]}$ is a.s.\ continuous on a compact set, it is also a.s.\ uniformly continuous. Therefore the continuity modulus $\omega(\bm{Y},\delta)\coloneqq\sup_{s,t\in[0,1],|s-t|\leq\delta}|\bm{Y}_t-\bm{Y}_s|$ tends a.s.\ to zero as $\delta$ tends to zero.
	 By Skorokhod's representation theorem we can assume that there exist a coupling such that $\bm{X}_n-\tilde{\bm{X}}_n\stackrel{a.s.}{\to}0,$ therefore
	 $$|\bm{Y}_{\bm{X}_n}-\bm{Y}_{\tilde{\bm{X}}_n}|\leq\omega(\bm{Y},\bm{X}_n-\tilde{\bm{X}}_n)\stackrel{a.s.}{\to}0.$$
	 Hence we can conclude that $\bm{Y}_{\bm{X}_n}-\bm{Y}_{\tilde{\bm{X}}_n}=\bm{o}_P(1)$ in the original probability space.
\end{proof}

\section*{Acknowledgements}
The author is very grateful to Mathilde Bouvel and Valentin F\'eray for introducing him to the fantastic world of random permutations. He also thanks them for the constant and stimulating discussions and suggestions.  

The author also warmly thanks Benedikt Stufler for precious suggestions about the terminology for Benjamini--Schramm convergence and for some explanations about local results for random trees.

Finally, he thanks Tommaso Padovan for precious help in the realization of various simulations and the anonymous referee for all his/her precious and useful comments.

This work was completed with the support of the SNF grant number $200021\_172536$, ``Several aspects of the study of non-uniform random permutations".

\bibliographystyle{abbrv}
\bibliography{mybib}

\end{document}